\setlist[enumerate]{parsep=0pt plus 4pt,topsep=0pt plus 4pt}
\definecolor{darkblue}{RGB}{0,0,160}
\definecolor{lightred}{rgb}{1,.3,.3}
\definecolor{darkpurple}{rgb}{.7,0,.7}
\newcommand\blu{\color{blue}}
\newcommand\pur{\color{darkpurple}}
\newcommand{\excise}[1]{}
\newtheorem{thm}{Theorem}[section]
\newtheorem{lemma}[thm]{Lemma}
\newtheorem{cor}[thm]{Corollary}
\newtheorem{prop}[thm]{Proposition}
\newtheorem{conj}[thm]{Conjecture}
\newtheorem{question}[thm]{Question}
\theoremstyle{definition}
\newtheorem{example}[thm]{Example}
\newtheorem{remark}[thm]{Remark}
\newtheorem{defn}[thm]{Definition}
\numberwithin{equation}{section}
\newcounter{separated}
\newcommand{\Ring}[1]{\ensuremath{\mathbb{#1}}}
\renewcommand\>{\rangle}
\newcommand\<{\langle}
\newcommand\0{\mathbf{0}}
\newcommand\NN{\Ring{N}}
\newcommand\OO{\mathcal{O}}
\newcommand\RR{{\mathbb R}}
\newcommand\ZZ{{\mathbb Z}}
\newcommand\bb{{\mathbf b}}
\newcommand\cc{{\mathbf c}}
\newcommand\ee{{\mathbf e}}
\newcommand\ff{{\mathbf f}}
\newcommand\ii{{\mathbf i}}
\newcommand\jj{{\mathbf j}}
\newcommand\kk{\Bbbk}
\newcommand\mm{{\mathfrak m}}
\newcommand\pp{{\mathfrak p}}
\newcommand\qq{{\mathbf q}}
\newcommand\uu{{\mathbf u}}
\newcommand\vv{{\mathbf v}}
\newcommand\ww{{\mathbf w}}
\newcommand\xx{{\mathbf x}}
\newcommand\zz{{\mathbf z}}
\newcommand\cA{\mathcal{A}}
\newcommand\cB{\mathcal{B}}
\newcommand\cC{\mathcal{C}}
\newcommand\cF{\mathcal{F}}
\newcommand\cH{H}
\newcommand\cM{M}
\newcommand\cN{N}
\newcommand\cP{P}
\newcommand\cQ{Q}
\newcommand\cS{\mathcal{S}}
\newcommand\cT{\mathcal{T}}
\newcommand\oD{\hspace{.3ex}\ol{\hspace{-.3ex}D\hspace{-.05ex}}\hspace{.05ex}}
\newcommand\oS{\hspace{.3ex}\ol{\hspace{-.3ex}\cS\hspace{-.05ex}}\hspace{.05ex}}
\newcommand\del{\partial}
\newcommand\sss{\mathbf{s}}
\renewcommand\aa{{\mathbf a}}
\renewcommand\phi{\varphi}
\newcommand\gt{\Gamma_{\!\tau}}
\newcommand\op{\mathrm{op}}
\newcommand\cfq{\cF_\cQ}
\newcommand\cfr{\cF_{\RR^2}}
\newcommand\cmr{\cM^{\rho\hspace{-.2ex}}}
\newcommand\cmt{\cM\hspace{-1pt}/\tau}
\newcommand\cnr{\cN\hspace{-1pt}/\rho}
\newcommand\cnt{\cN\hspace{-1pt}/\tau}
\newcommand\dda{\Delta^{\!D}{\hspace{-1.1ex}}_\aa}
\newcommand\ddb{\Delta^{\!D}_\bb}
\newcommand\dsd{\delta^\sigma\hspace{-.25ex} D}
\newcommand\dsm{\delta^\sigma\hspace{-.25ex}\cM}
\newcommand\dst{\delta^{\sigma/\tau}}
\newcommand\dzt{D/\hspace{.3ex}\ZZ\tau}
\newcommand\fqo{\cfq^\op}
\newcommand\fro{\cfr^{\hspace{.2ex}\op}}
\newcommand\mtd{\textstyle \max_{\hspace{.3ex}\tau\hspace{-.3ex}} D}
\newcommand\mti{\textstyle \max_{\hspace{.3ex}\tau\hspace{-.3ex}} I}
\newcommand\nda{\nabla_{\hspace{-.35ex}D}^\aa}
\newcommand\oas{\OO_{\hspace{-.05ex}\aa}^{\hspace{.09ex}\sigma}}
\newcommand\qpc{\cQ_+^\circ}
\newcommand\qrs{\cQ/\hspace{.2ex}\RR\sigma}
\newcommand\qrr{\cQ/\hspace{.2ex}\RR\rho}
\newcommand\qrt{\cQ/\hspace{.2ex}\RR\tau}
\newcommand\qzr{\cQ/\hspace{.2ex}\ZZ\rho}
\newcommand\qzt{\cQ/\hspace{.2ex}\ZZ\tau}
\newcommand\qrrp{\cQ_+/\hspace{.2ex}\RR\rho}
\newcommand\qrsp{\cQ_+/\hspace{.2ex}\RR\sigma}
\newcommand\qrtp{\cQ_+/\hspace{.2ex}\RR\tau}
\newcommand\qztp{\cQ_+/\hspace{.2ex}\ZZ\tau}
\newcommand\too{\longrightarrow}
\newcommand\from{\leftarrow}
\newcommand\into{\hookrightarrow}
\newcommand\kats{\kk_\sigma[\aa+\tau]}
\newcommand\kbrx{\kk_\xi[\bb+\rho]}
\newcommand\nabk{\nabla\hspace{-.2ex}\sigma_k}
\newcommand\nabr{\nabla\hspace{-.2ex}\rho}
\newcommand\nabs{\nabla\hspace{-.2ex}\sigma}
\newcommand\nabt{\nabla\hspace{-.2ex}\tau}
\newcommand\nabx{\nabla\hspace{-.2ex}\xi}
\newcommand\onto{\twoheadrightarrow}
\newcommand\spot{{\hbox{\raisebox{1pt}{\tiny$\scriptscriptstyle\bullet$}}}}
\newcommand\minus{\smallsetminus}
\newcommand\nabro{\Delta\rho}
\newcommand\simto{\mathrel{\!\ooalign{$\fillrightmap$\cr\raisebox{.75ex}{$\,\sim\ \hspace{.2ex}$}}}}
\newcommand\goesto{\rightsquigarrow}
\newcommand\dirlim{\varinjlim}
\newcommand\invlim{\varprojlim}
\newcommand\nothing{\varnothing}
\newcommand\fillrightmap{\mathord- \mkern-6mu
	\cleaders\hbox{$\mkern-2mu \mathord- \mkern-2mu$}\hfill
	\mkern-6mu \mathord\rightarrow}
\renewcommand\iff{\Leftrightarrow}
\renewcommand\epsilon{\varepsilon}
\renewcommand\implies{\Rightarrow}
\newcommand\dd[2][\!D]{\Delta^{#1}_{#2}}
\newcommand\dr[1][]{\delta_{\hspace{-.2ex}\rho\hspace{.15ex}}^{\hspace{.15ex}#1}}
\newcommand\ds[1][\ ]{\delta^{\sigma\hspace{-1.1ex}}{}_{#1\hspace{.2ex}}}
\newcommand\dt[1][\ ]{\operatorname{\delta_{\tau\hspace{-1ex}}{}^{#1}\hspace{-.2ex}}}
\newcommand\dx[1][]{\delta^{\hspace{.1ex}\xi}}
\newcommand\lr[1][]{\del_{\hspace{-.2ex}\rho\hspace{.2ex}}^{\hspace{.15ex}#1}}
\newcommand\lx[1][\xi]{\del^{\hspace{.15ex}#1}}
\newcommand\nd[1][\ ]{\nabla_{\!D}^{#1}}
\newcommand\ol[1]{{\overline{#1}}}
\newcommand\wt[1]{{\widetilde{#1}}}
\newcommand\ats[1][\sigma]{\operatorname{\mathit{A}}_\tau^{#1}\hspace{-.2ex}}
\newcommand\csp[1][\tau']{\operatorname{\mathcal S}_{#1\!}}
\newcommand\cst[1][]{\operatorname{\mathcal S}_{\tau\!}^{#1\hspace{-.3ex}}}
\newcommand\ctr{\mathcal{T}_{\!\rho\,}}
\newcommand\dsp{\delta^{\sigma'}\hspace{-.09ex}}
\newcommand\qnk{\cQ_{\nabk}}
\newcommand\qnp{\cQ_{\nabs'}}
\newcommand\qns[1][]{\cQ_{\nabs#1}}
\newcommand\qnt{\cQ_{\nabt}}
\newcommand\qnx[1][]{\cQ_{\nabx#1}}
\newcommand\qny[1][]{\cQ_{\naby[#1]}}
\newcommand\sts[1][\sigma]{\operatorname{\mathit{S}}_\tau^{#1}\hspace{-.2ex}}
\newcommand\naby[1][y]{\nabla\hspace{-.2ex}#1}
\newcommand\socc[1][]{\ol{\mathrm{soc}}_{#1\,}} 
\newcommand\socp[1][\tau']{\operatorname{soc}_{\hspace{.02ex}#1\hspace{-.15ex}}}
\newcommand\socr[1][]{\operatorname{soc}_{\hspace{.02ex}\rho}^{#1\!\!}\hspace{-.15ex}}
\newcommand\soct[1][]{\operatorname{soc}_{\hspace{.02ex}\tau}^{#1\!\!}\hspace{-.15ex}}
\newcommand\topc[1][]{\ol{\mathrm{top}}_{#1\,}} 
\newcommand\topr[1][]{\operatorname{top}_{\hspace{-.1ex}\rho}^{\hspace{.15ex}#1\!\!}}
\newcommand\socct[1][\tau]{\ol{\operatorname{soc}}_{\hspace{.02ex}#1}^{\,}\hspace{.1ex}}
\newcommand\topcr[1][\ ]{\ol{\mathrm{top}}{}_{\rho\!}^{\hspace{.15ex}#1}} %
\newcommand\cpsoc{\cP\hbox{\rm-}\socc}
\newcommand\cptop{\cP\hbox{\rm-}\topc}
\newcommand\fqsoc{\fqo\hspace{-.1ex}\hbox{\rm-}\hspace{.1ex}\socc}
\newcommand\nrsoc{\nabr\hspace{.25ex}\hbox{\rm-}\hspace{.1ex}\socc}
\newcommand\nrtop{\nabro\hspace{.25ex}\hbox{\rm-}\hspace{.1ex}\topc}
\newcommand\ntsoc{\nabt\hbox{\rm-}\hspace{.1ex}\socc}
\newcommand\posoc{\cP^\op\hbox{\rm-}\socc}
\newcommand\qtsoc{(\qrt)\hspace{-.05ex}\hbox{\rm-}\hspace{.1ex}\socc}
\newcommand\rnsoc{\cQ\hspace{-.1ex}\hbox{\rm-}\hspace{.1ex}\socc}
\renewcommand\top{\operatorname{top}}
\newcommand{\aoverb}[2]{{\genfrac{}{}{0pt}{1}{#1}{#2}}}
\def\twoline#1#2{\aoverb{\scriptstyle {#1}}{\scriptstyle {#2}}}
\DeclareMathOperator\ass{Ass} 
\DeclareMathOperator\att{Att} 
\DeclareMathOperator\Hom{Hom} 
\DeclareMathOperator\soc{soc} 
\DeclareMathOperator\hhom{
	\hspace{.6pt}{\underline{\hspace{-.6pt}{\rm Hom}\hspace{-.6pt}}\hspace{1pt}}}
\DeclareMathOperator\soco{soc_{\hspace{.1ex}\0}} 
\DeclareMathOperator\image{im} 
\newcommand\noheight[1]{\raisebox{0pt}[0pt][0pt]{#1}}
\begin{document}

\mbox{}
\vspace{-4.1ex}
\title[Essential graded algebra over polynomial rings with real exponents]%
      {Essential graded algebra over\\polynomial rings with real exponents}
\author{\vspace{-1.1ex}Ezra Miller}
\address{Mathematics Department\\Duke University\\Durham, NC 27708}
\urladdr{\url{http://math.duke.edu/people/ezra-miller}}

\makeatletter
  \@namedef{subjclassname@2020}{\textup{2020} Mathematics Subject Classification}
\makeatother
\subjclass[2020]{Primary: 05E40, 13C05, 13C70, 13A02, 06F05, 06F20,
20M25, 13F55, 13J99, 06A11, 06B15, 13P25, 14P10, 52B99,
13F20, 13D05, 13D02, 13E99, 20M14, 06B35, 22A25, 13F99, 13F70, 13P99;
Secondary: 14M25, 55N31, 62R40, 62R01, 68W30, 06A07}

\date{5 July 2025; \textit{Declarations of interest}: none;
funded in part by NSF DMS-1702395}

\begin{abstract}\vspace{-.5ex}
The geometric and algebraic theory of monomial ideals and multigraded
modules is initiated over real-exponent polynomial rings and, more
generally, monoid algebras for real polyhedral cones.  The main
results include the generalization of Nakayama's lemma; complete
theories of minimal and dense primary, secondary, and irreducible
decomposition, including associated and attached faces; socles and
tops; minimality and density for downset hulls, upset covers, and
fringe presentations; Matlis duality; and geometric analysis of
staircases.  Modules that are semialgebraic or piecewise-linear (PL)
have the relevant property preserved by functorial constructions as
well as by minimal primary and secondary decompositions.  And when the
modules in question are subquotients of the group itself, such as
monomial ideals and quotients modulo them, minimal primary and
secondary decompositions are canonical, as are irreducible
decompositions up to the new real-exponent notion of density.
\end{abstract}
\maketitle

\vspace{-4.22ex}
\setcounter{tocdepth}{2}
\tableofcontents

\vspace{-4ex}
\section{Introduction}\label{s:intro}

\subsection*{Overview}\label{b:overview}

Little is known about the algebra of rings of polynomials whose
exponents are allowed to be nonnegative real numbers instead of
integers.  The extreme failure of the noetherian condition---ideals
can be uncountably generated---and nontrivial topology on exponent
sets present daunting technical difficulties.  The small amount of
existing literature proceeds by restricting to monomial ideals that
are finitely generated, in an appropriate sense
\cite{ingebretson--sather-wagstaff2013,
andersen--sather-wagstaff2015}, or to multigraded modules that are
finitely presented \cite{lesnick-interleav2015}.  Other work can be
viewed as touching on the continuous nature of exponent sets via
nondiscrete-monoid algebras
\cite{anderson-coykendall-hill-zafrullah2007}.  But the general
behavior of modules over real-exponent polynomial rings remains wide
open, even in the special case of monomial ideals.  Beyond its
intrinsic value, the issue has risen to prominence because modules
over polynomial and semigroup rings with dense or continuous real
exponents emerge in quantum noncommutative toric geometry
\cite{katzarkov-lupercio-meersseman-verjovsky2020} and applied
topology \cite{multiparamPH} (see also \cite{fruitFlyModuli}), which
both focus on the multigraded~setting.

This paper breaks ground on the earnest study of modules over
real-exponent polynomial rings, including the usual setting where
exponents lie in a right-angled nonnegative orthant but also real
analogues of affine semigroup rings, with exponents in arbitrary
pointed polyhedral cones.  This first step of the investigation
concerns monomial ideals and multigraded modules, which are general
enough to exhibit the starkly different behavior resulting from
continuous exponents and deviation from noetherianity but have enough
combinatorial structure to allow complete treatment of basic theory,
such as primary decomposition, Nakayama's lemma, and minimal
presentations.

The algebraic development hinges on a number of foundations whose
elementary versions for finitely generated or noetherian modules fail
when straightforwardly generalized to real exponents but nonetheless
admit fully functioning analogues when appropriately enhanced.  Most
importantly, detecting
\begin{itemize}
\item%
an injective homomorphism of modules by checking at all associated
primes or
\item%
a surjective homomorphism by Nakayama's lemma (Krull--Azumaya theorem)
\end{itemize}
falter at the outset: modules need not contain copies of quotients by
prime ideals, so the notion of associated prime requires serious
thought; and dually, modules---even as simple as monomial ideals---do
not have minimal generators \cite{ingebretson--sather-wagstaff2013},
so considerations surrounding Nakayama's lemma require just as much
attention.

The solutions developed here to tackle these problems construct a
topological framework for concepts of minimality, in the form of dense
generator and cogenerator functors.  For example, with definitions
made properly, arbitrary real-exponent monomial ideals have canonical
monomial primary decompositions that are minimal in a strong sense,
generalizing the situation for polynomial and other affine semigroup
rings.  And these decompositions are similarly derived from canonical
irreducible decompositions.  But the notion of ``irredundant'' for
irreducible decomposition must be revised: components can be omitted
as long as those that remain are dense in the sense developed~here.

Density engages with continuity of exponent sets in a way that gives
hope of lifting lessons learned here for monomial ideals and
multigraded modules to arbitrary ideals and modules.  In the meantime,
the results here open the door to the vast literature surrounding
monomial ideals and multigraded modules, from Stanley--Reisner theory
to homological algebra, in the uncharted context of real exponents.
These developments are important by virtue of the central roles of
real multigraded algebra independently in quantum noncommutative toric
geometry \cite{katzarkov-lupercio-meersseman-verjovsky2020}, which
rests on dense finitely generated additive subgroups of~$\RR^n$
(already there are payoffs; see \cite{gldim}, which identifies smooth
cases), and in the rapidly developing field of topological data
analysis, where real parameters have been present since the
introduction of multiple parameters to persistent homology
\cite{multiparamPH} but mathemtical foundations have been sorely
lacking.%
\enlargethispage*{.5ex}%

\addtocontents{toc}{\protect\setcounter{tocdepth}{2}}\vspace{-.65ex}
\subsection*{Acknowledgements}

Justin Curry provided feedback after listening for hours about face
posets infinitesimally near real persistence parameters; he enhanced
the functorial viewpoint and provided references as well as insight on
topics from sheaf theory to real algebraic geometry.  Ashleigh Thomas
played a crucial role in the genesis of this algebraic theory of real
multipersistence.  Ville Puuska read an inchoate version of this
manuscript \cite[\S6--14]{qr-codes} extremely carefully; among his
valuable comments, he indicated a need for certain hypotheses in
Matlis~duality.  A~referee~made~\mbox{excellent}~\mbox{comments}.

\subsection*{Main results}\label{b:main}

The remainder of this Introduction provides an account of the main
results, along the way comparing and contrasting them with the usual
noetherian case.  Readers for whom these comparisons may not be useful
can skip to Section~\ref{b:pogroups} and then to
Section~\ref{s:staircases}, where the novel content begins, although
it might be worth skimming Section~\ref{s:intro} to get a feel for
phenomena unique to the real-exponent case.  Readers skipping to
Section~\ref{s:staircases} should refer back to
Section~\ref{s:pogroups} for background as needed; a~guide to which
parts of Section~\ref{s:pogroups} are used where is included in its
opening paragraphs.

\subsection{Real exponent issues}\label{b:trouble}

\begin{example}\label{e:maximal}
Let $\RR_+$ denote the nonnegative real numbers, so $\RR^n_+$ is the
nonnegative orthant in~$\RR^n$.  In the real-exponent polynomial ring
$\kk[\RR^n_+]$, a \emph{monomial ideal} is an ideal generated by
monomials: $I = \<\xx^\aa \mid \aa \in {A}\>$ for some $A \subseteq
\RR^n_+$, where $\xx^\aa = x_1^{a_1}\!\cdots x_n^{a_n}$.  For example,
the maximal graded ideal $\mm = \<x_1^{b_1},\dots,x_n^{b_n} \mid b_i >
0\ \text{for}\ i = 1,\dots,n\>$, which is the monomial ideal generated
by all positive powers of the individual variables, has the exponent
set%
\vspace{-2ex}%
$$
  \psfrag{x}{\footnotesize$x$}
  \psfrag{y}{\footnotesize$y$}
  \includegraphics[height=20ex]{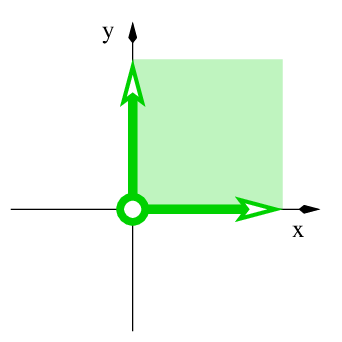}
\vspace{-1ex}%
$$
which is the nonnegative orthant with the origin missing.  This ideal
is not finitely generated, although it is countably generated by the
powers $x_1^{b_1},\dots,x_n^{b_n}$ of individual variables for any
sequence of strictly positive vectors $\bb = (b_1,\dots,b_n)$
converging to~$\0$.  This ideal also has no minimal generating set,
because deleting any finite subset of a sequence converging to~$\0$
still results in a sequence converging to~$\0$.  This phenomenon was
observed in \cite{ingebretson--sather-wagstaff2013}.
\end{example}

\begin{example}\label{e:trouble}
The monomial ideal $I = \<\xx^\aa \mid a_1+\dots+a_n = 1\>
\hspace{-1pt}\subseteq\hspace{-1pt} \kk[\RR^n_+]$ has exponent set
depicted on the left:
\vspace{-1ex}%
$$
  \psfrag{x}{\footnotesize$x$}
  \psfrag{y}{\footnotesize$y$}
  \includegraphics[height=20ex]{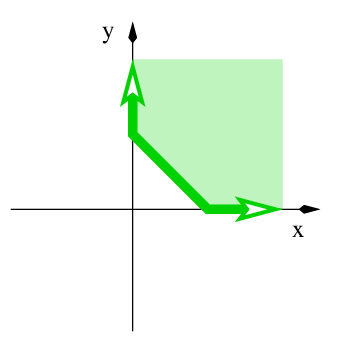}
  \qquad\qquad\qquad
  \includegraphics[height=20ex]{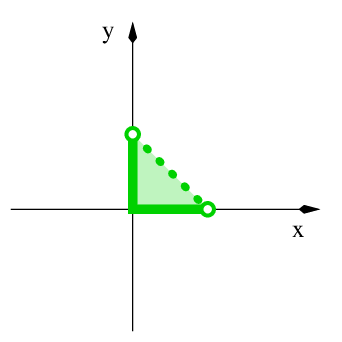}
\vspace{-1ex}%
$$
This ideal is uncountably generated, with the given generators forming
the unique minimal monomial generating set.  The quotient module $M =
\kk[\RR^n_+]/I$, depicted on the right, has open upper boundary.  This
module would appear to be primary to the maximal ideal~$\mm$, but as
$\mm^d = \mm$ for all positive integers~$d$, the module~$M$ is not
annihilated by any power of~$\mm$.  Worse, $M$ has no elements
annihilated by the maximal ideal: $M$ has no simple submodules, so
$\mm$ is not an associated prime in the usual sense of $M$ containing
a copy of~$\kk = \kk[\RR^n_+]/\mm$.  In that usual sense, the set of
such copies, namely the socle $\Hom(\kk,M)$, would be an essential
submodule of~$M$, which by definition intersects every nonzero
submodule of~$M$ nontrivially.  But in this picture the $\Hom$
vanishes.  Moreover, an essential submodule requires containing a
strip of locally positive width near the upper boundary, but the
intersection of all essential~submodules~is~$0$.
\end{example}

\pagebreak[3]

\begin{question}\label{q:nakayama}
In Example~\ref{e:maximal}, what should Nakayama's lemma say?
\end{question}

\begin{question}\label{q:prim-decomp}
In Example~\ref{e:trouble}, what should the statement,
``A~homomorphism $M \to N$ is injective if and only if $M_\pp \into
N_\pp$ is injective for all associated primes~$\pp$ of~$M$'' say?
\end{question}

\noindent
These questions are precisely dual to each other when considered from
a \mbox{judicious}~\mbox{angle}.

\subsection{Nakayama's lemma and primary decomposition}\label{b:summary}\mbox{}

\medskip
\noindent
The first answers to Questions~\ref{q:nakayama}
and~\ref{q:prim-decomp} are two major contributions of this~paper,
\begin{itemize}
\item%
Theorem~\ref{t:surjection-RR} on detecting surjectivity by generator
functors (tops),
\end{itemize}
and the Matlis dual (Section~\ref{b:matlis}) from which it is
deduced,
\begin{itemize}
\item%
Theorem~\ref{t:injection} on detecting injectivity by cogenerator
functors (socles).
\end{itemize}
However, the final answers reflect the observations in
Examples~\ref{e:maximal} and~\ref{e:trouble} that generating sets and
essential submodules in the setting of real exponents maintain those
properties when replaced with dense approximations.  The precise
formulation of this deeply non-discrete observation yields two
additional major contributions,
\begin{itemize}
\item%
the density enhancements in Theorem~\ref{t:dense-top} and
Theorem~\ref{t:dense-subfunctor}.
\end{itemize}

Flowing from these foundational results, particularly from the socle
injectivity criterion that is Theorem~\ref{t:injection}, are other
staples of commutative algebra:
\begin{itemize}
\item%
primary decompositions, minimal in a strong sense
(Theorems~\ref{t:interval-hull} and~\ref{t:minimal-primary});

\item%
canonical minimal primary decompositions of monomial ideals
(Theorem~\ref{t:hull-I});

\item%
irreducible decomposition for monomial ideals that are canonical and
irredundant up to taking dense subsets (Theorem~\ref{t:interval=union}
and Corollary~\ref{c:interval=union});

\item%
duals of all these for secondary decomposition and attached primes
(\mbox{Section}~\ref{s:tops}).
\end{itemize}

\begin{example}\label{e:min-primary}
The upper boundary of the interval in~$\RR^2$ at the left end of the
display
$$
\psfrag{x}{\tiny$x$}
\psfrag{y}{\tiny$y$}
  \kk\!
  \left[
  \begin{array}{@{\!}c@{\!\!}}\includegraphics[height=29mm]{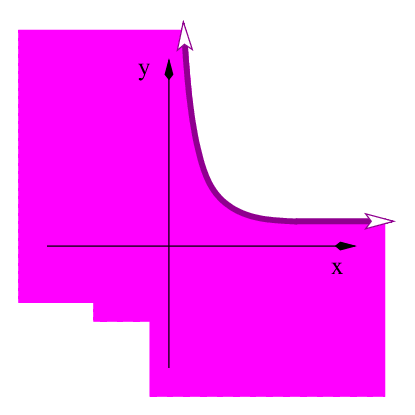}\end{array}
  \right]
\:\into\ 
  \kk\!
  \left[
  \begin{array}{@{\!}c@{\!\!}}\includegraphics[height=29mm]{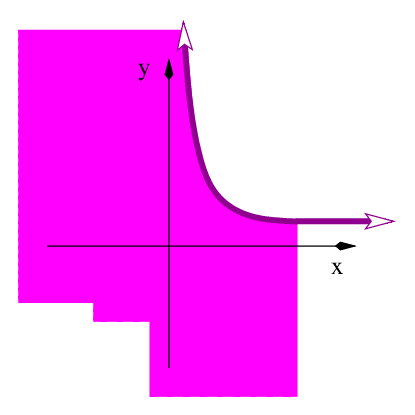}\end{array}
  \right]
\oplus\,
  \kk\!
  \left[
  \begin{array}{@{\!}c@{\!\!}}\includegraphics[height=29mm]{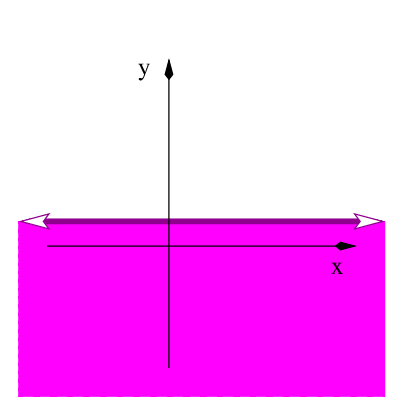}\end{array}
  \right]
$$
has the vertical axis as an asymptote, whereas the horizontal axis is
exactly parallel to the positive horizontal end of the upper boundary.
The corresponding interval module has the indicated canonical minimal
primary decomposition by Theorem~\ref{t:hull-I}.
\end{example}

The minimality in Theorem~\ref{t:minimal-primary} is a requirement
that the socle of the module should map isomorphically to the direct
sum of the socles of the quotients modulo its primary components
(Definition~\ref{d:minimal-primary}).  This isomorphism is stronger
than usually proposed in noetherian commutative algebra.  When applied
to an injective hull or irreducible decomposition in noetherian
situations, socle-minimality as in Definition~\ref{d:minimal-primary}
is equivalent to there being a minimal number of indecomposable
summands.  In contrast, minimal primary decompositions in noetherian
commutative algebra do not require socle-minimality in any sense; they
stipulate only minimal numbers of summands, with no conditions on
socles.  This has unfortunate consequences: even in noetherian
settings, different choices of primary components for an embedded
(i.e., nonminimal) prime can strictly contain one another, for
example.  Requiring socle-minimality as in
Definition~\ref{d:minimal-primary} recovers a modicum of uniqueness
over arbitrary noetherian rings, as socles are functorial even if
primary components themselves need not be.  This tack is more commonly
taken in combinatorial commutative algebra, typically involving
objects such as monomial or binomial ideals.  In particular, the
``witnessed'' forms of minimality for mesoprimary decomposition
\cite[Definition~13.1 and Theorem~13.2]{mesoprimary} and irreducible
decomposition of binomial ideals \cite{soccular} serve as models for
the type of minimality in primary decompositions considered here.

In ordinary noetherian commutative algebra socle-minimal primary
decompositions are anyway automatically produced by the usual
existence proof, which leverages the noetherian hypothesis to create
an irreducible decomposition.  Indeed, a noetherian primary
decomposition is socle-minimal if and only if each primary component
is obtained by gathering some of the components in a minimal
irreducible decomposition.  When real exponents enter, truly minimal
irreducible decompositions are impossible by
Theorem~\ref{t:interval=union} and Corollary~\ref{c:interval=union},
which force us to settle for irredundancy up to taking dense subsets.
Nonetheless, the primary component formed by gathering all irreducible
components with a given associated face is well defined, regardless of
which dense subset of irreducible components was present.  That is how
uniqueness of the minimal primary decomposition in
Theorem~\ref{t:hull-I} arises even from nonunique
irreducible~decomposition.

Secondary decomposition is lesser known, even to algebraists, than its
Matlis dual, primary decomposition, but secondary decomposition has
been in the literature for decades \cite{kirby1973,
macdonald-secondary-rep1973,northcott-gen-koszul1972} (see
\cite[Section~1]{sharp-secondary1976} for a brief summary of the main
concepts).  The unfamiliarity of secondary decomposition and its
related functors is a primary reason why the bulk of the technical
development over real exponents is carried out in terms of
cogenerators and socles instead of generators and tops.

Minimal primary and dense irreducible decomposition owe their
existence to definitions tailored to real exponents.  These include
especially
\begin{itemize}
\item%
a definition of associated prime by socle nonvanishing
(Definition~\ref{d:associated}) that yields

\item%
characterizations of coprimary modules as those with only one
associated prime (Proposition~\ref{p:elementary-coprimary} and
Theorem~\ref{t:coprimary}).
\end{itemize}
These, in turn, rely on the heart of the matter regarding density,
which draws, at the most fundamental level, on the topological algebra
surrounding real exponents:
\begin{itemize}
\item%
the characterization of essential submodules by socle inclusion
(Theorem~\ref{t:essential-submodule}).
\end{itemize}

\subsection{Socles, cogenerators, and staircases}\label{b:staircases}\mbox{}

\medskip
\noindent
\enlargethispage*{0ex}%
The results discussed thus far all rest on the main socle injectivity
criterion in Theorem~\ref{t:injection}.  As such, the entire edifice
is built on socles.%
	\footnote{This is appropriate to the English definition of
	\emph{socle}: the base of a column.}
Identifying the right definition of socle in Section~\ref{s:socle} to
account for the departure from discrete exponents is the most subtle
and difficult aspect of the theory.  But the answer turns out to be
pretty and, as luck would have it, finite.

The problem to be overcome is seen in Example~\ref{e:trouble}: the
socle of the quotient module~$M$ there should lie along its upper%
	\footnote{It is an accident of history that in illustrations,
	socles lie along upper boundaries instead of along lower
	boundaries at the bottoms of pictures, where tops quite
	unfortunately reside.\vspace{-30pt}\mbox{}}
boundary,
but the upper boundary is~missing: $M$~is zero in the corresponding
$\RR^n$-graded degrees.  The solution is to keep track of the
directions in which limits must be taken to reach the missing boundary
points.  The finiteness of the answer comes down to the fact that it
matters only which of the finitely many faces of the exponent cone the
limits are taken along.
The main product of Section~\ref{s:socle} is not a theorem, but
nonetheless a major contribution, namely
\begin{itemize}
\item%
the notion of socle in Definition~\ref{d:soct}.\ref{i:global-soc-tau}
as well as
\item%
cogenerator and nadir in Definition~\ref{d:soct}.\ref{i:global-cogen}.
\end{itemize}
Readers from persistent homology should view these as functorializing
the notion of ``closed or open right endpoint of an interval'' and
generalizations to more parameters.

\begin{example}\label{e:soc-min-primary}
Consider the module
$\kk[\raisebox{-.6ex}[0pt][0pt]{\includegraphics[height=2.8ex]{decomp}}]$
at the left-hand end of Example~\ref{e:min-primary}.  Any point along
the curved portion of its upper boundary---that is, along the upper
boundary of the middle illustration---represents a usual (``closed'')
socle element of
$\kk[\raisebox{-.6ex}[0pt][0pt]{\includegraphics[height=2.8ex]{decomp}}]$
(Definition~\ref{d:socc}), because such an element is annihilated by
moving up in any direction, including straight vertically or
horizontally, so it yields an injection $\kk \into
\kk[\raisebox{-.6ex}[0pt][0pt]{\includegraphics[height=2.8ex]{decomp}}]$
in the relevant multigraded degree.  In contrast, the horizontal ray
in the upper boundary represents a closed socle element of
$\kk[\raisebox{-.6ex}[0pt][0pt]{\includegraphics[height=2.8ex]{decomp}}]$
along the $x$-axis (Definition~\ref{d:socct}), because it (i)~extends
infinitely far to the right, so it yields an injection
$\kk[x\text{-axis}_+] \into
\kk[\raisebox{-.6ex}[0pt][0pt]{\includegraphics[height=2.8ex]{decomp}}]$
but (ii)~is annihilated upon moving upward in any direction, notably
the vertical direction.
\end{example}

\begin{example}\label{e:trouble-soc}
In the right-hand illustration from Example~\ref{e:trouble}, the
module is~$0$ at any point along the antidiagonal upper boundary line
segment.  However, any such point can be approached within the
interior of the triangle from below or from the left:%
\vspace{-1.5ex}%
$$
  \psfrag{x}{\footnotesize$x$}
  \psfrag{y}{\footnotesize$y$}
  \includegraphics[height=13.25ex]{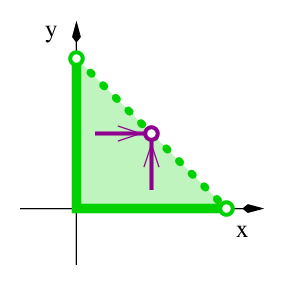}
\vspace{-2.5ex}%
$$
This open boundary point represents an element in the socle
(Definition~\ref{d:soc}).  Limits can be taken along any nonzero face
of the cone $\RR^2_+$ to reach this point, but the two minimal such
faces, namely the positive $x$-axis and the positive $y$-axis, are the
two nadirs of this cogenerator
(Definition~\ref{d:soct}.\ref{i:global-cogen}) along the face $\tau =
\{\0\}$ comprising~the~origin.
\end{example}

\pagebreak[3]

\begin{example}\label{e:nadir-x}
As the upper boundary in Example~\ref{e:min-primary} is closed, its
has no open cogenerators: its socle is closed.  So modify the interval
there by omitting the horizontal ray and leaving the rest of the
points as they are.
$$
\psfrag{x}{\tiny$x$}
\psfrag{y}{\tiny$y$}
  \kk\!
  \left[
  \begin{array}{@{\!}c@{\!\!}}\includegraphics[height=29mm]{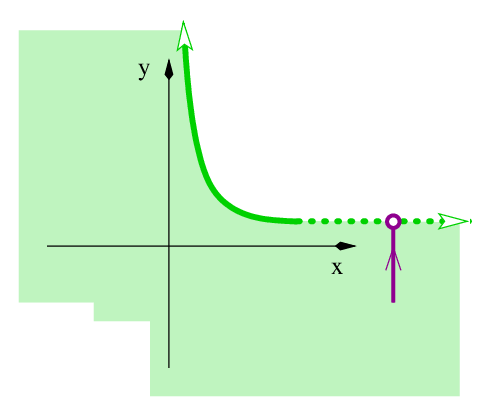}\end{array}
  \right]
\quad
\stackrel{\text{localization}}{\:\goesto\ }
\quad
  \kk\!
  \left[
  \begin{array}{@{\!}c@{\!\!}}\includegraphics[height=29mm]{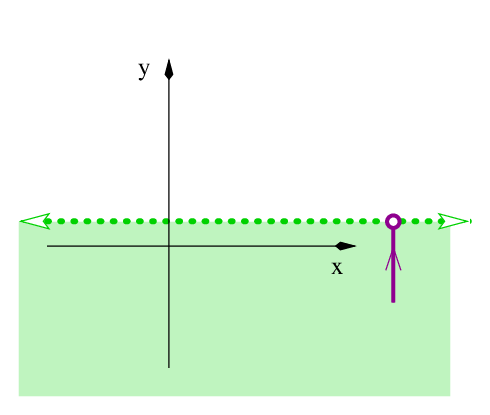}\end{array}
  \right]
\quad
\stackrel{\text{quotient-restriction}}{\:\goesto\ }
\quad
  \kk\!
  \left[
  \begin{array}{@{\!}c@{\!\!}}\includegraphics[height=29mm]{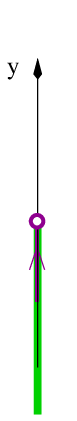}\end{array}
  \right]
$$
The missing horizontal boundary ray represents an element in the socle
along the~\mbox{$x$-axis}.  In more detail, the ray yields a summand
of the upper closure (Definition~\ref{d:upper-closure}) that contains
a copy of~$\kk[x\text{-axis}_+]$.  Localizing along the $x$-axis
yields the translation-invariant version in the middle picture, which
can be reached by vertical limits from the interior of the lower
half-plane.  The quotient-restriction
(Definition~\ref{d:quotient-restriction}), thought of either as
modding out by horizontal translation or by restricting to a vertical
slice, yields a situation analogous to Example~\ref{e:trouble-soc}.
The nadir (Definition~\ref{d:soct}.\ref{i:global-cogen}) along the
face $\tau = x\text{-axis}_+$ is the open interior face of~$\RR^2_+$.
Upon quotient-restriction along~$\tau$, this nadir maps to the
vertical axis depicted at right in the figure, but the nadir itself is
formally labeled by the preimage face, which by definition
contains~$\tau$.
\end{example}

The socle along a face~$\tau$ of the exponent cone is a module over
the quotient $\qrt$ of the ambient real vector space~$\cQ$ modulo its
subspace generated by~$\tau$, via quotient-restriction
(Definition~\ref{d:quotient-restriction}).  That aspect is not novel
to real exponents; it is the analogue of the socle
$\Hom_R(R/\pp,M)_\pp$ at a prime~$\pp$ of positive dimension in a
ring~$R$ being a module over the local ring~$R_\pp$.  What is novel,
however, is the set of nadirs, as in the examples.  A~nadir of
positive dimension indicates an ``open'' cogenerator of~$M$, which is
not an element of~$M$ but an element in its upper closure~$\delta\cM$
(Definition~\ref{d:upper-closure}).  The upper closure is a module
not merely over a real-exponent polynomial ring, but also over the
face poset of the exponent cone.  That is the crucial ingredient
entailed by real exponents: every point of the grading group~$\cQ$
gets replaced by an infinitesimal copy of the face poset of the
exponent cone.

This treatment of socles highlights the price to pay for real
exponents.  First, generalizing standard constructions from noetherian
commutative algebra demands care.  Notably, for instance, localization
fails to commute with $\Hom$, materially complicating proofs; see
Remark~\ref{r:soc-vs-supp}, which explains how this failure to commute
is not an artifact of the proofs but rather an intrinsic facet of the
real-exponent theory.  Second, socles of modules over real exponents
are not submodules, but instead are functorially manufactured from
auxiliary modules derived from~$\cM$, namely upper closure
modules~$\delta\cM$.  Honest submodules must be reconstructed from
cogenerators (this is done in Section~\ref{s:density}).  Finally, it
would have been nice to develop module theory over real exponents
entirely within the language of monoid algebras, but the infinitesimal
structure of real-exponent polynomial rings is unavoidably
poset-theoretic in nature, the poset being the face lattice of the
positive cone of exponents.  Hence this paper is phrased in terms of
modules over posets \cite{hom-alg-poset-mods, prim-decomp-pogroup},
whose theory is reviewed~in~Section~\ref{s:pogroups}.

The geometry and combinatorics that rules the construction of socles,
especially the entrance of face posets, is that of downsets and their
boundaries, otherwise known as \emph{staircases}
\cite[\S2]{graphMinRes}.  These are rich objects at the interface of
geometry, algebra, and combinatorics with connections to other areas
of mathematics and science; see \cite{okounkov16}, for example, where
they are Ising crystals at zero temperature, or
\cite{berkouk-petit2019}, where the modules called ``ephemeral'' in
topological data analysis are those with no upper boundary along the
interior of the exponent cone.
The functorial viewpoint on staircases in Section~\ref{s:staircases},
particularly
\begin{itemize}
\item%
the upper closure functors (Definition~\ref{d:upper-closure}),

\item%
what it means to divide an upper closure element
(Definition~\ref{d:divides}), and

\item%
computations of downset upper boundaries
(Lemma~\ref{l:downset-upper-closure} and
Proposition~\ref{p:downset-upper-closure})
\end{itemize}
translate topological limits in partially ordered real vector spaces
into algebraic
colimits on modules.  Taking ordinary socles of the upper closure of
a module~$M$, to get a module over the real exponent ring and the face
poset of the exponent cone, completes the detection of missing
boundary points that the ordinary socle of~$M$ itself misses.

Section~\ref{s:semialg} assures that functorial constructions
surrounding socles preserve additional semialgebraic or piecewise
linear structure when they are present in the input.  To wit,
\begin{itemize}
\item%
Theorem~\ref{t:endofunctor} says that left-exact functors with
predictable actions on quotients modulo monomial ideals preserve
additional geometric structure, and

\item%
Theorem~\ref{t:soct-tame} verifies the hypotheses to draw
this conclusion for the cogenerator functors, which take socles.
\end{itemize}
This conclusion is reasonable, because socles take each downset to a
well behaved subset of its closure
(Lemma~\ref{l:downset-upper-closure} and
Proposition~\ref{p:downset-upper-closure}).  Preserving additional
geometric structure is particularly crucial for algorithms: it is
hopeless for a computer to manipulate an arbitrary real-exponent
monomial ideal, for its staircase could be missing a Cantor set or
some more arbitrary, unfathomable antichain.  Multigraded modules that
arise in practice---such as from persistent homology---come from
finite, computable procedures.  Constraints from linear inequalities
or comparisons among (squared) distances between points or other
simple geometric objects yield PL or \mbox{semialgebraic} modules.

The entire theory for tame modules over real exponents has a simpler
analogue over affine semigroup rings.  It is barely new, being based
on more elementary foundations, but it is worth phrasing precisely and
collecting the results for the record (Section~\ref{s:discrete}).

The dual discrete theory surrounding generator functors is
interspersed with the corresponding real-exponent theory of tops in
Sections~\ref{s:gen-functors}--\ref{s:tops}, which is Matlis duality
applied to earlier sections; see especially
\begin{itemize}
\item%
\hspace{-1.1pt}Theorem~\ref{t:top=socvee}: socle and top duality over
real-exponent polynomials, and

\item%
\hspace{-1.1pt}Theorem~\ref{t:topc=socc^vee}: closed socle and top
duality over partially ordered abelian~\mbox{groups}.
\end{itemize}
To locate results for affine semigroup rings, look for results over
arbitrary partially ordered abelian groups, such as
Theorem~\ref{t:topc=socc^vee}, or look for the keywords ``discrete
polyhedral group''; these indicate the same context as ``affine
semigroup'' (Definition~\ref{d:polyhedral}) but refer to modules over
posets, which is the language adopted (of necessity) for real
exponents.

The paper closes with its final major results, in a discussion
(Section~\ref{s:min}) of
\begin{itemize}
\item%
minimal presentations, including downset, upset, and fringe
presentations; and

\item%
resolutions, including conjectures about minimal lengths of
resolutions as well as lengths of minimal resolutions along the lines
of the Hilbert syzygy theorem.
\end{itemize}

\section{Algebra over partially ordered abelian groups}\label{s:pogroups}

The algebra surrounding localization, support, primary decomposition,
and Matlis duality works over a broad class of partially ordered
abelian groups with finitely many faces, appropriately defined
\cite{prim-decomp-pogroup}, and sometimes in more generality.  The
main settings of this paper restrict primarily to the continuous case
of real vector spaces but also secondarily the discrete case of
finitely generated free abelian groups.  Nonetheless, since some of
the surrounding algebra works for all partially ordered abelian
groups, this section reviews the basic setup, always indicating the
allowed generality.  For reference, the definitions and claims in
Sections~\ref{b:pogroups}, \ref{b:tame}, and the start
of~\ref{b:localization} are taken from \cite[\S2, \S3.1, \S4.1,
\S4.5]{hom-alg-poset-mods} and \cite[\S5]{prim-decomp-pogroup}; those
in Section~\ref{b:support} are taken from \cite[\S4,
\S5]{prim-decomp-pogroup}.
%
The expositions in Section~\ref{b:matlis} and the remainder of
Section~\ref{b:localization} do not review specific sources, as
their levels of generality are likely new, though they build on
indicated well known material in straightforward~ways.

Notation and concepts surrounding partially ordered abelian groups
(Section~\ref{b:pogroups}) are used freely throughout, so all readers
should begin there before proceeding to Section~\ref{s:staircases}.
Localization (Section~\ref{b:localization}) is not used until socles
along faces of positive dimension are introduced in
Section~\ref{b:socc+}, where localization becomes fundamental for the
duration.  Global support (Section~\ref{b:support}) is essential for
interactions with primary decomposition and minimality starting in
Section~\ref{s:minimality}, specifically
Proposition~\ref{p:sigma-nbd-cogen}, and continuing through
Section~\ref{s:hulls}; see Lemma~\ref{l:nearby} for a particularly
tight encapsulation of the connection.  Matlis duality
(Section~\ref{b:matlis})
translates statements about cogenerators and socles to statements
about generators and tops starting in Section~\ref{s:gen-functors}.

\subsection{Real and discrete polyhedral groups}\label{b:pogroups}\mbox{}

\medskip
\noindent
The modules in this paper are families of vector spaces indexed by
partially ordered sets that are also vector spaces.  Basic notions and
notations surrounding those are introduced here, including faces,
upsets, and downsets.

\begin{defn}\label{d:poset-module}
Let $Q$ be a partially ordered set (\emph{poset}) and~$\preceq$ its
partial order.  A \emph{module over~$Q$} (or a \emph{$Q$-module})
is
\begin{itemize}
\item%
a $Q$-graded vector space $M = \bigoplus_{q\in Q} M_q$ with
\item%
a homomorphism $M_q \to M_{q'}$ whenever $q \preceq q'$ in~$Q$
such that
\item%
$M_q \to M_{q''}$ equals the composite $M_q \to M_{q'} \to
M_{q''}$ whenever $q \preceq q' \preceq q''$.
\end{itemize}
A \emph{homomorphism} $M \to \cN$ of $Q$-modules is a
degree-preserving linear map, or equivalently a collection of vector
space homomorphisms $M_q \to \cN_q$, that commute with the structure
homomorphisms $M_q \to M_{q'}$ and $\cN_q \to \cN_{q'}$.
\end{defn}

The posets of interest in the paper are the following, primarily the
real case in Definition~\ref{d:polyhedral}, although some results are
naturally stated in the generality of Definition~\ref{d:pogroup}.

\begin{defn}\label{d:pogroup}
An abelian group~$Q$ is \emph{partially ordered} if it is generated
by a submonoid~$Q_+$, called the \emph{positive cone}, that has
trivial unit group.  The partial order is: $q \preceq q' \iff q' - q
\in Q_+$.
\end{defn}

\begin{defn}\label{d:polyhedral}
A \emph{real polyhedral group}~$Q$ is a real vector space of finite
dimension partially ordered so that its positive cone~$\cQ_+$ is an
intersection of finitely many closed half-spaces.  The notation
$\RR^n$ and especially $\RR^n_+$ is reserved for the case where the
positive cone is the nonnegative orthant, so the partial order is
componentwise comparison.
\end{defn}

\begin{example}\label{e:polyhedral-discrete}
A \emph{discrete polyhedral group} is a finitely generated free
abelian group partially ordered so that its positive cone is a
finitely generated submonoid.  (Equivalently a discrete polyhedral
group is the Grothendieck group of an affine semigroup with trivial
unit group.)  The notation $\ZZ^n$ is reserved for the special case
where the positive cone is the nonnegative orthant~$\NN^n$, so the
partial order is componentwise~comparison.
\end{example}

\begin{defn}\label{d:face}
A \emph{face} of a partially ordered abelian group~$Q$ is a submonoid
$F \subseteq \cQ_+$ of the positive cone whose complement in~$\cQ_+$
is an ideal of~$\cQ_+$.  The face~$F$ can also be referred to as a
face of~$\cQ_+$ rather than a face of~$\cQ$.
\end{defn}

\begin{remark}\label{r:polyhedral}
Definition~\ref{d:polyhedral} and~\ref{e:polyhedral-discrete} are
instances of the class of \emph{polyhedral partially ordered groups},
introduced in \cite[Definition~2.8]{prim-decomp-pogroup}, which have
finitely many faces.
\end{remark}

\begin{defn}\label{d:indicator}
Fix a poset~$Q$.  The vector space $\kk[Q] = \bigoplus_{q\in Q} \kk$
that assigns $\kk$ to every point of~$Q$ is a $Q$-module with identity
maps on~$\kk$.  More generally,
\begin{enumerate}
\item\label{i:upset}%
an \emph{upset} (also called a \emph{dual order ideal}) $U \subseteq
Q$, meaning a subset closed \mbox{under} going upward in~$Q$ (so $U +
Q_+ = U$, when $Q$ is
an abelian group) determines an \emph{indicator submodule} or
\emph{upset module} $\kk[U] \subseteq \kk[Q]$;
\item\label{i:downset}%
dually, a \emph{downset} (also called an \emph{order ideal}) $D
\subseteq Q$, meaning a subset closed under going downward in~$Q$ (so
$D - Q_+ = D$, when $Q$ is
an abelian group) determines an \emph{indicator quotient module} or
\emph{downset module} $\kk[Q] \onto \kk[D]$; and
\item\label{i:interval}%
an \emph{interval} $I \subseteq Q$, meaning the intersection of an
upset and a downset, determines an \emph{interval module} $\kk[I]$
that is a subquotient of~$\kk[Q]$: if $I = U \cap D$ then $\kk[I]
\into \kk[D]$ and $\kk[U] \onto \kk[I]$, since $I$ is an upset in~$D$
and a downset in~$U$.
\end{enumerate}
\end{defn}

\begin{remark}\label{r:Q-graded}
For polyhedral groups the language of \mbox{$Q$-modules} is equivalent
to that of $Q$-graded $\kk[Q_+]$-modules.  Indeed, a module over any
partially ordered abelian group~$Q$ is the same thing as a
$Q$-graded\/ module over the monoid algebra~$\kk[Q_+]$ of the positive
cone \cite[Lemma~2.6]{prim-decomp-pogroup}.  When $Q = \ZZ^n$ and $Q_+
= \NN^n$, the relevant monoid algebra is the polynomial ring
$\kk[\NN^n] = \kk[\xx]$, where $\xx = x_1,\dots,x_n$ is a sequence of
$n$ variables.  This is the classical case; see \cite[\S8.1]{cca}, for
instance.  When $Q = \RR^n$ and $Q_+ = \RR^n_+$, the relevant monoid
algebra is the \emph{real-exponent polynomial ring} $\kk[\RR^n_+]$,
whose elements are polynomials in $\xx = x_1,\dots,x_n$ with exponents
that are nonnegative real numbers.
\end{remark}

\subsection{Localization and restriction}\label{b:localization}\mbox{}

\medskip
\noindent
One of the main reasons to work over partially ordered abelian groups
instead of over arbitrary posets is that groups admit translations.
Localization of a set or module along a face forces---in a universal
way---translation along the group generated by that face to be a
symmetry.  Subsequently taking a quotient modulo the translation
action has the same effect as slicing by a transverse subspace.  The
overall effect is to reduce the dimension of the given face to~$0$.
Algebraic manifestations of this process culminate with exactness in
Lemma~\ref{l:exact-qr}.

\begin{defn}\label{d:localization}
Fix a face~$\tau$ of a partially ordered abelian group~$Q$.  The
\emph{localization} of a $Q$-module~$M$ \emph{along~$\tau$} is the
tensor product
$$
  M_\tau = M \otimes_{\kk[Q_{+\!}]} \kk[Q_{+\!} + \ZZ \tau],
$$
viewing~$M$ as a $Q$-graded $\kk[Q_{+\!}]$-module.
\end{defn}

\begin{example}\label{e:localization}
Depicted on the left is an interval in~$\RR^2$
(Definition~\ref{d:indicator}.\ref{i:interval}).
\begin{center}
\psfrag{x}{\raisebox{-.5ex}{\tiny$\scriptscriptstyle x$}}
\psfrag{y}{\tiny$\scriptscriptstyle \!\!y$}
\raisebox{-.7\height}{\includegraphics[width=18ex]{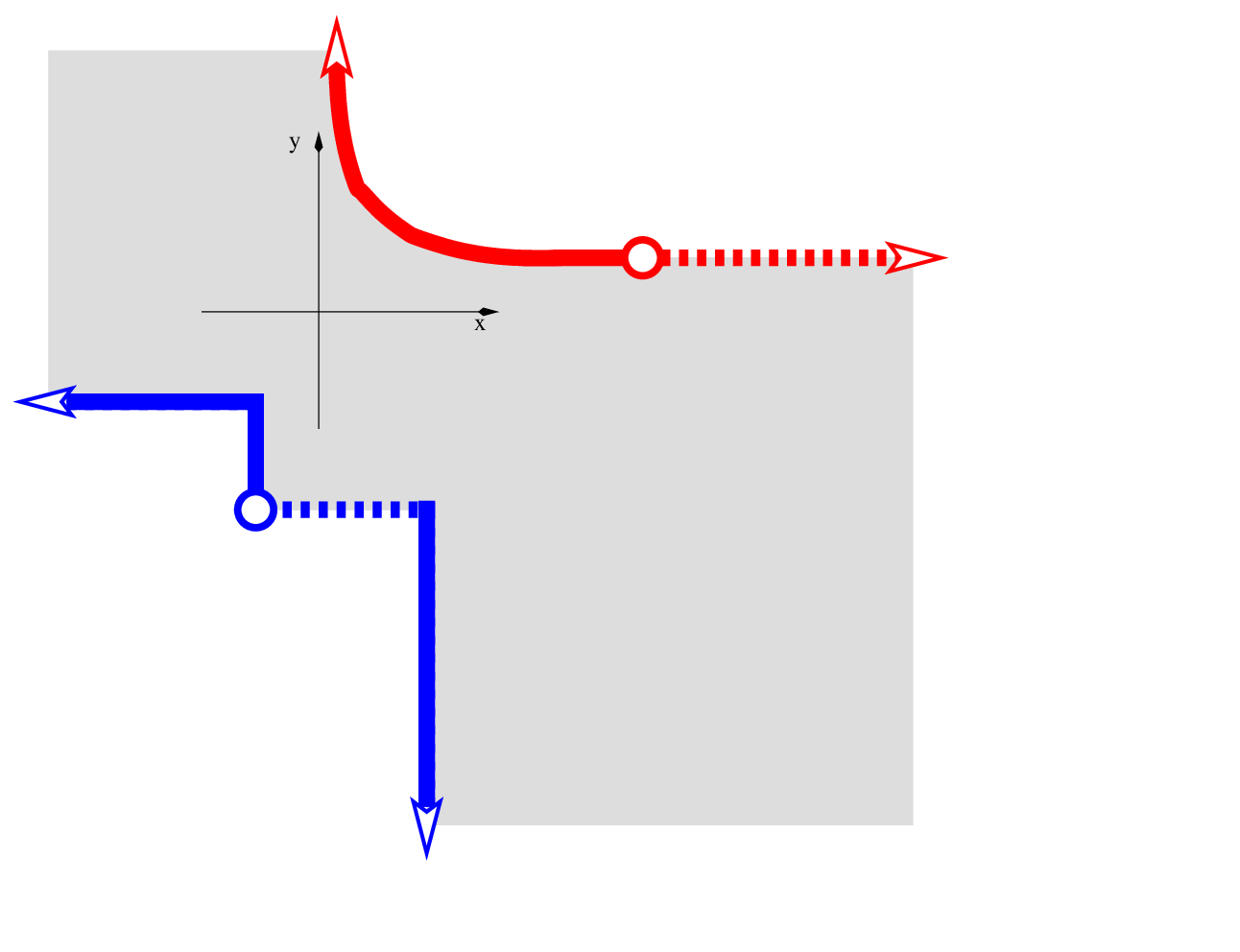}}
\quad\hspace{-4ex}$\overset{\text{localize along }x\text{-axis}}\goesto$\quad
\raisebox{-.7\height}{\includegraphics[width=18ex]{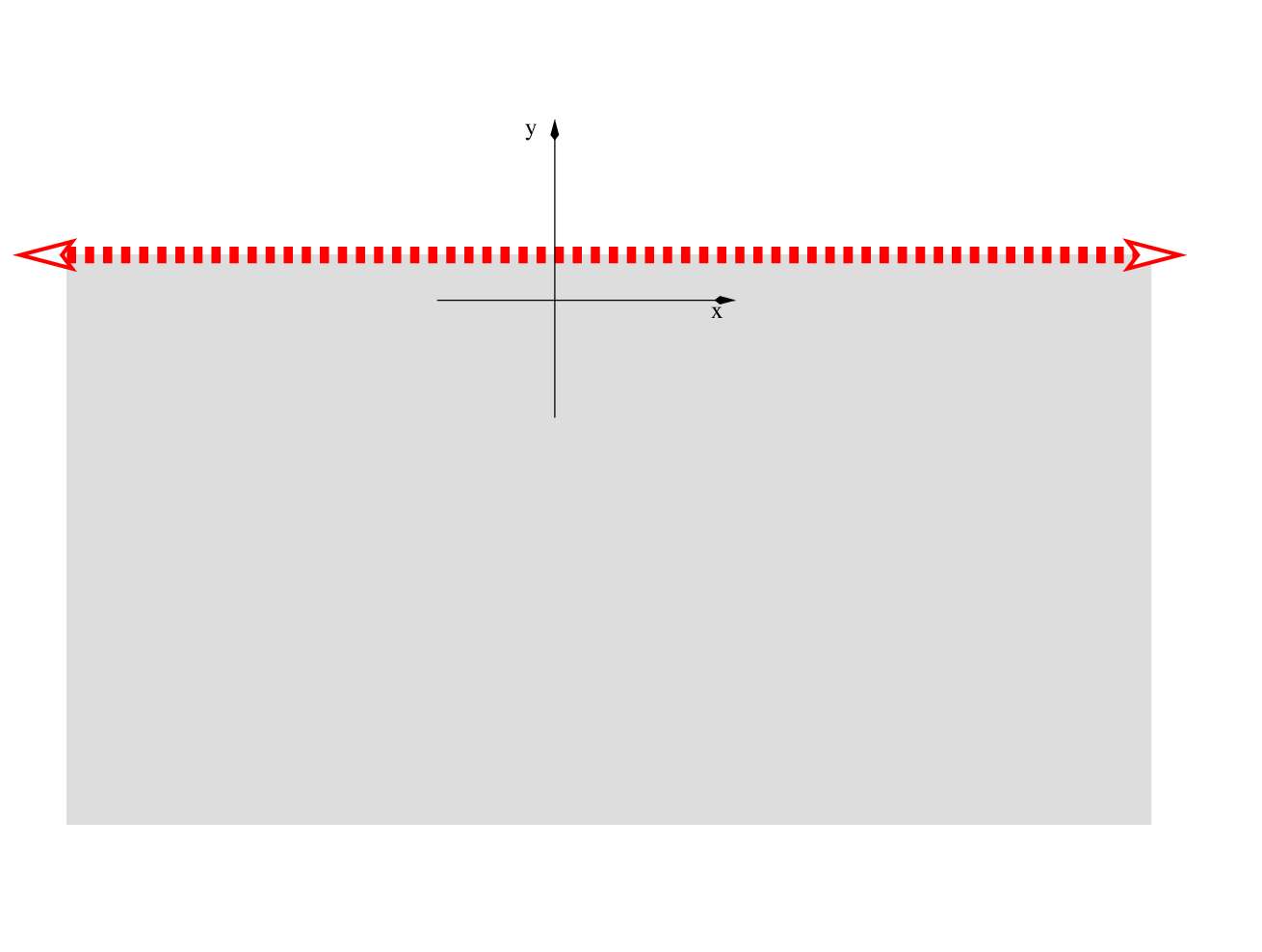}}\hspace{-5ex}
\end{center}
The corresponding interval module
(Definition~\ref{d:indicator}.\ref{i:interval}) generated along the
blue path has localization along the $x$-axis that is also an interval
\pagebreak[3]%
module, namely for the interval depicted on the right.  The process of
localization here can be thought of as moving the original image
infinitely far to the left---that is, negatively along the $x$-axis.
\end{example}

\begin{defn}\label{d:localize-downset}
The \emph{localization} $D_\tau$ of a downset~$D$ is the downset with
\mbox{$\kk[D]_\tau \hspace{-2.17134pt}=\hspace{-2pt} \kk[D_\tau]$}.
\end{defn}

\begin{example}\label{e:localize-downset}
The interval depicted on the left side of Example~\ref{e:localization}
is not a downset.  However, deleting the (blue) lower boundary results
in a downset whose upper boundary is the (red) upper curve.  The
localization of that downset along the $x$-axis is depicted on the
right side of Example~\ref{e:localization}.  Why does localization
along the $x$-axis not care about the (blue) lower generating curve?
Because any interval with this (red) upper boundary curve yields the
depicted localization along the $x$-axis as long as the interval
extends infinitely downward.
\end{example}

The remainder of Section~\ref{b:localization} is new in this
generality, although $\ZZ^n$-graded versions date back to
\cite[\S4]{alexdual1998} and \cite[\S3.6]{alexdual2000}

\begin{defn}\label{d:qrt}
For a partially ordered abelian group~$\cQ$ and a face~$\tau$
of~$\cQ_+$, write $\qzt$ for the quotient of~$\cQ$ modulo the subgroup
generated by~$\tau$.  If $\cQ$ is a real polyhedral group then write
$\qrt = \qzt$.
\end{defn}

\begin{remark}\label{r:qrt}
The image $\qztp$ of~$\cQ_+\!$~in~$\qzt$ is a submonoid that generates
$\qzt$, but $\qztp$ can have units, so $\qzt$ need not be partially
ordered in a natural way.  For example, when $\cQ = \ZZ^2$ and the
columns of $\big[\twoline {2\ 1\ 0}{0\ 1\ 1}\big]$ generate~$\cQ_+$,
taking $\tau = \big\<\big[\twoline 20\big]\big\>$ to be the face along
the $x$-axis yields a quotient monoid $\cQ_+/\ZZ\tau \cong \ZZ/2\ZZ
\oplus \NN$ with torsion.  However, if $\cQ$ is a real polyhedral
group then the group of units (lineality space) of the cone $\cQ_+ \!+
\RR\tau$ is just~$\RR\tau$ itself, because $\cQ_+$ is pointed, so
$\qrt$ is a real polyhedral group whose positive cone $(\qrt)_+ =
\qrtp$ is the image of~$\cQ_+$.  Similar reasoning applies to the
intersection of the real polyhedral situation with any subgroup
of~$\cQ$; this includes the case of normal affine semigroups, where
the subgroup~of~$\cQ$~is~discrete.
\end{remark}

\begin{lemma}\label{l:quotient-restriction}
The subgroup $\ZZ\tau \subseteq \cQ$ of a partially ordered abelian
group~$\cQ$ acts freely on~the localization $\cM_\tau$ of any
$\cQ$-module~$\cM$ along a face~$\tau$.  Consequently, if $I_\tau
\subseteq \kk[\cQ_+]$ is the augmentation ideal $\<m - 1 \mid m \in
\kk[\tau]$ is a monomial\/$\>$, then the $\qzt$-graded module $\cmt =
\cM/I_\tau\cM$ over the monoid algebra $\kk[\qztp]$ satisfies
$$
  \cM_\tau \cong \bigoplus_{q \,\mapsto\, \wt q} (\cM/\tau)_{\wt q}.
$$
\end{lemma}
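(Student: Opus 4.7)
The plan is to split the lemma into two halves: freeness of the $\ZZ\tau$-action, and the graded decomposition. I would begin with freeness. Because $\tau$ is a face, the localization introduces monomial inverses, so $\kk[\cQ_+ + \ZZ\tau]$ contains $x^{-t}$ for every $t \in \tau$; hence $x^t$ is a unit for every $t \in \ZZ\tau$, with inverse $x^{-t}$. Since $\cM_\tau = \cM \otimes_{\kk[\cQ_+]} \kk[\cQ_+ + \ZZ\tau]$ is a $\cQ$-graded module over this ring, multiplication by $x^t$ is a grade-shifting isomorphism $(\cM_\tau)_\aa \to (\cM_\tau)_{\aa+t}$ for every $\aa \in \cQ$ and $t \in \ZZ\tau$, which is precisely the statement that $\ZZ\tau$ acts freely on $\cM_\tau$.

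For the graded decomposition I would first establish the ring isomorphism $\kk[\cQ_+]/I_\tau \cong \kk[\qztp]$. The natural surjection $x^\aa \mapsto x^{\wt\aa}$ kills every generator $x^t - 1$ of $I_\tau$; conversely, if $\aa, \aa' \in \cQ_+$ have $\wt\aa = \wt{\aa'}$ then $\aa - \aa' \in \ZZ\tau$, and writing $\aa - \aa' = t_1 - t_2$ with $t_1, t_2 \in \tau$ (using that $\tau$ generates $\ZZ\tau$ as a group) gives $\aa + t_2 = \aa' + t_1 \in \cQ_+$, whence
$$
  x^\aa \equiv x^{\aa+t_2} = x^{\aa'+t_1} \equiv x^{\aa'} \pmod{I_\tau}.
$$
Thus $\cmt = \cM \otimes_{\kk[\cQ_+]} \kk[\qztp]$ acquires its $\qzt$-grading over the monoid algebra $\kk[\qztp]$, and by associativity of tensor product, together with the parallel identification $\kk[\cQ_+ + \ZZ\tau]/I_\tau\kk[\cQ_+ + \ZZ\tau] \cong \kk[\qztp]$, we get $\cmt \cong \cM_\tau/I_\tau\cM_\tau$.

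Finally, passing from $\cM_\tau$ to $\cM_\tau/I_\tau\cM_\tau$ imposes $x^t = 1$ for all $t \in \tau$, trivializing the free $\ZZ\tau$-action on $\cM_\tau$. Each $\ZZ\tau$-orbit of graded pieces $\{(\cM_\tau)_\aa : \aa \mapsto \wt\aa\}$, already mutually isomorphic by freeness, is therefore collapsed to the single $\qzt$-graded piece $(\cmt)_{\wt\aa}$. Re-summing the $\cQ$-graded decomposition of $\cM_\tau$ yields $\cM_\tau = \bigoplus_{\aa \in \cQ}(\cM_\tau)_\aa \cong \bigoplus_{\aa \mapsto \wt\aa}(\cmt)_{\wt\aa}$. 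The main obstacle is verifying that the projection $(\cM_\tau)_\aa \to (\cmt)_{\wt\aa}$ is an isomorphism rather than merely a surjection; this is precisely where freeness of the $\ZZ\tau$-action is indispensable, ensuring that the relations $(x^t-1)m$ equate only distinct graded pieces within a single orbit, never collapsing any fixed piece onto itself nontrivially.
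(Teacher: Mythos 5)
Your proof is correct and follows essentially the same route as the paper's (much terser) argument: monomials from $\tau$ become units acting as degree-translations on $\cM_\tau$, so the augmentation quotient factors through $\cM_\tau$ and collapses each $\ZZ\tau$-orbit of graded pieces to a single copy. You have simply filled in the details—the ring identification $\kk[\cQ_+]/I_\tau \cong \kk[\qztp]$ and the verification that $(\cM_\tau)_\aa \to (\cmt)_{\wt\aa}$ is an isomorphism—that the paper leaves implicit.
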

\begin{proof}
The monomials of $\kk[\cQ_+]$ corresponding to elements of~$\tau$ are
units on~$\cM_\tau$ acting as translations along~$\tau$.  Since the
augmentation ideal sets every monomial equal to~$1$, the quotient $\cM
\to \cmt$ factors through~$\cM_\tau$.
\end{proof}

\begin{defn}\label{d:quotient-restriction}
The $\kk[\qzt]$-module $\cmt$ in Lemma~\ref{l:quotient-restriction} is
the \emph{quotient-restriction} of~$\cM$ along~$\tau$.
\end{defn}

\begin{example}\label{e:quotient-restriction}
In the situation of Example~\ref{e:localization}, the
quotient-restriction along the $x$-axis is described in detail and
illustrated in Example~\ref{e:nadir-x}.
\end{example}

\begin{remark}\label{r:quotient-functor}
Over a real polyhedral group~$\cQ$, or over any subgroup of~$\cQ$, the
functor $\cM_\tau \mapsto\nolinebreak \cmt$ has a ``section'' $\cmt
\mapsto \cM_\tau|_\tau{}_{{}^{\!\perp}}$, where $\cN|_{\tau^\perp} =
\bigoplus_{\aa\in\tau^\perp} \cN_\aa$ is the \emph{restriction}
of~$\cN$ to any linear subspace~$\tau^\perp$ complementary
to~$\RR\tau$.  (When $\cQ_+ = \RR^n_+$, a complement is canonically
spanned by the face orthogonal to~$\tau$, or equivalently, the unique
maximal face of~$\RR^n_+$ intersecting~$\tau$ trivially.)  The
restriction is a module over the real polyhedral group $\tau^\perp$
with positive cone $(\cQ_+ \!+ \RR\tau) \cap \RR\tau^\perp$, which
projects isomorphically to the positive cone of~$\qrt$.  Thus the
quotient-restriction is both a quotient and a restriction
of~$\cM_\tau$.  While a section can exist over polyhedral partially
ordered groups that are not real, it need not.  For example, when the
quotient monoid $\cQ_+/\ZZ\tau$ has torsion, as in the case detailed
in Remark~\ref{r:qrt}, the torsion prevents the functor
$\kk[\cQ_+]_\tau \mapsto \kk[\cQ_+]/\tau$ from having a section to any
category of modules over a subgroup of~$\cQ$.
\end{remark}

\begin{lemma}\label{l:exact-qr}
The quotient-restriction functors $\cM \mapsto \cM/\tau$ are exact.
\end{lemma}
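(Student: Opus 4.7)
The plan is to reduce exactness of the quotient-restriction functor to exactness of localization, using the explicit decomposition provided by Lemma~\ref{l:quotient-restriction}.

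First I would recall that localization along a face is exact: the functor $\cM \mapsto \cM_\tau = \cM \otimes_{\kk[\cQ_+]} \kk[\cQ_+ + \ZZ\tau]$ is base change along a localization of commutative rings, hence flat (even a filtered colimit of free modules), so it preserves short exact sequences. Extracting any single $\cQ$-graded component is also exact, since a sequence of $\cQ$-graded modules is exact if and only if it is exact in every degree. Composing these two observations, the functor $\cM \mapsto (\cM_\tau)_\aa$ is exact for every $\aa \in \cQ$.

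Next I would invoke the natural isomorphism from Lemma~\ref{l:quotient-restriction}: for each $\tilde\aa \in \qzt$ and any lift $\aa \in \cQ$ mapping to~$\tilde\aa$, the $\tilde\aa$-graded component $(\cmt)_{\tilde\aa}$ is identified with $(\cM_\tau)_\aa$, the identification being given by the quotient map $\cM_\tau \onto \cmt$ restricted to the relevant graded piece. The key point to verify is that this identification is natural in~$\cM$, i.e., that any $\cQ$-graded homomorphism $\phi: \cM \to \cN$ makes the obvious square commute. This is immediate from the construction, because both the localization $\cM \mapsto \cM_\tau$ and the passage to the quotient modulo $I_\tau$ are functorial, and the isomorphism between them in a fixed degree is induced by the structural map $\cM_\tau \onto \cmt$ (which is itself natural).

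Combining the two steps: given a short exact sequence $0 \to \cM' \to \cM \to \cM'' \to 0$, exactness of localization plus exactness of taking graded pieces yields
\[
0 \to (\cM'_\tau)_\aa \to (\cM_\tau)_\aa \to (\cM''_\tau)_\aa \to 0
\]
for every $\aa \in \cQ$. Under the natural identifications $(\cN_\tau)_\aa \cong (\cN/\tau)_{\tilde\aa}$ supplied by Lemma~\ref{l:quotient-restriction}, this becomes exactness of $0 \to (\cM'/\tau)_{\tilde\aa} \to (\cmt)_{\tilde\aa} \to (\cM''/\tau)_{\tilde\aa} \to 0$ for every $\tilde\aa \in \qzt$, which is exactness of $0 \to \cM'/\tau \to \cmt \to \cM''/\tau \to 0$ as $\qzt$-graded $\kk[\qztp]$-modules.

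The only mildly subtle point is the naturality of the isomorphism in Lemma~\ref{l:quotient-restriction}; once that is unpacked, the proof is a straightforward two-line reduction to the exactness of localization and of taking graded components.
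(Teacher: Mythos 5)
Your proof is correct and follows essentially the same route as the paper's: exactness of localization via flatness of $\kk[\cQ_+ + \ZZ\tau]$, followed by a degree-by-degree check of the passage from $\cM_\tau$ to $\cmt$ using the identification in Lemma~\ref{l:quotient-restriction}. The paper states this more tersely, but the naturality point you flag is exactly what the degreewise check implicitly relies on, so your unpacking adds nothing that would change the argument.
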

\begin{proof}
Localizing along~$\tau$ is exact because the localization $\kk[\cQ_+\!
+\nolinebreak \ZZ\tau]$ of~$\kk[\cQ_+]$ is flat as a
$\kk[\cQ_+]$-module.  The exactness of the functor that takes each
$\kk[\qztp]$-module $\cM_\tau$ to~$\cmt$ can be checked on each
$\qzt$-degree individually.
\end{proof}

\subsection{Support and primary decomposition}\label{b:support}\mbox{}

\medskip
\noindent
Elements in modules over partially ordered groups can persist
indefinitely or eventually become~$0$ in each direction.  The relevant
directions can always be viewed as occurring along faces of the
positive cone.  Making these notions precise gives way to primary
decomposition, the goal here in Definition~\ref{d:primDecomp}.

\begin{defn}\label{d:support}
Fix a face~$\tau$ of a partially ordered abelian group~$Q$.  The
submodule of~$M$ \emph{globally supported on~$\tau$}~is
$$
  \gt M
  =
  \bigcap_{\tau' \not\subseteq \tau}\bigl(\ker(M \to M_{\tau'})\bigr)
  =
  \ker \bigl(M \to \prod_{\tau' \not\subseteq \tau} M_{\tau'}\bigr).
$$
Fix a $Q$-module $M$ for a polyhedral partially ordered group~$Q$.
The \emph{local $\tau$-support} of~$M$ is the module $\gt M_\tau$ of
elements globally supported on~$\tau$ in the localization~$M_\tau$, or
equivalently \cite[Proposition~4.6]{prim-decomp-pogroup} the
localization along~$\tau$ of the submodule of~$M$ globally supported
on~$\tau$.
\end{defn}

\begin{example}\label{e:global-support}
The global supports of the interval module for the interval
in~$\RR^2$~on~the
$$
\psfrag{x}{\tiny$x$}
\psfrag{y}{\tiny$y$}
\!\!\!\!
  \begin{array}{@{}c@{}l@{}}
	{\pur .}\qquad\quad\ \\[-1.7ex]
	{\pur .}\qquad\quad\ \\[-1.7ex]
	{\pur .}\qquad\quad\ \\[-1ex]
	\includegraphics[height=29mm]{decomp}
	&\raisebox{3ex}{\pur$\!\cdot\!\cdot\!\cdot$}
	\end{array}\!\!
\ \ \goesto\ \
  \begin{array}{@{}c@{}}
	{\pur .}\quad\,\\[-1.7ex]
	{\pur .}\quad\,\\[-1.7ex]
	\makebox[0pt][l]{\quad$\tau = \{\0\}$}
	{\pur .}\quad\,\\[-1ex]
	\includegraphics[height=29mm]{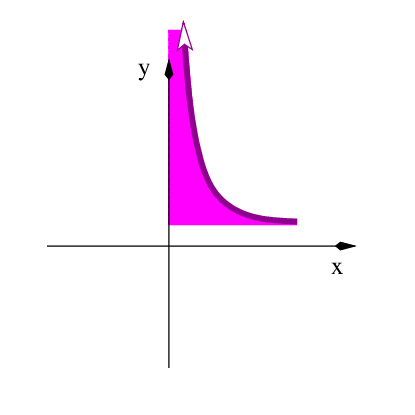}
	\end{array}
\ ,\,\ \
  \begin{array}{@{}c@{}l@{}}
	\phantom{.}\qquad\quad\ \ \\[-1.7ex]
	\phantom{.}\qquad\quad\ \ \\[-1.7ex]
	\makebox[0pt][l]{\qquad$\tau = x$-axis}
	\phantom{.}\qquad\quad\ \ \\[-1ex]
	\includegraphics[height=29mm]{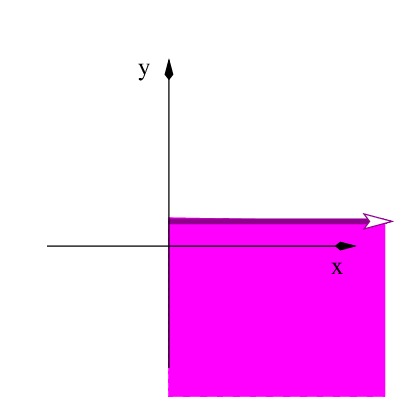}
	&\raisebox{3ex}{\pur$\!\cdot\!\cdot\!\cdot$}
	\end{array}
\ ,\,\ \
  \begin{array}{@{}c@{}}
	{\pur .}\qquad\quad\ \ \\[-1.7ex]
	{\pur .}\qquad\quad\ \ \\[-1.7ex]
	\makebox[0pt][l]{\qquad$\tau = y$-axis}
	{\pur .}\qquad\quad\ \ \\[-1ex]
	\includegraphics[height=29mm]{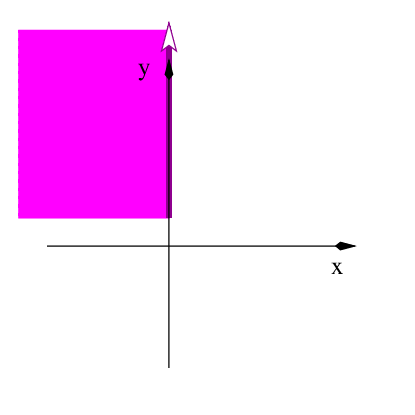}
	\end{array}
$$
left-hand side of this display (the same interval as in
Example~\ref{e:min-primary}) are the interval modules for the
intervals on the right-hand side, each labeled by the face~$\tau$ on
which the support is taken.  On the indicator module for a downset,
the kernel of localization $M \to M_{\tau'}$ records the part of the
downset that disappears under the ``infinite negative motion'' from
Example~\ref{e:localization}.  To reside in the intersection kernels
(or the kernel of the map to the product of localizations) a downset
element must disappear under all of the relevant localizations.  Thus,
for instance, the global support along $\tau = \{\0\}$ is nonzero only
above the horizontal boundary ray and to the right of the vertical
asymptote because any point beneath the horizontal ray or to the left
of the vertical asymptote survives one of the localizations, namely
along the $x$-axis and $y$-axis, respectively.  Note that extending
the first two of these global supports modules downward, so they
become quotients instead of submodules of the original interval
module, yields the
components in Example~\ref{e:min-primary}.
\end{example}

\begin{defn}\label{d:coprimary}%
A module $M$ over a polyhedral partially ordered group
is~\mbox{\emph{coprimary}} if for some face~$\tau$, the localization
map $M \into M_\tau$ is injective and $\gt M_\tau$ is an essential
submodule of~$M_\tau$, meaning every nonzero submodule
of~$M_\tau$\hspace{-1pt} intersects \mbox{$\gt M_\tau$\hspace{-1.4pt}
nontrivially}.
\end{defn}

\begin{example}\label{e:coprimary}
Of the four modules (whose degree sets are) depicted in
Example~\ref{e:global-support}, the three on the right-hand side are
coprimary.  Verifying Definition~\ref{d:coprimary} explicitly, the
localization map $M \to M_{\{\0\}}$ is the identity, and the depicted
$\tau = \{\0\}$ component equals its global support along $\tau =
\{\0\}$ by construction.  For the $\tau = x$-axis component,
localizing along the $x$-axis yields the module on the right-hand side
of Example~\ref{e:localization}; the map from the $x$-axis component
to this localization is injective, and the image is an essential
submodule since every element can be moved to the right (positively in
the $x$-direction) to land in the given $x$-axis component.  In
contrast, the module on the left side of
Example~\ref{e:global-support} is not coprimary because the
localization morphisms along the $x$-axis and $y$-axis are not
injective, and although localization along $\tau = \{\0\}$ is always
injective, the global support along $\tau = \{\0\}$ is not an
essential submodule because any nonzero element sufficiently far to
the right (that is, whose degree has sufficiently positive
$x$-coordinate) generates a submodule whose intersection with the
$\tau = \{\0\}$ component is trivial.
\end{example}

\begin{defn}\label{d:elementary-coprimary}
Fix a face~$\tau$ of the positive cone~$Q_+$ in a partially ordered
abelian group~$Q$.  A homogeneous element $z \in M_q$ in a
$Q$-module~$M$ is
\begin{enumerate}
\item%
\emph{$\tau$-persistent} if it has nonzero image in $M_{q'}$ for all
$q' \in q + \tau$;

\item%
\emph{$\tau$-transient} if, for each $f \in Q_+ \minus \tau$, the
image of~$z$ vanishes in $M_{q'}$ whenever $q' = q + \lambda f$ for
$\lambda \gg 0$;

\item%
\emph{$\tau$-coprimary} if it is $\tau$-persistent and
$\tau$-transient.
\end{enumerate}
\end{defn}

\begin{example}\label{e:elementary-coprimary}
In the left-hand module from Example~\ref{e:global-support}, any
element whose degree lies below or a little above the $x$-axis is
$x$-axis-persistent.  Elements to the right of the $y$-axis die when
pushed far enough in any direction with a positive $y$-component and
hence are $x$-axis-transient.  The intersection of these conditions
yields the $x$-axis component in Example~\ref{e:global-support}.
\end{example}

\begin{prop}[{\cite[Theorem~4.13]{prim-decomp-pogroup}}]\label{p:elementary-coprimary}
Fix a face~$\tau$ of the positive cone~$Q_+$ in a real or discrete
polyhedral partially ordered group~$Q$.  A~$Q$-module~$M$ is
$\tau$-coprimary if and only if every homogeneous element divides a
$\tau$-coprimary element, where $z \in M_q$ \emph{divides} $z' \in
M_{q'}$ if $q \preceq q'$ and $z$ has image~$z'$ under the structure
morphism $M_q \to M_{q'}$.
\end{prop}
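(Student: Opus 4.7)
Plan: Both implications hinge on moving between $M$ and the localization $M_\tau$ (which inverts monomials in $\kk[\tau]$), while exploiting the polyhedral structure of $Q_+$ to access the relative interior of each face.

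$(\Leftarrow)$ Assume every nonzero homogeneous $y \in M$ divides a $\tau$-coprimary element $y'$. For injectivity of $M \to M_\tau$: write $y' = x^g y$; since $y'$ is $\tau$-persistent, no monomial in $\kk[\tau]$ annihilates $y'$, so $\overline{y'} \neq 0$ in $M_\tau$, forcing $\overline{y} \neq 0$. For essentiality of $\Gamma_\tau M_\tau$ in $M_\tau$, take a nonzero submodule $N \subseteq M_\tau$ and a nonzero homogeneous $z \in N$; clearing denominators by a monomial $h \in \kk[\tau]$ yields $hz = \overline{y}$ for some nonzero $y \in M$. The hypothesis then produces a $\tau$-coprimary $y' = x^g y$, and $\overline{y'} = x^g h \cdot z \in N$ is nonzero by $\tau$-persistence. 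For each face $\tau' \not\subseteq \tau$, pick $f \in \tau' \setminus \tau$, so $\overline{\tau}$-transience yields $x^{\lambda f} y' = 0$ in $M$ for $\lambda$ large; since $x^f$ is a unit in $(M_\tau)_{\tau'}$, this forces $\overline{y'} = 0$ there. Hence $\overline{y'} \in \Gamma_\tau M_\tau \cap N$ is nonzero.

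$(\Rightarrow)$ Assume $M$ is $\tau$-coprimary and fix nonzero homogeneous $y \in M_q$. Injectivity gives $\overline{y} \neq 0$ in $M_\tau$, so the principal submodule $\kk[Q_+ + \ZZ\tau] \cdot \overline{y}$ is nonzero, and essentiality of $\Gamma_\tau M_\tau$ yields a nonzero homogeneous $z = x^{g-t} \overline{y} \in \Gamma_\tau M_\tau$ for some $g \in Q_+$ and $t \in \tau$. Set $y' = x^g y \in M$, so that $y$ divides $y'$ and $\overline{y'} = x^t z$. For $\tau$-persistence, every $\tau$-translate of $\overline{y'}$ is a $\kk[\tau]$-unit multiple of $z \neq 0$; by injectivity the same holds for $y'$ in $M$. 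For $\overline{\tau}$-transience, fix $f \in Q_+ \setminus \tau$ and let $\tau_f$ be the smallest face of $Q_+$ containing $f$; then $\tau_f \not\subseteq \tau$, so $z$ vanishes in $(M_\tau)_{\tau_f}$, whence some $x^m$ with $m \in \tau_f$ annihilates $z$ in $M_\tau$. Because $f$ lies in the relative interior of $\tau_f$, the polyhedral structure supplies $\lambda f - m \in Q_+$ for $\lambda$ large, so $x^{\lambda f} z = x^{\lambda f - m} \cdot x^m z = 0$ in $M_\tau$; hence $x^{\lambda f} \overline{y'} = x^t x^{\lambda f} z = 0$, and injectivity gives $x^{\lambda f} y' = 0$ in $M$.

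Main obstacle: The decisive step is bridging the face-by-face vanishing that defines $\Gamma_\tau M_\tau$ with the element-level vanishing required by $\overline{\tau}$-transience. The engine is the relative interior argument: in a polyhedral cone, any element in the relative interior of a face dominates every element of that face after sufficient scaling, uniformly in the real and discrete polyhedral settings but failing for arbitrary partially ordered groups. Lifting from $M_\tau$ back to $M$ via monomials in $\kk[\tau]$ while preserving the divisibility $y \mid y'$---which requires a translation in $Q_+$ rather than in $Q_+ + \ZZ\tau$---is routine but demands some bookkeeping.
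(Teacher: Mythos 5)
The paper offers no proof of this proposition: it is imported verbatim by citation from \cite[Theorem~4.13]{prim-decomp-pogroup}, so there is no in-text argument to compare against. On its own merits your proof is correct and self-contained. Both directions translate faithfully between the two halves of Definition~\ref{d:coprimary} (injectivity of $M \to M_\tau$ and essentiality of $\Gamma_{\!\tau} M_\tau$) and the element-level conditions of Definition~\ref{d:elementary-coprimary}: $\tau$-persistence of $y'$ is exactly non-membership of $y'$ in $\ker(M \to M_\tau)$, and the face-by-face vanishing defining $\Gamma_{\!\tau}$ is matched to $\ol\tau$-transience via the smallest face $\tau_f$ containing a given $f \in Q_+ \minus \tau$. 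The one step that deserves a few more words is the dominance claim $\lambda f - m \in Q_+$ for $\lambda \gg 0$ when $m \in \tau_f$ and $f$ lies in the relative interior of $\tau_f$. In the real case this is immediate (it is essentially Lemma~\ref{l:<<}), but in the discrete case one should note that $\tau_f$ is generated as a monoid by those generators of $Q_+$ it contains, that $f$ is therefore a positive \emph{rational} combination of these, and that clearing denominators gives $Nf = \sum n_i h_i$ with all $n_i \ge 1$; expanding $m = \sum b_i h_i$ then yields $kNf - m \in \tau_f$ for $k \ge \max_i b_i$, and adding further multiples of $f$ handles all large $\lambda$. You correctly flag this as the point where the real or discrete polyhedral hypothesis enters and where the statement would fail for arbitrary partially ordered groups, so I regard this as a presentational rather than a mathematical gap.
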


\begin{example}\label{e:hyperbola}
In the indicator module for the downset
\begin{center}
\psfrag{x}{\footnotesize$x$}
\psfrag{y}{\footnotesize$y$}
\includegraphics[height=30mm]{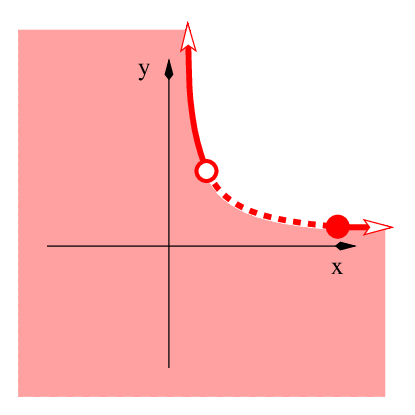}
\end{center}
beneath a hyperbola, not every homogeneous element is
$\{\0\}$-coprimary.  Indeed, only the elements whose degrees lie in
the strictly positive quadrant are $\{\0\}$-coprimary.  However, this
downset module is nonetheless $\{\0\}$-coprimary because every
homogeneous element divides an element in the strictly positive
quadrant.  This analysis does not depend on the upper boundary
hyperbola being present and closed, or not present, or anywhere in
between.
\end{example}

\begin{defn}\label{d:primDecomp}
Fix a $Q$-module $M$ over a polyhedral partially ordered group~$Q$.
A~\emph{primary decomposition} of~$M$ is an injection $M \into
\bigoplus_{i=1}^r M/M_i$ into a direct sum of coprimary quotients
$M/M_i$, called \emph{components} of the decomposition.
\end{defn}

\begin{example}\label{e:primDecomp}
The interval modules on the right-hand side of
Example~\ref{e:min-primary} are coprimary by
Proposition~\ref{p:elementary-coprimary}, as justified by
Example~\ref{e:coprimary}, because the operation of extending a region
from Example~\ref{e:global-support} downward causes every element to
divide an element in the original region.  The homomorphism in
Example~\ref{e:min-primary} is injective because the interval on the
left-hand side is contained in the union of the intervals on the
right-hand side, so every element on the left has nonzero image in at
least one component on the right.  Therefore the injection in
Example~\ref{e:min-primary} constitutes a primary decomposition.
\end{example}

\begin{example}\label{e:interval}
When $M = \kk[I]$ in Definition~\ref{d:primDecomp} is an interval
module, a primary decomposition $\kk[I] \into \bigoplus_{i=1}^r
\kk[I_i]$ may also be expressed as a \emph{primary decomposition}
of~$I$ itself: $I = \bigcup I_i$, where each $I_i$ is a
\emph{coprimary interval} in~$I$.  That is, $I_i$ is an interval
in~$Q$ but a downset in the subposet~$I$.  See
Example~\ref{e:min-primary}, where the interval on the left side is
the union of the two intervals on the right side.
\end{example}

\subsection{Tame, semialgebraic, and PL modules and morphisms}\label{b:tame}\mbox{}

\medskip
\noindent
The lack of a useful noetherian hypothesis for modules over real
polyhedral groups demands alternative finiteness conditions, which are
covered here.  The general idea is that, given a module~$\cM$, the
poset should decompose into finitely many regions on which $\cM$ is
constant.  Subtleties and variations on this notion are covered here,
ending with the concept of downset hull and downset-finiteness because
the theory in later sections is initially developed in terms of
cogenerators and socles rather than the more usual generators and
tops.

\begin{defn}\label{d:constant-subdivision}
A \emph{constant subdivision} of a poset~$Q$ \emph{subordinate} to a
$Q$-module~$M$ is a partition of~$Q$ into \emph{constant regions} such
that for each constant region~$I$ there is a single vector space~$M_I$
with an isomorphism $M_I \to M_\ii$ for all $\ii \in I$ that \emph{has
no monodromy}: if $J$ is some (perhaps different) constant region,
then all comparable pairs $\ii \preceq \jj$ with $\ii \in I$ and $\jj
\in J$ induce the same composite homomorphism $M_I \to M_\ii \to M_\jj
\to M_J$.
\end{defn}


\begin{defn}\label{d:tame}
Fix a poset~$Q$ and a $Q$-module~$M$.
\mbox{}
\begin{enumerate}
\item\label{i:finite-constant-subdiv}%
A constant subdivision of~$Q$ is \emph{finite} if it has finitely
many constant regions.

\item\label{i:Q-finite}%
The $Q$-module~$M$ is \emph{$Q$-finite} if its components $M_q$
have finite dimension over~$\kk$.

\item\label{i:tame}%
The $Q$-module~$M$ is \emph{tame} if it is $Q$-finite and $Q$
admits a finite constant subdivision subordinate to~$M$.
\end{enumerate}
\end{defn}

\begin{defn}\label{d:auxiliary-hypotheses}
Fix a subposet~$\cQ$ of a partially ordered real vector space (e.g.,~a
real polyhedral group).  A partition of~$\cQ$ into subsets~is
\begin{enumerate}
\item\label{i:semialgebraic}%
\emph{semialgebraic} if the subsets are real semialgebraic varieties;

\item\label{i:PL}%
\emph{piecewise linear (PL)} if the subsets are finite unions of
convex polyhedra, where a \emph{convex polyhedron} is an intersection
of finitely many closed or open half-spaces.
\end{enumerate}
A module over~$\cQ$ is \emph{semialgebraic} or \emph{PL} if $\cQ_+$ is
and the module is tame via a subordinate finite constant subdivision
of the corresponding type.
\end{defn}

\begin{defn}\label{d:encoding}
Fix a poset~$\cQ$.  An \emph{encoding} of a $\cQ$-module $\cM$ by a
poset~$\cP$ is a poset morphism $\pi: \cQ \to \cP$ together with a
$\cP$-module $\cH$ such that $\cM \cong \pi^*\cH =
\bigoplus_{q\in\cQ}H_{\pi(q)}$, the \emph{pullback of~$\cH$
along~$\pi$}, which is naturally a $\cQ$-module.  The encoding is
\emph{finite} if
\begin{enumerate}
\item%
the poset $\cP$ is finite, and
\item%
the vector space $H_p$ has finite dimension for all $p \in \cP$.
\end{enumerate}
\end{defn}

\begin{defn}\label{d:subordinate-encoding}
\vbox{
Fix a poset~$\cQ$ and a $\cQ$-module~$\cM$.
\begin{enumerate}
\item\label{i:morphism}%
A poset morphism $\pi: \cQ \to \cP$ or an encoding of a $\cQ$-module
(perhaps different from~$\cM$) is \emph{subordinate} to~$\cM$ if there
is a $\cP$-module~$\cH$ such that~$\cM \cong\nolinebreak \pi^*\cH$.

\item\label{i:auxiliary-encoding}%
When $\cQ$ is a subposet of a partially ordered real vector space, an
encoding of~$\cM$ by~$\pi$ is \emph{semialgebraic} or \emph{PL} if the
constant subdivision of~$\cQ$ formed by the fibers of~$\pi$
\cite[Theorem~4.22]{hom-alg-poset-mods} is of the corresponding type
(Definition~\ref{d:auxiliary-hypotheses}).
\end{enumerate}
}%
\end{defn}

\begin{defn}\label{d:tame-morphism}
A homomorphism $\phi: M \to N$ of modules over a poset~$\cQ$ is
\emph{tame} if $\cQ$ admits a finite constant subdivision subordinate
to both $M$ and~$N$ such that for each constant region~$I$ the
composite
$M_I \to M_\ii \to N_\ii \to N_I$ does not depend~on~$\ii \in I$.
\begin{enumerate}
\item\label{i:subordinate}%
This constant subdivision is \emph{subordinate} to the
morphism~$\phi$.

\item\label{i:dominates}%
The morphism $\phi$ \emph{dominates} a constant subdivision or poset
encoding if the subdivision or encoding is subordinate to~$\phi$.

\item\label{i:semialg-morphism}%
The morphism~$\phi$ is \emph{semialgebraic} or \emph{PL} if it
dominates a constant subdivision of the corresponding type.
\end{enumerate}
\end{defn}

\begin{defn}\label{d:abelian-category}
The \emph{category of tame modules} over a poset~$\cQ$ is the
subcategory of $\cQ$-modules whose objects are the tame modules and
whose morphisms are the tame homomorphisms.  The subcategories of
\emph{semialgebraic modules} and \emph{PL~modules} have the
correspondingly restricted objects and tame morphisms.
\end{defn}

\begin{prop}[{\cite[Proposition~4.28]{hom-alg-poset-mods}}]\label{p:abelian-category}
Over any poset~$\cQ$, the kernel and cokernel of any tame homomorphism
of $\cQ$-modules are tame, finite direct sums of tame modules are
tame, and the set
of tame morphisms from $M$ to~$N$ is an abelian subgroup of
$\Hom(M,N)$.  If~$\cQ$ is a subposet of a partially ordered real
vector space, then the same is true for semialgebraic and PL modules.
\end{prop}

\begin{remark}\label{r:less-than-tameness}
Since socles involve essential submodules, which are divorced from
generators, the theory can often get by with less than tameness, which
requires finite upset covers (Definitions~\ref{d:minimal-cover}) as
well as finite downset hulls as in the next definition.  In the
pictures, only phenomena near upper boundaries matter for socles, not
anything near lower boundaries; see
\cite[Remark~5.6]{prim-decomp-pogroup} for discussion.
\end{remark}

\begin{defn}\label{d:downset-hull}
A \emph{downset hull} of a module~$M$ over an arbitrary poset is an
injection $M \into \bigoplus_{j \in J} E_j$ with each $E_j$ being a
downset module.  The hull is \emph{finite} if $J$ is~finite.  The
module~$M$ is \emph{downset-finite} if it admits a finite downset
hull.
\end{defn}

\subsection{Matlis duality}\label{b:matlis}\mbox{}

\medskip
\noindent
Matlis duality takes vector space duals degree by degree while turning
the poset upside down.  It interchanges generators and cogenerators as
well as flat and injective modules.  Details, notations, and basic
isomorphisms are reviewed here for use in
Sections~\ref{s:gen-functors}--\ref{s:min}.

\begin{defn}\label{d:self-dual}
A poset~$\cQ$ is \emph{self-dual} if it is given a poset isomorphism
$\cQ \simto \cQ^\op$ with its opposite poset.  On elements denote this
isomorphism by $q \mapsto -q$.
\end{defn}

\begin{example}\label{e:self-dual}
Inversion makes partially ordered abelian groups self-dual as posets.
\end{example}

\begin{defn}\label{d:matlis}
Fix a poset~$\cQ$ with opposite poset~$\cQ^\op$.  The \emph{Matlis
dual} of a $\cQ$-module~$\cM$ is the $\cQ^\op$-module~$\cM^\vee$
defined by $(\cM^\vee)_q = \Hom_\kk(\cM_q,\kk)$.  When $\cQ \simto
\cQ^\op$ is a self-duality, then
$$
  (\cM^\vee)_q = \Hom_\kk(\cM_{-q},\kk),
$$
so the homomorphism $(\cM^\vee)_q \to (M^\vee)_{q'}$ is transpose to
$\cM_{-q'} \to \cM_{-q}$.
\end{defn}

\begin{example}\label{e:matlis}
\hspace{-1.5177pt}The Matlis dual over a partially ordered abelian
group~$\cQ$ is \mbox{equivalently}
$$
  \cM^\vee
  =
  \hhom_\cQ(\cM,\kk[\cQ_+]^\vee)
$$
where $\hhom_\cQ(\cM,\cN) = \bigoplus_{q\in\cQ}
\Hom\bigl(\cM,\cN(q)\bigr)$ is the direct sum of all degree-preserving
homomorphisms from~$\cM$ to $\cQ$-graded translates of~$\cN$, i.e.,
$N(q)_a = N_{a+q}$.  This is proved using the adjuntion between $\Hom$
and~$\otimes$; see \cite[Lemma~11.16]{cca}, noting that the nature of
the grading group is immaterial.  And as in \cite[Lemma~11.16]{cca},
$$
  \hhom_\cQ(\cM,\cN^\vee) = (\cM \otimes_\cQ \cN)^\vee.
$$
\end{example}

\begin{example}\label{e:dual-of-localization}
It is instructive to compute the Matlis dual of localization along a
face~$\tau$ over a partially ordered abelian group: the Matlis dual
of~$\cM_\tau$ is
$$
  (\cM_\tau)^\vee
  =
  \hhom\bigl(\kk[\cQ_+]_\tau\otimes\cM,\kk\bigr)
  =
  \hhom\bigl(\kk[\cQ_+]_\tau,\hhom(\cM,\kk)\bigr)
  =
  \hhom\bigl(\kk[\cQ_+]_\tau,\cM^\vee\bigr),
$$
the module of homomorphisms from a localization of~$\kk[\cQ_+]$
into~$\cM^\vee$.  The unfamiliarity of this functor is one of the
reasons for developing most of the theory in this paper in terms of
socles and cogenerators instead of tops and generators.
\end{example}

\begin{lemma}\label{l:vee-vee}
$(\cM^\vee)^\vee\!$ is canonically isomorphic to~$\cM$ if $\cM$ is
$\cQ$-finite (Def.\,\ref{d:tame}.\ref{i:Q-finite}).\qed
\end{lemma}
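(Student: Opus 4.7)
The plan is to construct a canonical degree-preserving isomorphism by pointwise double-dualization of vector spaces, then check it assembles into a $\cQ$-module morphism. On each graded piece, the canonical evaluation map $\cM_a \to (\cM_a)^{**}$ from a vector space to its double dual is an isomorphism when $\cM_a$ is finite-dimensional. Since $\cQ$-finiteness means every $\cM_a$ has finite $\kk$-dimension, this produces a coherent family of vector space isomorphisms.

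First I would unwind the indexing. Using the self-duality $q \mapsto -q$ on~$\cQ$ from Example~\ref{e:self-dual}, Definition~\ref{d:matlis} gives $(\cM^\vee)_q = \Hom_\kk(\cM_{-q},\kk)$, hence
\[
  \bigl((\cM^\vee)^\vee\bigr)_q = \Hom_\kk\bigl((\cM^\vee)_{-q},\kk\bigr)
  = \Hom_\kk\bigl(\Hom_\kk(\cM_q,\kk),\kk\bigr).
\]
The grading indices match up because applying inversion twice is the identity. Then I would define the canonical map $\eta_\cM \colon \cM \to (\cM^\vee)^\vee$ in degree~$q$ to be the usual evaluation $\eta_q \colon \cM_q \to \cM_q^{**}$ sending $m \mapsto (\phi \mapsto \phi(m))$, which is an isomorphism by $\cQ$-finiteness.

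Next I would verify $\eta_\cM$ is a homomorphism of $\cQ$-modules, i.e.\ that the $\eta_q$ commute with the structure maps. For $q \preceq q'$, the structure map on~$\cM^\vee$ in degrees $-q' \preceq -q$ (recall inversion reverses order) is the transpose of the $\cM$-structure map $\cM_{-(-q')} \to \cM_{-(-q)}$, that is, $\cM_q \to \cM_{q'}$. Dualizing once more, the structure map on $(\cM^\vee)^\vee$ at $q \preceq q'$ is the transpose of that transpose, i.e.\ the double transpose of the original $\cM_q \to \cM_{q'}$. The naturality of evaluation for finite-dimensional vector spaces ensures that this double transpose agrees with the original map under the canonical isomorphisms $\eta_q$ and~$\eta_{q'}$; this is a standard diagram chase.

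Finally I would note that the construction of~$\eta_\cM$ is natural in~$\cM$ (the evaluation maps form a natural transformation $\mathrm{id} \Rightarrow (-)^{\vee\vee}$ on finite-dimensional vector spaces in each degree), so the isomorphism is canonical in the required sense. The only genuine point requiring care is the bookkeeping with the self-duality, to confirm that the two applications of $(-)^\vee$ restore the original grading and the original structure maps; everything else reduces to pointwise linear algebra in finite dimension.
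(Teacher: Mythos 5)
Your proposal is correct and is exactly the standard argument the paper has in mind when it omits the proof with a terminal \qed: degree by degree, the evaluation map $\cM_q \to \cM_q^{**}$ is an isomorphism because $\dim_\kk \cM_q < \infty$, and naturality of evaluation makes these commute with the (double-transposed) structure maps. Your bookkeeping with the self-duality $q \mapsto -q$ and the order reversal is the only point of care, and you have handled it correctly.
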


\begin{remark}\label{r:flat}
Every $\cQ$-finite injective module over a discrete polyhedral
group~$\cQ$ is, by \cite[Theorem~11.30]{cca}, isomorphic to a direct
sum of downset modules~$\kk[D]$ for downsets of the form $D = \aa +
\tau - \cQ_+$ for some vector $\aa \in Q$, said to be
\emph{cogenerated} by~$\aa$ \emph{along} the face~$\tau$.  (The
noetherian hypothesis in \cite[Theorem~11.30]{cca} is satisfied by the
finitely generated assumption in Example~\ref{e:polyhedral-discrete}.)
Taking Matlis duals, every $\cQ$-finite flat module over a discrete
polyhedral group~$\cQ$ is isomorphic to a direct sum of upset
modules~$\kk[U]$ for upsets of the form $U = \bb + \ZZ\tau + \cQ_+$
for some vector $\bb \in Q$, said to be \emph{generated} by~$\bb$
\emph{along} the face~$\tau$.  These upset modules are the graded
translates of localizations of~$\kk[\cQ_+]$~along~faces.
\end{remark}

\begin{lemma}\label{l:matlis-pair}
$\hhom\bigl(\kk[\cQ_+]_\tau,(-)^\vee\bigr)$ is exact for all
faces~$\tau$ of any partially ordered~abel\-ian group~$\cQ$.  As a
result, $\hhom(\kk[\cQ_+]_\tau,-)$ is exact on the category of
$\cQ$-finite~\mbox{modules}.
\end{lemma}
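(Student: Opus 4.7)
The plan is to reduce both statements to known exactness properties using the identifications in Examples~\ref{e:matlis} and~\ref{e:dual-of-localization}. The first statement is the substantive one; the second follows formally.

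For the first statement, I would unpack $\hhom(\kk[\cQ_+]_\tau, \cM^\vee)$ using the tensor-hom adjunction recalled in Example~\ref{e:matlis}. This gives the natural identification
\[
  \hhom\bigl(\kk[\cQ_+]_\tau,\cM^\vee\bigr)
  \;=\;
  \bigl(\kk[\cQ_+]_\tau \otimes_\cQ \cM\bigr)^\vee
  \;=\;
  (\cM_\tau)^\vee,
\]
as displayed in Example~\ref{e:dual-of-localization}. So the functor in question factors as $\cM \mapsto \cM_\tau \mapsto (\cM_\tau)^\vee$. The first factor is localization along~$\tau$, which is exact because $\kk[\cQ_+]_\tau = \kk[\cQ_+ + \ZZ\tau]$ is flat over $\kk[\cQ_+]$ (this is the same flatness invoked in the proof of Lemma~\ref{l:exact-qr}). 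The second factor is Matlis duality, which is exact because $\Hom_\kk(-,\kk)$ is exact on $\kk$-vector spaces (as $\kk$ is a field) and Matlis duality is defined degreewise by Definition~\ref{d:matlis}. The composition of two exact functors is exact.

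For the second statement, on the category of $\cQ$-finite modules Matlis duality is an exact contravariant involution: Lemma~\ref{l:vee-vee} gives $(\cM^\vee)^\vee \cong \cM$ naturally, and the Matlis dual of a $\cQ$-finite module is again $\cQ$-finite since $\dim_\kk \Hom_\kk(\cM_q,\kk) = \dim_\kk \cM_q$ when this is finite. Given a short exact sequence $0 \to \cM' \to \cM \to \cM'' \to 0$ of $\cQ$-finite modules, I would dualize to obtain the exact sequence $0 \to (\cM'')^\vee \to \cM^\vee \to (\cM')^\vee \to 0$, apply the functor $\hhom(\kk[\cQ_+]_\tau, (-)^\vee)$, which is exact by the first statement, and then use the natural isomorphism $((\cM^\bullet)^\vee)^\vee \cong \cM^\bullet$ from Lemma~\ref{l:vee-vee} to conclude that the resulting exact sequence is precisely the one obtained by applying $\hhom(\kk[\cQ_+]_\tau, -)$ to the original sequence.

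The only point demanding any care is the identification in the first step, since $\kk[\cQ_+]_\tau$ need not be $\cQ$-finite and so does not fit a naive tameness framework; but the hom-tensor identities used are purely formal in the grading group and apply to arbitrary $\cQ$-modules, so there is no real obstacle. Thus the main content is simply recognizing the composition in the first statement and then invoking the self-duality of the $\cQ$-finite category for the second.
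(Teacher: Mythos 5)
Your proof is correct and follows essentially the same route as the paper: the first statement is the identification $\hhom\bigl(\kk[\cQ_+]_\tau,\cM^\vee\bigr) = (\cM_\tau)^\vee$ from Example~\ref{e:dual-of-localization}, exhibiting the functor as a composite of exact functors (localization, then Matlis duality), and the second statement follows by writing $\hhom(\kk[\cQ_+]_\tau,-)$ on $\cQ$-finite modules as $(-)^\vee$ followed by $\hhom\bigl(\kk[\cQ_+]_\tau,(-)^\vee\bigr)$ via Lemma~\ref{l:vee-vee}. Your explicit short-exact-sequence check for the second part is just an unwinding of this same composite-of-exact-functors argument.
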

\begin{proof}
Localization is exact and so is Matlis duality, so the first sentence
follows from Example~\ref{e:dual-of-localization}.  The consequence
comes from Lemma~\ref{l:vee-vee}: for $\cQ$-finite modules,
$\hhom(\kk[\cQ_+]_\tau,-)$ is the composite $(-)^\vee$ followed by
$\hhom\bigl(\kk[\cQ_+]_\tau,(-)^\vee\bigr)$.
\end{proof}

\begin{remark}\label{r:matlis-pair}
What really drives the lemma is the observation that while the
opposite notion to injective is projective (reverse all of the arrows
in the definition), the adjoint notion to injective is flat.  That is,
a module is flat if and only if its Matlis dual is injective.  This is
an instance of a rather general phenomenon that can be phrased in
terms of a monoidal abelian category~$\cC$ possessing a \emph{Matlis
object}~$E$ for a \emph{Matlis dual pair} of subcategories~$\cA$
and~$\cB$ such that $\Hom(-,E)$ restricts to exact contravariant
functors $\cA \to \cB$ and $\cB \to \cA$ that are inverse to one
another.  The idea is to set $\cM^\vee = \Hom(\cM,E)$, the
\emph{Matlis dual} of any object~$\cM$ of~$\cC$, and define an object
of~$\cC$ to be \emph{$\cB$-flat} if $F \otimes -$ is an exact functor
on~$\cB$.  Then an object $F$ of~$\cA$ is $\cB$-flat if and only if
$\Hom(F,-)$ is an exact functor on~$\cA$.  Examples of this situation
include artinian and noetherian modules over a complete local ring;
modules of finite length over any local ring (in both cases $E =
E(R/\mm)$ is the injective hull of the residue field); and of course
$\cQ$-finite modules over a partially ordered abelian group~$\cQ$.
The latter two examples feature a single Matlis self-dual subcategory.
\end{remark}

\begin{example}\label{e:not-exact-Q-infinite}
It is important to note that $\hhom(\kk[\cQ_+]_\tau,-)$ is not exact
on the category of all---that is, not necessarily
$\cQ$-finite---modules over a partially ordered abelian group~$\cQ$.
Indeed, let $F \onto \kk[\cQ]$ be any free cover of the localization
of~$\kk[\cQ_+]$ along the maximal face.  (When $\cQ_+ = \NN$, this
writes the module $\kk[\ZZ]$ of Laurent polynomials as a quotient of a
graded free module~$F$ over the ordinary polynomial ring $\kk[\NN]$ in
one variable.)  Then $\kk[\cQ] = \kk[\cQ_+]_\tau$ for $\tau = \cQ$
itself, and applying $\hhom(\kk[\cQ], -)$ to the surjection $F \onto
\kk[\cQ]$ yields the homomorphism $0 \to \kk[\cQ]$, which is not
surjective.
\end{example}

\section{Geometry of real staircases}\label{s:staircases}

The difference between ordinary noetherian commutative algebra and
algebra over real polyhedral groups begins with the local geometry of
downsets near their boundaries (Section~\ref{b:tangent}) and the
functorial view of this geometry (Section~\ref{b:upper-closure}).

\subsection{Tangent cones of downsets}\label{b:tangent}\mbox{}

\medskip
\noindent
Locally speaking, the geometry of a downset near a boundary point is
rather rigid, in the sense that only finitely many geometries are
possible (Proposition~\ref{p:shape}): the negative relative interior
of each face of the positive cone is either filled or does not appear
at all, and as soon as some face appears, all bigger faces are forced
to appear, as well.  Getting there requires some definitions and limit
computations.

\begin{defn}\label{d:tangent-cone}
The \emph{tangent cone} $T_\aa D$ of a downset $D$ in a real
polyhedral group~$\cQ$ (Definition~\ref{d:polyhedral}) at a point $\aa
\in \cQ$ is the set of vectors $\vv \in -\cQ_+$ such that $\aa +
\epsilon\vv \in D$ for all sufficiently small (hence all)~$\epsilon >
0$.
\end{defn}

\begin{example}\label{e:tangent-cones}
The three depicted points on the upper boundary of the downset
\begin{center}
\includegraphics[height=30mm]{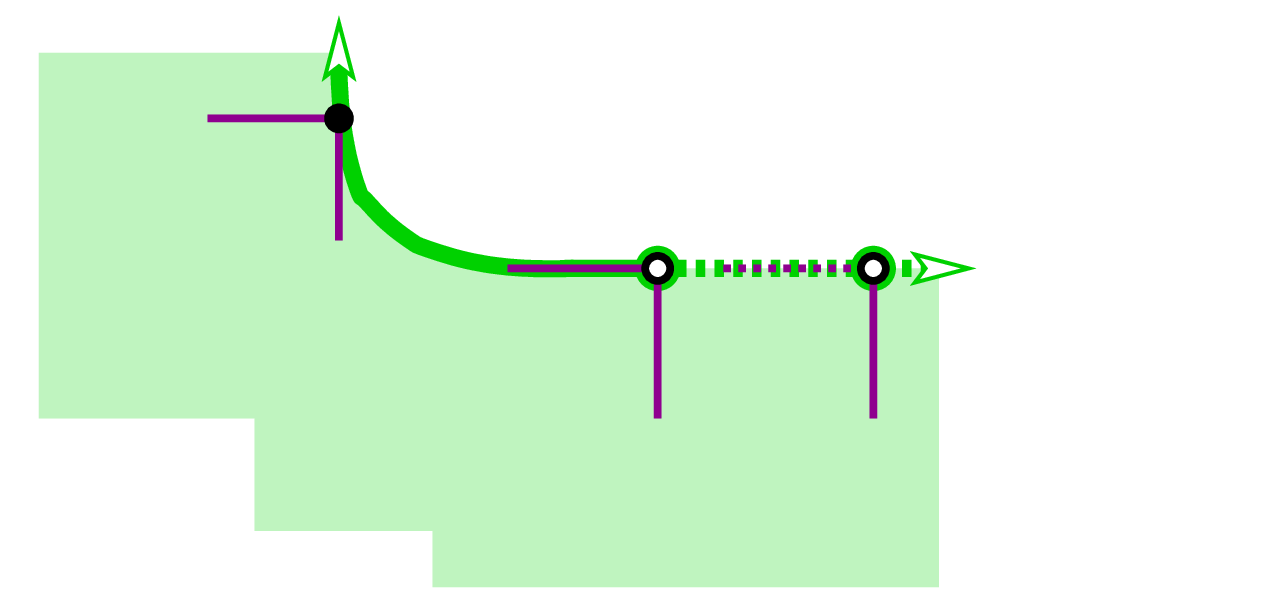}
\end{center}
have different tangent cones.  The leftmost point lies in the downset
itself, so its tangent cone equals the entire closed negative
\pagebreak[2]
quadrant.  The middle point does not itself lie in the downset, but
all points strictly to its left do, and all points strictly beneath it
do, so its tangent cone is the punctured negative quadrant, with only
the origin missing.  For the rightmost point, no points nearby and at
the same height lie in the downset, but all points with strictly lower
height lie in the downset, so the tangent cone is the negative
quadrant with its horizontal axis deleted.
\end{example}

\begin{remark}\label{r:tangent-cone}
Since the real number $\epsilon$ in Definition~\ref{d:tangent-cone} is
strictly positive, the vector $\vv = \0$ lies in $T_\aa D$ if and only
if $\aa$ itself lies in~$D$, and in that case $T_\aa D = -\cQ_+$.
\end{remark}

\begin{example}\label{e:tangent-cone}
The tangent cone defined here is not the tangent cone of~$D$ as a
stratified space, because the cone here only considers vectors
in~$-\cQ_+$.  A specific simple example to see the difference is the
closed half-plane $D$ beneath the line $y = -x$ in~$\RR^2$, where the
usual tangent cone at any point along the boundary line is the
half-plane, whereas $T_\aa D = -\RR^2_+$.  Furthermore, $T_\aa D$ can
be nonempty for a point~$\aa$ in the boundary of~$D$ even if $\aa$
does not lie in~$D$ itself.  For an example of that, take $D^\circ$ to
be the interior of~$D$; then $T_\aa D^\circ = -\RR^2_+ \minus \{\0\}$
for any $\aa$ on the boundary line.
\end{example}

The most important conclusion concerning tangent cones at points of
downsets, Proposition~\ref{p:shape}, says that such cones are certain
unions of relative interiors of faces.  Some definitions and
preliminary results are required.

\begin{defn}\label{d:cocomplex}
Fix a real polyhedral group $\cQ$.
\begin{enumerate}
\item%
For any face $\sigma$ of the positive cone~$\cQ_+$, write $\sigma^\circ$
for the relative interior of~$\sigma$.
\item%
For any set~$\nabla$ of faces of~$\cQ_+$, write $\cQ_\nabla =
\bigcup_{\sigma \in \nabla} \sigma^\circ$, the \emph{cone of
shape~$\nabla$}.
\item\label{i:cocomplex}%
A \emph{cocomplex} in~$\cQ_+$ is an upset in the face poset $\cfq$
of~$\cQ_+$, where $\sigma \preceq \tau$ if~$\sigma \subseteq \tau$.
\end{enumerate}
\end{defn}

\begin{example}\label{e:nabla}
The cocomplex $\nabs = \{\text{faces }\sigma'\text{ of }\cQ_+ \mid
\sigma' \supseteq \sigma\}$ is the \emph{open star} of the
face~$\sigma$.  It determines the cone $\qns$ of shape~$\nabs$, which
plays an important role.
\end{example}

\begin{remark}\label{r:quantum}
The next proposition is the reason for specializing this section to
real polyhedral groups instead of arbitrary polyhedral partially
ordered groups, where limits might not be meaningful.  For example,
although limits make formal sense in the integer lattice~$\ZZ^n$ with
the usual discrete topology, it is impossible for a sequence of points
in the relative interior of a face to converge to the origin of the
face.  This quantum separation has genuine finiteness consequences for
the algebra of $\ZZ^n$-modules that usually do not hold for
$\RR^n$-modules.
\end{remark}

\begin{prop}\label{p:<<}
If $\{\aa_k\}_{k\in\NN}$ is any sequence converging to~$\aa$, then	
$\bigcup_{k=0}^\infty (\aa_k - \cQ_+) \supseteq \aa - \qpc$.  If the
sequence is contained in $\aa - \sigma^\circ$, then the union equals
$\aa - \qns$.
\end{prop}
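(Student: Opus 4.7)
The plan is to reduce both assertions to elementary perturbation arguments, using two ingredients: the fact that $\cQ_+$ spans~$\cQ$ as a real vector space (since $\cQ_+$ generates~$\cQ$ as a group by Definition~\ref{d:pogroup}), so $\qpc$ is the topological interior of~$\cQ_+$ and hence open in~$\cQ$; and the standard face-separation property of polyhedral cones, namely that $x+y\in\sigma'$ with $x,y\in\cQ_+$ forces $x,y\in\sigma'$, which in turn yields the consequence that a face of~$\cQ_+$ meeting the relative interior of another face must contain it.

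For the first containment, I would pick $\bb=\aa-\vv$ with $\vv\in\qpc$ and seek $k$ such that $\aa_k-\bb=(\aa_k-\aa)+\vv\in\cQ_+$. Since $\qpc$ is open in~$\cQ$ and $\aa_k-\aa\to\0$, this perturbation remains in $\qpc\subseteq\cQ_+$ for all sufficiently large~$k$, proving $\bb\in\aa_k-\cQ_+$.

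For the equality $\bigcup_k(\aa_k-\cQ_+)=\aa-\qns$ under the hypothesis $\aa_k\in\aa-\sigma^\circ$, I would prove the two inclusions separately. Writing $\ww_k=\aa-\aa_k\in\sigma^\circ$, the $\subseteq$ inclusion amounts to showing $\ww_k+\uu\in\qns$ for every $\uu\in\cQ_+$: the sum lies in the relative interior of some face~$\sigma'$ of~$\cQ_+$, face-separation forces $\ww_k\in\sigma'$, and then $\sigma\subseteq\sigma'$ because $\ww_k\in\sigma^\circ\cap\sigma'$, so $\sigma'\in\nabs$ and $\ww_k+\uu\in\qns$. For the $\supseteq$ inclusion, take $\vv\in(\sigma')^\circ$ with $\sigma'\supseteq\sigma$; one needs $\vv-\ww_k\in\cQ_+$, and since $\ww_k\in\sigma\subseteq\sigma'$ lies in the linear span of~$\sigma'$ while $(\sigma')^\circ$ is open in that span, $\vv-\ww_k$ remains in $(\sigma')^\circ\subseteq\cQ_+$ for all $k$ large.

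The main subtlety, and the step I expect to require the most care, is the perturbation for the $\supseteq$ inclusion: the limit argument must be carried out in the relative topology of each face~$\sigma'$ containing~$\sigma$ rather than in the ambient~$\cQ$, because $\vv$ may lie on the relative boundary of~$\cQ_+$ and then has no $\cQ$-neighborhood contained in~$\cQ_+$. That $\ww_k$ lies in $\sigma\subseteq\sigma'$ rather than in some transverse direction is exactly what keeps the perturbation inside the span of~$\sigma'$, and is precisely where the hypothesis $\aa_k\in\aa-\sigma^\circ$ gets used; without it one would only recover the weaker first assertion.
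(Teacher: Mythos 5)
Your proof is correct, and for the first assertion it coincides with the paper's argument (the paper phrases the perturbation via the finitely many linear functionals cutting out $\cQ_+$, you via openness of $\qpc$ in~$\cQ$; these are the same observation). For the equality, though, your organization genuinely differs. The paper gets the containment $\bigcup_k(\aa_k-\cQ_+)\subseteq \aa-\sigma^\circ-\cQ_+$ for free from the hypothesis, gets $\supseteq \aa-\sigma^\circ$ by applying the first claim inside the real polyhedral group $\RR\sigma$ with positive cone~$\sigma$, and then exploits that the union is a \emph{downset} to upgrade this to $\supseteq \aa-\sigma^\circ-\cQ_+$; the identification $\sigma^\circ+\cQ_+=\qns$ is factored out into Lemma~\ref{l:<<}. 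You instead prove both inclusions of $\bigcup_k(\aa_k-\cQ_+)=\aa-\qns$ pointwise, which amounts to re-proving Lemma~\ref{l:<<} inline: your face-separation argument for $\ww_k+\uu$ is exactly the first half of the paper's proof of that lemma (with support faces in place of vanishing functionals), and your relative-interior perturbation $\vv-\ww_k\in(\sigma')^\circ$ plays the role of the paper's ``$\bb=\epsilon\ff+(\bb-\epsilon\ff)$'' step. The paper's route buys a reusable lemma and avoids a second perturbation argument by a soft order-theoretic observation; yours is self-contained and makes explicit where the hypothesis $\aa_k\in\aa-\sigma^\circ$ enters (keeping $\ww_k$ in the span of each $\sigma'\supseteq\sigma$), which you correctly flag as the delicate point. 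Both are sound.
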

\begin{proof}
For each point $\bb \in \aa - \qpc$, every linear function $\ell:
\RR^n \to \RR$ that is nonnegative on~$\cQ_+$ eventually takes values
on the sequence that are bigger than~$\ell(\bb)$; thus $\bb$ lies in
the union as claimed.  When the sequence is contained in $\aa -
\sigma^\circ$, the union is contained in $\aa - \sigma^\circ - \cQ_+$
by hypothesis, but the union contains $\aa - \sigma^\circ$ by the
first claim applied with $\sigma$ in place of~$\cQ_+$.  The union
therefore equals $\aa - \sigma^\circ - \cQ_+$ because it is a downset.
The next lemma completes the proof.
\end{proof}

\begin{lemma}\label{l:<<}
If $\sigma$ is any face of the positive cone~$\cQ_+$ then $\sigma^\circ
+ \cQ_+ = \qns$.
\end{lemma}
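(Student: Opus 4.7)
The plan is to prove both inclusions of $\sigma^\circ + \cQ_+ = \qns$ by exploiting the standard stratification of the pointed cone $\cQ_+$ as the disjoint union $\bigsqcup_{\sigma'}(\sigma')^\circ$ of relative interiors of its faces.

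For the inclusion $\sigma^\circ + \cQ_+ \subseteq \qns$, I would take $p = s + c$ with $s \in \sigma^\circ$ and $c \in \cQ_+$, and let $\sigma'$ be the unique face of $\cQ_+$ whose relative interior contains $p$. The key ingredient is the face-summand property of a pointed polyhedral cone: if $x,y \in \cQ_+$ and $x+y$ lies in a face $\sigma'$, then both $x$ and $y$ lie in $\sigma'$ (this follows because $\sigma'$ is cut out by a supporting hyperplane on which $\cQ_+$ is nonnegative). Applying this to $s + c = p \in \sigma'$ gives $s \in \sigma'$, and since $\sigma$ is the unique face whose relative interior contains $s$, we conclude $\sigma \subseteq \sigma'$, i.e., $\sigma' \in \nabs$, so $p \in \qns$.

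For the reverse inclusion $\qns \subseteq \sigma^\circ + \cQ_+$, I would take $p \in (\sigma')^\circ$ for some face $\sigma' \supseteq \sigma$ and fix any $s_0 \in \sigma^\circ$. Since $\sigma \subseteq \sigma'$, the element $s_0$ lies in the linear span of $\sigma'$. The relative interior $(\sigma')^\circ$ is open in $\mathrm{aff}(\sigma')$, so there exists $\epsilon > 0$ small enough that $p - \epsilon s_0 \in (\sigma')^\circ \subseteq \cQ_+$. The decomposition $p = \epsilon s_0 + (p - \epsilon s_0)$ then exhibits $p$ as an element of $\sigma^\circ + \cQ_+$, using that $\epsilon s_0 \in \sigma^\circ$ by positive homogeneity of $\sigma^\circ$.

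The only genuinely subtle point is the reverse inclusion: one must verify that perturbing $p$ backward by a small multiple of $s_0$ stays inside $\sigma'$ rather than merely inside $\cQ_+$. This is guaranteed precisely because $s_0$ lies in the linear span of $\sigma'$ (thanks to $\sigma \subseteq \sigma'$) and $p$ is in the relative interior, not merely in $\sigma'$. The forward inclusion is entirely structural, depending only on the face-summand property of pointed cones and the uniqueness of the face whose relative interior contains a given point.
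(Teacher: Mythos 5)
Your proof is correct and takes essentially the same approach as the paper: the forward inclusion via the observation that a supporting functional vanishing on $\ff+\bb$ must vanish on $\ff$ (your face-summand property), and the reverse inclusion by subtracting a small multiple of a point of $\sigma^\circ$. The only cosmetic difference is that you justify the perturbation step by relative openness of $(\sigma')^\circ$ in its span, whereas the paper checks the finitely many defining functionals directly to land in $\cQ_+$; both are sound.
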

\begin{proof}
Fix $\ff + \bb \in \sigma^\circ + \cQ_+$.  If $\ell(\ff + \bb) = 0$
for some linear function $\ell: \RR^n \to \RR$ that is nonnegative
on~$\cQ_+$, then $\ell(\ff) = 0$, too.  Therefore the support face
of~\mbox{$\ff + \bb$} (the smallest face in which it lies)
contains~$\sigma$.  On the other hand, suppose $\bb$ lies interior to
some face of~$\cQ_+$ that contains~$\sigma$.  Then pick any $\ff \in
\sigma^\circ$.  If~$\ell(\bb) = 0$ then also $\ell(\ff) = 0$, because
the support face of~$\bb$ contains~$\sigma$.  But if $\ell(\bb) > 0$,
then $\ell(\bb) > \ell(\epsilon \ff)$ for any sufficiently small
positive~$\epsilon$.  As~$\cQ_+$ is an intersection of only finitely many
closed half-spaces, a single $\epsilon$ works for all relevant~$\ell$, and
then $\bb = \epsilon \ff + (\bb - \epsilon \ff) \in \sigma^\circ + \cQ_+$.
\end{proof}

For $\cQ_+ = \RR_+^n$ the following is essentially
\cite[Lemma~5.1]{madden-mcguire2015}.

\begin{cor}\label{c:<<}
If $D \subseteq \cQ$ is a downset in a real polyhedral group, then
$\aa$ lies in the closure $\oD$ if and only if $D$ contains the
interior $\aa - \qpc$ of the negative cone with~origin~$\aa$.
\end{cor}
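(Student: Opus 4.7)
The plan is to derive both implications directly from Proposition~\ref{p:<<}, with the downset property of~$D$ doing most of the work.

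For the forward direction, suppose $\aa \in \oD$ and choose a sequence $\{\aa_k\}_{k\in\NN} \subseteq D$ converging to~$\aa$. Proposition~\ref{p:<<} furnishes the containment
$$
  \bigcup_{k=0}^\infty (\aa_k - \cQ_+) \supseteq \aa - \qpc.
$$
Because $D$ is a downset and each $\aa_k \in D$, every set $\aa_k - \cQ_+$ is contained in~$D$, so the left-hand union lies in~$D$. This gives $\aa - \qpc \subseteq D$, as desired.

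For the reverse direction, suppose $\aa - \qpc \subseteq D$. Since $\cQ_+$ is a pointed polyhedral cone in a real vector space, its relative interior $\qpc$ is nonempty; fix any $\ff \in \qpc$. Then the sequence $\aa_k = \aa - \tfrac{1}{k}\ff$ lies in $\aa - \qpc$ (as $\qpc$ is closed under positive scalar multiplication) and converges to~$\aa$, so $\aa$ lies in the closure~$\oD$.

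The only subtle point is ensuring that $\qpc$ is nonempty, which follows from $\cQ_+$ being a full-dimensional object of its own linear span (since the partial order is induced by it) and hence having nonempty relative interior; I~would record this briefly and then combine the two directions into a two-sentence proof invoking Proposition~\ref{p:<<}.
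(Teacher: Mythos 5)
Your proof is correct and is exactly the argument the paper intends: the corollary is stated without a written proof precisely because it follows from Proposition~\ref{p:<<} together with the downset property in the way you describe. Your only worry---nonemptiness of $\qpc$---is automatic, since the relative interior of any nonempty convex set (here the positive cone, which moreover spans~$\cQ$ because it generates the group) is nonempty.
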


\begin{prop}\label{p:shape}
If $\aa \in \oD$ for a downset $D$, then $T_\aa D = -\cQ_\nabla$ is
the negative cone of shape~$\nabla$ for some cocomplex~$\nabla$
in~$\cQ_+$.  In this case $\nabla = \nda$ is the \emph{shape} of~$D$
at\/~$\aa$.
\end{prop}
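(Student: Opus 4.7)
The plan is to define the candidate cocomplex directly from the tangent cone and then use Proposition~\ref{p:<<} to show it does the job. Specifically, set
$$
  \nabla
  =
  \bigl\{\sigma \text{ a face of } \cQ_+ \;\big|\; -\sigma^\circ \subseteq T_\aa D\bigr\},
$$
so that $-\cQ_\nabla = \bigcup_{\sigma\in\nabla}(-\sigma^\circ)$ is automatically contained in $T_\aa D$. The two things to prove are then (i)~the reverse containment $T_\aa D \subseteq -\cQ_\nabla$ and (ii)~that $\nabla$ is upward closed in the face poset $\cfq$.

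The main step is this observation: if $\vv \in T_\aa D$ and $\vv \ne \0$, let $\sigma$ be the unique face of~$\cQ_+$ with $-\vv \in \sigma^\circ$. For all sufficiently large integers~$k$ the point $\aa_k := \aa + \tfrac{1}{k}\vv$ lies in~$D$ (by definition of the tangent cone), and the sequence $\{\aa_k\}$ converges to~$\aa$ while lying in $\aa - \sigma^\circ$. Because $D$ is a downset, $\aa_k - \cQ_+ \subseteq D$ for every~$k$, so
$$
  \aa - \qns
  \;=\;
  \bigcup_{k\gg 0}(\aa_k - \cQ_+)
  \;\subseteq\; D
$$
by Proposition~\ref{p:<<} applied to this sequence. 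Hence for every face $\sigma' \supseteq \sigma$ and every $\vv' \in -\sigma'^\circ$, one has $\epsilon\vv' \in -\sigma'^\circ \subseteq -\qns$ for all $\epsilon>0$, so $\aa + \epsilon\vv' \in \aa - \qns \subseteq D$, giving $\vv' \in T_\aa D$. Thus every face $\sigma' \supseteq \sigma$ lies in~$\nabla$.

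Applied with $\sigma' = \sigma$ this yields $\vv \in -\sigma^\circ \subseteq -\cQ_\nabla$, proving~(i) on the nonzero part of $T_\aa D$; applied with any $\sigma \in \nabla$ and $\vv \in -\sigma^\circ$ it yields the upset property~(ii). The only case left is $\vv = \0$: by Remark~\ref{r:tangent-cone}, $\0 \in T_\aa D$ iff $\aa \in D$, in which case $T_\aa D = -\cQ_+$; then the trivial face $\{\0\}$ belongs to~$\nabla$ (since $-\{\0\}^\circ = \{\0\} \subseteq T_\aa D$), so $\0$ is accounted for in $-\cQ_\nabla$, and conversely if $\aa\notin D$ then $\{\0\}\notin\nabla$.

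I expect no serious obstacle: once Proposition~\ref{p:<<} is in hand, the whole argument reduces to identifying $\sigma$ as the support face of~$-\vv$ and running the downset inclusion. The only subtlety is remembering to treat $\vv = \0$ separately, since Proposition~\ref{p:<<} only applies to sequences landing in a genuine relative interior $\sigma^\circ$, and matching up with the convention that $-\{\0\}^\circ = \{\0\}$.
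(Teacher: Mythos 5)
Your proof is correct, and it takes a genuinely different route from the paper's. The paper argues by induction on dimension: it checks the three possible cases when $n=1$ by hand, then slices $D$ by the affine translates $\aa + \RR\sigma$ for facets $\sigma$ of~$\cQ_+$, invokes the inductive hypothesis on each slice $D_\sigma$, and treats the one face not covered by any facet poset --- the open maximal face $\qpc$ --- via Corollary~\ref{c:<<}. You instead start from an arbitrary nonzero $\vv \in T_\aa D$, identify $\sigma$ as the support face of $-\vv$, and feed the sequence $\aa_k = \aa + \tfrac{1}{k}\vv \in \aa - \sigma^\circ$ into Proposition~\ref{p:<<} to get $\aa - \qns \subseteq D$ in one stroke; that single inclusion, combined with $\sigma'^{\circ} \subseteq \qns$ for $\sigma' \supseteq \sigma$ (Lemma~\ref{l:<<}), simultaneously delivers the containment $T_\aa D \subseteq -\cQ_\nabla$ and the upward-closedness of~$\nabla$. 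Your version is arguably tighter: it avoids the induction and the facet-by-facet case analysis, and it makes explicit how the cocomplex structure of $\nabla$ falls out of the geometry of $\qns$, whereas the paper leaves the assembly of the facet-wise upsets into a single cocomplex largely implicit. Your separate handling of $\vv = \0$ via Remark~\ref{r:tangent-cone} is exactly the right bookkeeping, since Proposition~\ref{p:<<} needs a sequence landing in a genuine relative interior. One cosmetic observation: your argument never uses the hypothesis $\aa \in \oD$ --- if $\aa \notin \oD$ your $\nabla$ is simply the empty cocomplex --- which is harmless and consistent with the paper's convention that $T_\aa D$ may be empty.
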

\begin{proof}
The result is true when $n = 1$ because there are only three
possibilities for $a \in \RR$: either $a \in D$, in which case $T_a D
= -\RR_+ = \cQ_\nabla$ for $\nabla = \cfq$
(Remark~\ref{r:tangent-cone}); or $a \not\in D$ but $a$ lies in the
closure of~$D$, in which case $T_a D = \cQ_\nabla$ for $\nabla =
\{\qpc\} \subseteq \cfq$; or $a$ is separated from~$D$ by a nonzero
distance, in which case $T_a D = \cQ_{\{\}}$ is empty.

Write $D_\sigma$ for the intersection of~$D$ with the $\aa$-translate
of the linear span of~$\sigma$:
$$
  D_\sigma = D \cap (\aa + \RR \sigma).
$$
If $\sigma \subsetneqq \cQ_+$, then $T_\aa D_\sigma = \sigma_\nabla$
for some upset $\nabla \subseteq \cF_\sigma$ by induction on the
dimension of~$\sigma$.  In actuality, only the case $\dim \sigma =
n-1$ is needed, as the face posets $\cF_\sigma$ for $\dim \sigma =
n-1$ almost cover~$\cfq$: only the open maximal face $\qpc$ itself
lies outside of their union, and that case is dealt with by
Corollary~\ref{c:<<}.
\end{proof}

\begin{example}\label{e:shape}
For the three upper boundary points in Example~\ref{e:tangent-cones},
the shapes are
\begin{itemize}
\item%
left: the entire four-element poset of all faces of~$\RR^2_+$;
\item%
center: the cocomplex of faces that are not the origin~$\0$;
\item%
right: the cocomplex whose unique minimal face is the $y$-axis.
\end{itemize}
\end{example}

\subsection{Upper closure functors}\label{b:upper-closure}\mbox{}

\medskip
\noindent
Geometric closures append points along boundaries.  Functorial
closures append categorical limits in
Definition~\ref{d:upper-closure}.  Specific calculations for downset
modules in Proposition~\ref{p:downset-upper-closure} render make the
theory concrete and usable.

\begin{defn}\label{d:atop-sigma}
For a module~$\cM$ over a real polyhedral group~$\cQ$, a face~$\sigma$
of~$\cQ_+$, and a degree $\aa \in \cQ$, the \emph{upper closure
atop~$\sigma$} at~$\aa$ in~$\cM$ is the vector space
$$
  (\dsm)_\aa
  =
  \cM_{\aa-\sigma}
  =
  \dirlim_{\aa' \in \aa - \sigma^\circ} \cM_{\aa'}.
$$
\end{defn}

\begin{lemma}\label{l:exact-delta}
The functor $\cM \mapsto \dsm = \bigoplus_{\aa\in\cQ} (\dsm)_\aa$
is exact.
\end{lemma}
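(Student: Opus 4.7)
The plan is to reduce exactness of the functor $\cM \mapsto \dsm$ to the standard fact that filtered colimits of vector spaces are exact. Since exactness of a sequence of $\cQ$-modules is tested one graded degree at a time, and since $\dsm = \bigoplus_{\aa\in\cQ}(\dsm)_\aa$ with the $\aa$-th graded component given by
\[
  (\dsm)_\aa \;=\; \dirlim_{\aa'\in \aa-\sigma^\circ} \cM_{\aa'},
\]
it suffices to show that for each $\aa\in\cQ$ the indexing poset $\aa - \sigma^\circ$, with the order it inherits from~$\cQ$, is directed (filtered), and then to invoke exactness of filtered colimits of $\kk$-vector spaces.

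First I would verify directedness of $\aa - \sigma^\circ$. Given $\aa-\vv_1$ and $\aa-\vv_2$ with $\vv_1,\vv_2\in\sigma^\circ$, I need a third element $\aa-\vv_3$ of $\aa-\sigma^\circ$ that dominates both in~$\cQ$, i.e.\ satisfies $\vv_i - \vv_3 \in \cQ_+$ for $i=1,2$. Choose $\vv_3 = \epsilon(\vv_1+\vv_2)$ with $\epsilon>0$ small. Since $\sigma^\circ$ is a convex open cone in~$\sigma$, it is closed under positive linear combinations, so $\vv_3\in\sigma^\circ$. Moreover $\vv_1-\vv_3 = (1-\epsilon)\vv_1 - \epsilon\vv_2$ lies within a small neighborhood of $\vv_1$ in the linear span of~$\sigma$ for $\epsilon$ small, hence lies in~$\sigma^\circ\subseteq\cQ_+$; by symmetry the same holds for $\vv_2 - \vv_3$. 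Thus $\aa-\vv_3$ dominates both $\aa-\vv_1$ and $\aa-\vv_2$ in~$\cQ$, proving directedness.

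Next I would observe that the assignment $\cM\mapsto (\dsm)_\aa$ factors through the exact functor $\cM\mapsto (\cM_{\aa'})_{\aa'\in\aa-\sigma^\circ}$ (a diagram of vector spaces indexed by a directed poset) followed by the filtered colimit functor, which is exact on $\kk$-vector spaces. Hence each $\cM\mapsto (\dsm)_\aa$ is exact. Taking the direct sum over $\aa\in\cQ$ preserves exactness termwise, and the $\cQ$-module structure maps on $\dsm$ are induced by the universal property of colimits, so the functor $\cM\mapsto \dsm$ is exact as a functor of $\cQ$-modules. I do not anticipate any serious obstacle; the only subtlety is the directedness check above, and that is handled by the positivity argument inside~$\sigma^\circ$.
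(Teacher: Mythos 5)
Your proposal is correct and takes essentially the same route as the paper, whose entire proof is the one-line observation that direct limits are exact in categories of vector spaces. The only addition is your explicit verification that the index poset $\aa-\sigma^\circ$ is directed (which the paper leaves implicit in the word ``direct limit''), and that check is carried out correctly.
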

\begin{proof}
Direct limits are exact in categories of vector spaces (or modules
over rings).
\end{proof}

\begin{lemma}\label{l:natural}
The structure homomorphisms of~$\cM$ as a $\cQ$-module induce natural
homomorphisms $\cM_{\aa-\sigma} \to \cM_{\bb-\tau}$ for $\aa \preceq
\bb$ in~$\cQ$ and faces $\sigma \supseteq \tau$ of~$\cQ_+$.
\end{lemma}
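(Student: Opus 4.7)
The plan is to use the universal property of direct limits. Since $\cM_{\aa - \sigma} = \dirlim_{\aa' \in \aa - \sigma^\circ} \cM_{\aa'}$, giving a homomorphism $\cM_{\aa - \sigma} \to \cM_{\bb - \tau}$ amounts to producing a family $\{\cM_{\aa'} \to \cM_{\bb - \tau}\}_{\aa' \in \aa - \sigma^\circ}$ that is compatible with the structure maps $\cM_{\aa'_1} \to \cM_{\aa'_2}$ of $\cM$. For each $\aa' = \aa - \sss \in \aa - \sigma^\circ$, I will exhibit some $\bb' = \bb - \ttt \in \bb - \tau^\circ$ with $\aa' \preceq \bb'$ and take the map to $\cM_{\bb - \tau}$ to be the composite of the $\cQ$-module structure map $\cM_{\aa'} \to \cM_{\bb'}$ with the canonical arrow $\cM_{\bb'} \to \cM_{\bb - \tau}$ into the direct limit.

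The key geometric step is the existence of such a $\bb'$. Fix some $\ttt_0 \in \tau^\circ$ and set $\ttt = \epsilon\, \ttt_0$; the condition $\aa' \preceq \bb'$ becomes $\bb - \aa + \sss - \epsilon\, \ttt_0 \in \cQ_+$. Since $\cQ_+$ is cut out by finitely many half-spaces $\{\ell \geq 0\}$, it suffices to check this inequality one $\ell$ at a time, and the zero set $\ell^{-1}(0) \cap \cQ_+$ is a face $F$ of $\cQ_+$. If $F \supseteq \tau$ then $\ell(\ttt_0) = 0$ and $\ell(\bb - \aa + \sss - \ttt) = \ell(\bb - \aa) + \ell(\sss) \geq 0$ automatically. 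Otherwise $F$ does not contain $\tau$, so $F \cap \sigma$ is a proper face of $\sigma$ (using $\tau \subseteq \sigma$) and therefore disjoint from $\sigma^\circ$; consequently $\ell(\sss) > 0$, and this positive slack absorbs $\epsilon\, \ell(\ttt_0)$ once $\epsilon$ is small enough. Taking $\epsilon$ uniformly small across the finitely many $\ell$'s produces the required $\ttt$. Independence of the particular choice of $\bb'$, and compatibility along the directed system $\aa - \sigma^\circ$, both follow the same pattern: the identical scaling argument applied with $\ttt_0$ alone shows that $\bb - \tau^\circ$ is downward directed inside $\cQ$, so two candidates $\bb'_1, \bb'_2$ admit a common upper bound $\bb'_3 \in \bb - \tau^\circ$, and the $\cQ$-module axioms collapse the two composites through $\cM_{\bb'_1}$ and $\cM_{\bb'_2}$ to the same arrow via $\cM_{\bb'_3} \to \cM_{\bb - \tau}$. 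Naturality of the resulting map in $\cM$ is automatic, since every ingredient uses only functorial applications of the $\cQ$-module structure.

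The principal obstacle is the face-theoretic assertion in the middle of the argument: a linear functional $\ell \geq 0$ on $\cQ_+$ that is positive somewhere on $\tau^\circ$ must be strictly positive on all of $\sigma^\circ$ whenever $\tau \subseteq \sigma$. This rests on the standard fact that $\ell^{-1}(0) \cap \cQ_+$ is a face, together with the elementary observation that a proper face of $\sigma$ is disjoint from $\sigma^\circ$; it is essentially the same kind of face-containment reasoning already used in the proof of Lemma~\ref{l:<<}. Once this ingredient is in hand, the rest of the proof is formal manipulation of direct limits.
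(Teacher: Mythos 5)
Your proof is correct and follows essentially the same route as the paper's: both rest on the universal property of colimits together with the polyhedral fact that each element of $\aa - \sigma^\circ$ is bounded above by an element of $\bb - \tau^\circ$. The only difference is packaging — the paper factors the map through $\cM_{\bb-\sigma}$ and cites Proposition~\ref{p:<<} and Lemma~\ref{l:<<} for the geometric input, whereas you construct the bound in one step and reprove that input inline with the same finitely-many-half-spaces scaling argument that proves Lemma~\ref{l:<<}.
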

\begin{proof}
The natural homomorphisms come from the universal property of
colimits.  First a natural homomorphism $\cM_{\aa-\sigma} \to
\cM_{\bb-\sigma}$ is induced by the composite homomorphisms $\cM_\cc
\to \cM_{\cc+\bb-\aa} \to \cM_{\bb - \sigma}$ for $\cc \in \aa -
\sigma^\circ$ because adding $\bb - \aa$ takes $\aa - \sigma^\circ$ to
\mbox{$\bb - \sigma^\circ$}.  For $\cM_{\bb-\sigma} \to
\cM_{\bb-\tau}$ the argument is similar, except that existence of
natural homomorphisms $\cM_\cc \to \cM_{\bb-\tau}$ for $\cc \in \bb -
\sigma^\circ$ requires Proposition~\ref{p:<<} and Lemma~\ref{l:<<}.
\end{proof}

\begin{remark}\label{r:semilattice=monoid}
The face poset~$\cfq$ of the positive cone~$\cQ_+$ can be made into a
commutative monoid in which faces $\sigma$ and~$\tau$ of~$\cQ_+$ have
sum
$$
  \sigma + \tau = \sigma \cap \tau.
$$
Indeed, these monoid axioms use only that $\bigl(\cfq,\cap\bigr)$ is a
bounded meet semilattice, the monoid unit element being the maximal
semilattice element---in this case, $\cQ_+$ itself.  When $\cfq$ is
considered as a monoid in this way, the partial order on it has
$\sigma \preceq \tau$ if $\sigma \supseteq \tau$, which is the
opposite of the default partial order on the faces of a polyhedral
cone.  For utmost clarity, and because both monoid partial orders are
relevant (see Remark~\ref{r:semilattice=monoid'}), $\fqo$ is written
when this monoid partial order is intended.
\end{remark}

\begin{defn}\label{d:upper-closure}
Fix a module~$\cM$ over a real polyhedral group~$\cQ$ and a degree
$\aa \in \cQ$.  The \emph{upper closure functor} takes~$\cM$ to the
$\cQ \times \fqo$-module $\delta\cM$ whose fiber over $\aa \in \cQ$ is
the $\fqo$-module
$$
  (\delta\cM)_\aa
  =
  \bigoplus_{\sigma \in \cfq} \cM_{\aa-\sigma}
  =
  \bigoplus_{\sigma \in \cfq} (\dsm)_\aa.
$$
The fiber of $\delta\cM$ over $\sigma \in \fqo$ is the \emph{upper
closure~$\dsm\!$ of~$\cM$ atop~$\sigma$}.
\end{defn}

\begin{example}\label{e:closure-module}
The upper closure of the module~$M$ in Examples~\ref{e:trouble}
and~\ref{e:trouble-soc} with triangular degree set has vector spaces
of dimension either~$0$ or~$1$ in every graded component indexed
by~\mbox{$\RR^2_+ \times \fro$}.  For each $\aa \in \RR^2$, depict the
corresponding $\fro$-module using a solid dot for a vector space of
dimension~$1$ and no solid dot for a vector space of dimension~$0$.
Then $\delta\cM$ is drawn at left and $\fro$ is drawn at right:%
\vspace{-1ex}%
$$
  \psfrag{0}{\footnotesize$\0$}
  \psfrag{x}{\footnotesize$x$}
  \psfrag{y}{\footnotesize$y$}
  \psfrag{1}{\footnotesize$\RR^2_+$}
  \psfrag{F}{$\fro$}
  \includegraphics[height=30ex]{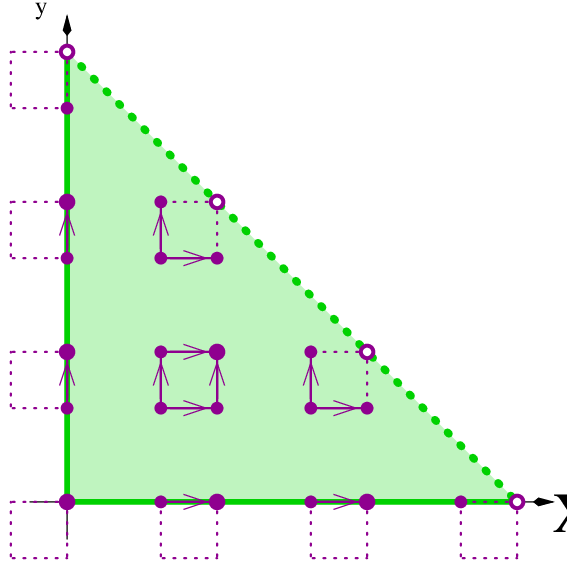}
  \qquad\qquad\qquad
  \psfrag{x}{\footnotesize$y$}
  \psfrag{y}{\footnotesize$x$}
  \includegraphics[height=30ex]{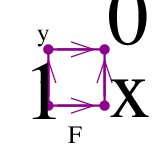}
\vspace{-1ex}%
$$
The $\fro$-module at~$\aa$ is drawn with $\aa$ at the upper-right corner,
to convey the idea that one should stand at~$\aa$ and see what direct
limits result as~$\aa$ is approached from below along the various faces.
When $M_\aa = 0$, the point~$\aa$ is drawn as an empty dot.
\end{example}

\begin{remark}\label{r:ephemeral}
Upper boundaries contain the later%
	\footnote{Upper closure functors were introduced in
	\cite{qr-codes}, where they were called upper boundary
	functors.}
notion of ephemeral modules \cite{berkouk-petit2019}:
an $\RR^n$ module~$\cM$ is ephemeral if its upper closure $\ds\cM$
atop the interior~$\sigma$ of~$\RR^n_+$ vanishes.  This notion is key
to the difference between poset module theory
\cite{hom-alg-poset-mods} and the formulation of persistent homology
via constructible sheaves \cite{kashiwara-schapira2018}, as detailed
in \cite{strat-conical}.
\end{remark}

\begin{remark}\label{r:upper-frontier}
The face of~$\cQ_+$ that contains only the origin~$\0$ is an absorbing
element: it acts like infinity, in the sense that $\sigma + \{\0\} =
\{\0\}$ in the monoid~$\fqo$ for all faces~$\sigma$.  Adding the
absorbing element~$\0$ in the $\fqo$ component therefore induces a
natural\/ $\cQ \times \fqo$-module projection from the upper closure
$\delta\cM$ to~$\cM$.  At a degree $\aa \in \cQ$, this projection is
$\cM_{\aa-\sigma} \to \cM_{\aa - \0} = \cM_\aa$.  Interestingly, the
\emph{frontier} of a downset~$D$---those points in the topological
closure but outside of~$D$---is the set of nonzero degrees of a
functor, namely $\ker(\dsm \to \cM)$ for $\sigma = \qpc$.  The proof
is by Corollary~\ref{c:<<}.
\end{remark}

There is no natural map $\cM \to \dsm$ when $\sigma \neq \{\0\}$ has
positive dimension, because an element of degree~$\aa$ in~$\cM$ comes
from elements of~$\dsm$ in degrees less than~$\aa$.  However, that
leaves a way for Lemma~\ref{l:natural} to afford a notion of
divisibility of upper closure elements by elements of~$\cM$.

\begin{defn}\label{d:divides}
An element $y \in \cM_\bb$ \emph{divides} $x \in (\dsm)_\aa$ if $\bb
\in \aa - \qns = \aa - \sigma^\circ - \cQ_+\!$ (Lemma~\ref{l:<<}) and
$y \mapsto x$ under the natural map $\cM_\bb \to \cM_{\aa-\sigma}$
(Lemma~\ref{l:natural}).  The element $y$ is said to
\emph{$\sigma$-divide}~$x$ if, more restrictively, $\bb \in \aa -
\sigma^\circ$.
\end{defn}

Now come the fundamental calculations of upper closure functors, in
the next lemma and proposition, that drive all of the results in the
rest of the paper.

\begin{lemma}\label{l:downset-upper-closure}
If $\sigma \in \cfq$ and $D$ is a downset in~$\cQ$ then $\ds\,\kk[D] =
\kk[\dsd]$,~where
$$
  \dsd
  =
  \bigcup_{\xx\in\cQ} \ol{D \cap (\xx + \RR\sigma)}
  =
  D \cup \bigcup_{\xx\in\del D} \ol{D \cap (\xx + \RR\sigma)}
$$
It suffices to take the middle union over $\xx$ in any subspace
complement to~$\RR\sigma$.
\end{lemma}
\begin{proof}
For the second displayed equality, observe that the middle union
contains the right-hand union because the middle one contains~$D$.
For the other containment, if $\xx + \RR\sigma$ contains no boundary
point of~$D$, then $D \cap (\xx + \RR\sigma) = \oD \cap (\xx +
\RR\sigma)$ is already closed, so the contribution of $D \cap (\xx +
\RR\sigma)$ to the middle union is contained in~$D$.

For the other equality, $\ds\kk[D]$ is nonzero in degree $\aa$ if and
only if $\aa - \sigma^\circ \subseteq D$.  That condition is
equivalent to $\aa - \sigma^\circ \subseteq D \cap (\aa + \RR\sigma)$
because $\aa - \sigma^\circ \subseteq \aa + \RR\sigma$.  Translating
$D \cap (\aa + \RR\sigma)$ back by~$\aa$ yields a downset in the real
polyhedral group~$\RR\sigma$, with $(\RR\sigma)_+ = \sigma$, thereby
reducing to Corollary~\ref{c:<<}.

The final sentence follows because $\xx + \RR\sigma = \xx' +
\RR\sigma$ when $\xx - \xx' \in \RR\sigma$.
\end{proof}

\begin{prop}\label{p:downset-upper-closure}
If $D$ is a downset in a real polyhedral group~$Q$ and $\sigma \in
\cfq$, then $\ds\,\kk[D] = \kk[\dsd]$ is the indicator quotient for a
downset $\dsd$ that satisfies
\begin{enumerate}
\item\label{i:containment}%
$D \subseteq \dsd \subseteq \oD$ and
\item\label{i:nabla}%
$\dsd = \{\aa\in \oD \mid \sigma \in \nda\}$.
\end{enumerate}
\end{prop}
\begin{proof}
Item~\ref{i:containment} follows from item~\ref{i:nabla}.  What
remains to show is that $\dsd$ is a downset in~$\oD$ characterized by
item~\ref{i:nabla} and that it is semialgebraic if $D$~is.

First, $\sigma \in \nda$ means that $\aa - \sigma^\circ \subseteq D$,
which immediately implies that $\aa \in \dsd$ by
Lemma~\ref{l:downset-upper-closure}.  Conversely, suppose $\aa \in
\dsd$.  Lemma~\ref{l:downset-upper-closure} and
Corollary~\ref{c:<<}, the latter applied to the downset $-\aa + D \cap
(\aa + \RR\sigma)$ in~$\RR\sigma$, imply that $\aa - \sigma^\circ
\subseteq D$, and hence $\sigma \in \nda$ by definition, proving
item~\ref{i:nabla}.  Given that $\aa - \sigma^\circ \subseteq D$,
Proposition~\ref{p:<<} yields $D \cap (\aa - \cQ_+) \supseteq \aa -
\qns$.  Consequently, if $\bb \in \aa - \cQ_+$ then $D \cap (\bb -
\sigma) \supseteq \bb - \sigma^\circ$, whence $\bb \in \dsd$.  Thus
$\dsd$ is a downset.
\end{proof}

\section{Socles and cogenerators}\label{s:socle}

The main contribution of this section is Definition~\ref{d:soct},
which introduces the notions of cogenerator functor and socle along a
face with a given nadir.  Its concomitant foundations include ways to
decompose the cogenerator functors into continuous and discrete parts
(Proposition~\ref{p:either-order}), interactions with localization
(Proposition~\ref{p:local-vs-global}), and left-exactness
(Proposition~\ref{p:left-exact-tau}), along with the crucial
calculation of socles in the simplest case, namely the indicator
module of a single face (Example~\ref{e:soct-k[tau]}).  The theory is
built step by step, starting with ordinary (closed) socles over
arbitrary posets in Section~\ref{b:socc} and proceeding through
Section~\ref{b:soc-along} to cogenerator functors and socles with
increasing levels of complexity and (necessarily) decreasing freedom
regarding~the~poset.

\subsection{Closed socles and closed cogenerator functors}\label{b:socc}\mbox{}

\medskip
\noindent
In commutative algebra, the socle of a module over a local ring is the
set of elements annihilated by the maximal ideal.  The socle is a
vector space over the residue field~$\kk$ that can alternately be
characterized by taking homomorphisms from~$\kk$ into the module.
Either characterization works for modules over partially ordered
abelian groups, but only the latter generalizes readily to modules
over arbitrary posets.  Note that socles over face posets of polyhedra
occur naturally in the theory over real polyhedral~groups.

\begin{defn}\label{d:socc}
Fix an arbitrary poset~$\cP$.  The \emph{skyscraper} $\cP$-module
$\kk_p$ at $p \in \cP$ has $\kk$ in degree~$p$ and $0$ in all other
degrees.  The \emph{closed cogenerator functor} $\hhom_\cP(\kk,-)$ takes
each $\cP$-module $\cM$ to its \emph{closed socle}: the $\cP$-submodule
$$
  \socc\cM = \hhom_\cP(\kk,\cM) = \bigoplus_{p\in\cP} \Hom_\cP(\kk_p,\cM).
$$
When it is important to specify the poset, the notation $\cpsoc$ is
used instead of~$\socc\!$.  A \emph{closed cogenerator} of
\emph{degree}~$p \in P$ is a nonzero element in $(\socc\cM)_p$.
\end{defn}

\begin{remark}\label{r:overbar}
The bar over ``$\soc$'' is meant to evoke the notion of closure or
``closed''.  The bar under $\Hom$ is the usual one in multigraded
commutative algebra for the direct sum of homogeneous homomorphisms of
all degrees (see \cite[Section~I.2]{GWii} or
\cite[Definition~11.14]{cca}, for example).
\end{remark}

\begin{example}\label{e:socc}
The closed socle of~$\cM$ consists of the elements that are
annihilated by moving up in any direction, or that have maximal
degree.  In particular, the interval module $\kk[I]$ for any interval
$I \subseteq \cP$ (Definition~\ref{d:indicator}.\ref{i:interval}) has
closed socle
$$
  \socc \kk[I] = \kk[\max I],
$$
the interval module with degree set the elements of~$I$ that are
maximal in~$I$.
\end{example}

\begin{lemma}\label{l:left-exact-socc}
The closed cogenerator functor over any poset is left-exact.
\end{lemma}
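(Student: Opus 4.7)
The plan is to recognize $\socc$ as a direct sum of $\Hom$-functors into the second variable and exploit the standard fact that such functors are left-exact. Specifically, by Definition~\ref{d:socc} we have
\[
  \socc\cM \;=\; \bigoplus_{p\in\cP} \Hom_\cP(\kk_p, \cM),
\]
so it suffices to show that each summand $\Hom_\cP(\kk_p, -)$ is left-exact and then that the formation of the direct sum preserves left-exactness.

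For the first point, I would invoke the general principle that the category of $\cP$-modules (Definition~\ref{d:poset-module}) is an abelian category---it is the functor category from $\cP$ to $\kk$-vector spaces, with kernels and cokernels computed pointwise---and that in any abelian category $\Hom(\cX, -)$ is left-exact for every object $\cX$. Alternatively, and more concretely, I would identify $\Hom_\cP(\kk_p, \cM)$ with the subspace of $\cM_p$ consisting of those $y \in \cM_p$ whose image under the structure homomorphism $\cM_p \to \cM_q$ is zero for every $q \succ p$: this is because a $\cP$-module homomorphism $\kk_p \to \cM$ is determined by the image of $1 \in (\kk_p)_p$, and the compatibility with the structure maps of $\kk_p$ (which are zero above~$p$) forces precisely this annihilation condition. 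Thus $\Hom_\cP(\kk_p, \cM)$ is the kernel of the natural map $\cM_p \to \prod_{q \succ p} \cM_q$, and passage to kernels is left-exact.

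For the second point, given a short exact sequence $0 \to \cL \to \cM \to \cN$ of $\cP$-modules, applying $\Hom_\cP(\kk_p, -)$ for each $p$ yields exact sequences $0 \to \Hom_\cP(\kk_p, \cL) \to \Hom_\cP(\kk_p, \cM) \to \Hom_\cP(\kk_p, \cN)$. Since direct sums of vector spaces are exact (they commute with kernels), assembling these over $p \in \cP$ produces the desired exact sequence $0 \to \socc\cL \to \socc\cM \to \socc\cN$.

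I do not anticipate any genuine obstacle: the argument is formal and uses only that $\cP$-modules form an abelian category and that $\Hom$ is left-exact in the second variable. The only mildly subtle piece, if one avoids the abstract abelian-category appeal, is verifying the explicit identification of $\Hom_\cP(\kk_p, \cM)$ as a kernel of structure maps, but this is an immediate unpacking of Definition~\ref{d:poset-module}.
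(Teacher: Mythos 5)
Your proposal is correct and follows essentially the same route as the paper: the paper's proof also reduces to the observation that $\cP$-modules form an abelian category (realized there via path algebras or sheaves on the upset topology), so that $\Hom$ from a fixed source is left-exact. Your explicit identification of $\Hom_\cP(\kk_p,\cM)$ as $\ker\bigl(\cM_p \to \prod_{q\succ p}\cM_q\bigr)$ and your remark that the direct sum over $p$ preserves exactness are welcome elaborations of details the paper leaves implicit, but they do not constitute a different argument.
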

\begin{proof}
It suffices to observe that the category of $\cP$-modules is abelian,
so $\Hom$ from a fixed source is automatically left-exact.  There are
a number of well known reasons that the category is abelian.  For
example, the definition of $\cP$-module is equivalent to that of a
module over the path algebra of (the Hasse diagram of)~$\cP$ with
relations to impose commutativity, namely equality of the morphism
induced by $p < p''$ and the composite morphism induced by $p < p' <
p''$.  Alternatively, a~$\cP$-module is the same as a sheaf on~$\cP$
in which the topology comprises the upsets of~$\cP$.  (See
\cite{yuzvinsky1981} as well as
\cite[\S4.2]{curry-thesis} and \cite{curry2019} for discussions of
these and references.)
\end{proof}

\subsection{Socles and cogenerator functors}\label{b:soc}\mbox{}

\medskip
\noindent
Socle elements are obtained by pushing arbitrary module elements up in
the poset as far as possible, so that pushing farther in any direction
annihilates the element.  In a continuous poset, elements might remain
nonzero in all neighborhoods of a boundary degree but vanish at the
degree itself.  When in this manner there is no actual module element
at a boundary point, the upper closure functors from
Section~\ref{b:upper-closure} manufacture all possible candidates from
which to select socle elements.  Definitions, algebraic properties,
and computations for downset modules occupy this section.

\begin{defn}\label{d:soc}
The \emph{cogenerator functor} takes a module over a real polyhedral
group to its \emph{socle}:
$$
  \soco\cM = \socc\delta\cM,
$$
which is the closed socle, computed over the poset $\cQ \times \fqo$
(see Remark~\ref{r:semilattice=monoid}) of the upper closure
module~of~$\cM$ (Definition~\ref{d:upper-closure}).
\end{defn}

\begin{example}\label{e:soc}
For the module~$\cM$ in Example~\ref{e:closure-module}, the socle
$\socc\cM$ is computable using Example~\ref{e:socc}, because the upper
closure $\delta\cM$ is an interval module over $\RR^2_+ \times \fro$.  Let $L$ be
the reltively open hypotenuse in Example~\ref{e:closure-module}, with
endpoints $X$ and~$Y$ along the $x$- and~$y$-axes, respectively.  The
degree set of the closed socle of~$\delta\cM$ is the~union~of
\begin{itemize}
\item%
$L \times \{x\}$
\item%
$L \times \{y\}$
\item%
$X \times \{x\}$
\item%
$Y \times \{y\}$.
\end{itemize}
Standing at any other point in $\RR^2_+ \times \fro$ where the module $\delta\cM$
does not already vanish, moving up in the $\RR^2$ direction keeps $\delta\cM$
nonzero if the $\RR^2$ coordinate lies strictly beneath~$L$, and moving
up in the $\fro$ direction keeps $\delta\cM$ nonzero if the $\fro$
coordinate is the lower-left $\RR^2_+$ corner.
\end{example}

\begin{remark}\label{r:notation-soc-bar}
Notationally, the lack of a bar over ``$\soco$'' serves as a visual
cue that the functor is over a real polyhedral group, as the upper
closure $\delta$ is not defined in more generality.  This visual cue
persists throughout the more general notions of socle: bar means
arbitrary poset, and no bar means real polyhedral group.  The
subscript~``$\0$'' refers to the minimal face of the positive cone of
the real polyhedral group; to be precise, it is the $\tau = \0$
special case of Definition~\ref{d:soct}.
\end{remark}

\begin{lemma}\label{l:soco-left-exact}
The cogenerator functor $\cM \mapsto \soco\cM$ is left-exact.
\end{lemma}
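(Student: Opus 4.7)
The plan is to read the definition $\soco\cM = \socc\,\delta\cM$ as a composition of two functors and then invoke the exactness properties already established for each factor. Specifically, I would first argue that the upper boundary functor $\delta$ is exact as a functor from $\cQ$-modules to $(\cQ \times \fqo)$-modules. By Definition~\ref{d:upper-boundary}, $\delta\cM = \bigoplus_{\sigma \in \cfq} \dsm$, and each summand $\cM \mapsto \dsm$ is exact by Lemma~\ref{l:exact-delta}; since finite (in fact arbitrary) direct sums of exact functors of vector-space-valued functors are exact, and the $\fqo$-module structure on $\delta\cM$ is assembled componentwise via the natural maps of Lemma~\ref{l:natural} (which preserve kernels and cokernels on each graded piece), $\delta$ itself is exact.

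Next, I would invoke Lemma~\ref{l:left-exact-socc} applied to the poset $\cP = \cQ \times \fqo$ (which is indeed a poset, with the product order, $\fqo$ being partially ordered as noted in Remark~\ref{r:semilattice=monoid}). That lemma yields that $\socc = \hhom_\cP(\kk,-)$ is left-exact on $\cP$-modules.

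Finally, I would conclude that the composite $\soco = \socc \circ \delta$ is left-exact, since a left-exact functor precomposed with an exact functor is left-exact: given a left-exact sequence $0 \to \cM' \to \cM \to \cM''$, exactness of $\delta$ produces a left-exact sequence $0 \to \delta\cM' \to \delta\cM \to \delta\cM''$ of $(\cQ \times \fqo)$-modules, and then left-exactness of $\socc$ produces the desired left-exact sequence $0 \to \soco\cM' \to \soco\cM \to \soco\cM''$.

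There is no real obstacle here: everything is packaged by earlier lemmas, and the only point requiring even a moment's care is recording that $\delta\cM$ is naturally a module over the product poset $\cQ \times \fqo$ so that $\socc$ can be applied in the sense of Definition~\ref{d:socc}. I would state the proof as a single short paragraph citing Lemmas~\ref{l:exact-delta} and~\ref{l:left-exact-socc}.
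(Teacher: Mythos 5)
Your proof is correct and follows exactly the same route as the paper: exactness of each $\dsm$ (Lemma~\ref{l:exact-delta}), exactness of the direct sum assembling $\delta\cM$, and left-exactness of $\socc$ over the poset $\cQ\times\fqo$ (Lemma~\ref{l:left-exact-socc}), composed in that order. No gaps.
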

\begin{proof}
Use exactness of upper boundaries atop~$\sigma$
(Lemma~\ref{l:exact-delta}), exactness of the direct sums forming
$\delta\cM$ from $\dsm$, and left-exactness of closed socles
(Lemma~\ref{l:left-exact-socc}).
\end{proof}
  
Sometimes is it useful to apply the closed socle functor
to~$\delta\cM$ over $\cQ \times \fqo$ in two steps, first over one
poset and then over the other.  These yield the same result.

\begin{lemma}\label{l:either-order}
The functors $\rnsoc\!$ and $\fqsoc$ commute.  In particular,
$$
  \fqsoc(\rnsoc\delta\cM)
  \cong
  \soco\cM\!
  \cong
  \rnsoc(\fqsoc\delta\cM).
$$
\end{lemma}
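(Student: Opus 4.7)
The plan is to deduce the lemma from a general product-of-posets identity: for any module~$N$ over a product poset $\cP \times \cP'$, the closed socle factors through iterated one-variable closed socles,
$$
  (\cP \times \cP')\hbox{\rm-}\socc N
  \,\cong\,
  \cP\hbox{\rm-}\socc(\cP'\hbox{\rm-}\socc N)
  \,\cong\,
  \cP'\hbox{\rm-}\socc(\cP\hbox{\rm-}\socc N).
$$
Specializing to $\cP = \cQ$, $\cP' = \fqo$, and $N = \delta\cM$ yields the lemma, because Definition~\ref{d:soc} defines $\soco\cM$ as the closed socle of $\delta\cM$ over the product poset~$\cQ \times \fqo$.

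To make sense of the iterated socles, I first need to verify that $\rnsoc\delta\cM$ is genuinely a sub-$\cQ \times \fqo$-module of $\delta\cM$, not merely a sub-$\cQ$-module, so that $\fqsoc$ can be applied to it. Given $x \in (\delta\cM)_{(q,\sigma)}$ annihilated by every $\cQ$-structure map to $(\delta\cM)_{(q',\sigma)}$ with $q' \succ q$, and a move $\sigma \to \sigma'$ in~$\fqo$, the commuting square of $\cQ \times \fqo$-structure maps forces the $\fqo$-image of~$x$ in~$(\delta\cM)_{(q,\sigma')}$ to vanish in every $(\delta\cM)_{(q',\sigma')}$. Hence $\rnsoc\delta\cM$ inherits its $\fqo$-action, and by the symmetric argument $\fqsoc\delta\cM$ inherits a $\cQ$-action.

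The displayed identity is then immediate from the observation that any strict inequality $(p,p') \prec (q,q')$ in a product poset factors through $(p,q')$ (and also through $(q,p')$). Thus $x \in N_{(p,p')}$ lies in $(\cP \times \cP')\hbox{\rm-}\socc N$ if and only if its image vanishes in $N_{(p,q')}$ for every $q' \succ p'$ and in $N_{(q,p')}$ for every $q \succ p$; these are precisely the conditions defining $\cP\hbox{\rm-}\socc(\cP'\hbox{\rm-}\socc N)$ and $\cP'\hbox{\rm-}\socc(\cP\hbox{\rm-}\socc N)$, so all three functors agree.

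I do not anticipate a genuine obstacle: the content is a formal consequence of $\delta\cM$ being a module over the product poset $\cQ \times \fqo$ (Definition~\ref{d:upper-boundary}) together with the definition of closed socle as $\hhom_\cP(\kk,-)$. The only bookkeeping is the reversed order on~$\fqo$ (Remark~\ref{r:semilattice=monoid}) and the factorization of the skyscraper $\kk_{(q,\sigma)}$ over $\cQ \times \fqo$ into its components in each factor---the adjunction underlying the reduction of socles over a product to iterated one-variable socles.
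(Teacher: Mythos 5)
Your proposal is correct and takes essentially the same route as the paper: the paper's proof is exactly the factorization $\Hom_{\cQ\times\fqo}(\kk_{\aa,\sigma},-) \cong \Hom_{\fqo}\bigl(\kk_\sigma,\Hom_{\cQ}(\kk_\aa,-)\bigr) \cong \Hom_\cQ\bigl(\kk_\aa,\Hom_{\fqo}(\kk_\sigma,-)\bigr)$, summed over degrees, and your elementwise argument (strict inequalities in a product poset factor through an elbow, so the product-poset socle condition is the conjunction of the two one-factor conditions) is just this adjunction unwound. Your explicit check that $\rnsoc\delta\cM$ inherits the $\fqo$-action is a worthwhile detail the paper leaves implicit.
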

\begin{proof}
By taking direct sums over $\aa$ and~$\sigma$, this follows from the
natural isomorphisms
$
  \Hom_{\fqo}\bigl(\kk_\sigma,\Hom_{\cQ}(\kk_\aa,-)\bigr)
  \cong
  \Hom_{\cQ\times\fqo}(\kk_{\aa,\sigma},-)
  \cong
  \Hom_\cQ\bigl(\kk_\aa,\Hom_{\fqo}(\kk_\sigma,-)\bigr).
$
\end{proof}

The fundamental examples---indicator quotients for downsets---require
a notation.

\begin{defn}\label{d:del-nabla}
In the situation of Definition~\ref{d:cocomplex}, write $\del\nabla$
for the antichain of faces of~$\cQ_+$ that are minimal under inclusion
in~$\nabla$.
\end{defn}

\begin{example}\label{e:del-nabla}
For the module~$\cM$ in Example~\ref{e:trouble-soc}, the shape
of~$\cM$ (Proposition~\ref{p:shape}) at any point along the relative
interior of the hypotenuse is the cocomplex
(Definition~\ref{d:cocomplex}.\ref{i:cocomplex})~$\nabla$ comprising
the faces of $\RR^2_+$ labeled as $x$, $y$, and $\RR^2_+$ itself, so
$\del\nabla = \{x, y\}$.
\end{example}

\begin{remark}\label{r:del-nda}
The reason to write $\del\nabla$ instead of $\max$ or~$\min$ is that
it would be ambiguous either way, as both $\cfq$ and~$\fqo$ are
natural here.  Taking the ``op'' perspective
(Remark~\ref{r:semilattice=monoid}), the $\fqo$-module
$\kk[\del\nabla]$ with basis~$\del\nabla$ resulting from
Definition~\ref{d:del-nabla} is really just a $\fqo$-graded vector
space: the antichain condition ensures that every element of~$\fqo$
not acting as the identity on a summand of~$\kk[\del\nabla]$ acts
by~$0$.  More precisely, if the summand is indexed by a face~$\tau$,
then $\sigma \in \fqo$ acts as the identity on the summand if $\sigma
\supseteq \tau$ (because then $\sigma + \tau = \tau$ as in
Remark~\ref{r:semilattice=monoid}) and $\sigma$ acts by~$0$ otherwise.
In particular, if $\del\nabla = \{\0\}$, then all of $\fqo$ acts
by~$1$~on~$\kk[\del\nabla]$.
\end{remark}

The case of most interest here is $\nabla = \nda$, the shape of~$D$
at~$\aa$ (Proposition~\ref{p:shape}).

\begin{example}\label{e:soc-Rn-downset}
For a downset $D$ in a real polyhedral group, $\fqsoc\delta\kk[D]$ has
$\kk[\del\nda]$ in each degree~$\aa$, because $\delta\kk[D]$ itself
has $\kk[\nda]$ in each degree~$\aa$ by
Definition~\ref{d:upper-closure} and
Proposition~\ref{p:downset-upper-closure}.  Note that $\delta\kk[D]$
and $\fqsoc\delta\kk[D]$ are direct sums over faces~$\sigma$, since
they are $\fqo$-modules.  What $\rnsoc$ then does, for each~$\sigma$,
is find the degrees~$\aa \in \cQ$ maximal among those where $\sigma
\in \del\nda$, by
Proposition~\ref{p:downset-upper-closure}.\ref{i:nabla} and
Example~\ref{e:socc}.

Taking socles in the other order, first $\rnsoc\kk[\dsd]$ asks
whether $\sigma \in \nda$ but $\sigma \not\in \nd[\bb]$ for any $\bb
\succ \aa$ in~$\cQ$.  That can happen even if $\sigma$ contains a
smaller face where it still happens.  What $\fqsoc$ then does is
return the smallest faces at~$\aa$ where it happens.
\end{example}

\begin{cor}\label{c:soc-Rn-downset}
The socle of the indicator quotient $\kk[D]$ for any downset $D$ in a
real polyhedral group~$\cQ$ is nonzero only in degrees lying in the
topological boundary~$\del D$.
\end{cor}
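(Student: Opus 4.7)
The plan is to unwind Definition~\ref{d:soc} via Example~\ref{e:soc-Rn-downset}, so that computing $(\soco\kk[D])_\aa$ reduces to computing the shape $\nda$ and its minimal faces $\del\nda$, and then to rule out the two possibilities for $\aa \notin \del D$: either $\aa \notin \oD$, or $\aa$ lies in the topological interior~$D^\circ$.

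First I would handle $\aa \notin \oD$. Since the complement of~$\oD$ is open, there is a small open ball around~$\aa$ disjoint from~$D$. For any face~$\sigma$ of~$\cQ_+$, the relative interior $\sigma^\circ$ contains points of arbitrarily small norm (and is the singleton $\{\0\}$ when $\sigma = \{\0\}$, in which case $\aa \notin D$ itself already does the job), so $\aa - \sigma^\circ$ meets the ball and hence is not contained in~$D$. Consequently $\sigma \notin \nda$ for every~$\sigma$, i.e., $\nda = \emptyset$. Proposition~\ref{p:downset-upper-boundary} then gives $(\delta\kk[D])_\aa = \kk[\nda] = 0$, so the $\fqo$-socle, and hence $\soco$, vanishes at~$\aa$.

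Next I would handle $\aa \in D^\circ$. Now $D$ contains an entire neighborhood of~$\aa$, and being a downset it contains $\aa - \cQ_+^\circ$ in a neighborhood of~$\0$; in particular $\aa - \sigma^\circ \subseteq D$ for every face~$\sigma$, so $\nda = \cfq$ and $\del\nda = \{\{\0\}\}$ by Definition~\ref{d:del-nabla}. By Lemma~\ref{l:either-order} and Example~\ref{e:soc-Rn-downset}, the only potential socle contribution at~$\aa$ sits in $\fqo$-degree~$\{\0\}$. The main obstacle is to see that the remaining $\rnsoc$ step annihilates this one-dimensional piece. To do so, I would use that $D^\circ$ is open and that $\cQ_+$ is full-dimensional (being a pointed cone generating~$\cQ$), so I can pick $\vv \in \cQ_+^\circ$ and $\epsilon > 0$ with $\bb := \aa + \epsilon\vv \in D^\circ$, giving $\bb \succ \aa$. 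Since $\bb \in D$, the structure map $\kk[D]_\aa \to \kk[D]_\bb$ is the identity on~$\kk$, and this is precisely the $\cQ$-structure map at $\fqo$-degree~$\{\0\}$ inside $\fqsoc\delta\kk[D]$. Thus the candidate element at~$(\aa, \{\0\})$ is not annihilated, and $(\soco\kk[D])_\aa = 0$.

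Combining the two cases, $\soco\kk[D]$ can be nonzero only at degrees in $\oD \setminus D^\circ = \del D$, proving the corollary. The only nontrivial ingredient beyond definition-chasing is the interior step, where openness of~$D^\circ$ together with full-dimensionality of~$\cQ_+$ supplies a strictly larger degree inside~$D^\circ$; everything else is a direct read-off of $\nda$ from the geometry of~$D$ and the computations already packaged in Proposition~\ref{p:downset-upper-boundary} and Example~\ref{e:soc-Rn-downset}.
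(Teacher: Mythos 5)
Your proof is correct and follows essentially the same route as the paper's: both reduce to the computation of $\delta\kk[D]$ as a sum of downset modules sandwiched between $D$ and~$\oD$ (Proposition~\ref{p:downset-upper-boundary}) and then use that closed socle elements sit at maximal degrees, which cannot occur off the closure or in the interior. Your version just makes the two cases ($\aa \notin \oD$, where $\nda = \varnothing$, and $\aa \in D^\circ$, where the candidate element survives moving up) explicit where the paper compresses them into citations of Example~\ref{e:socc} and Proposition~\ref{p:shape}.
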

\begin{proof}
By Proposition~\ref{p:downset-upper-closure}.\ref{i:containment},
$\delta\kk[D]$ is a direct sum of indicator quotients.
Example~\ref{e:socc} and Proposition~\ref{p:shape} show that the socle
of an indicator quotient over a real polyhedral group lies along the
boundary of the downset in question.
\end{proof}

\subsection{Closed socles along faces of positive dimension}\label{b:socc+}\mbox{}

\medskip
\noindent
Every element should divide a socle element.  But some elements
persist indefinitely in some directions.  In some modules---coprimary
ones (Definition~\ref{d:coprimary}), for example---every element
persists indefinitely along a face of the positive cone.  This
behavior is not rare; it occurs in all modules of positive Krull
dimension.  Such cases require socles along faces, in contrast to the
socles in Sections~\ref{b:socc+} and~\ref{b:soc}, which occur at
isolated points.  Locating closed socles along faces
(Definition~\ref{d:socct}) still uses $\Hom$ functors, in analogy with
Definition~\ref{d:socc}, but since they occur along faces of~$\cQ$,
the $\Hom$ functors try to insert indicator modules~$\kk[\tau]$ of
positive-dimensional faces (Definition~\ref{d:hhom}) instead of
individual points.
Localization and quotient-restriction the reduce back to the
zero-dimensional setting.  Much of this section concerns the order in
which these various $\Hom$, localization, and quotient-restriction
operations are applied.

\begin{defn}\label{d:hhom}
Fix a face~$\tau$ of a partially ordered abelian group~$\cQ$.  The
\emph{skyscraper} $\cQ$-module at $\aa \in \cQ$ along~$\tau$ is
$\kk[\aa+\tau]$, the subquotient $\kk[\aa + \cQ_+]/\kk[\aa +
\mm_\tau]$ of~$\kk[\cQ]$, where $\mm_\tau = \cQ_+\! \minus \tau$.  Set
$$
  \hhom_\cQ\bigl(\kk[\tau],-\bigr)
  =
  \bigoplus_{\aa \in \cQ} \Hom_\cQ\bigl(\kk[\aa+\tau],-\bigr).
$$
\end{defn}

\begin{defn}\label{d:socct}
Fix a partially ordered abelian group~$\cQ$, a face~$\tau$, and a
$\cQ$-module~$\cM$.
\begin{enumerate}
\item\label{i:global-socc-tau}%
The \emph{global closed cogenerator functor along~$\tau$} takes $\cM$
to its \emph{global closed socle along~$\tau$}: the
$\kk[\qztp]$-module
$$
  \socct\cM = \hhom_\cQ\bigl(\kk[\tau],\cM\bigr)/\tau,
$$
where the quotient modulo~$\tau$ is quotient-restriction as in
Definition~\ref{d:quotient-restriction}.

\item\label{i:local-socc-tau}%
If $\qzt$ is partially ordered, then the \emph{local closed
cogenerator functor along~$\tau$} takes $\cM$ to its \emph{local
closed socle along~$\tau$}: the $\qzt$-module
$$
  \socc(\cmt) = \hhom_{\qzt}(\kk,\cM/\tau).
$$
Elements of $\socc(\cmt)$ are identified with elements of~$\cmt$ via
$\phi \mapsto \phi(1)$.

\item\label{i:global-closed-cogen}%
Regard the $\cQ$-module $\hhom_\cQ\bigl(\kk[\tau],\cM\bigr)$ naturally
as contained in~$\cM$ via \mbox{$\phi \mapsto \phi(1)$}.  A
homogeneous element in this $\cQ$-submodule that maps to a nonzero
element of $\socct\cM$ is a \emph{global closed cogenerator} of~$\cM$
along~$\tau$.  For an inteval $I \subseteq \cQ$, a \emph{global closed
cogenerator} of~$I$ is the degree in~$\cQ$ of a global closed
\mbox{cogenerator}~of~$\kk[I]$.

\item\label{i:local-closed-cogen}%
Regard $\socc(\cmt)$ as naturally contained in~$\cmt$ via $\phi
\mapsto \phi(1)$.  A nonzero homogeneous element in $\socc(\cmt)$ is a
\emph{local closed cogenerator} of~$\cM$ along~$\tau$.
\end{enumerate}
Assume the default modifier ``global'' when neither ``local'' nor
``global'' is written.
\end{defn}

\begin{example}\label{e:global-socc-tau}
As noted in Example~\ref{e:soc-min-primary}, the global closed socle
of the module
$\kk[\raisebox{-.6ex}[0pt][0pt]{\includegraphics[height=2.8ex]{decomp}}]$
at the left-hand end of Example~\ref{e:min-primary} along~$\tau = $ the
positive $x$-axis (Definition~\ref{d:socct}.\ref{i:global-socc-tau})
has an element~$\wt z$ represented by the horizontal ray in the upper
boundary.  In detail, any point~$\aa$ along the ray yields an injection
$\kk[x\text{-axis}_+] \into
\kk[\raisebox{-.6ex}[0pt][0pt]{\includegraphics[height=2.8ex]{decomp}}]$
generated in degree~$\aa$ because the ray extends infinitely far to the
right but is annihilated upon moving upward in any direction.  This
injection constitutes a global closed cogenerator
(Definition~\ref{d:socct}.\ref{i:global-closed-cogen})~$z$
that maps to~$\wt z$ in the quotient-restriction modulo~$\tau$
(Definition~\ref{d:quotient-restriction}), and the degree~$\aa$ is a
global closed cogenerator of the
interval~$\raisebox{-.6ex}[0pt][0pt]{\includegraphics[height=2.8ex]{decomp}}$.
In contrast, the global closed socle
of~$\kk[\raisebox{-.6ex}[0pt][0pt]{\includegraphics[height=2.8ex]{decomp}}]$
along~$\tau = $ the positive $y$-axis vanishes because the module has
no vertical rays that are annihilated upon moving to the right.
\end{example}

\begin{example}\label{e:local-socc-tau}
Localizing the module
$\kk[\raisebox{-.6ex}[0pt][0pt]{\includegraphics[height=2.8ex]{decomp}}]$
from Example~\ref{e:global-socc-tau} (originally from
Example~\ref{e:min-primary}) along the $x$-axis yields the right-hand
module from~Example~\ref{e:min-primary}, which is the interval module
for a closed lower half-plane.  The quotient-restriction along the
$x$-axis is therefore the interval module for a closed
downward-pointing ray, which has closed socle~$\kk$ residing at the
closed endpoint.  Remarkably, localizing
$\kk[\raisebox{-.6ex}[0pt][0pt]{\includegraphics[height=2.8ex]{decomp}}]$
along the $y$-axis yields the interval module for a \emph{closed} left
half-plane, so the quotient-restriction
$\kk[\raisebox{-.6ex}[0pt][0pt]{\includegraphics[height=2.8ex]{decomp}}]/y\text{-axis}_+$
along the $y$-axis is the interval module for a closed left-pointing
ray.  Therefore
$\kk[\raisebox{-.6ex}[0pt][0pt]{\includegraphics[height=2.8ex]{decomp}}]$
has local closed socle
(Definition~\ref{d:socct}.\ref{i:local-socc-tau}) isomorphic to~$\kk$
residing at the closed right endpoint of the ray.  Compare the
conclusion of Example~\ref{e:global-socc-tau}.
\end{example}

\begin{remark}\label{r:notation-soc-tau}
Notationally, a subscript on ``$\soc$'' serves as a visual cue that
the functor is over a partially ordered abelian group, as faces of
posets are not defined in more generality.  This visual cue persists
throughout the more general notions of~socle.
\end{remark}

\begin{remark}\label{r:socc0}
The closed cogenerator functor over a partially ordered abelian group
is the global closed cogenerator functor along the trivial face:
$\socc = \socc[\{\0\}]$ and it equals the local cogenerator functor
along~$\{\0\}$.
\end{remark}

\begin{remark}\label{r:witness}
In looser language, a closed cogenerator of~$\cM$ along~$\tau$ is an
element
\begin{itemize}
\item%
annihilated by moving up in any direction outside of~$\tau$ but that
\item%
remains nonzero when pushed up arbitrarily along~$\tau$.
\end{itemize}
Equivalently, a closed cogenerator along~$\tau$ is an element whose
annihilator under the action of~$\cQ_+$ on~$\cM$ equals the prime
ideal $\mm_\tau = \cQ_+ \minus \tau$ of the positive cone~$\cQ_+$.
Elements like this are sometimes known as ``witnesses'' in commutative
algebra.
\end{remark}

\begin{remark}\label{r:automatic-partial-order-qzt}
The next Example, like all of the results between here and
Section~\ref{b:soc-along}, refers to a face~$\tau$ such that $\qzt$
is partially ordered.  The fact that $\qzt$ is not automatically
partially ordered is discussed in Remark~\ref{r:qrt}, which notes that
in many cases of interest it is, with the positive cone for~$Q_+$
being the image of~$Q_+$ in~$\qzt$.
\end{remark}

\begin{example}\label{e:socct}
The closed socle along a face~$\tau$ of the indicator quotient
$\kk[I]$ for any interval~$I$ in a partially ordered abelian
group~$\cQ$ with partially ordered quotient~$\qzt$~is
$$
  \socct\, \kk[I] = \kk[\mti],
$$
where $\mti$ is the image in~$\qzt$ of the set of closed cogenerators
of~$I$ along~$\tau$:
$$
  \mti =
  \big\{\aa \in I \mid (\aa+\cQ_+) \cap I = \aa + \tau\big\}/\,\ZZ\tau.
$$
The set of closed cogenerators of a downset~$D$ along~$\tau$ can also
be characterized as the elements of~$D$ that become maximal in the
localization~$D_\tau$ of~$D$ (Definition~\ref{d:localize-downset}).
\end{example}

Every global closed cogenerator yields a local one.

\begin{prop}\label{p:local-vs-global-closed}
Fix a partially ordered abelian group~$\cQ$.  There is a natural
injection
$$
  \socct\cM \into \socc(\cmt)
$$
for any $\cQ$-module~$\cM$ if $\tau$ is a face with partially ordered
quotient~$\qzt$.
\end{prop}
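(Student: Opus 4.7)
The plan is to construct the map in two steps: first use the natural inclusion of $\hhom_\cQ(\kk[\tau],\cM)$ into~$\cM$ to produce a $\qzt$-module homomorphism $\socct\cM \to \cmt$, then verify that its image lies in $\socc(\cmt)$ and that the resulting map is injective.

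I start by identifying $\hhom_\cQ(\kk[\tau],\cM)$ with the $\cQ$-graded subspace $\{m \in \cM \mid \mm_\tau\cdot m = 0\}$ of~$\cM$ via $\phi \mapsto \phi(1)$, as in Definition~\ref{d:socct}.\ref{i:global-closed-cogen}, where $\mm_\tau = \cQ_+\minus\tau$. This subspace is a $\kk[\cQ_+]$-submodule: the face condition ensures $\bb + \mm_\tau \subseteq \mm_\tau$ for $\bb \in \tau$ (otherwise some $c \in \mm_\tau$ would satisfy $\bb + c \in \tau$, forcing $c \in \tau$), so $\mm_\tau\cdot(x^\bb m) = 0$ whenever $\mm_\tau\cdot m = 0$; and elements of~$\mm_\tau$ themselves act as zero on such~$m$ by definition. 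Applying the quotient-restriction functor to this inclusion yields a natural $\qzt$-module map
$$
  \socct\cM = \hhom_\cQ(\kk[\tau],\cM)/\tau \too \cmt.
$$
Since $\tau = \cQ_+\cap\ZZ\tau$ and $\qztp$ has trivial unit group under the partially-ordered hypothesis on~$\qzt$, the image of~$\mm_\tau$ in~$\qztp$ equals the maximal ideal $\qztp\minus\{0\}$. Any element of~$\cmt$ in the image comes from one killed by~$\mm_\tau$ in~$\cM$, hence is killed by this maximal ideal, so the displayed map factors through $\socc(\cmt) = \hhom_{\qzt}(\kk,\cmt)$.

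For injectivity, let $m \in \hhom_\cQ(\kk[\tau],\cM)_\aa$ represent a class $[m] \in (\socct\cM)_{\wt\aa}$ whose image in $(\cmt)_{\wt\aa}$ vanishes. By Lemma~\ref{l:quotient-restriction}, the quotient $\cM_\tau \onto \cmt$ restricts in $\cQ$-degree~$\aa$ to an isomorphism $(\cM_\tau)_\aa \cong (\cmt)_{\wt\aa}$, so the image of~$m$ in the localization~$\cM_\tau$ vanishes. From the description of~$\cM_\tau$ as inverting the monomials $\{x^\bb : \bb \in \tau\}$, this yields some $\bb \in \tau$ with $x^\bb m = 0$ in~$\cM$. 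The face argument above again puts $x^\bb m$ into $\hhom_\cQ(\kk[\tau],\cM)_{\aa+\bb}$, where it equals zero since the inclusion into~$\cM$ is injective. The quotient-restriction sets $x^\bb$ equal to~$1$, so it identifies $[m]$ with $[x^\bb m] = 0$, yielding $[m] = 0$ in $\socct\cM$. Naturality in~$\cM$ is automatic since every ingredient in the construction is functorial.

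The main obstacle is the factorization through $\socc(\cmt)$: the partially-ordered hypothesis on~$\qzt$ is exactly what is needed to identify the image of~$\mm_\tau$ in~$\qztp$ with the full maximal ideal of~$\kk[\qztp]$, so that the notion of socle on the target is both defined and captured by elements killed by~$\mm_\tau$.
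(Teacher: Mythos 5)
Your proof is correct and follows essentially the same route as the paper's: identify the global closed socle with the annihilator submodule $\{m \in \cM \mid \mm_\tau m = 0\}$ via $\phi \mapsto \phi(1)$, apply quotient-restriction, and check the image lands in $\socc(\cmt)$. The only difference is packaging — where the paper cites exactness of quotient-restriction (Lemma~\ref{l:exact-qr}) and the identification in Lemma~\ref{l:socct}, you verify injectivity and the socle condition by hand at the level of elements, using that an element dies in $\cM_\tau$ only if some monomial $x^\bb$ with $\bb \in \tau$ kills it; both verifications are sound.
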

\begin{proof}
Localizing any homomorphism $\kk[\aa+\tau] \to \cM$ along~$\tau$
yields a homomorphism $\kk[\aa+\ZZ\tau] \to \cM_\tau$, so
$\hhom_\cQ\bigl(\kk[\tau],\cM\bigr){}_\tau$ is naturally a submodule
of $\hhom_\cQ\bigl(\kk[\ZZ\tau],\cM_\tau\bigr)$.  The claim now
follows from Lemma~\ref{l:exact-qr} and the next result.
\end{proof}

\begin{lemma}\label{l:socct}
If $\cQ$ and $\qzt$ are partially ordered, there is a canonical
\mbox{isomorphism}
$$
  \hhom_\cQ\bigl(\kk[\ZZ\tau],\cM_\tau\bigr)/\tau
  \cong
  \hhom_{\qzt}(\kk,\cM/\tau).
$$
\end{lemma}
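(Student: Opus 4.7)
The plan is to unwind both sides into explicit descriptions at each $\qzt$-degree $\wt\aa$, and match them via the quotient map $\cM_\tau \onto \cmt$.

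For the left-hand side, in $\cQ$-degree $\aa$ lifting $\wt\aa$, any $\phi \in \Hom_\cQ(\kk[\aa+\ZZ\tau], \cM_\tau)$ is determined by $m := \phi(1_\aa) \in (\cM_\tau)_\aa$, because $\cM_\tau$ is $\tau$-localized (Lemma~\ref{l:quotient-restriction}). The condition that $\phi$ vanish in degrees outside $\aa+\ZZ\tau$ translates to $\mm_\tau \cdot m = 0$ in $\cM_\tau$, where $\mm_\tau = \cQ_+\minus\tau$. So if $L := \hhom_\cQ(\kk[\ZZ\tau], \cM_\tau)$, then $L \subseteq \cM_\tau$ is the $\cQ$-submodule of elements annihilated by $\mm_\tau$. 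Since the $\tau$-action remains invertible on $L$, another application of Lemma~\ref{l:quotient-restriction} gives $(L/\tau)_{\wt\aa} \cong L_\aa$.

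For the right-hand side, $\Hom_{\qzt}(\kk_{\wt\aa}, \cmt)$ is the set of elements of $(\cmt)_{\wt\aa}$ annihilated by $\qztp\minus\{\0\}$. The quotient map $\cM_\tau \to \cmt$ restricts in $\cQ$-degree $\aa$ to an isomorphism $(\cM_\tau)_\aa \simto (\cmt)_{\wt\aa}$, again by Lemma~\ref{l:quotient-restriction}.

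The key step---and the main obstacle---is to verify that this last isomorphism identifies $L_\aa$ with the closed socle of $\cmt$ at $\wt\aa$. One direction is immediate: if $q \in \cQ_+$ has nonzero image $\wt q \in \qztp$, then $q \notin \tau$ (because $\ZZ\tau \cap \cQ_+ = \tau$ when $\tau$ is a face), hence $q \in \mm_\tau$, so annihilation of $m$ by $\mm_\tau$ yields annihilation of its image $\bar m \in (\cmt)_{\wt\aa}$ by all nonzero elements of~$\qztp$. Conversely, given $q \in \mm_\tau$, its image $\wt q$ in $\qztp$ is nonzero, so $qm \in (\cM_\tau)_{\aa+q}$ must map to zero in $(\cmt)_{\wt\aa+\wt q}$; but $(\cM_\tau)_{\aa+q} \simto (\cmt)_{\wt\aa+\wt q}$ is an isomorphism by Lemma~\ref{l:quotient-restriction}, forcing $qm = 0$ in $\cM_\tau$. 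The resulting bijection is natural in $\cM$ and equivariant for the $\kk[\qztp]$-action (which on both sides is induced by the $\cQ_+$-action on $\cM_\tau$), so it assembles into the required isomorphism of $\qzt$-modules.
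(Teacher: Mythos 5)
Your proof is correct and takes essentially the same route as the paper, whose entire proof is ``follows from the definitions, using that $\kk[\ZZ\tau]/\tau = \kk$ in $(\qzt)$-degree~$\0$''; you have simply written out in full the element-level verification that both sides compute, in each degree, the elements of~$(\cM_\tau)_\aa \cong (\cmt)_{\wt\aa}$ annihilated by $\mm_\tau$, equivalently by the nonzero elements of~$\qztp$. The only cosmetic remark is that the identity $\ZZ\tau \cap \cQ_+ = \tau$ is actually what the \emph{converse} direction of your key step needs (to see that $q \in \mm_\tau$ has nonzero image $\wt q$), not the forward direction where you cite it.
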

\begin{proof}
Follows from the definitions, using that $\kk[\ZZ\tau]/\tau = \kk$ in
$(\qzt)$-degree~$\0$.
\end{proof}

The following crucial remark highlights the difference between
real-graded algebra and integer-graded algebra.  It is the source of
much of the subtlety in the theory developed in this paper,
particularly Sections~\ref{s:socle}--\ref{s:hulls}.

\begin{remark}\label{r:soc-vs-supp}
In contrast with taking support on a face
\cite[Proposition~4.6]{prim-decomp-pogroup} and also with socles in
commutative algebra over noetherian local or graded rings,
localization need not commute with taking closed socles along faces of
positive dimension in real polyhedral groups.  In other words, the
injection in Proposition~\ref{p:local-vs-global-closed} need not be
surjective: there can be local closed cogenerators that do not lift to
global ones.  The problem comes down to the homogeneous prime ideals
of the monoid algebra $\kk[\cQ_+]$ not being finitely generated, so
the quotient $\kk[\tau]$ fails to be finitely presented; it means that
$\Hom_{\kk[\cQ_+]}\bigl(\kk[\tau],-\bigr)$ need not commute with $A
\otimes_{\kk[\cQ_+]}-$, even when $A$ is a flat $\kk[\cQ_+]$-algebra
such as a localization of~$\kk[\cQ_+]$.  The context of
$\RR^n$-modules complicates the relation between support on~$\tau$ and
closed cogenerators along~$\tau$ because the ``thickness'' of the
support can approach~$0$ without ever quantum jumping all the way
there and, importantly, remaining there along an entire translate
of~$\tau$, as it would be forced to for a discrete group like~$\ZZ^n$.
See Example~\ref{e:global-support}, for instance, where the support on
the $y$-axis does not contribute any closed socle along the $y$-axis
to the ambient module.  Or see another manifestation in
Examples~\ref{e:global-socc-tau} and~\ref{e:local-socc-tau}, where the
global and local socles differ.  This issue is independent of the
density phenomenon explored in Section~\ref{s:minimality}; indeed, the
interval in Example~\ref{e:global-support} is closed, so its socle
equals~its~closed~socle~and~is~closed.
\end{remark}

\begin{prop}\label{p:left-exact-tau-closed}
The global closed cogenerator
$\socct\!$ along any face~$\tau$ of a partially ordered abelian group
is left-exact, as is the local version if $\qzt$ is partially ordered.
\end{prop}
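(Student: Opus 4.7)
The plan is to exhibit both $\socct$ and $\socc(-/\tau)$ as composites of a Hom-style functor and the quotient-restriction, each of whose exactness properties has already been established, and then invoke the standard fact that a composite of a left-exact functor with an exact one is again left-exact.

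For the global version, I would write $\socct = (-/\tau) \circ \hhom_\cQ(\kk[\tau],-)$. The outer factor, quotient-restriction along~$\tau$, is exact by Lemma~\ref{l:exact-qr}, so it suffices to verify left-exactness of the inner factor. By Definition~\ref{d:hhom},
\[
  \hhom_\cQ(\kk[\tau],-)\;=\;\bigoplus_{\aa\in\cQ}\Hom_\cQ(\kk[\aa+\tau],-),
\]
which is a direct sum of Hom functors out of fixed $\cQ$-modules $\kk[\aa+\tau]$. Each summand is left-exact because Hom out of a fixed object in an abelian category is automatically left-exact (as was observed in the proof of Lemma~\ref{l:left-exact-socc}, where the abelian structure on $\cQ$-modules was invoked). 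Direct sums of left-exact functors with vector-space values are left-exact, since direct sums of vector spaces commute with kernels. Hence $\hhom_\cQ(\kk[\tau],-)$ is left-exact, and its composite with the exact functor $-/\tau$ is left-exact, giving the global claim.

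For the local version, I would write $\socc(\cmt) = \hhom_{\qzt}(\kk,-) \circ (-/\tau)$, so the same strategy applies in the opposite order. The inner factor $\cM \mapsto \cmt$ is exact by Lemma~\ref{l:exact-qr}; this uses the assumption that $\qzt$ is partially ordered so that the output is a module over the poset $\qzt$ and Lemma~\ref{l:left-exact-socc} can be applied. The outer factor is then the closed cogenerator functor $\socc = \hhom_{\qzt}(\kk,-)$ over the poset~$\qzt$, which is left-exact by Lemma~\ref{l:left-exact-socc}. The composite of an exact functor with a left-exact one is left-exact, yielding the local claim.

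There is no substantive obstacle; the proposition is a formal consequence of the two exactness results cited (Lemmas~\ref{l:exact-qr} and~\ref{l:left-exact-socc}) together with the general principle that Hom from a fixed object in an abelian category is left-exact. The only point one should mention explicitly, beyond those inputs, is that forming the direct sum over $\aa\in\cQ$ in the definition of $\hhom_\cQ(\kk[\tau],-)$ preserves left-exactness, since otherwise one might worry about the passage from single-degree Hom functors to the grade-aware version needed for taking quotient-restriction afterwards.
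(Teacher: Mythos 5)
Your proposal is correct and follows essentially the same route as the paper: decompose $\socct$ as quotient-restriction (exact by Lemma~\ref{l:exact-qr}) applied after $\hhom_\cQ(\kk[\tau],-)$, which is left-exact as a Hom out of a fixed object in the abelian category of $\cQ$-graded $\kk[\cQ_+]$-modules, and handle the local case by composing the exact functor $\cM\mapsto\cmt$ with the left-exact closed socle over~$\qzt$ (Lemma~\ref{l:left-exact-socc}). Your explicit remark that the direct sum over $\aa\in\cQ$ preserves left-exactness is a harmless extra precaution, and your restraint in claiming only left-exactness (rather than exactness) of $\hhom_\cQ(\kk[\tau],-)$ is appropriate, since that functor need not be exact when $\kk[\tau]$ is not finitely presented.
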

\begin{proof}
For the global case, $\hhom_Q(\kk[\tau],-)$ is exact because it occurs
in the category of graded modules over the monoid
algebra~$\kk[\cQ_+]$, and quotient-restriction is exact by
Lemma~\ref{l:exact-qr}.  For the local case, use exactness of $\cM
\mapsto \cmt$ again (Lemma~\ref{l:exact-qr}) and left-exactness of
closed socles (Lemma~\ref{l:left-exact-socc}), the latter applied
over~$\qzt$.
\end{proof}

\begin{remark}\label{r:along}
Closed socles, without reference to faces, work over arbitrary posets
and are actually used that way in this work (over $\fqo$, for
instance, in Section~\ref{b:soc}).  That explains why
Section~\ref{b:socc} is needed in the generality presented there.
\end{remark}

\subsection{Socles along faces of positive dimension}\label{b:soc-along}\mbox{}

\medskip
\noindent
Sections~\ref{b:soc} and~\ref{b:socc+} generalize Section~\ref{b:socc}
in two independent ways: with non-closed cogenerators and with closed
cogenerators of positive dimemsion.  What remains is to take the join
of these by defining non-closed cogenerators along faces of positive
dimension (Definition~\ref{d:soct}).  This requires upper closure
functors along faces of positive dimension, and it is followed by
analyses of changing the order of operations, enlarging the relevant
face, computing socles of indicator modules of faces, and exactness.

\begin{lemma}\label{l:nabt}
If $\tau$ is a face of a real polyhedral group~$\cQ$ then the face
poset of the quotient real polyhedral group $\qrt$ is isomorphic to
the open star $\nabt$ from Example~\ref{e:nabla} by the map $\nabt \to
(\qrt)_+$ sending $\sigma \in \nabt$ to its image $\sigma/\tau$
in~$\qrt$.
\end{lemma}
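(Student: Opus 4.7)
The plan is to verify this by working with support functionals, which characterize faces of a pointed polyhedral cone and translate cleanly under the quotient map $\pi\colon \cQ \to \qrt$. By Remark~\ref{r:qrt}, $\qrt$ is a real polyhedral group with $(\qrt)_+ = \qrtp = \pi(\cQ_+)$, so both sides of the proposed bijection are sets of faces of pointed polyhedral cones.

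First I would establish that $\sigma \mapsto \sigma/\tau$ is well defined on $\nabt$ and order preserving. If $\sigma \supseteq \tau$ is a face of $\cQ_+$, there is a linear functional $\ell \colon \cQ \to \RR$ with $\ell \geq 0$ on $\cQ_+$ and $\sigma = \ell^{-1}(0) \cap \cQ_+$. Since $\sigma \supseteq \tau$, the functional $\ell$ vanishes on $\RR\tau$ and hence descends to $\bar\ell$ on $\qrt$. Then $\bar\ell \geq 0$ on $\qrtp$ and $\bar\ell^{-1}(0) \cap \qrtp = \pi(\sigma) = \sigma/\tau$, exhibiting $\sigma/\tau$ as a face of~$\qrtp$. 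Order preservation ($\sigma \subseteq \sigma'$ implies $\sigma/\tau \subseteq \sigma'/\tau$) is immediate from the definition of~$\pi$.

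Next I would prove injectivity: given faces $\sigma, \sigma' \in \nabt$ with $\sigma/\tau = \sigma'/\tau$, the preimage description $\sigma = \pi^{-1}(\sigma/\tau) \cap \cQ_+$ (which holds because $\sigma \supseteq \tau$, so $\sigma$ is saturated under translation by~$\RR\tau$ within $\cQ_+$) forces $\sigma = \sigma'$. Here the key subpoint, worth a line of justification, is that a face $\sigma$ containing $\tau$ satisfies $\sigma + \RR\tau \cap \cQ_+ = \sigma$; this is the standard fact that for a face $\sigma$ of a pointed cone containing another face $\tau$, one has $\sigma = (\sigma + \RR\tau) \cap \cQ_+$, which follows by pairing with any supporting functional~$\ell$ that cuts out~$\sigma$ (since $\ell$ vanishes on $\RR\tau$).

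For surjectivity, given a face $\bar\sigma$ of $\qrtp$, choose a supporting functional $\bar\ell$ on $\qrt$ with $\bar\sigma = \bar\ell^{-1}(0) \cap \qrtp$, and let $\ell = \bar\ell \circ \pi$ on $\cQ$. Then $\sigma := \ell^{-1}(0) \cap \cQ_+$ is a face of $\cQ_+$ containing $\tau$ (since $\ell|_\tau = 0$ and $\tau \subseteq \cQ_+$), and $\pi(\sigma) = \bar\sigma$, so $\sigma \in \nabt$ maps to~$\bar\sigma$. The main obstacle I foresee is purely bookkeeping: one has to be careful that the preimage $\sigma$ is exactly a face (not merely contained in one) and that the identification $\pi(\sigma) = \bar\sigma$ holds on the nose, which reduces to the fact that every point of $\bar\sigma$ lifts to a point of $\cQ_+$ on which $\ell$ vanishes, i.e., to a point of~$\sigma$. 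This last lifting statement is where pointedness of $\cQ_+$ (so that $\RR\tau$ is exactly the lineality space of $\cQ_+ + \RR\tau$, as noted in Remark~\ref{r:qrt}) is used. Combined with order preservation in both directions, this gives the claimed poset isomorphism.
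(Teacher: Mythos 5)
Your proof is correct. The paper's own ``proof'' is a one-line pointer to Remark~\ref{r:qrt}, which only records that $\qrt$ is a real polyhedral group with positive cone $\qrtp = \pi(\cQ_+)$ and leaves the face correspondence as standard polyhedral geometry; your supporting-functional argument (exposedness of faces of polyhedral cones, saturation $(\sigma+\RR\tau)\cap\cQ_+=\sigma$ for $\sigma\supseteq\tau$, and pullback of functionals for surjectivity) is precisely the verification the paper leaves implicit.
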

\begin{proof}
The image $\qrtp$ of~$\cQ_+\!$~in~$\qrt$ is a cone that
generates~$\qrt$, and the group of units (lineality space) of the cone
$\cQ_+ \!+ \RR\tau$ is just~$\RR\tau$ itself because $\cQ_+$ is
pointed.  (See Remark~\ref{r:qrt} for further discussion.)
\end{proof}

\begin{defn}\label{d:upper-closure-tau}
In the situation of Lemma~\ref{l:nabt},
view $\nabt$ as a monoid and poset as in
Remark~\ref{r:semilattice=monoid}, so $\sigma \preceq \sigma'$
in~$\nabt$ if $\sigma \supseteq \sigma'$.  The \emph{upper closure
functor along~$\tau$} takes a $Q$-module~$\cM$ to the $\cQ \times
\nabt$-module $\dt\cM = \delta\cM/\bigoplus_{\sigma\not\in\nabt}\dsm$,
so $\dt\cM = \bigoplus_{\sigma\in\nabt}\ds[\tau]\cM$.
\end{defn}

The notation is such that $\ds[\tau] \neq 0 \iff \sigma \supseteq \tau$.

\begin{example}\label{e:upper-closure-tau}
Starting from Example~\ref{e:closure-module}, upper closures along the
positive $x$-axis and along $\RR^2_+$ are obtained by erasing certain
points.  The upper boundary along the $x$-axis, drawn at left,
$$
  \psfrag{0}{\footnotesize$\0$}
  \psfrag{x}{\footnotesize$x$}
  \psfrag{y}{\footnotesize$y$}
  \psfrag{1}{\footnotesize$\RR^2_+$}
  \psfrag{F}{$\fro$}
  \includegraphics[height=30ex]{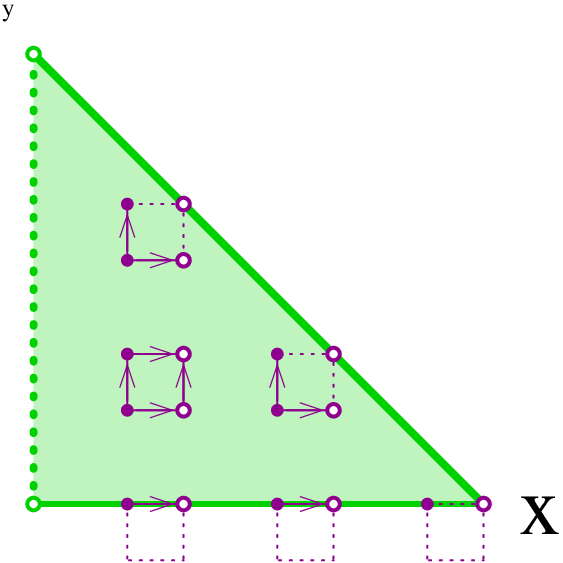}
\qquad\qquad
  \includegraphics[height=30ex]{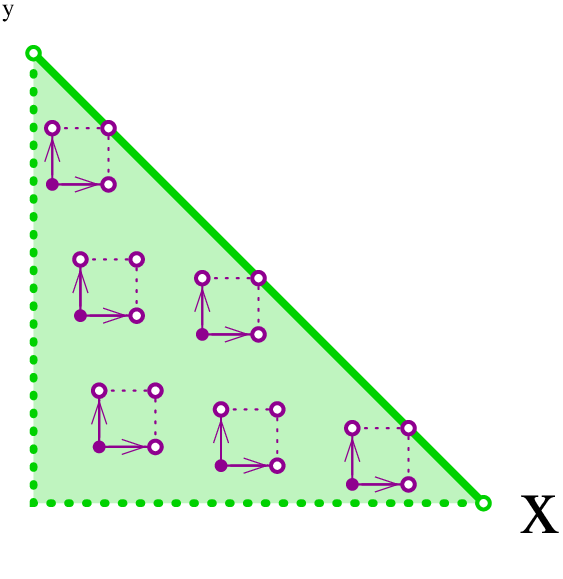}
\vspace{-1ex}%
$$
is obtained from Example~\ref{e:closure-module} by erasing the
lower-right corner of every little square that has one (because the
$y$-axis does not contain the face $\tau = x$-axis).  This results
automatically in erasure of the vertical left edge of the triangle.
The upper boundary along the interior~$\RR^2_+$, drawn at right, is
obtained by further erasing the upper-left corner of every little
square, which results automatically in erasure of the horizontal
bottom edge of the triangle.  The hypotenuse of the triangle is drawn
as a solid line here to emphasize that the $\fro$ factor has nonzero
components in these $\RR^2$-graded degrees, even though the module
itself has no nonzero elements in these degrees.
\end{example}

\begin{defn}\label{d:kats}
Fix a partially ordered abelian group~$\cQ$, a face~$\tau$, and an
arbitrary commutative monoid~$\cP$.  The \emph{skyscraper} $(\cQ
\times \cP)$-module at $(\aa,\sigma) \in \cQ \times \cP$ along~$\tau$
is
$$
  \kats = \kk[\aa+\tau] \otimes_\kk \kk_\sigma,
$$
the right-hand side being a module over the ring $\kk[\cQ_+]
\otimes_\kk \kk[\cP] = \kk[\cQ_+ \times \cP]$ with tensor factors as
in Definitions~\ref{d:socc} and~\ref{d:hhom}.
Set
$$
  \hhom_{\cQ\times\cP}\bigl(\kk[\tau],-\bigr) =
  \bigoplus_{(\aa,\sigma) \in
  \cQ\times\cP} \Hom_{\cQ\times\cP}\bigl(\kats,-\bigr).
$$
\end{defn}

\begin{remark}\label{r:kats}
When $\cP$ is trivial, this notation agrees with
Definition~\ref{d:hhom}, because $\cQ\times\{\0\} \cong \cQ$
canonically, so $\hhom_{\cQ\times\{\0\}}\bigl(\kk[\tau],-\bigr) =
\hhom_\cQ\bigl(\kk[\tau],-\bigr)$.
\end{remark}

\begin{defn}\label{d:soct}
Fix a real polyhedral group~$\cQ$, a face~$\tau$, and a
$\cQ$-module~$\cM$.
\begin{enumerate}
\item\label{i:global-soc-tau}%
The \emph{global cogenerator functor along~$\tau$} takes $\cM$ to its
\emph{global socle along~$\tau$}:
$$
  \soct\cM = \hhom_{\cQ\times\nabt}\bigl(\kk[\tau],\dt\cM\bigr)/\tau.
$$
The $\nabt$-graded components of $\soct\cM$ are denoted by
$\soct[\sigma]\cM$ for $\sigma \in \nabt$.

\item\label{i:local-soc-tau}%
The \emph{local cogenerator functor along~$\tau$} takes $\cM$ to its
\emph{local socle along~$\tau$}:
$$
  \soco(\cmt)
  =
  \socc\delta(\cmt)
  =
  \hhom_{\qrt\times\nabt}\bigl(\kk,\delta(\cM/\tau)\bigr),
$$
where the upper closure is over~$\qrt$ and the closed socle is over
$\qrt \times \nabt$.  Elements of $\soco(\cmt)$ are identified with
elements of~$\delta(\cmt)$ via $\phi \mapsto \phi(1)$.

\item\label{i:global-cogen}%
Regard $\hhom_{\cQ\times\nabt}\bigl(\kk[\tau],\dt\cM\bigr)$ as a
$(\cQ\times\nabt)$-submodule of~$\dt\cM$ via $\phi \mapsto \phi(1)$.
A homogeneous element $s$ in this submodule that maps to a nonzero
element of $\soct\cM$ is a \emph{global cogenerator} of~$\cM$
along~$\tau$, and if $s \in \ds[\tau]\cM$ then it has
\emph{nadir}~$\sigma$.  If $I \subseteq \cQ$ is an interval, then a
\emph{cogenerator} of~$I$ along~$\tau$ with nadir~$\sigma$ is the
degree in~$\cQ$ of a cogenerator of~$\kk[I]$ with nadir~$\sigma$
along~$\tau$.

\item\label{i:local-cogen}%
Regard $\soco(\cmt)$ as contained in~$\delta(\cmt)$ via $\phi \mapsto
\phi(1)$.  A nonzero homogeneous element in $\soco(\cmt)$ is a
\emph{local cogenerator} of~$\cM$ along~$\tau$.
\end{enumerate}
Assume the default modifier ``global'' when neither ``local'' nor
``global'' is written.
\end{defn}

\begin{example}\label{e:intro-soc}
See Example~\ref{e:nadir-x} for a socle along a face of positive
dimension (Definition~\ref{d:soct}.\ref{i:global-soc-tau}).  For
details, the upper closure of the interval module $\cM = \kk[I]$ there
along the face $\tau = x\text{-axis}_+$
(Definition~\ref{d:upper-closure-tau}) is $\dt\cM = \delta^\tau\cM
\oplus \delta^{\RR^2_+}\cM$.  Along the omitted (i.e., dotted)
horizontal ray, $\delta^\tau\cM$ vanishes because $\cM_\aa = 0$
for~$\aa$ in the omitted ray.  On the other hand,
$\delta^{\RR^2_+}\cM$ contains the skyscraper module
\noheight{$\kk_{\RR^2_+}[\aa + \tau]$} at $(\aa,\RR^2_+) \in \RR^2
\times \nabt$ along~$\tau$ (Definition~\ref{d:kats}) for any
degree~$\aa$ in the omitted ray and vanishes elsewhere.  The module
$\hhom_{\cQ\times\nabt}\bigl(\kk[\tau],\dt\cM\bigr)$ is therefore the
interval module whose degree set is the omitted ray; any element
therein is a global cogenerator of~$\cM$ along the horizontal
ray~$\tau$ with nadir~$\sigma = \RR^2_+$
(Definition~\ref{d:soct}.\ref{i:global-cogen}).  Localizing this
$\hhom$ along~$\tau$ yields the interval module for the entire line
containing the omitted ray, following the depiction in
Example~\ref{e:nadir-x}.  Taking quotient-restriction of this $\hhom$
module therefore yields a vector space of dimension~$1$ for the global
socle along the positive $x$-axis~$\tau$
(Definition~\ref{d:soct}.\ref{i:global-soc-tau}).  This vector space
resides in degree $\wt\aa = \aa + \ZZ\tau$, the coset through~$\aa$ of
the group $\ZZ\tau$ generated by~$\tau$, which is the single open dot
in the rightmost image in Example~\ref{e:nadir-x}.
\end{example}

\begin{example}\label{e:local-intro-soc}
In the setting of Example~\ref{e:intro-soc}, the corresponding local
socle computation first takes the quotient-restriction modulo~$\tau$,
as in Example~\ref{e:nadir-x}, and then takes the closed socle to get
$\soco(\cmt)$.  The local socle along~$\tau$ is naturally isomorphic
to the global socle along~$\tau$ in this case.
(This passage from global to local is general, by
Proposition~\ref{p:local-vs-global}: global cogenerators yield local
cogenerators.)
For a situation where the local and global socles do not agree, take
$\tau$ to be the positive $y$-axis in Example~\ref{e:global-support},
so the global socle along~$\tau$ vanishes while the local socle
along~$\tau$ is nonzero, as noted already in
Remark~\ref{r:soc-vs-supp}.
\end{example}

\begin{example}\label{e:nadir}
The closure $\delta\cM$ in Example~\ref{e:closure-module} explains the
nadirs in Example~\ref{e:trouble-soc}.  The other $\fro$-modules in
Example~\ref{e:closure-module}, which have solid dots in their
upper-right corners, do not yield socle elements of~$\delta\cM$
because the $\RR^2_+$-component can be moved up without annihilation.
That is not a universal statement, though: it only holds in this
example because all of the closed socles of~$\cM$
(Definition~\ref{d:socct}) vanish.  If~one of the points along the
antidiagonal upper boundary of the triangle were added to the
triangular interval, then the $\fro$-module at that upper boundary
point would have a solid dot in Example~\ref{e:closure-module}, and
that dot would represent a closed socle element of~$\cM$.
\end{example}

\begin{remark}\label{r:soct/tau}
The reason to quotient by~$\tau$ in
Definition~\ref{d:soct}.\ref{i:global-soc-tau} is to lump together all
cogenerators with nadir~$\sigma$ along the same translate
of~$\RR\tau$.  This lumping makes it possible for a socle basis to
produce a downset hull that is (i)~as minimal as possible and
(ii)~finite.  The lumping also creates a difference between the notion
of socle element and that of cogenerator: a socle element is a class
of cogenerators, these classes being indexed by elements in the
quotient-restriction; in Example~\ref{e:intro-soc}, for instance,
cogenerators lie along the omitted ray before the quotient
modulo~$\tau$, whereas the degree of the socle element is a (single)
coset of the $x$-axis.  In contrast, a local cogenerator is a
cogenerator of the quotient-restriction itself, so a local cogenerator
is already an element in the socle of the quotient-restriction; see
Example~\ref{e:local-intro-soc}.
This difference between socle element and cogenerator already arises
for closed socles along faces (Definition~\ref{d:socct}) but
disappears in the context of socles not along faces (see
Remark~\ref{r:socc0}), be they over real polyhedral groups
(Definition~\ref{d:soc}) or closed over posets
(Definition~\ref{d:socc}).
\end{remark}

\begin{remark}\label{r:soct-nabt}
If localization commuted with cogenerator functors, then the
restriction from $\fqo$ to~$\nabt$ in
Definition~\ref{d:soct}.\ref{i:global-soc-tau} would happen
automatically, because localizing $\cM$ along~$\tau$ would yield a
module over $\cQ_+\! + \RR\tau$, whose face poset is
naturally~$\nabt$.  But in this real polyhedral setting, the
restriction from $\fqo$ to~$\nabt$ must be imposed manually because
the $\Hom$ must be taken before localizing
(Remark~\ref{r:soc-vs-supp}), when the default face poset is
still~$\fqo$.
\end{remark}

\begin{remark}\label{r:nabt}
If $\aa$ is a cogenerator of a downset~$D$ along~$\tau$, then the
topology of~$D$ at~$\aa$ is induced by downsets of the form $\aa' -
\sigma^\circ$ for faces $\sigma \in \nabt$ and elements $\aa' \in \aa
+ \tau^\circ$.  This subtle issue regarding shapes of cogenerators
along~$\tau$ is a vital reason for using $\nabt$ instead of~$\fqo$.
It is tempting to expect that if a face~$\sigma$ is minimal in the
shape~$\nda$, then any expression of~$D$ as an intersection of
downsets must induce the topology of~$D$ at~$\aa$ by explicitly taking
$\aa - \sigma^\circ$ into account in one of the intersectands.  One
way to accomplish that would be for an intersectand to be a union of
downsets of the form $\bb - \cQ_{\nda}$ (see
Definition~\ref{d:cocomplex}) in which one of the elements~$\bb$
is~$\aa$.  But if $\sigma \in \nd[\aa']$ for all $\aa' \in \aa +
\tau$, or even merely for a single element $\aa' \in \aa +
\tau^\circ$, then
$$
  \aa - \sigma^\circ
  =
  \aa' - (\aa'-\aa - \sigma^\circ)
  \in
  \aa' - (\tau^\circ + \sigma^\circ)
  \subseteq
  \aa' - (\tau \vee \sigma)^\circ.
$$
As the purpose of cogenerators is to construct downset
decompositions as minimally as possible, it is counterproductive to
think of~$\sigma$ as being a valid $\fqo$-socle degree unless $\sigma
\in \nabt$, because otherwise it fails to give rise to an essential
cogenerator.  See Theorem~\ref{t:downset=union} for the most general
possible view of considerations in this Remark.
\end{remark}

\begin{remark}\label{r:open-upper-closure}
In terms of persistent homology, cogenerators are deaths of classes.
In that context, the need for upper closure functors and socle theory
beyond closed socles is particularly crucial, because the modules most
pertinent to applied topology are precisely those whose closed socles
vanish \cite{kashiwara-schapira2018, berkouk-petit2019,
strat-conical}.  That is, the upper boundaries of these modules are as
far from closed as possible.
\end{remark}

\begin{remark}\label{r:soc-as-k-vect}
Although $\soct\cM$ is a module over $\qrt \times \nabt$ by
construction, the actions of $\qrt$ and $\nabt$ on it are trivial, in
the sense that attempting to move a nonzero homogeneous element up in
one of the posets either takes the element to~$0$ or leaves it
unchanged.  (The latter only happens if the degree is unchanged, which
occurs only when acting by the identity~$\0 \in \qrt$ or when acting
by $\sigma \in \nabt$ on an element of $\nabt$-degree $\sigma'
\subseteq \sigma$.)  That is what it means to be a direct sum of
skyscraper modules.  It immediately implies the following, which is
used in the proof of Theorem~\ref{t:dense-hull-M}.
\end{remark}

\begin{lemma}\label{l:soc-as-k-vect}
Any direct sum decomposition of $\soct\cM$ as a vector space graded by
$\qrt \times \nabt$ is also a decomposition of $\soct\cM$ as a
$\qrt$-module or as a $\nabt$-module.
\end{lemma}
\begin{proof}
The argument appears in Remark~\ref{r:soc-as-k-vect}.
\end{proof}

\begin{lemma}\label{l:intersection-of-kernels}
If~$\tau$ is a face of a real polyhedral group~$\cQ$ and $\cN =
\bigoplus_{\sigma\in\nabt} \cN_\sigma$ is a module over $\cQ \times
\nabt$, then $\Hom_{\nabt}(\kk_\sigma,\cN)/\tau \cong
\Hom_{\nabt}(\kk_\sigma,\cnt)$, and hence
$$
  (\ntsoc\cN)/\tau \cong \ntsoc(\cnt).
$$
\end{lemma}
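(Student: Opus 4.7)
The plan is to unpack $\Hom_{\nabt}(\kk_\sigma, \cN)$ as a finite intersection of kernels of $\cQ$-module homomorphisms, then invoke exactness of quotient-restriction.

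First I would compute $\Hom_{\nabt}(\kk_\sigma, \cN)$ explicitly from the definition of a $\nabt$-module morphism. Such a morphism $\kk_\sigma \to \cN$ is determined by the image of the generator in $\nabt$-degree $\sigma$; this image must be an element $n \in \cN_\sigma$ such that, for every $\sigma' \in \nabt$ strictly above $\sigma$ in $\fqo$ (i.e., $\sigma' \subsetneq \sigma$ as a face of $\cQ_+$), the $\nabt$-structure map $\cN_\sigma \to \cN_{\sigma'}$ kills $n$. Consequently,
$$
  \Hom_{\nabt}(\kk_\sigma, \cN)
  \;=\;
  \bigcap_{\sigma' \in \nabt,\ \sigma' \subsetneq \sigma}
  \ker\!\bigl(\cN_\sigma \to \cN_{\sigma'}\bigr),
$$
viewed as a $\cQ$-submodule of the $\cQ$-module fiber $\cN_\sigma$.

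Second, since $\cQ_+$ has only finitely many faces, $\nabt$ is finite, so the display above is a finite intersection. Quotient-restriction $(-)/\tau$ acts only on the $\cQ$-grading and preserves the $\nabt$-grading: in particular $(\cnt)_\sigma = \cN_\sigma/\tau$, and the $\nabt$-structure maps of $\cnt$ are the quotient-restrictions of those of $\cN$. By Lemma \ref{l:exact-qr}, $(-)/\tau$ is exact, hence commutes with finite intersections of kernels in the category of $\cQ$-modules. Applying $(-)/\tau$ to the displayed identity therefore yields the first isomorphism $\Hom_{\nabt}(\kk_\sigma, \cN)/\tau \cong \Hom_{\nabt}(\kk_\sigma, \cnt)$.

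For the ``hence'' clause I would take the direct sum over $\sigma \in \nabt$. Exactness of $(-)/\tau$ implies commutation with direct sums (this is also immediate from its construction fiber-by-fiber over $\cQ$-degrees, cf.\ Lemma \ref{l:quotient-restriction}), so the per-$\sigma$ isomorphisms assemble into $(\ntsoc\cN)/\tau \cong \ntsoc(\cnt)$. The only subtlety is bookkeeping: keeping the $\cQ$-grading (on which $/\tau$ acts) distinct from the $\nabt$-grading (which $\Hom_{\nabt}$ concerns), and recognizing that $\Hom_{\nabt}(\kk_\sigma,-)$ is a \emph{finite} limit in the category of $\cQ$-modules so exactness of $(-)/\tau$ suffices. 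There is no deeper obstacle; the finiteness of $\cfq$ is what makes the argument go through without any hypotheses on $\cN$ beyond being a $\cQ \times \nabt$-module.
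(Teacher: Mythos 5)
Your proposal is correct and matches the paper's argument: the paper likewise identifies $\Hom_{\nabt}(\kk_\sigma,\cN)$ with the intersection of the kernels of the structure maps $\cN_\sigma \to \cN_{\sigma'}$ for $\sigma \supsetneq \sigma'$, applies exactness of quotient-restriction (Lemma~\ref{l:exact-qr}) to commute $(-)/\tau$ with this finite intersection, and then takes the direct sum over $\sigma \in \nabt$ for the socle statement. Your added bookkeeping about the two gradings and the finiteness of $\cfq$ is accurate but implicit in the paper's shorter write-up.
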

\begin{proof}
$\Hom_{\nabt}(\kk_\sigma,\cN)$ is the intersection of the kernels of
the $\cQ$-module homomorphisms $\cN_\sigma \to \cN_{\sigma'}$ for
faces $\sigma \supset \sigma'$, so the isomorphism of $\Hom$ modules
follows from Lemma~\ref{l:exact-qr}.  The socle isomorphism follows by
taking the direct sum over $\sigma \in \nabt$.
\end{proof}

\begin{prop}\label{p:either-order}
The functors $\socct\!$ and $\ntsoc$ commute.  In particular,
$$
  \ntsoc(\socct\dt\cM)
  \cong
  \soct\cM
  \cong
  \socct(\ntsoc\dt\cM).
$$
\end{prop}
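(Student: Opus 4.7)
\medskip
\noindent\textbf{Proof proposal.} The plan is to show both isomorphisms by factoring the skyscraper module~$\kats = \kk[\aa+\tau] \otimes_\kk \kk_\sigma$ (Definition~\ref{d:kats}) inside the defining $\Hom$ of $\soct$, then using Lemma~\ref{l:intersection-of-kernels} to interchange quotient-restriction along~$\tau$ with the $\nabt$-socle functor.

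First, I would establish the key Hom-factoring identity: for any $(\cQ\times\nabt)$-module~$\cN$ and any $(\aa,\sigma)\in\cQ\times\nabt$, the product structure of the indexing monoid and the definition of the external tensor product give a natural double adjunction
$$
\Hom_{\cQ\times\nabt}(\kats,\cN)
\cong \Hom_\cQ\bigl(\kk[\aa+\tau],\Hom_\nabt(\kk_\sigma,\cN)\bigr)
\cong \Hom_\nabt\bigl(\kk_\sigma,\Hom_\cQ(\kk[\aa+\tau],\cN)\bigr).
$$
Summing over $(\aa,\sigma)\in\cQ\times\nabt$ and using that $\Hom_\nabt(\kk_\sigma,-)$ commutes with direct sums produces
$$
\hhom_{\cQ\times\nabt}\bigl(\kk[\tau],\cN\bigr)
\cong \hhom_\cQ\bigl(\kk[\tau],\ntsoc\cN\bigr)
\cong \hhom_\nabt\bigl(\kk,\hhom_\cQ(\kk[\tau],\cN)\bigr).
$$
Substituting $\cN = \dt\cM$ and applying $/\tau$ to the first expression recovers $\soct\cM$ by Definition~\ref{d:soct}.\ref{i:global-soc-tau}.

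Next I would unwind each side. Applying $/\tau$ to the middle expression gives $\hhom_\cQ(\kk[\tau],\ntsoc\dt\cM)/\tau$, which is exactly $\socct(\ntsoc\dt\cM)$ by Definition~\ref{d:socct}.\ref{i:global-socc-tau}; that is one of the two claimed isomorphisms. For the other, apply $/\tau$ to the right-hand expression to obtain
$$
\hhom_\nabt\bigl(\kk,\hhom_\cQ(\kk[\tau],\dt\cM)\bigr)\big/\tau,
$$
and then invoke Lemma~\ref{l:intersection-of-kernels} with $\cN = \hhom_\cQ(\kk[\tau],\dt\cM)$ to push the $/\tau$ through the outer $\ntsoc$. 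This yields $\hhom_\nabt(\kk,\hhom_\cQ(\kk[\tau],\dt\cM)/\tau) = \ntsoc(\socct\dt\cM)$, giving the remaining isomorphism.

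The main (very mild) obstacle is justifying the Hom-factoring at the level of graded modules over the product poset~$\cQ\times\nabt$: one must check that the external tensor product $\kats$ indeed has the universal property that makes both iterated adjunctions work, and that the resulting identifications respect the $\cQ$-grading needed to sum over $\aa$ and the $\nabt$-grading needed to sum over $\sigma$. Apart from bookkeeping, everything else is a direct consequence of functoriality, the exactness of $/\tau$ (Lemma~\ref{l:exact-qr}), and Lemma~\ref{l:intersection-of-kernels}; no further hypotheses on~$\cM$ are needed.
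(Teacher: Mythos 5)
Your proposal is correct and follows essentially the same route as the paper's proof: the double Hom--tensor adjunction for the skyscraper $\kats$ over $\cQ\times\nabt$, summation over degrees, quotient-restriction along~$\tau$, and Lemma~\ref{l:intersection-of-kernels} to commute $/\tau$ past $\ntsoc$ on the side where it does not come out automatically. No gaps.
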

\begin{proof}
By taking direct sums over $\aa$ and~$\sigma$, this is mostly the
natural isomorphisms
\begin{align*}
\Hom_{\nabt}\bigl(\kk_\sigma,\Hom_\cQ\bigl(\kk[\aa+\tau],-\bigr)\bigr)
  &\cong\Hom_{\cQ\times\nabt}\bigl(\kats,-\bigr)
\\
  &\cong\Hom_\cQ\bigl(\kk[\aa+\tau],\Hom_{\nabt}(\kk_\sigma,-)\bigr)
\end{align*}
that result from the adjunction between Hom and~$\otimes$.  Taking the
quotient-restriction along~$\tau$
(Definition~\ref{d:quotient-restriction}) almost yields the desired
result; the only issue is that the left-hand side requires
Lemma~\ref{l:intersection-of-kernels}.
\end{proof}

\begin{example}\label{e:soct}
If $\aa$ is a cogenerator of a downset $D \subseteq \cQ$ along~$\tau$
with nadir~$\sigma$, then reasoning as in
Example~\ref{e:soc-Rn-downset} and using Definition~\ref{d:del-nabla},
computing $\ntsoc$ first in Proposition~\ref{p:either-order} shows
that $\sigma \in \del(\nda \cap \nabt)$.  What $\socct$ then does is
verify that the image
of~$\aa$ in~$\qrt$ is maximal with this property, by
Example~\ref{e:socct}.
\end{example}

\begin{prop}\label{p:local-vs-global}
There is a natural injection
$$
  \soct\cM \into \soco(\cmt)
$$
for any module~$\cM$ over a real polyhedral group~$\cQ$ and any face
$\tau$ of~$\cQ$.
\end{prop}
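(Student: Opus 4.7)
The plan is to mirror the proof of Proposition~\ref{p:local-vs-global-closed}, substituting the upper-boundary module $\dt\cM$ for $\cM$ and tracking the additional $\nabt$-grading throughout. First I localize along $\tau$: any homogeneous $\cQ\times\nabt$-module homomorphism $\phi\colon\kats\to\dt\cM$ localizes to $\phi_\tau\colon\kk[\aa+\ZZ\tau]\otimes_\kk\kk_\sigma\to(\dt\cM)_\tau$, producing a natural inclusion
$$
  \hhom_{\cQ\times\nabt}\bigl(\kk[\tau],\dt\cM\bigr)_\tau
  \into
  \hhom_{\cQ\times\nabt}\bigl(\kk[\ZZ\tau],(\dt\cM)_\tau\bigr)
$$
exactly as in Proposition~\ref{p:local-vs-global-closed}: any $\phi$ killed under localization is already annihilated by some monomial of $\tau$ in the source, hence vanishes in the $\tau$-localization.

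Next I apply $\mathord{/\tau}$ to both sides. Because quotient-restriction factors through localization (Lemma~\ref{l:quotient-restriction}), the source becomes $\hhom_{\cQ\times\nabt}(\kk[\tau],\dt\cM)/\tau=\soct\cM$, and exactness of quotient-restriction (Lemma~\ref{l:exact-qr}) preserves the injection. Using the $\nabt$-equivariant enhancement of Lemma~\ref{l:socct}, together with Lemma~\ref{l:intersection-of-kernels} to commute $\mathord{/\tau}$ past $\Hom_{\nabt}(\kk_\sigma,-)$, the target identifies with $\hhom_{\qrt\times\nabt}\bigl(\kk,(\dt\cM)_\tau/\tau\bigr)$.

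The crux, and main obstacle, is then the identification $(\dt\cM)_\tau/\tau\cong\delta(\cmt)$ as modules over $\qrt\times\nabt$, using the face-poset isomorphism $\nabt\cong\cF_{\qrt}$ from Lemma~\ref{l:nabt}. Componentwise I would verify $(\ds\cM)_\tau/\tau\cong\delta^{\sigma/\tau}(\cmt)$ for each $\sigma\in\nabt$. At any $\tilde\aa\in\qrt$ with lift $\aa\in\cQ$, both sides compute the same direct limit of~$\cM$: the left unwinds to $\dirlim_{t\in\tau}(\ds\cM)_{\aa+t}=\dirlim_{\aa'\in\aa+\tau-\sigma^\circ}\cM_{\aa'}$, while the right is a double colimit $\dirlim_{\tilde\aa'\in\tilde\aa-(\sigma/\tau)^\circ}\dirlim_{t\in\tau}\cM_{\aa'+t}$ that reindexes to a single colimit over $\aa-\sigma^\circ+\RR\tau$. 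The two index sets coincide because $\tau\subseteq\sigma$ forces the set identity $\aa+\tau-\sigma^\circ=\aa+\RR\tau-\sigma^\circ$: decomposing any $\rho\in\RR\tau$ as $\rho=t_+-t_-$ with $t_\pm\in\tau$, for each $s\in\sigma^\circ$ the element $\rho-s$ equals $t_+-(t_-+s)$ with $t_-+s\in\sigma^\circ$ (using that $\tau+\sigma^\circ\subseteq\sigma^\circ$ whenever $\tau$ is a face of~$\sigma$). Composing the three steps yields the desired natural injection $\soct\cM\into\soco(\cmt)$; the face geometry $\tau\subseteq\sigma$ entering through this last step is the substantive content, while the rest is formal.
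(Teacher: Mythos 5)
Your proof is correct and rests on the same pillars as the paper's: the localization argument from Proposition~\ref{p:local-vs-global-closed}, exactness of quotient-restriction, Lemma~\ref{l:intersection-of-kernels}, and the crux identification $(\dt\cM)/\tau\cong\delta(\cmt)$, which is exactly Lemma~\ref{l:ds-vs-qr} (plus Lemma~\ref{l:nabt}) and whose colimit-reindexing verification you reproduce correctly. The only difference is organizational: you run the closed-case argument directly over the product poset $\cQ\times\nabt$ and handle the $\nabt$-direction at the end, whereas the paper first applies $\ntsoc$ to $\dt\cM$ and then invokes Proposition~\ref{p:local-vs-global-closed} as a black box together with Proposition~\ref{p:either-order}.
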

\begin{proof}
By Proposition~\ref{p:local-vs-global-closed} $\socct\cN \into
\socc(\cnt)$ for $\cN = \ntsoc\dt\cM$ viewed as a $\qrt$-module.
Proposition~\ref{p:either-order} yields $\socct\cN = \soct\cM$.  It
remains to show that $\qtsoc(\cnt) = \soco(\cmt)$.  To that end, first
note that
$$
  (\ntsoc\dt\cM)/\tau
  \cong
  \ntsoc\bigl((\dt\cM)/\tau\bigr)
  \cong
  \ntsoc\delta(\cM/\tau),
$$
the first isomorphism by Lemma~\ref{l:intersection-of-kernels} and the
second by Lemma~\ref{l:ds-vs-qr}, which shows that the modules acted
on by $\ntsoc$ are isomorphic.  Now apply the last isomorphism in
Lemma~\ref{l:either-order}, with $\cQ$ replaced by~$\qrt$ so that
automatically $\fqo$ must be replaced by~$\nabt$ via
Lemma~\ref{l:nabt}.
\end{proof}

\begin{lemma}\label{l:ds-vs-qr}
If $\sigma \supseteq \tau$ then $(\dsm)/\tau \cong \dst(\cmt)$.
\end{lemma}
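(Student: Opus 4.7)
I would prove the isomorphism by computing both sides in each degree $\wt\aa \in \qrt$ and constructing mutually inverse natural maps between the resulting colimits.

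Fix a lift $\aa \in \cQ$ of $\wt\aa$, and write $\pi\colon \cQ \onto \qrt$ for the projection. By Lemma~\ref{l:quotient-restriction} and Definition~\ref{d:atop-sigma}, together with the standard observation that $(\sigma/\tau)^\circ = \pi(\sigma^\circ)$ (implicit in Lemma~\ref{l:nabt}), both sides unwind as iterated direct limits:
\begin{align*}
((\dsm)/\tau)_{\wt\aa} &\;=\; \dirlim_{t \in \tau}\;\dirlim_{\uu \in \sigma^\circ} \cM_{\aa+t-\uu},\\
(\dst(\cmt))_{\wt\aa} &\;=\; \dirlim_{\wt\cc \in \wt\aa-(\sigma/\tau)^\circ}\;\dirlim_{s \in \tau} \cM_{\cc+s},
\end{align*}
where $\cc$ is any lift of $\wt\cc$ to $\cQ$.

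The entire argument reduces to the set-theoretic claim that every lift $\bb \in \cQ$ of any point of $\wt\aa - (\sigma/\tau)^\circ$ already lies in $\aa + \tau - \sigma^\circ$. Such a lift has the form $\bb = \aa - \uu + w$ with $\uu \in \sigma^\circ$ and $w \in \RR\tau$; writing $w = t_1 - t_2$ with $t_1, t_2 \in \tau$ gives $\bb = \aa + t_1 - (\uu + t_2)$, and $\uu + t_2 \in \sigma^\circ + \tau \subseteq \sigma^\circ + \sigma = \sigma^\circ$, using $\tau \subseteq \sigma$ and the standard stability of relative interiors of convex cones under addition of the cone. Conversely, any $\bb' \in \aa + \tau - \sigma^\circ$ projects to $\pi(\bb') \in \wt\aa - (\sigma/\tau)^\circ$ since $\pi(\tau) = 0$.

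Using this, I define natural maps in each direction on representatives: a representative $\tilde y \in \cM_{\cc+s}$ on the right-hand side has $\cc + s \in \aa + \tau - \sigma^\circ$ by the claim (applied to $\bb = \cc + s$, which lifts $\wt\cc \in \wt\aa - (\sigma/\tau)^\circ$), so $\tilde y$ represents an element of the left-hand side; conversely, a representative $z \in \cM_{\aa+t-\uu}$ on the left projects to $\pi(\aa+t-\uu) \in \wt\aa - (\sigma/\tau)^\circ$, so $z$ represents an element of $(\cmt)_{\pi(\aa+t-\uu)}$ and hence of the right-hand side. These descend to mutually inverse natural transformations between the colimits. The main obstacle is the set-theoretic claim above, which is precisely where the hypothesis $\tau \subseteq \sigma$ is needed; the rest is routine checking that the maps respect the colimit equivalence relations and that the induced isomorphism is $\qrt$-linear and natural in~$\cM$.
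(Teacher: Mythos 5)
Your proposal is correct and follows essentially the same route as the paper: both compute the two sides degree by degree as iterated direct limits over the common index set $\aa+\tau-\sigma^\circ$, with the hypothesis $\sigma\supseteq\tau$ entering exactly at the point of identifying the lifts of $\wt\aa-(\sigma/\tau)^\circ$ with that set (via $\sigma^\circ+\tau\subseteq\sigma^\circ$). Your explicit set-theoretic verification is a nice spelled-out version of the step the paper flags as "the only one not completely obvious."
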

\begin{proof}
\hspace{-1.205pt}Explicit calculations from the definitions show that in
degree~$\aa/\hspace{-1pt}\tau$ both sides~equal
$$
  \dirlim_{\substack{\aa'\in\aa-\sigma^\circ\\\vv\in\tau}}\cM_{\aa'+\vv},
$$
although they take the colimits in different orders: $\vv$ first or
$\aa'$ first.  The hypothesis that $\sigma \supseteq \tau$ enters to
show that any direct limit over $\{\aa' \in \cQ \mid
\aa'/\tau\in\aa/\tau-(\sigma/\tau)^\circ\}$ can equivalently be
expressed as a direct limit over $\aa'\in\aa-\sigma^\circ$.
\end{proof}

\begin{cor}\label{c:at-most-one}
An indicator quotient for a downset in a real polyhedral group has at
most one linearly independent socle element along each face with given
nadir and~degree.  In fact, the degrees of independent socle elements
along~$\tau$ with fixed nadir are incomparable in~$\qrt$, and nadirs
of socle elements with fixed degree are
\mbox{incomparable}~in~$\nabt$.
\end{cor}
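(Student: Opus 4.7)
The plan is to compute $\soct\kk[D]$ explicitly using the downset machinery of Section~\ref{s:staircases} and then read off each of the three claims directly. By Lemma~\ref{l:downset-upper-boundary} and Proposition~\ref{p:downset-upper-boundary}, for every face $\sigma \in \nabt$ the upper boundary $\ds\kk[D]$ is an indicator quotient $\kk[\dsd]$ for a downset $\dsd$ with $D \subseteq \dsd \subseteq \oD$. Hence every homogeneous component of $\dt\kk[D]$, viewed as a $\cQ\times\nabt$-module, has dimension at most one. Proposition~\ref{p:either-order} then rewrites $\soct\kk[D] \cong \socct(\ntsoc\dt\kk[D])$, reducing everything to $\socct$ and $\ntsoc$ applied to these indicator quotients.

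For the one-dimensionality assertion, observe that $\Hom_\cQ(\kk[\aa+\tau],\kk[\dsd])$ is at most one-dimensional: such a morphism is determined by the image of the generator in degree~$\aa$, which lies in the at-most-one-dimensional component $\kk[\dsd]_\aa$. The functor $\ntsoc$ over the $\nabt$-factor can only shrink this further, and the subsequent quotient-restriction along~$\tau$ identifies degrees in a single $\tau$-orbit without introducing new dimensions: the $\cQ$-support of $\hhom_\cQ(\kk[\tau],\kk[\dsd])$ is $\tau$-translation invariant, and the $\Hom$-space along each orbit is canonically constant via Lemma~\ref{l:quotient-restriction}. Thus $(\soct\kk[D])_{\wt\aa,\sigma}$ has dimension at most one for every $\wt\aa \in \qrt$ and $\sigma \in \nabt$.

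For the incomparability of degrees with a fixed nadir~$\sigma$, Example~\ref{e:socct} identifies $\socct\kk[\dsd]$ as the skyscraper-type module supported on the image in~$\qrt$ of the closed cogenerators of~$\dsd$ along~$\tau$---equivalently, on the maximal elements of the localization $(\dsd)_\tau$ in the partial order on~$\qrt$. Applying $\ntsoc$ can only delete degrees, so two distinct $\qrt$-degrees supporting socle elements with common nadir come from distinct maximal elements of a poset and are therefore incomparable in~$\qrt$. For the incomparability of nadirs at a fixed degree, Example~\ref{e:soct} characterizes the admissible nadirs as the minimal elements of $\nda \cap \nabt$ under the inclusion order on faces of~$\cQ_+$; distinct minimal elements are incomparable, and incomparability persists after passing to the opposite order used on~$\nabt$ (Remark~\ref{r:semilattice=monoid}). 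The main obstacle is the quotient-restriction step in the one-dimensionality argument: one must rule out that identifying $\tau$-translates merges independent socle contributions into a single $\qrt$-degree, which is handled by the indicator-quotient structure forcing the relevant Hom spaces to be canonically constant along each $\tau$-orbit.
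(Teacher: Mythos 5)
Your proof is correct, but it is organized differently from the paper's. The paper's proof is a three-step local reduction: it injects $\soct\kk[D]$ into the local socle $\soco(\kk[D]/\tau)$ via Proposition~\ref{p:local-vs-global}, observes that the quotient-restriction is again an indicator quotient over~$\qrt$, and then reads off both incomparability statements from the explicit description of $\soco$ of a downset module in Example~\ref{e:soc-Rn-downset} (degrees with fixed nadir are \emph{maximal} in $\qrt$ among those with $\sigma\in\del\nda$, and nadirs with fixed degree form the antichain $\del\nda$). You instead stay global, using Proposition~\ref{p:either-order} to write $\soct\kk[D]\cong\socct(\ntsoc\dt\kk[D])$ and invoking Example~\ref{e:socct} and Example~\ref{e:soct}. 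That works, and your dimension count via $\Hom_\cQ(\kk[\aa+\tau],\kk[\dsd])$ is a clean way to get the one-dimensionality. Two spots deserve a word more than you give them, though both are routine: (i) the quotient-restriction step --- the $\qrt$-component at $\wt\aa$ is a directed colimit of the at-most-one-dimensional spaces along the $\tau$-orbit, which is why no merging of dimensions can occur; and (ii) incomparability of degrees must hold in $\qrt$, not just in $\cQ$, so one needs that two closed cogenerators of $(\dsd)_\tau$ along $\tau$ with comparable images in $\qrt$ actually lie in the same $\RR\tau$-coset (translate one into the principal upset of the other using the $\tau$-stability of the localized downset). The paper's local route gets (ii) for free because maximality is taken directly in $\qrt$; your global route has to supply it. With those two sentences added, your argument is a complete and legitimate alternative proof.
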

\begin{proof}
A socle element of an indicator quotient~$E$ along a face~$\tau$
of~$\cQ$ is a local socle element of~$E$ along~$\tau$ by
Proposition~\ref{p:local-vs-global}.  Local socle elements
along~$\tau$ are socle elements (along the minimal face~$\{\0\}$) of
the quotient-restriction along~$\tau$ by
Definition~\ref{d:soct}.\ref{i:local-soc-tau}.  But $E/\tau$ is an
indicator quotient of~$\kk[\qrt]$, so its socle degrees with fixed
nadir~$\sigma$ are incomparable, as are its nadirs with fixed socle
degrees, by~Example~\ref{e:soc-Rn-downset}.
\end{proof}

\begin{example}\label{e:soct-k[tau]}
Propositions~\ref{p:either-order} and~\ref{p:local-vs-global} ease
some socle computations.  To see how, consider the indicator
$\cQ$-module~$\kk[\rho]$ for a face~$\rho$ of~$\cQ$.
Proposition~\ref{p:local-vs-global} immediately implies that
$\soct\kk[\rho] = 0$ unless $\rho \supseteq \tau$, because localizing
along~$\tau$ yields $\kk[\rho]_\tau = 0$ unless $\rho \supseteq \tau$.

Next compute $\ds\kk[\rho]$.  When either $\aa \not\in \rho$ or
$\sigma \not\subseteq \rho$, the direct limit in
Definition~\ref{d:atop-sigma} is over a set $\aa - \sigma^\circ$ of
degrees in which $\kk[\rho] = 0$ in a neighborhood of~$\aa$.  Hence
the only faces that can appear in $\dt\kk[\rho]$ lie in the interval
between~$\tau$ and~$\rho$, so assume $\tau \subseteq \sigma \subseteq
\rho$.  If $\bigl(\ds\kk[\rho]\bigr){}_\aa \neq 0$ then it
equals~$\kk$ because $\kk[\rho]$ is an indicator module for a subset
of~$\cQ$.  Moreover, if $(\ds\kk[\rho])_\aa = \kk$ then the same is
true in any degree $\bb \in \aa + \rho$ because $(\bb -\nolinebreak
\aa) +\nolinebreak (\aa - \sigma^\circ) \cap \rho \subseteq (\bb -
\sigma^\circ) \cap \rho$.  Thus $\ds\kk[\rho]$ is torsion-free as a
$\kk[\rho]$-module.

The $\socct\!$ on the left side of Proposition~\ref{p:either-order},
which by Definition~\ref{d:socct}.\ref{i:global-socc-tau} is a
quotient-restriction of a module
$\hhom_\cQ\bigl(\kk[\tau],\dt\kk[\rho]\bigr)^{\!}$, can only be
nonzero if $\tau = \rho$, as any nonzero image of~$\kk[\tau]$ is a
torsion $\kk[\rho]$-module.  Hence the socle of~$\kk[\rho]$
along~$\tau$ equals the closed socle along $\tau = \rho$, which is
computed directly from Definition~\ref{d:soct}.\ref{i:global-soc-tau}
and Definition~\ref{d:quotient-restriction} to be
$\hhom_\cQ\bigl(\kk[\tau],\kk[\tau]\bigr)/\tau = \kk[\tau]/\tau$.  In
summary,
$$
  \soct\kk[\rho] =
  \begin{cases}
  \kk_\0\text{ for } \0 \in \qrt & \text{if } \tau = \rho
\\               0               & \text{otherwise.}
\end{cases}
$$
\end{example}

\begin{prop}\label{p:left-exact-tau}
The global cogenerator functor $\soct$ along any face~$\tau$ of a
real polyhedral group is left-exact, as is the local cogenerator
functor along~$\tau$.
\end{prop}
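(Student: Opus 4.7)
The plan is to factor each of the two cogenerator functors as a composition whose constituents are known from earlier results in this section to be exact or left-exact, and then to invoke the fact that composing a left-exact functor with exact functors yields a left-exact functor.

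For the global case, Definition~\ref{d:soct}.\ref{i:global-soc-tau} displays $\soct$ as the three-fold composition
\[
  \cM \ \longmapsto\ \dt\cM \ \longmapsto\ \hhom_{\cQ\times\nabt}\bigl(\kk[\tau], \dt\cM\bigr) \ \longmapsto\ \hhom_{\cQ\times\nabt}\bigl(\kk[\tau], \dt\cM\bigr)/\tau.
\]
The first arrow is exact because $\dt\cM = \bigoplus_{\sigma \in \nabt} \dsm$ by Definition~\ref{d:upper-boundary-tau}, and each summand is exact in~$\cM$ by Lemma~\ref{l:exact-delta}. The middle arrow is left-exact: by Definition~\ref{d:kats} it is a direct sum over $(\aa,\sigma) \in \cQ \times \nabt$ of the Hom functors $\Hom_{\cQ\times\nabt}(\kats,-)$, each left-exact as a Hom functor from a fixed source in the abelian category of $(\cQ\times\nabt)$-modules (compare the proof of Proposition~\ref{p:left-exact-tau-closed}). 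The last arrow is exact by Lemma~\ref{l:exact-qr}.

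For the local case, Definition~\ref{d:soct}.\ref{i:local-soc-tau} writes $\soco(\cmt) = \socc\delta(\cmt)$, which factors as $\cM \mapsto \cmt \mapsto \delta(\cmt) \mapsto \socc\delta(\cmt)$. The first arrow is exact by Lemma~\ref{l:exact-qr}. The second is exact because $\delta$ is a direct sum of exact functors by Lemma~\ref{l:exact-delta}, here applied over the real polyhedral group~$\qrt$ (Remark~\ref{r:qrt}, whose face poset is identified with~$\nabt$ via Lemma~\ref{l:nabt}). The third is left-exact by Lemma~\ref{l:left-exact-socc}.

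I do not anticipate a serious obstacle. The one point worth flagging is that $\kk[\tau]$ fails to be finitely presented over $\kk[\cQ_+]$ in the real-exponent setting (see Remark~\ref{r:soc-vs-supp}); however, left-exactness of $\Hom$ from a fixed source requires only that the ambient category be abelian, not that the source be finitely presented. The subtler phenomenon highlighted in Remark~\ref{r:soc-vs-supp}---namely that the Hom functor $\hhom_{\cQ\times\nabt}\bigl(\kk[\tau],-\bigr)$ does not commute with localization, so $\soct$ is not right-exact---is exactly what distinguishes this setting from the noetherian one, but is immaterial for the claim at hand.
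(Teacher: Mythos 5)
Your proof is correct and is essentially the paper's own argument: the paper simply cites Proposition~\ref{p:left-exact-tau-closed} and Lemma~\ref{l:exact-delta}, which amounts to exactly the factorization of $\soct$ (and of the local functor) into exact upper-boundary/quotient-restriction pieces composed with a left-exact Hom (or closed socle) that you spell out. Your closing remark correctly identifies that the finite-presentation issue from Remark~\ref{r:soc-vs-supp} is irrelevant to left-exactness.
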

\begin{proof}
Proposition~\ref{p:left-exact-tau-closed} and
Lemma~\ref{l:exact-delta}.
\end{proof}

\section{Tame, semialgebraic, and PL socles}\label{s:semialg}

The tame, semialgebraic, and PL conditions are preserved under taking
socles.  That is the goal of this section, Theorem~\ref{t:soct-tame},
which states this preservation for the most general form of socle,
namely the cogenerator functor along a face
(Definition~\ref{d:soct}.\ref{i:global-soc-tau}) over any real
polyhedral group.  But because the various forms of socles in
Section~\ref{s:socle} occur in contexts more general than real
polyhedral groups, it is necessary to record the statements separately
for each form of socle.  The order in which they are covered here is
the same as in Section~\ref{s:socle}: closed socles $\socc$ over an
arbitrary poset~$\cQ$ (Proposition~\ref{p:socc-tame}); socles $\soco$
over real polyhedral groups (Corollary~\ref{c:soco-tame}); closed
socles $\socct$ along faces of polyhedral groups
(Proposition~\ref{p:socct-tame}); and finally socles $\soct$ along
faces of real polyhedral groups (Theorem~\ref{t:soct-tame}).  The
proofs are based on the observation that everything reduces to the
effects of cogenerator functors on downset modules.
Verifying the hypotheses for the criterion in
Theorem~\ref{t:endofunctor} for cogenerator functors in the tame case
is relatively straightforward.  The semialgebraic and PL cases require
more power\linebreak (Lemma~\ref{l:semialg-closure} and onward).
First, here is a handy concept
\cite[Definition~3.14]{hom-alg-poset-mods}.

\begin{defn}\label{d:connected-homomorphism}
Let each of $S$ and $S'$ be a nonempty interval in~$Q$
(Definition~\ref{d:indicator}.\ref{i:interval}).  A homomorphism $\phi:
\kk[S] \to \kk[S']$ of interval modules is \emph{connected} if there
is a scalar $\lambda \in \kk$ such that $\phi$ acts as multiplication
by~$\lambda$ on the copy of~$\kk$ in degree~$q$~for~all~$q \!\in\! S
\cap S'$.
\end{defn}

\begin{thm}\label{t:endofunctor}
Fix posets $\cQ$ and~$\cQ'$.  Suppose a left-exact functor~$\cS$ from
the category of $\cQ$-modules to the category of $\cQ'$-modules takes
each
\begin{itemize}
\item%
downset module $\kk[D]$ to a subquotient $\cS(\kk[D]) = \kk[\cS D]$
of\/~$\kk[\cQ']$, and
\item%
connected morphism $\kk[D] \to \kk[D']$ of downset modules to a
connected morphism $\kk[\cS D] \to \kk[\cS D']$ of interval modules.
\end{itemize}
Then
\begin{enumerate}
\item\label{i:finitely-encoded}%
the restriction of\/~$\cS$ to any tame morphism of tame $\cQ$-modules
(see Proposition~\ref{p:abelian-category}) yields a tame morphism of
of tame $\cQ'$-modules; and

\item\label{i:semialg}%
if $\cQ$ and~$\cQ'$ are partially ordered real vector spaces, and $\cS
D$ is semialgebraic in~$\cQ'$ for all semialgebraic downsets $D
\subseteq \cQ$, then $\cS$ takes every semialgebraic morphism of
$\cQ$-modules a semialgebraic morphism of $\cQ'$-modules.
\end{enumerate}
The previous claim remains true with ``PL'' in place of
``semialgebraic''.
\end{thm}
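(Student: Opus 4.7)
The plan is to leverage the left-exactness of $\cS$ by presenting an arbitrary tame $\cQ$-module $\cM$ as the kernel of a morphism between finite direct sums of downset modules whose matrix entries are connected morphisms in the sense of Definition~\ref{d:connected-homomorphism}. Concretely, I would first establish a downset copresentation
\[
  0 \too \cM \too \bigoplus_{j \in J} \kk[D_j] \too \bigoplus_{k \in K} \kk[D'_k]
\]
with $J$ and~$K$ finite, and with the differential a matrix of connected morphisms $\kk[D_j] \to \kk[D'_k]$. In the semialgebraic or PL settings, the downsets $D_j$ and~$D'_k$ can moreover be chosen to be semialgebraic or PL, using that the ambient category remains abelian (Proposition~\ref{p:abelian-category}) and that tame constant subdivisions subordinate to~$\cM$ refine to ones of the required geometric type.

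Granted such a copresentation, applying the left-exact functor~$\cS$ yields
\[
  0 \too \cS\cM \too \bigoplus_{j \in J} \kk[\cS D_j] \too \bigoplus_{k \in K} \kk[\cS D'_k],
\]
where by the first hypothesis of the theorem each $\kk[\cS D_j]$ and each $\kk[\cS D'_k]$ is an interval module in~$\kk[\cQ']$, and by the second hypothesis each matrix entry $\kk[\cS D_j] \to \kk[\cS D'_k]$ is connected. Finite direct sums of interval modules are tame (and are semialgebraic or PL whenever the underlying intervals are, which is guaranteed by the additional hypothesis in parts~\ref{i:semialg} and its PL analogue). Consequently both of the displayed targets lie in the appropriate category, and since the categories of tame, semialgebraic, and PL $\cQ'$-modules are abelian by Proposition~\ref{p:abelian-category}, the kernel~$\cS\cM$ lies in the corresponding category as well, establishing both conclusions at once.

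The main obstacle is the first step: producing a finite downset copresentation with connected differential for an arbitrary tame module, and doing so semialgebraically or piecewise-linearly when the input is of that type. Existence of a finite downset hull $\cM \into E^0$ amounts to downset-finiteness (Definition~\ref{d:downset-hull}) of tame modules, which should follow from the structure theory of finitely encoded modules in~\cite{hom-alg-poset-mods}; the cokernel $E^0/\cM$ must then have its own finite downset hull of the appropriate type in order to furnish~$E^1$. The more delicate point is arranging the differential so that each entry is connected, since an arbitrary morphism $\kk[D] \to \kk[D']$ can act by distinct scalars on distinct connected components of $D \cap D'$. I anticipate resolving this by subordinating the copresentation to a finite constant subdivision and splitting each summand $\kk[D_j]$ or $\kk[D'_k]$ along this subdivision until every intersection $D_j \cap D'_k$ that carries a nonzero composite $\kk[D_j] \to E^0/\cM \into E^1 \onto \kk[D'_k]$ is connected---at which point that composite is automatically connected in the sense of Definition~\ref{d:connected-homomorphism}, and the hypothesis on~$\cS$ applies.
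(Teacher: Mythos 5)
Your strategy coincides with the paper's: realize a tame module as $\cM = \ker(E^0 \to E^1)$ for a finite downset copresentation, apply the left-exact functor~$\cS$ to get $\cS\cM = \ker(\cS E^0 \to \cS E^1)$, and conclude via Proposition~\ref{p:abelian-category} that this kernel is tame (resp.\ semialgebraic, PL).  The existence of the copresentation that you flag as the ``main obstacle'' is exactly the syzygy theorem \cite[Theorem~6.12.5]{hom-alg-poset-mods}, which the paper cites; your concern about connectedness of the matrix entries is legitimate but is resolved by the structure theory there (a homomorphism between downset modules splits as a sum of connected morphisms over the order-connected components of the intersection, so after a finite refinement each entry may be taken connected), and the paper simply invokes the hypothesis on~$\cS$ entrywise.

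The genuine gap is that you verify the theorem only on objects.  The target category of tame (or semialgebraic, or PL) $\cQ'$-modules has its morphisms restricted to tame (or semialgebraic, or PL) homomorphisms (Definitions~\ref{d:tame-morphism} and~\ref{d:abelian-category}), so the claim that $\cS$ ``restricts to a functor'' also requires showing that $\cS\phi: \cS\cM \to \cS\cM'$ is a morphism of the corresponding type for every tame $\phi: \cM \to \cM'$; this is not automatic, since a homomorphism between two tame modules need not itself be tame (one needs a single constant subdivision subordinate to both source and target and to the map).  The paper handles this by choosing copresentations of~$\cM$ and~$\cM'$ so that $\phi$ lifts to a morphism of copresentations, observing that the composite $\cS\cM \to \cS E^0 \to \cS E^0{}'$ lands in~$\cS\cM'$, and then passing to a common refinement of two encodings of~$\cS E^0{}'$ subordinate to the maps from $\cS\cM$ and from~$\cS\cM'$.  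Your write-up needs this second half to establish the stated conclusion.
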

\begin{proof}
Assume $\cM$ is a tame $\cQ$-module.  Then $\cM = \ker(E^0
\to E^1)$ is the kernel of a tame morphism of finite
direct sums of
downset modules by the syzygy theorem for poset modules
\cite[Theorem~6.12.5]{hom-alg-poset-mods}.
Left-exactness implies that~\mbox{$\cS\cM\! = \ker(\cS E^0
\hspace{-.51pt}\to\hspace{-.51pt} \cS E^1)$}.  The first goal is to
show that $\cS\cM$ is a tame module.  For that it suffices by
Proposition~\ref{p:abelian-category} to show that $\cS E^0 \to \cS
E^1$ is a tame morphism.  But that also follows from
Proposition~\ref{p:abelian-category} because each component of $\cS
E^0 \to \cS E^1$ has the form $\kk[\cS D] \to \kk[\cS D']$ and is
hence a tame morphism by hypothesis.  The argument works mutatis
mutandis in the semialgebraic and PL cases by
\cite[Theorem~6.12]{hom-alg-poset-mods}.

Now suppose that a morphism $\cM \to \cM'$ is given.  The syzygy
theorem again \cite[Theorem~6.12]{hom-alg-poset-mods} yields
a~copresentation $\cM' = \ker(E^0{}' \!\to\! E^1{}')$ such that the
map $\cM \to\nolinebreak \cM'$ is induced by a morphism of
copresentations.  The map $\cS\cM \to \cS E^0{}'$
obtained by restricting $\cS E^0 \to \cS E^0{}'$ to $\cS\cM$ is tame
because, in general, any tame morphism restricts to a tame morphism on
any tamely included submodule; the same goes for semialgebraic or PL
in place of tame.  The morphism $\cS\cM \to \cS\cM'$ is hence tame,
semialgebraic, or PL by any common refinement of two encodings
(Definition~\ref{d:encoding}) of~$\cS E^0{}'$ subordinate to the
morphisms (Definition~\ref{d:tame-morphism}) from $\cS\cM$
and~$\cS\cM'$.
\end{proof}

\begin{example}\label{e:upper-closure-tame}
The upper closure functor $\cS = \ds$ atop~$\sigma$ in
Definition~\ref{d:upper-closure} satisfies the hypotheses of
Theorem~\ref{t:endofunctor}.\ref{i:finitely-encoded} with $\cQ' = \cQ$
by Lemma~\ref{l:exact-delta} and
Proposition~\ref{p:downset-upper-closure}.
Therefore $\dsm$ is a tame $\cQ$-module if $\cM$ is tame, and $\dsm
\to \dsm'$ is a tame morphism if $\cM \to \cM'$ is.  Hence the same is
true with~$\delta$ in place of~$\ds$, since \noheight{$\delta =
\bigoplus_{\sigma \in \cfq} \ds$} as endofunctors on the category of
$\cQ$-modules.  And the same is true of $\dt$ for any face $\tau \in
\cfq$ (Definition~\ref{d:upper-closure-tau}), since it is a subdirect
sum of~$\delta$.  The corresponding semialgebraic conclusions are true
as well, this time using Theorem~\ref{t:endofunctor}.\ref{i:semialg},
but checking that uses Proposition~\ref{p:semialgebraic-D}, which
requires a bit more power.
\end{example}


\begin{lemma}\label{l:semialg-closure}
If $X \subseteq \RR^n$ and $X \to Y$ is a morphism of semialgebraic
varieties, then the family $\ol X_Y$ obtained by taking the closure
in~$\RR^n$ of every fiber of~$X$ is semialgebraic.
\end{lemma}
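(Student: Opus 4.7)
The plan is to invoke the Tarski--Seidenberg theorem by expressing membership in $\ol X_Y$ as a first-order formula over the ordered real field whose atomic predicates are all semialgebraic.

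Let $f\colon X \to Y$ denote the given morphism, so its graph $\Gamma_f \subseteq \RR^n \times Y$ is semialgebraic. Unwinding the definition of closure, a pair $(x, y) \in \RR^n \times Y$ lies in $\ol X_Y$ precisely when $y \in Y$ and $x$ lies in the topological closure in $\RR^n$ of the fiber $f^{-1}(y)$, that is,
\[
  y \in Y \ \wedge\ \forall\, \epsilon > 0\ \exists\, x' \in \RR^n \colon (x', y) \in \Gamma_f \ \wedge\ \|x - x'\|^2 < \epsilon^2.
\]
Every atomic clause is semialgebraic (the two membership conditions by hypothesis, the distance inequality as a polynomial comparison), so this formula is first-order definable over the real closed field $\RR$ with parameters from a fixed semialgebraic set. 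Tarski--Seidenberg quantifier elimination then produces a semialgebraic description of $\ol X_Y$ in $\RR^n \times Y$.

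There is no serious obstacle, only a choice of formalism. One must take care that fiberwise closure is generally strictly smaller than slicing the closure $\ol{\Gamma_f}$ of the graph (consider a semialgebraic section that is discontinuous along a stratum of $Y$), so one cannot simply close the graph and slice; the formula above sidesteps this by forcing approximating points $x'$ to live in the \emph{same} fiber. If one prefers to avoid appeal to quantifier elimination, the same conclusion follows from a semialgebraic cell decomposition of $X$ compatible with $f$, applying closure stratum by stratum and using that finite unions of semialgebraic sets are semialgebraic; but that route is heavier than necessary and the first-order argument is both shorter and more transparent.
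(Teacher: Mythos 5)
Your proof is correct, but it takes a genuinely different route from the paper. The paper disposes of this lemma by citing Hardt's semialgebraic local triviality theorem: after discarding a semialgebraic subset of $Y$ of smaller dimension, the family $X \to Y$ is trivial, so fiberwise closure can be performed on finitely many trivialized pieces and the results reunited. Your argument instead writes membership in $\ol X_Y$ as a first-order formula over the real closed field — $y \in Y$ and every $\epsilon$-ball around $x$ meets the fiber $f^{-1}(y)$ — with all atomic predicates semialgebraic, and concludes by Tarski--Seidenberg quantifier elimination. Both are valid; yours is more elementary and self-contained (quantifier elimination is the bedrock fact of real algebraic geometry, whereas Hardt's theorem is a substantially deeper structural result), while the paper's citation is shorter on the page and the triviality it invokes gives extra information about the family that quantifier elimination does not. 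Your closing caveat — that one cannot simply take the closure of the graph and slice it, since fiberwise closure is generally strictly smaller — is a correct and worthwhile observation, and your fallback via a cell decomposition compatible with $f$ is essentially the paper's Hardt-based strategy in disguise. One cosmetic point: whether $\ol X_Y$ is read as a subset of $\RR^n \times Y$ (as you set it up) or as the union of the fiber closures inside $\RR^n$ (as it is used in Proposition~\ref{p:semialgebraic-D}), the two versions differ by a projection, which Tarski--Seidenberg also handles, so nothing is lost.
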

\begin{proof}
This is a consequence of Hardt's theorem \cite[Theorem~4]{hardt80}
(see also \cite[Remark~II.3.13]{shiota97}), which says that over a
subset of~$Y$ whose complement in~$Y$ has dimension less than $\dim
Y$, the family $X \to Y$ is trivial.
\end{proof}

\begin{prop}\label{p:semialgebraic-D}
If $D$ is a semialgebraic or PL downset in a real polyhedral
group~$\cQ$ and $\sigma \in \cfq$ is a face then $\dsd$ is similarly
semialgebraic or~PL.
\end{prop}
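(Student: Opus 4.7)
The plan is to identify $\dsd$ with a fiberwise closure and then apply Lemma~\ref{l:semialg-closure} in the semialgebraic case and a direct polyhedral computation in the PL case. Pick a linear subspace $V \subseteq \cQ$ complementary to $\RR\sigma$, and let $\pi : \cQ \to V$ be the projection. The final sentence of Lemma~\ref{l:downset-upper-boundary} gives
\[
  \dsd \;=\; \bigcup_{\xx \in V} \ol{D \cap (\xx + \RR\sigma)},
\]
so $\dsd$ is exactly the union in $\cQ$ of the closures of the fibers of the restriction $\pi : D \to V$.

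For the semialgebraic case this restriction is a morphism of semialgebraic varieties, and Lemma~\ref{l:semialg-closure} applies verbatim: the fiberwise closure family is semialgebraic in~$\cQ$, and by the display above it equals $\dsd$.

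For the PL case I would write $D = \bigcup_{i=1}^k P_i$ as a finite union of convex polyhedra. Since closure and the union over $\xx \in V$ both commute with the finite union over~$i$, it suffices to show that for one convex polyhedron $P = \bigcap_j H_j$ (each $H_j$ a closed or open half-space) the set $E = \bigcup_{\xx \in V} \ol{P \cap (\xx + \RR\sigma)}$ is again a convex polyhedron. Define $H_j^\flat = H_j$ if $\partial H_j \supseteq \RR\sigma$ and $H_j^\flat = \ol{H_j}$ otherwise. The key identity to prove is
\[
  E \;=\; \Bigl(\,\bigcap_j H_j^\flat\Bigr) \,\cap\, \pi^{-1}\bigl(\pi(P)\bigr).
\]
The containment $\subseteq$ is a routine limit argument: any slice $\aa + \RR\sigma$ lies on one closed side of each hyperplane containing $\RR\sigma$, so parallel constraints carry across to the limit $\aa$ unchanged, while non-parallel constraints pass only to their closures. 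The containment $\supseteq$ is by interpolation: given $\aa$ in the right-hand side, pick $\bb \in P$ with $\pi(\bb) = \pi(\aa)$ and consider $\aa_t = (1-t)\aa + t\bb \in \aa + \RR\sigma$; the parallel constraints are satisfied on the entire slice (hence at $\aa_t$), while each non-parallel open constraint becomes strict at $\aa_t$ for $t > 0$ as a convex combination of a weak inequality at~$\aa$ and a strict one at~$\bb$. Thus $\aa_t \in P$ for $t \in (0,1]$ and $\aa_t \to \aa$, so $\aa \in E$. Since $\pi(P)$ is a convex polyhedron by Fourier--Motzkin elimination, $\pi^{-1}(\pi(P))$ is a convex polyhedron, and $E$ is an intersection of finitely many open and closed half-spaces; therefore $\dsd = \bigcup_i E_i$ is PL.

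The principal obstacle is the PL case: the naive identity $E = \bigcap_j H_j^\flat$ overshoots whenever some slices of $P$ parallel to $\RR\sigma$ are empty. For instance, with $P = \{x > 0,\, y > x\}$ and $\RR\sigma$ the $x$-axis, at $\aa = \0$ both $H_j^\flat$ constraints hold, yet no slice $\{y = c\}$ with $c \le 0$ meets $P$, so $\0 \notin E$. The correction $\pi^{-1}(\pi(P))$ captures precisely the nonempty-slice locus, and polyhedrality of projections of polyhedra keeps the final description inside the PL category.
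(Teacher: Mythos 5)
Your proof is correct and follows essentially the same route as the paper: the semialgebraic case is word-for-word the same application of Lemma~\ref{l:semialg-closure} via Lemma~\ref{l:downset-upper-boundary}, and the PL case likewise reduces to a single convex piece and identifies the union of slice-closures as a polyhedron cut down by $\pi^{-1}(\pi(\cdot))$ (the paper uses relatively open cells, where the closure of a nonempty slice is just the slice of the closure, while you keep a general half-space description and track which constraints relax to their closures). Your explicit identity $E = \bigl(\bigcap_j H_j^\flat\bigr) \cap \pi^{-1}(\pi(P))$ and the observation that empty slices force the $\pi^{-1}(\pi(P))$ correction are sound; just note that ``$\partial H_j \supseteq \RR\sigma$'' should read ``the defining functional of $H_j$ vanishes on $\RR\sigma$,'' since the bounding hyperplane need not pass through the origin.
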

\begin{proof}
Semialgebraic case: Lemma~\ref{l:semialg-closure} with $Y = \qrs$ and
$X = D$ by Lemma~\ref{l:downset-upper-closure}.

PL case: in Lemma~\ref{l:downset-upper-closure} the union can be
broken over finitely many relatively open polyhedral cells
comprising~$D$.  So assume $D$ is a single relatively open polyhedral
cell.  The union in Lemma~\ref{l:downset-upper-closure} is plainly a
subset of~$\oD$.  Indeed, if $\pi: \oD \to Q/\RR\sigma$, then the
union is $\pi^{-1}\bigl(\pi(D)\bigr)$.  That is, the union is the
complement in~$\oD$ of the (closed) faces of~$\oD$ whose projections
mod $\RR\sigma$ are contained in the boundary of~$\pi(\oD)$.
\end{proof}

Proposition~\ref{p:socc-tame} covers the case of closed socles over an
arbitrary poset, with the next lemma needed for the semialgebraic and
PL cases.

\begin{lemma}\label{l:semialgebraic}
If $D$ is a semialgebraic or PL downset in a real polyhedral group
then its set $\max D$ of maximal elements is similarly semialgebraic
or PL, as well.
\end{lemma}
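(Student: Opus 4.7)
The plan is to express $\max D$ as a set-theoretic difference that manifestly preserves the semialgebraic or PL class. Observe that $\aa \in \max D$ precisely when $\aa \in D$ and no strictly larger element of~$\cQ$ lies in~$D$; equivalently,
$$
  \max D \;=\; D \minus \bigl(D - (\cQ_+ \minus \{\0\})\bigr),
$$
where $D - (\cQ_+ \minus \{\0\})$ is the image of the product $D \times (\cQ_+ \minus \{\0\})$ under the linear map $(\bb,\vv) \mapsto \bb - \vv$; it is the set of points of~$\cQ$ strictly dominated by some element of~$D$, so complementing within~$D$ leaves exactly the maximal elements.

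For the semialgebraic case, I would invoke the Tarski--Seidenberg theorem: $\cQ_+ \minus \{\0\}$ is semialgebraic, so $D \times (\cQ_+ \minus \{\0\})$ is semialgebraic, and its image under a fixed linear map is again semialgebraic. Since complements and intersections preserve the semialgebraic class, $\max D = D \minus (D - (\cQ_+ \minus \{\0\}))$ is semialgebraic. This part is routine.

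For the PL case, I would decompose $\cQ_+ \minus \{\0\} = \bigcup_{\tau\neq\{\0\}} \tau^\circ$ into its finitely many nonzero relatively open faces, and write $D$ itself as a finite union of convex polyhedra. Then $D - (\cQ_+ \minus \{\0\})$ becomes a finite union of images of products of convex polyhedra under a single linear map, each of which is again a convex polyhedron by Fourier--Motzkin elimination (linear projections of convex polyhedra are convex polyhedra). Hence that union is PL, and $\max D$ is PL because the class of finite unions of convex polyhedra is closed under set-theoretic differences: the complement of a single half-space is a half-space of the opposite (open/closed) type, so the Boolean algebra generated by half-spaces contains all PL sets in the sense of Definition~\ref{d:auxiliary-hypotheses}.

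The main obstacle I foresee is exactly this PL bookkeeping in the regime where Definition~\ref{d:auxiliary-hypotheses} allows both open and closed half-spaces: because $\cQ_+ \minus \{\0\}$ and the constituent pieces of~$D$ need not be closed, the Fourier--Motzkin step and the Boolean-algebra closure have to be verified for the mixed class of (not necessarily closed) convex polyhedra, rather than invoked as a black box for closed polyhedra. Once that is in hand the argument is purely combinatorial and produces an explicit finite polyhedral description of~$\max D$.
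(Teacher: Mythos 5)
Your proof is correct and is essentially the paper's own argument: the paper likewise realizes $D \minus \max D$ as the first-factor projection of the set $\{(\aa,\bb) \in D \times D \mid \bb \in \aa + \mm\}$ with $\mm = \cQ_+ \minus \{\0\}$ (the same set as your $D \cap (D - \mm)$ up to a linear change of coordinates), then takes the complement within~$D$, citing closure of the semialgebraic class under linear images and Boolean operations and noting that the PL case goes through because all the defining (in)equalities are linear. Your explicit Fourier--Motzkin justification of the PL step for mixed open/closed half-spaces is a welcome elaboration of what the paper leaves as a remark, not a deviation from its method.
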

\begin{proof}
The semialgebraic proof relies on standard operations on subsets that
preserve the semialgebraic property; see \cite[Chapter~II]{shiota97},
for instance.  As it happens, the proof works verbatim for the PL case
because the relevant (in)equalities are linear.

Inside of $\RR^n \times \RR^n$, consider the subset~$X$ whose fiber
over each point $\aa \in D$ is $\aa + \mm$, where $\mm = \cQ \minus
\{\0\}$ is the maximal monoid ideal of~$\cQ_+$.  Note that $\mm$ is
semialgebraic because it is defined by linear inequalities and a
single linear inequation.  The subset~$X \subseteq \RR^n \times \RR^n$
is semialgebraic because it is the image of the algebraic morphism $D
\times \mm \to D \times \RR^n$ sending $(\aa,q) \mapsto (\aa,\aa+q)$.
The intersection of~$X$ with the semialgebraic subset $D \times D$
remains semialgebraic, as does the projection of this intersection
to~$D$.  The image of the projection is $D \minus \max D$ because
$(\aa + \mm) \cap D = \nothing$ precisely when $\aa \in \max D$.
Therefore $\max D = D \minus (D \minus \max D)$ is semialgebraic.
\end{proof}

\begin{prop}\label{p:socc-tame}
If a module $\cM$ over any poset is tame then so is its closed socle
$\socc\cM$.  If $\cM$ is semialgebraic or PL over a real polyhedral
group then so is~$\socc\cM$.  If $\cM \to \cM'$ is a tame,
semialgebraic, or PL morphism, then so is $\socc\cM \to \socc\cM'$.
\end{prop}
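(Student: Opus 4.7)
The plan is to derive the proposition from Theorem~\ref{t:endofunctor} applied with $\cS = \socc$ and $\cQ' = \cQ$, so all three conclusions fall out simultaneously. The task reduces to verifying the hypotheses of that theorem for the functor $\socc$.

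First, $\socc$ is left-exact by Lemma~\ref{l:left-exact-socc}. Second, Example~\ref{e:socc} computes $\socc\kk[D] = \kk[\max D]$ for any downset~$D$ (viewing $D$ as the interval $\cQ \cap D$), which realizes $\socc\kk[D]$ as a subquotient of~$\kk[\cQ]$, namely as the submodule of maximal-degree elements inside the downset quotient~$\kk[D]$. For the semialgebraic and PL refinements of the second hypothesis, Lemma~\ref{l:semialgebraic} guarantees that $\max D$ is semialgebraic, respectively PL, whenever $D$ is.

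The third hypothesis requires that a connected homomorphism $\phi: \kk[D] \to \kk[D']$ of downset modules, acting as multiplication by a scalar~$\lambda$ on every degree in $D \cap D'$, induces a connected homomorphism $\socc\phi: \kk[\max D] \to \kk[\max D']$. Since $\socc$ is a subfunctor of the identity, $\socc\phi$ is simply the restriction of~$\phi$: at each degree in $\max D \cap \max D' \subseteq D \cap D'$ both sides contribute a one-dimensional component and $\socc\phi$ acts again by~$\lambda$, while at any other degree of $\max D$ the target side is zero so the restriction vanishes trivially. This is exactly the condition of Definition~\ref{d:connected-homomorphism}. With all three hypotheses confirmed, Theorem~\ref{t:endofunctor} delivers the proposition in the tame, semialgebraic, and PL cases at once, both for modules and for morphisms. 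The only step demanding real attention is this connectedness verification; the remainder is the bookkeeping that Theorem~\ref{t:endofunctor} has already absorbed via its syzygy-theoretic proof.
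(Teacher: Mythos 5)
Your proposal is correct and matches the paper's proof essentially verbatim: both apply Theorem~\ref{t:endofunctor} with left-exactness from Lemma~\ref{l:left-exact-socc}, the downset computation $\socc\kk[D]=\kk[\max D]$ from Example~\ref{e:socc}, and the semialgebraic/PL criterion from Lemma~\ref{l:semialgebraic}. Your explicit check that $\socc\phi$ is the restriction of~$\phi$ and hence connected is exactly the content the paper compresses into ``follow from Example~\ref{e:socc}.''
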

\begin{proof}
Apply Theorem~\ref{t:endofunctor}: left-exactness is
Lemma~\ref{l:left-exact-socc}, the criteria on downset modules and
connected morphisms between them both follow from
Example~\ref{e:socc}, and the semialgebraic or PL criterion is
Lemma~\ref{l:semialgebraic}.
\end{proof}

The next three results cover the case of socles over real polyhedral
groups.

\begin{lemma}\label{l:frontier-tame}
The homomorphisms $\dsm \to \dsp\cM$ for faces $\sigma \supseteq
\sigma'$ afforded by Proposition~\ref{l:natural} are tame,
semialgebraic, or PL if $\cM$~is.
\end{lemma}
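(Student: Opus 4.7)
The plan is to mimic the structure of the proof of Theorem~\ref{t:endofunctor}, reducing to the case of downset modules via a finite copresentation and then lifting via the kernel construction.

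First I would invoke the syzygy theorem (as in the proof of Theorem~\ref{t:endofunctor}) to produce a copresentation $0 \to \cM \to E^0 \to E^1$ by a tame (resp.\ semialgebraic or PL) morphism, where each $E^i$ is a finite direct sum of downset modules $\kk[D]$ of the corresponding type. Exactness of upper-boundary functors (Lemma~\ref{l:exact-delta}) together with naturality of the transformation $\ds \to \dsp$ then produces a commutative diagram with exact rows realizing $\ds\cM \to \dsp\cM$ as the map induced on kernels.

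The heart of the argument is to identify, for a single downset~$D$, what the map $\ds\kk[D] \to \dsp\kk[D]$ is. By Lemma~\ref{l:downset-upper-boundary} and Proposition~\ref{p:downset-upper-boundary}, its source and target are the indicator quotients $\kk[\dsd]$ and $\kk[\delta^{\sigma'} D]$, each characterized by containment of $\sigma$ or $\sigma'$ in the shape $\nda$. Because $\nda$ is a cocomplex (an upset in $\cfq$) and $\sigma \supseteq \sigma'$, we have $\delta^{\sigma'} D \subseteq \dsd$. Unwinding the colimit construction in Lemma~\ref{l:natural} would then identify the natural map as the connected surjection of indicator quotients: identity on copies of~$\kk$ over the smaller downset, and zero on the complement. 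Proposition~\ref{p:semialgebraic-D} ensures that both downsets are tame (resp.\ semialgebraic or PL), so the three-piece partition $\cQ = \delta^{\sigma'} D \cup (\dsd \minus \delta^{\sigma'} D) \cup (\cQ \minus \dsd)$ is a finite constant subdivision of the correct type subordinate to this morphism.

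To assemble, I would take finite direct sums over downset summands to make each vertical map $\ds E^i \to \dsp E^i$ tame (resp.\ semialgebraic or PL). A common refinement of finite constant subdivisions subordinate to both vertical arrows, to the two horizontal arrows, and to the modules $\ds\cM$ and $\dsp\cM$ themselves (tame by Example~\ref{e:upper-boundary-tame}) then restricts to a subdivision subordinate to the induced kernel morphism $\ds\cM \to \dsp\cM$. The main obstacle will be the identification in the previous paragraph: verifying that the colimit-induced natural transformation of Lemma~\ref{l:natural}, applied to $\kk[D]$, really coincides with the connected surjection $\kk[\dsd] \to \kk[\delta^{\sigma'} D]$. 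The remaining descent of subdivisions to the kernel closely parallels the end of the proof of Theorem~\ref{t:endofunctor} and should be routine.
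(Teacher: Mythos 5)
Your proposal is correct and follows essentially the same route as the paper: a downset copresentation from the syzygy theorem, the containment $\dsp D \subseteq \dsd$ from Proposition~\ref{p:downset-upper-boundary}.\ref{i:nabla} identifying the map on a single downset module as a connected quotient, and the common-refinement argument from the end of the proof of Theorem~\ref{t:endofunctor} to descend to kernels. The "main obstacle" you flag is harmless: all graded components involved are $0$ or $\kk$ with identity structure maps, so the colimit-induced map is automatically the connected surjection.
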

\begin{proof}
Use \cite[Theorem~6.12.5]{hom-alg-poset-mods}
to express $\cM = \ker(E^0 \to E^1)$ as the kernel of a downset
copresentation that is tame, semialgebraic, or PL as the case may be.
For a single downset~$D$, observe that $\dsp D \subseteq \dsd$
whenever $\sigma \supseteq \sigma'$ by
Proposition~\ref{p:downset-upper-closure}.\ref{i:nabla}.
Therefore, by Proposition~\ref{p:downset-upper-closure}, the natural
map $\ds\kk[D] \to \dsp\kk[D]$ is a quotient of downset modules, which
is a connected homomorphism
(Definition~\ref{d:connected-homomorphism}) and hence tame,
semialgebraic, or PL, as the case may be.  The homomorphism $\dsm
\to\nolinebreak \dsp\cM$ is induced by the morphism $\ds E^\spot \to
\dsp E^\spot$ of copresentations.~~The argument in the final two
sentences of the proof of Theorem~\ref{t:endofunctor} therefore works
here.
\end{proof}

The next result is stated in the generality of $\ntsoc\dt$ (see
Section~\ref{b:soc-along}, including~$\dt$ in
Definition~\ref{d:upper-closure-tau}) for the eventual purpose of
Theorem~\ref{t:soct-tame}, even though for the time being all that is
needed is the case $\tau = \{\0\}$, where $\ntsoc\dt =
\fqsoc\delta$~(see~\mbox{Section}~\ref{b:socc}).

\begin{prop}\label{p:ntsoc}
Fix a real polyhedral group~$\cQ$ and a face~$\tau \in \cfq$.  The
endofunctor on the category of $\cQ$-modules that takes $\cM$ to
$\ntsoc\dt$ restricts to endofunctors on the categories of tame
$\cQ$-modules, semialgebraic $\cQ$-modules, and PL $\cQ$-modules.
\end{prop}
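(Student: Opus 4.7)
The plan is to factor the endofunctor as $\cM \mapsto \dt\cM \mapsto \ntsoc(\dt\cM)$ and verify that each step preserves tameness, semialgebraicity, and the PL property, on both objects and morphisms. For the first step, as a $\cQ$-module $\dt\cM = \bigoplus_{\sigma \in \nabt}\ds\cM$ is a finite direct sum because $\nabt \subseteq \cfq$ is finite, and each summand is handled by Example~\ref{e:upper-boundary-tame}, which treats tameness directly and invokes Proposition~\ref{p:semialgebraic-D} for the semialgebraic and PL cases; the same reference delivers the statement for morphisms componentwise.

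For the second step, I would invoke Lemma~\ref{l:intersection-of-kernels} to express the $\sigma$-component of $\ntsoc(\dt\cM)$ as a finite intersection
\[
\bigl(\ntsoc(\dt\cM)\bigr)_\sigma \;=\; \bigcap_{\sigma' \in \nabt,\; \sigma' \subsetneq \sigma} \ker\bigl(\ds\cM \to \dsp\cM\bigr)
\]
of kernels of the natural structure maps supplied by Lemma~\ref{l:natural}. Lemma~\ref{l:frontier-tame} shows that each of these structure maps is tame (resp.\ semialgebraic, PL) whenever $\cM$ is, and Proposition~\ref{p:abelian-category} shows that each of the three relevant module categories is abelian. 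Hence kernels lie in the category, and finite intersections of subobjects, realized as kernels of maps into finite direct products, lie in it too. Summing over the finite poset $\nabt$ then places $\ntsoc(\dt\cM)$ in the appropriate category.

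For a morphism $\phi: \cM \to \cM'$, functoriality of $\dt$ combined with functoriality of the kernel operation inside each abelian category induces a morphism $\ntsoc\dt\phi$ of the right type; as in the final two sentences of the proof of Theorem~\ref{t:endofunctor}, a common refinement of constant subdivisions or encodings subordinate to $\dt\cM$ and $\dt\cM'$ then dominates the induced morphism. The main obstacle I anticipate is pure bookkeeping: threading the tame/semialgebraic/PL property through the $\cQ \times \nabt$ bigrading while keeping the ambient category that of $\cQ$-modules, and verifying that finite intersections of subobjects really do remain in the category. Once Lemma~\ref{l:frontier-tame} and Proposition~\ref{p:abelian-category} are in hand, no genuinely new ingredient is needed.
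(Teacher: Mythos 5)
Your proposal is correct and takes essentially the same route as the paper: the paper likewise identifies the $\nabt$-degree-$\sigma$ component of $\ntsoc\dt\cM$ with the intersection of the kernels of the maps $\dsm \to \dsp\cM$, invokes Lemma~\ref{l:frontier-tame} for tameness (resp.\ semialgebraicity, PL) of those maps and Proposition~\ref{p:abelian-category} for the fact that a finite intersection of kernels is the kernel of the map to the direct sum, and handles morphisms by the common-refinement argument from the end of the proof of Theorem~\ref{t:endofunctor}. The only quibble is a citation: the kernel description you attribute to Lemma~\ref{l:intersection-of-kernels} is really the (essentially definitional) observation recorded in that lemma's proof rather than its statement, which concerns commuting $\Hom_{\nabt}(\kk_\sigma,-)$ with quotient-restriction.
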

\begin{proof}
The same proof works, mutatis mutandis, for the semialgebraic and PL
cases.

The $\nabt$-graded component of~$\ntsoc\dt\cM$ in
$\nabt$-degree~$\sigma$ is the intersection of the kernels of the
$\cQ$-module homomorphisms $\dsm \to \dsp\cM$ for $\sigma \supseteq
\sigma'$.  These are tame morphisms, if $\cM$ is tame, by
Lemma~\ref{l:frontier-tame}.  The intersection of their kernels is
tame by Proposition~\ref{p:abelian-category} because the intersection
of kernels of homomorphisms from a single module to finitely many
modules is the kernel of the diagonal homomorphism to the direct sum.
So $\ntsoc\dt\cM \to \dt\cM$ is tame.

Any given tame morphism $\cM \to \cN$ induces a tame morphism
$\delta\cM \to \delta\cN$ by Example~\ref{e:upper-closure-tame}.
Hence the composite $\ntsoc\dt\cM \to \dt\cM \to \dt\cN$ is a tame
morphism that happens to have its image in $\ntsoc\dt\cN$.  On the
other hand, $\ntsoc\dt\cN \to \dt\cN$ is also a tame morphism.  The
morphism $\ntsoc\dt\cM \to \ntsoc\dt\cN$ is tame by any common
refinement of two poset encodings of~$\dt\cN$ subordinate to the
morphisms from $\ntsoc\dt\cM$ and~$\ntsoc\dt\cN$.
\end{proof}

\begin{cor}\label{c:soco-tame}
If a module~$\cM$ over a real polyhedral group is tame, semialgebraic,
or PL then so is its socle $\soco\cM$.  If $\cM \to \cM'$ is a tame,
semialgebraic, or PL morphism, then the natural map $\soco\cM \to
\soco\cM'$ is, as well.
\end{cor}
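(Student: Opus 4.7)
By Definition~\ref{d:soc}, $\soco\cM = \socc\delta\cM$ is the closed socle over the poset $\cQ \times \fqo$ of the upper boundary module, and Lemma~\ref{l:either-order} provides the factorization $\soco\cM \cong \rnsoc(\fqsoc\delta\cM)$. The plan is to handle the two functors in this composite separately, in each case citing an already established preservation result.

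First, I would specialize Proposition~\ref{p:ntsoc} to the trivial face $\tau = \{\0\}$. Since every face of~$\cQ_+$ contains~$\{\0\}$, the open star $\nab_{\{\0\}}$ is all of~$\fqo$, and by Definition~\ref{d:upper-boundary-tau} the quotient $\dt[\{\0\}]$ is~$\delta$ itself. So Proposition~\ref{p:ntsoc} says exactly that $\cM \mapsto \fqsoc\delta\cM$ is an endofunctor on each of the categories of tame, semialgebraic, and PL $\cQ$-modules, and that it takes morphisms of a given type to morphisms of the same type. Therefore, given the hypothesis on~$\cM$ (or on $\cM \to \cM'$), the intermediate module $\fqsoc\delta\cM$ (resp.\ morphism $\fqsoc\delta\cM \to \fqsoc\delta\cM'$) inherits the same tame/semialgebraic/PL property.

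Second, since $\fqo$ is finite, the $(\cQ\times\fqo)$-graded module $\fqsoc\delta\cM$ decomposes as a finite direct sum, indexed by $\sigma \in \fqo$, of $\cQ$-modules, each of which individually inherits the tame, semialgebraic, or PL property from the whole. Proposition~\ref{p:socc-tame}, applied to each of these summands, then shows that $\rnsoc$ sends each summand to a tame (resp.\ semialgebraic, PL) $\cQ$-module, so their direct sum $\rnsoc(\fqsoc\delta\cM) \cong \soco\cM$ has the same property. The corresponding statement for morphisms follows from the morphism statement in Proposition~\ref{p:socc-tame} applied summandwise.

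The main point to watch is the bookkeeping across the two posets $\cQ$ and $\fqo$ on which the two socle functors act: after applying $\fqsoc\delta$ the output carries a $(\cQ\times\fqo)$-grading, and one must confirm that the remaining $\rnsoc$ can legitimately be analyzed by viewing the $\fqo$-component as merely indexing a finite direct sum of $\cQ$-modules. This is permissible precisely because $\fqo$ is finite (it is the face poset of a polyhedral cone) and because the tame, semialgebraic, and PL conditions are all closed under finite direct sums of $\cQ$-modules. No new geometric input beyond Propositions~\ref{p:ntsoc} and~\ref{p:socc-tame} is needed.
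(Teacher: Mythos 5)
Your proof is correct and follows essentially the same route as the paper: factor $\soco = \rnsoc \circ (\fqsoc\delta)$ via Lemma~\ref{l:either-order}, handle $\fqsoc\delta$ by the $\tau=\{\0\}$ case of Proposition~\ref{p:ntsoc}, and handle $\rnsoc$ by Proposition~\ref{p:socc-tame}. Your extra remark about viewing the $(\cQ\times\fqo)$-module as a finite direct sum of $\cQ$-modules indexed by the finite poset $\fqo$ is a sensible elaboration of bookkeeping the paper leaves implicit.
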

\begin{proof}
By Lemma~\ref{l:either-order}, $\soco\cM$ is the composite of the
functors $\rnsoc$ and $\fqsoc\delta$, which preserve the tame,
semialgebraic, and PL categories by Propositions~\ref{p:socc-tame}
and~\ref{p:ntsoc}, the latter of which has $\ntsoc\dt = \fqsoc\delta$
when $\tau = \{\0\}$.
\end{proof}

The next two results cover closed socles along faces of arbitrary
polyhedral groups.

\begin{lemma}\label{l:semialgebraic'}
If $D$ is a semialgebraic or PL downset in a real polyhedral
group~$\cQ$ then $\mtd$ is similarly semialgebraic or PL in~$\qrt$ for
any face~$\tau$ of~$\cQ_+$.
\end{lemma}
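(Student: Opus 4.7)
The plan is to relate $\mtd$ directly to its preimage in $\cQ$. By Example~\ref{e:socct}, $\mtd$ is the image in $\qrt$ under the natural linear projection $\pi: \cQ \onto \qrt$ of the set
\[
  M = \{\aa \in \cQ : (\aa + \cQ_+) \cap D = \aa + \tau\} = M_+ \cap M_-,
\]
where $M_+ = \{\aa : \aa + \tau \subseteq D\}$ and $M_- = \{\aa : (\aa + (\cQ_+ \minus \tau)) \cap D = \nothing\}$. It therefore suffices to show that $M$ is semialgebraic, respectively PL, in $\cQ$, and that this property is preserved by the linear map $\pi$.

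First I would observe that the complement of $M_+$ in $\cQ$, namely $\{\aa : \exists\, v \in \tau \text{ with } \aa + v \notin D\}$, is the image under the linear projection $(\aa, v) \mapsto \aa$ of the set $\{(\aa, v) \in \cQ \times \cQ : v \in \tau,\ \aa + v \notin D\}$. This latter set is a boolean combination of pullbacks of $D$, $\tau$, and $\cQ_+$ along linear maps, so it belongs to the semialgebraic (respectively PL) class since both classes are closed under boolean operations and inverse images under linear maps. Its projection to $\cQ$ is semialgebraic by Tarski--Seidenberg, or PL by Fourier--Motzkin elimination, and taking complements keeps us in the class. An identical recipe with $\cQ_+ \minus \tau$ in place of $\tau$ and ``$\aa + q \in D$'' in place of ``$\aa + v \notin D$'' handles $M_-$. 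Intersecting yields $M$, and applying the same projection step to $\pi$ establishes the conclusion for $\mtd = \pi(M)$.

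The principal obstacle is the PL case of projection: the paper's definition of PL allows both open and closed half-spaces inside convex polyhedra, while classical Fourier--Motzkin elimination is usually stated only for closed polyhedra. One must track the strictness of each inequality through variable elimination, which works because the elimination acts linearly on the defining functionals and preserves the distinction between strict and non-strict conditions, so the output system still cuts out a finite union of convex polyhedra in the sense of Definition~\ref{d:auxiliary-hypotheses}. This is the same ``linear operations preserve PL'' principle invoked in Lemma~\ref{l:semialgebraic} and used in Proposition~\ref{p:semialgebraic-D}, so the argument can proceed along that precedent.
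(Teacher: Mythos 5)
Your proof is correct and follows essentially the same route as the paper's: both split the set of closed cogenerators into the two conditions $(\aa+\cQ_+)\cap D \supseteq \aa+\tau$ (your $M_+$) and $(\aa+\cQ_+)\cap D \subseteq \aa+\tau$ (your $M_-$), realize each as the complement of a projection of a boolean combination of linear images/preimages of $D$, $\tau$, and $\mm_\tau = \cQ_+\minus\tau$, and then push the intersection forward to $\qrt$. Your explicit tracking of strict versus non-strict inequalities through Fourier--Motzkin elimination in the PL case is a welcome elaboration of the point the paper dispatches with the remark that ``the relevant (in)equalities are linear.''
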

\begin{proof}
The projection of a semialgebraic set is semialgebraic, so by
Example~\ref{e:socct} it suffices to prove that the set of degrees of
closed cogenerators of~$\kk[D]$ along~$\tau$ is semialgebraic.  The
argument comes in two halves, both following the framework of the
proof of Lemma~\ref{l:semialgebraic}.  For the first half, simply
replace $\mm$ by $\mm_\tau = \cQ_+ \minus \tau$ to find that
$\big\{\aa \in D \mid (\aa + \cQ_+) \cap D \subseteq \aa + \tau\big\}$
is semialgebraic.  The second half uses $\tau$ instead of~$\mm$, and
this time it intersects the subset~$X$ with $D \times (\cQ \minus D)$
to find that $\big\{\aa \in D \mid (\aa + \cQ_+) \cap D \supseteq \aa
+ \tau\big\}$ is semialgebraic.  The desired set of degrees is the
intersection of these two semialgebraic sets.  Replacing
``semialgebraic'' with ``PL'' works because, again, the relevant
(in)equalities are linear.
\end{proof}

\begin{prop}\label{p:socct-tame}
If a module $\cM$ over a partially ordered abelian group is tame then
so is its closed socle $\socct\cM$ along any face~$\tau$.  If $\cM$ is
semialgebraic or PL over a real polyhedral group then so
is~$\socct\cM$.  If $\cM \to \cM'$ is a tame, semialgebraic, or PL
morphism, then the natural map $\socct\cM \to \socct\cM'$ is, as well.
\end{prop}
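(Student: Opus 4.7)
The plan is to apply Theorem~\ref{t:endofunctor} to the global closed cogenerator functor $\socct$, with target poset $\cQ'=\qzt$ (which is polyhedral, and in the real polyhedral case is $\qrt$, cf.~Remark~\ref{r:qrt}). Left-exactness of $\socct$ over any partially ordered group, the first hypothesis of the endofunctor theorem, is supplied directly by Proposition~\ref{p:left-exact-tau-closed}.

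For the downset hypothesis, Example~\ref{e:socct} computes $\socct\kk[D]=\kk[\mtd]$ for any downset $D\subseteq\cQ$, with $\mtd\subseteq\qzt$ the image of the set of closed cogenerators of~$D$ along~$\tau$. This exhibits $\socct\kk[D]$ as an indicator subquotient of~$\kk[\qzt]$, as required. For the connected-morphism hypothesis, a connected morphism $\phi\colon\kk[D]\to\kk[D']$ acts as multiplication by some $\lambda\in\kk$ on every common degree $q\in D\cap D'$. By functoriality of~$\socct$, the induced morphism $\socct\phi\colon\kk[\mtd]\to\kk[\mtd']$ acts as multiplication by the same~$\lambda$ on every common degree $\wt q\in\mtd\cap\mtd'$: the scalar acting on $(\socct\kk[D])_{\wt q}$ is inherited from the scalar $\phi$ induces on the fiber over~$\wt q$ in~$\cQ$, which is uniformly~$\lambda$. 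Hence $\socct\phi$ is connected in the sense of Definition~\ref{d:connected-homomorphism}.

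For the semialgebraic and PL parts, the remaining hypothesis of Theorem~\ref{t:endofunctor}.\ref{i:semialg} is precisely Lemma~\ref{l:semialgebraic'}: if $D$ is a semialgebraic (respectively PL) downset in a real polyhedral group~$\cQ$, then $\mtd$ is a semialgebraic (respectively PL) downset in~$\qrt$. Having verified all four hypotheses, Theorem~\ref{t:endofunctor} delivers all three assertions of the proposition, for both objects and morphisms, simultaneously.

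The potential obstacle I anticipate is the connected-morphism verification: $\mtd$ lives in the quotient $\qzt$ rather than in~$\cQ$, so in principle several elements of $D\cap D'$ could collapse to a single element of $\mtd\cap\mtd'$, and one must check that the scalar acting on the target is consistent across the fiber. This is not actually problematic because $\phi$ acts by the single scalar $\lambda$ uniformly on the entire common fiber in~$\cQ$, so after passing through $\hhom_\cQ(\kk[\tau],-)$ and the exact quotient-restriction along~$\tau$ (Lemma~\ref{l:exact-qr}) the induced action in each $\qzt$-degree is again multiplication by~$\lambda$. Beyond this check, the proof is a direct invocation of the machinery already assembled in Theorem~\ref{t:endofunctor}, Example~\ref{e:socct}, Proposition~\ref{p:left-exact-tau-closed}, and Lemma~\ref{l:semialgebraic'}.
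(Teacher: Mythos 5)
Your proposal is correct and follows the paper's own proof essentially verbatim: both invoke Theorem~\ref{t:endofunctor} with left-exactness from Proposition~\ref{p:left-exact-tau-closed}, the downset and connected-morphism criteria from Example~\ref{e:socct}, and the semialgebraic/PL criterion from Lemma~\ref{l:semialgebraic'}. The only detail the paper makes explicit that you assert without comment is why $\socct\kk[D]=\kk[\mtd]$ is genuinely an indicator \emph{subquotient}, namely that $\mtd$ is an upset inside the downset $\dzt$ because it consists of maximal elements of~$\dzt$.
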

\begin{proof}\enlargethispage{.25ex}%
Apply Theorem~\ref{t:endofunctor}: left-exactness is
Proposition~\ref{p:left-exact-tau-closed}; the criteria on downset
modules and connected morphisms between them follow from
Example~\ref{e:socct}, noting that the set $\mtd$ is an upset in the
downset $\dzt$ because it is contained in the set of maximal elements
of~$\dzt$; and the semialgebraic or PL criterion is
Lemma~\ref{l:semialgebraic'}.
\end{proof}

Finally, here is the version covering total socles over real
polyhedral groups.

\begin{thm}\label{t:soct-tame}
Over a real polyhedral group~$\cQ$, the cogenerator functor $\soct$
along any face~$\tau$ restricts to endofunctors on the categories of
tame, semialgebraic, or PL modules over~$\cQ$.  For any face~$\sigma
\supseteq \tau$, this statement remains true for the cogenerator
functor $\soct[\sigma]$ along~$\tau$ with nadir~$\sigma$.
\end{thm}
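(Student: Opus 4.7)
The plan is to use the factorization provided by Proposition~\ref{p:either-order},
$$\soct\cM \cong \socct(\ntsoc\dt\cM),$$
so that $\soct$ becomes the composite of functors whose preservation of tameness, semialgebraicity, and piecewise linearity has already been established in this section. The nadir-$\sigma$ variant is handled simultaneously, since $\soct[\sigma]\cM = \socct\bigl((\ntsoc\dt\cM)_\sigma\bigr)$ is just a single $\sigma$-graded piece of this construction.

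For objects: if $\cM$ is tame, semialgebraic, or PL, then $\dt\cM$ inherits the property by Example~\ref{e:upper-boundary-tame} (which invokes Proposition~\ref{p:downset-upper-boundary} in the tame case and Proposition~\ref{p:semialgebraic-D} in the other two), noting that $\dt\cM$ is a finite subdirect sum of the functors $\dsm$ since $\nabt$ is finite. Proposition~\ref{p:ntsoc} then yields that $\ntsoc\dt\cM$ stays in the category as a module over $\cQ \times \nabt$. By Remark~\ref{r:soc-as-k-vect}, the $\nabt$-action on $\ntsoc\dt\cM$ is trivial, so for each $\sigma \in \nabt$ its $\sigma$-graded component $(\ntsoc\dt\cM)_\sigma$ is a $\cQ$-module in the relevant category. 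Proposition~\ref{p:socct-tame} then shows that $\socct$ applied to each such component is tame/semialgebraic/PL; finiteness of $\nabt$ guarantees that the direct sum $\soct\cM = \bigoplus_{\sigma \in \nabt} \soct[\sigma]\cM$ does too.

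For morphisms: a tame (resp.\ semialgebraic, PL) morphism $\cM \to \cN$ induces $\soct\cM \to \soct\cN$ by functoriality at each stage of the chain $\cM \mapsto \dt\cM \mapsto \ntsoc\dt\cM \mapsto \socct(\ntsoc\dt\cM)$. Tameness of the induced morphism is handled by the strategy already appearing in the proof of Proposition~\ref{p:ntsoc}: apply Proposition~\ref{p:abelian-category} together with a common refinement of two encodings of the intermediate target subordinate to the maps coming from $\cM$ and from~$\cN$, which then witnesses tameness of the final morphism; the same argument applies verbatim with ``semialgebraic'' or ``PL'' in place of ``tame''.

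The principal obstacle I anticipate is purely organizational, namely confirming at each intermediate step that the grading structure matches the hypotheses of the cited preservation result. In particular, Proposition~\ref{p:socct-tame} is stated for modules over a partially ordered group, and one must verify that applying it $\sigma$-slice by $\sigma$-slice to a $\cQ \times \nabt$-module is legitimate; this is where Remark~\ref{r:soc-as-k-vect} does the essential work by trivializing the $\nabt$-action on a socle. One must also confirm that the isomorphism of Proposition~\ref{p:either-order} is natural enough to transport tameness of intermediate morphisms to the composite, which follows from inspection of the proof of that proposition.
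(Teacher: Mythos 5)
Your proposal is correct and follows essentially the same route as the paper: the paper's proof likewise factors $\soct$ as the composite $\socct\circ(\ntsoc\dt)$ via Proposition~\ref{p:either-order}, cites Propositions~\ref{p:socct-tame} and~\ref{p:ntsoc} for preservation of the tame, semialgebraic, and PL categories, and obtains the $\soct[\sigma]$ statement by taking $\nabt$-graded pieces. The extra bookkeeping you flag (slice-by-slice application of Proposition~\ref{p:socct-tame} and naturality of the factorization) is sound and consistent with what the paper leaves implicit.
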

\begin{proof}
The cogenerator functor $\soct\cM$ is the composite of $\soct$
and~$\ntsoc\dt$ by Proposition~\ref{p:either-order}.  These functors
preserve the tame, semialgebraic, and PL categories by
Propositions~\ref{p:socct-tame} and~\ref{p:ntsoc}.  The
$\soct[\sigma]$ claim follows by taking $\nabt$-graded pieces.
\end{proof}

\section{Essential property of socles}\label{s:essential}

In this section, $\cQ$ is a real polyhedral group unless otherwise
stated.

The culmination of the foundations developed in Section~\ref{s:socle}
says that socles and cogenerators detect injectivity of homomorphisms
between tame modules over real polyhedral groups
(Theorem~\ref{t:injection}), as they do for noetherian rings in
ordinary commutative algebra.  The theory is complicated by there
being no actual submodule containing a given non-closed socle element;
that is why socles are functors that yield submodules of localizations
of auxiliary modules rather than submodules of localizations of the
given module itself.  Nonetheless, it comes down to the fact that,
when $D \subseteq \cQ$ is a downset, every element can be pushed up to a
cogenerator.  Theorem~\ref{t:divides} contains a precise statement of
this assertion that suffices for the purpose of
Theorem~\ref{t:injection}, although the definitive version of
Theorem~\ref{t:divides} occurs in Section~\ref{s:minimality}, namely
Theorem~\ref{t:downset=union}.

The proof of Theorem~\ref{t:divides} requires a
definition---essentially the notion dual to that of shape
(Proposition~\ref{p:shape}).  Informally, it is the set of faces
$\sigma$ such that a neighborhood of~$\aa$ in $\aa + \sigma^\circ$ is
contained in the downset~$D$.  The formal definition reduces by
negation to the discussion surrounding tangent cones of downsets
(Section~\ref{b:tangent}), noting that the negative of an upset is a
downset.

\begin{defn}\label{d:upshape}
The \emph{upshape} of a downset $D$ in a real polyhedral group~$\cQ$
at~$\aa$ is
$$
  \dda = \cfq \minus \nabla_{\!-U}^{-\aa\,},
$$
where $U = \cQ \minus D$ is the upset complementary to~$D$.
\end{defn}

\begin{lemma}\label{l:upshape}
The upshape $\dda$ is a polyhedral complex (a downset) in~$\cfq$.  As
a function of\/~$\aa$, for fixed~$D$ the upshape $\dda$ is decreasing,
meaning $\aa \preceq \bb \implies \dda \supseteq \ddb$.
\end{lemma}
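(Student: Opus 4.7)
The plan is to unpack Definition~\ref{d:upshape} and reduce both claims directly to facts already established about downsets and their shapes. Since $U = \cQ \minus D$ is an upset, the set $-U$ is a downset in~$\cQ$, so Proposition~\ref{p:shape} applies to~$-U$ at the point~$-\aa$ (assuming $\aa$ is on the relevant side of the boundary; otherwise the upshape is either all of~$\cfq$ or just handled by the trivial cases in Remark~\ref{r:tangent-cone}). Consequently $\nabla_{\!-U}^{-\aa}$ is a cocomplex, i.e., an upset in the face poset~$\cfq$. The first claim is then immediate: $\dda = \cfq \minus \nabla_{\!-U}^{-\aa}$ is the complement of an upset in~$\cfq$, hence a downset in~$\cfq$, which is precisely a polyhedral complex of faces of~$\cQ_+$.

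For the monotonicity claim, I would show the equivalent statement that $\nabla_{\!-U}^{-\aa} \subseteq \nabla_{\!-U}^{-\bb}$ whenever $\aa \preceq \bb$. The key is the intrinsic characterization of shape from Proposition~\ref{p:downset-upper-boundary}.\ref{i:nabla}: applied to the downset $-U$, one has $\sigma \in \nabla_{\!-U}^{-\aa}$ if and only if $-\aa - \sigma^\circ \subseteq -U$, which by negation is equivalent to $\aa + \sigma^\circ \subseteq U$. Granted this reformulation, suppose $\sigma \in \nabla_{\!-U}^{-\aa}$, so $\aa + \sigma^\circ \subseteq U$. Writing $\bb + \sigma^\circ = (\aa + \sigma^\circ) + (\bb - \aa)$ with $\bb - \aa \in \cQ_+$, and using that $U$ is an upset (so $U + \cQ_+ = U$), one concludes $\bb + \sigma^\circ \subseteq U$, that is, $\sigma \in \nabla_{\!-U}^{-\bb}$.

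There is really no substantive obstacle: both parts are bookkeeping once the definition of upshape is correctly translated into a statement about the downset~$-U$ via the characterization of shape. The only subtlety worth flagging is a minor one about what ``polyhedral complex'' means here; since cocomplexes are defined in Definition~\ref{d:cocomplex} as upsets in~$\cfq$, the natural dual object (a downset in~$\cfq$, closed under taking subfaces) is exactly what the reader should recognize as a polyhedral complex inside~$\cQ_+$, and this is what the computation above produces.
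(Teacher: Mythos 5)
Your proof is correct and is just the explicit unpacking of what the paper dismisses as ``immediate from the discussion in Section~\ref{sub:tangent}'': apply Proposition~\ref{p:shape} to the downset $-U$ to see the shape is a cocomplex (so its complement is a downset in~$\cfq$), and use $U + \cQ_+ = U$ together with the characterization $\sigma \in \nabla_{\!-U}^{-\aa} \iff \aa + \sigma^\circ \subseteq U$ for monotonicity. Nothing further is needed.
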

\begin{proof}
These claims are immediate from the discussion in
Section~\ref{b:tangent}.
\end{proof}

\begin{remark}\label{r:upshape}
The upshape $\dda$ is a rather tight analogue of the Stanley--Reisner
complex of a simplicial complex, or more generally the lower Koszul
simplicial complex \cite[Definition~5.9]{cca} of a monomial ideal in a
degree from~$\ZZ^n$.  To make the analogy even tighter, the complex
$K_\bb(I)$ would need to be indexed by $\bb - \mathrm{supp}(\bb)$, and
the cocomplex $\dda$ would need to be replaced by its opposite
poset~$(\dda)^\op$, which is a polyhedral subcomplex of the cone polar
to~$\cQ_+$.  Along these same lines, the shape $\nda$ of a downset~$D$
at an element $\aa \in \cQ$ (Proposition~\ref{p:shape} via
Definition~\ref{d:tangent-cone}) is analogous to the upper Koszul
simplicial complex of a monomial ideal \cite[Definition~1.33]{cca}.
This makes $\dda$ and~$\nda$ Alexander dual to one another, in a sense
generalizing that of \cite[Proof of Proposition~5.1]{cca}, which
occurs in a simplex.  Self-duality of the simplex means that a the
Alexander dual simplicial complex is a subcomplex of an isomorphic
simplex, whereas here the Alexander dual polyhedral complexes lie in
cones that are polar to one another.  But the analogue of the
topological Alexander duality theorem \cite[Theorem~5.9]{cca} holds
for $\nda$ and~$(\dda)^\op$ for the same combinatorial reason: the
union $\nda \cup \dda$ of posets is the face poset of~$\cQ_+$, which
is contractible.
\end{remark}

The general statement in Theorem~\ref{t:divides} about pushing up to
cogenerators relies on the special case of closed cogenerators for
closed downsets as follows.

\begin{lemma}\label{l:cogenerator}
If $D \subseteq \cQ$ is a downset and the part of~$D$ above $\bb \in
D$ is closed, so $(\bb + \cQ_+) \cap D = (\bb + \cQ_+) \cap \oD$, then
$\bb \preceq \aa$ for some closed cogenerator~$\aa$ of~$D$.
\end{lemma}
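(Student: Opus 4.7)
My plan is to apply Zorn's lemma to $C = (\bb + \cQ_+) \cap D$, which by the hypothesis equals $(\bb+\cQ_+) \cap \oD$ and is therefore closed in $\cQ$, with the partial order inherited from $\cQ$. The first thing to observe is that any element $\aa$ maximal in $C$ is automatically maximal in $D$: if $\aa \preceq \aa' \in D$ then $\aa' - \bb = (\aa'-\aa) + (\aa-\bb) \in \cQ_+$ places $\aa'$ in $C$, forcing $\aa = \aa'$. By Example~\ref{e:socct} specialized to $\tau = \{\0\}$, the maximal elements of the downset $D$ are exactly its closed cogenerators, so a maximal element of $C$ above $\bb$ is the $\aa$ sought.

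The substance is verifying Zorn's chain condition. Given a chain $\{\aa_\alpha\}$ in $C$, I fix a linear functional $\ell\colon \cQ \to \RR$ that is strictly positive on $\cQ_+\minus\{\0\}$; such $\ell$ exists because $\cQ_+$ is a pointed polyhedral cone, so its dual cone has nonempty interior. The increasing net $\ell(\aa_\alpha)$ is bounded above --- this is the essential role of the closedness hypothesis --- and the chain then lies in the compact slice $K = \{y \in \bb + \cQ_+ \mid \ell(y) \leq \sup_\alpha \ell(\aa_\alpha)\}$, compact because strict positivity of $\ell$ on $\cQ_+$ controls the size of any $y \in \bb + \cQ_+$ with bounded $\ell(y)$.

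From here the upper bound drops out by compactness. The descending family of closed tails $T_\alpha = \ol{\{\aa_\beta \mid \beta \succeq \alpha\}} \cap C$ inside $K$ has the finite intersection property (by total order of the chain), so $\bigcap_\alpha T_\alpha \neq \nothing$ by compactness. Any $\aa^*$ in this intersection lies in $C$, and for each $\alpha$ sits in the closed set $\aa_\alpha + \cQ_+$ because the tail past $\alpha$ does, so $\aa^* \succeq \aa_\alpha$ for every~$\alpha$. Thus $\aa^*$ is an upper bound of the chain in $C$, Zorn's lemma applies, and a maximal element of $C$ exists. The main obstacle I anticipate is the boundedness step itself: the closedness hypothesis prevents the chain from ``escaping across a frontier'' inside $\bb + \cQ_+$, and it is this interaction between the topological closedness of $C$ and the strict positivity of $\ell$ that must be used carefully --- if the closedness were absent, a chain could converge to a boundary point outside of $D$ and no maximal element above $\bb$ would exist.
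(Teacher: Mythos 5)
There is a genuine gap, and it is fatal to the approach. In this paper a \emph{closed cogenerator of~$D$} means a closed cogenerator along \emph{some} face~$\tau$ of~$\cQ_+$ (Definition~\ref{d:socct}, Example~\ref{e:socct}): an element $\aa \in D$ with $(\aa + \cQ_+) \cap D = \aa + \tau$. Only for $\tau = \{\0\}$ does this reduce to a maximal element of~$D$, so the maximal elements are a proper subset of the closed cogenerators, and your reduction to ``find a maximal element of $C = (\bb+\cQ_+)\cap D$'' asks for more than the lemma claims and more than is true. Take $\cQ = \RR^2$ with $\cQ_+ = \RR^2_+$, let $D = \{(x,y) \mid y \le 0\}$ be the closed lower half-plane, and let $\bb = \0$. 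Then $C$ is the nonnegative $x$-axis, which is closed, so the hypothesis holds; yet $D$ has no maximal element whatsoever, while $\0$ is a closed cogenerator along the face $\tau = x\text{-axis}$, as the lemma promises. The same example refutes the step you yourself flagged as essential: closedness of~$C$ does \emph{not} bound $\ell$ on a chain --- the chain $\aa_k = (k,0)$ in~$C$ has $\ell(\aa_k) \to \infty$ --- so your compact slice~$K$ does not exist and the Zorn chain condition fails. (Your observation that a maximal element of~$C$ would be maximal in~$D$ is correct but moot.)

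The paper's proof is structured around exactly the possibility your argument cannot handle, namely that the sought face~$\tau$ has positive dimension. Barring the trivial case $\bb + \cQ_+ \subseteq D$, it picks $\aa \in (\bb + \cQ_+) \cap \del D$ with \emph{minimal upshape} $\dda$ (Definition~\ref{d:upshape}) --- minimality of a finite combinatorial invariant, not of a point, so no chain or compactness argument is needed --- and then shows by a perturbation argument that a minimal upshape must be the face poset $\cF_\tau$ of a single facet~$\tau$ of itself. That condition, propagated upward using Lemma~\ref{l:upshape}, is precisely $(\aa + \cQ_+) \cap D = \aa + \tau$, i.e., $\aa$ is a closed cogenerator along~$\tau$. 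If you want to salvage a limit-point style of argument, you must first fix the target: you are looking for a point above~$\bb$ whose upward slice of~$D$ is a single translated face, not a point that is maximal.
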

\pagebreak[3]
\begin{proof}
It is possible that $\bb + \cQ_+ \subseteq D$, in which case $D = \cQ$
and $\bb$ is by definition a closed cogenerator along $\tau = \cQ_+$.
Barring that case, the intersection $(\bb + \cQ_+) \cap \del D$ of the
principal upset at~$\bb$ with the boundary of~$D$ is nonempty.  Among
the points in this intersection, choose~$\aa$ with minimal upshape.
Observe that $\{\0\} \in \dda$ because $\aa \in D$, so~$\dda$ is
nonempty.  By assumption, the part of~$D$ above~$\bb$ is closed.
Thus~$\aa \in D$.

Let $\tau \in \dda$ be a facet (maximal simplex).  The goal is to
conclude $\dda = \cF_\tau$ has no facet other than~$\tau$, for then
$\dd{\aa'} = \cF_\tau$ for all $\aa' \succeq \aa$ in~$D$ by upshape
minimality and Lemma~\ref{l:upshape},
so~$\aa$ is a cogenerator of~$D$ along~$\tau$ by
Definition~\ref{d:socct} (see also Remark~\ref{r:witness}).

Suppose that $\rho \in \cfq$ is a ray that lies outside of~$\tau$.  If
$\rho \in \dda$ then upshape minimality implies $\rho \in \dd{\aa'}$
for any $\aa' \in (\aa + \tau^\circ) \cap D$, and such an $\aa'$
exists by definition of upshape.  Consequently, some face containing
both~$\rho$ and~$\tau$ lies in~$\dda$: if $\vv$ is any sufficiently
small vector along~$\rho$, then $\aa' + \vv = \aa + (\aa' - \aa) + \vv
\in D$, and the smallest face containing $(\aa' - \aa) + \vv$ contains
both the interior of~$\tau$ (because it contains $\aa' - \aa$)
and~$\rho$ (because it contains~$\vv$).  But this is impossible, so in
fact $\dda = \cF_\tau$.
\end{proof}

\begin{thm}\label{t:divides}
If $D$ is a downset in a real polyhedral group~$\cQ$ and~$\bb \in
\oD$, then there are faces $\tau \subseteq \sigma$ of~$\cQ_+\!$~and a
cogenerator~$\aa$ of~$D$ along~$\tau$ with nadir~$\sigma$ such that
$\bb \preceq \aa$.
\end{thm}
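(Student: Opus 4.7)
The plan is to reduce to the closed case handled by Lemma~\ref{l:cogenerator}. First I would replace~$D$ by its topological closure~$\oD$, which remains a downset in~$\cQ$ because $\cQ_+$ is closed. Since $\oD$ is closed, the hypothesis $(\bb+\cQ_+)\cap\oD=(\bb+\cQ_+)\cap\overline{\oD}$ of Lemma~\ref{l:cogenerator} is automatic, so applying that lemma to~$\oD$ in place of~$D$ produces a face~$\tau \in \cfq$ and an element~$\aa \succeq \bb$ with $(\aa+\cQ_+)\cap\oD=\aa+\tau$. This~$\tau$ will serve as the face appearing in the theorem, and $\aa$ as the cogenerator.

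Next I would identify the nadir~$\sigma$. Since $\aa \in \oD$, Corollary~\ref{c:<<} gives $\aa - \qpc \subseteq D$, which means $\cQ_+ \in \nda$; in particular $\nda \cap \nabt$ is nonempty. Taking any minimal element~$\sigma$ of this intersection yields $\sigma \in \del(\nda \cap \nabt)$, with $\tau \subseteq \sigma$ automatic from $\sigma \in \nabt$. This establishes the minimality condition on~$\sigma$ in the characterization of cogenerators supplied by Example~\ref{e:soct}.

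It remains to verify the maximality condition of Example~\ref{e:soct}: that $\wt\aa \in \qrt$ is maximal among images of points~$\aa''$ having $\sigma \in \del(\nd[\aa''] \cap \nabt)$. Suppose such~$\aa''$ exists with image strictly above~$\wt\aa$ in~$\qrt$. Then $\sigma \in \nd[\aa'']$ gives $\aa''-\sigma^\circ\subseteq D$, exhibiting~$\aa''$ as a limit of points in~$D$ and placing it in~$\oD$. On the other hand, by the characterization in Example~\ref{e:socct}, the closed-cogenerator identity $(\aa+\cQ_+)\cap\oD=\aa+\tau$ says precisely that $\wt\aa$ is maximal in the image of~$\oD$ in~$\qrt$, contradicting $\wt{\aa''}\succ\wt\aa$ with $\aa'' \in \oD$.

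The principal subtlety is the interplay between~$\oD$ (where Lemma~\ref{l:cogenerator} delivers a face~$\tau$ with $\sigma = \tau$) and~$D$ itself (where the nadir~$\sigma$ may strictly enlarge~$\tau$ when~$\aa$ lies on the frontier $\oD\smallsetminus D$). The open cogenerators of Examples~\ref{e:trouble-soc} and~\ref{e:nadir-x} are precisely this phenomenon, so the delicate step is reading the closed-cogenerator datum~$(\aa,\tau)$ for~$\oD$ as the cogenerator datum $(\aa,\tau,\sigma)$ for~$D$, with the maximality in~$\qrt$ transferring intact across the closure.
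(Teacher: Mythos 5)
There is a genuine gap: the triple $(\aa,\tau,\sigma)$ you construct need not be a cogenerator of~$D$, because you never verify that the boundary class at~$\aa$ with nadir~$\sigma$ \emph{persists along~$\tau$}.  By Definition~\ref{d:soct}.\ref{i:global-soc-tau} via Proposition~\ref{p:either-order} (this is the content of Example~\ref{e:soct}), being a cogenerator along~$\tau$ with nadir~$\sigma$ requires a nonzero homomorphism from the skyscraper $\kk[\aa+\tau]$ that survives quotient-restriction along~$\tau$, hence requires $\sigma \in \del\bigl(\nd[\aa']\cap\nabt\bigr)$ for \emph{every} $\aa' \in \aa+\tau$, not only for $\aa'=\aa$.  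Your third paragraph checks the complementary condition (nothing of that shape strictly above $\aa+\tau$) but not this one.  Since shapes decrease going up ($\dsd$ is a downset by Proposition~\ref{p:downset-upper-boundary}), $\sigma$ can drop out of $\nd[\aa']$ as $\aa'$ moves up along $\aa+\tau$, and then the class at~$\aa$ is annihilated by the $\tau$-action and contributes nothing to $\soct$.

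This failure is realized concretely.  In $\cQ=\RR^2$ take $D=\{(x,y):y<0\}\cup(-\RR^2_+)$ and $\bb=\0$.  Then $\oD=\{y\le 0\}$, and Lemma~\ref{l:cogenerator} applied to~$\oD$ legitimately returns $\aa=\0$ with $\tau$ the $x$-axis (every point of $x\text{-axis}_{\ge0}$ has the same upshape, so the lemma gives you no control over which one you get).  Since $\nd[\0]=\cfq$, your minimal $\sigma$ in $\nd[\0]\cap\nabt$ is $\tau$ itself.  But for $t>0$ the set $(t,0)-\sigma^\circ=\{(x,0):x<t\}$ leaves~$D$, so $\sigma\notin\nd[(t,0)]$ and one checks directly that $\soct[\sigma]\kk[D]=0$ in the degree $\wt\0\in\qrt$: the triple is not a cogenerator.  (The genuine cogenerators above~$\bb$ here are $\0$ along $\{\0\}$ with nadir $\{\0\}$, and the degrees $(t,0)$ with $t>0$ along the $x$-axis with nadir $\RR^2_+$.)  This is precisely why the paper's proof first replaces~$\bb$ by a point of $(\bb+\cQ_+)\cap\del D$ of \emph{minimal shape}---so that each $\sigma\in\nd[\bb]$ remains in the shape at every point of $(\bb+\cQ_+)\cap\oD$---and then applies Lemma~\ref{l:cogenerator} to the downset $\dsd$ rather than to~$\oD$; the closed cogenerator so produced automatically persists within $\dsd$, which is the missing condition.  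A lesser imprecision: reading $(\aa+\cQ_+)\cap\oD=\aa+\tau$ as maximality of~$\wt\aa$ in the image of~$\oD$ in~$\qrt$ conflates global with local closed cogenerators (Proposition~\ref{p:local-vs-global-closed}, Remark~\ref{r:soc-vs-supp}); that step is repairable because the condition you actually need concerns only $\aa+\cQ_+$, but the persistence gap is not repairable without restructuring the argument along the paper's lines.
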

\begin{proof}
It is possible that $\bb + \cQ_+ \subseteq D$, in which case $D = \cQ$
and $\bb$ is by definition a closed cogenerator along $\tau =
\cQ_+\!$, which is the same as a cogenerator along~$\cQ_+\!$~with
nadir~$\cQ_+\!$.  Barring that case, the intersection $(\bb + \cQ_+)
\cap \del D$ of the principal upset at~$\bb$ with the boundary of~$D$
is nonempty.  Among the points in this intersection, there is one with
minimal shape, and it suffices to treat the case where this point
is~$\bb$ itself.

Minimality of~$\nd[\bb]$ implies that the shape does not change upon
going up from~$\bb$ while staying in the closure~$\oD$.
Consequently, given any face $\sigma \in \nd[\bb]$, the shape of~$D$
at every point in $\bb + \cQ_+\!$~that lies in~$\oD$ also
contains~$\sigma$.  Equivalently by
Proposition~\ref{p:downset-upper-closure}.\ref{i:nabla}, $(\bb +
\cQ_+) \cap \dsd = (\bb + \cQ_+) \cap \oD$.  Lemma~\ref{l:cogenerator}
applied to $\dsd$ produces a closed cogenerator~$\aa$ of~$\ds D$,
along some face~$\tau$, satisfying $\bb \preceq \aa$.  Since $\nda$ is
a nonempty cocomplex, its intersection with $\nabt$ is nonempty, so
assume $\sigma \in \nda \cap \nabt$.  The closed cogenerator~$\aa$
of~$\dsd$ need not be a cogenerator of~$D$, but if $\sigma$ is
\mbox{minimal}~\mbox{under}~inclusion~in~$\nda \cap\nolinebreak
\nabt$, then $\aa$ is indeed a cogenerator of~$D$ along~$\tau$ with
nadir~$\sigma$ by Proposition~\ref{p:either-order}---specifically the
first displayed isomorphism---applied to Example~\ref{e:socc}.
\end{proof}

\begin{remark}\label{r:essential}
The arguments in the preceding two proofs are essential to the whole
theory of socles, which hinges upon them.  The structure of the
arguments dictate the forms of all of the notions of socle,
particularly those involving cogenerators along~faces.
\end{remark}

Theorem~\ref{t:injection} is intended for tame modules, but because it
has no cause to deal with generators, in actuality it only requires
half of a fringe presentation (or a little less; see
Definition~\ref{d:downset-hull}).  The statement uses divisibility
(Definition~\ref{d:divides}), which works verbatim for~$\dt\cM$, by
Definition~\ref{d:upper-closure-tau}, because it refers only to upper
boundaries atop a single~face~$\sigma$.

\pagebreak[3]

\begin{thm}[Essentiality of socles]\label{t:injection}
Fix a homomorphism $\phi: \cM \to \cN$ of modules over a real
polyhedral group~$\cQ$.
\begin{enumerate}
\item\label{i:phi=>soct}%
If $\phi$ is injective then $\soct\phi: \soct\cM \to \soct\cN$ is
injective for all faces~$\tau$ of~$\cQ_+$.
\item\label{i:soct=>phi}%
If $\soct\phi: \soct\cM \to \soct\cN$ is injective for all
faces~$\tau$ of~$\cQ_+\!$~and~$\cM$ is downset-finite, then $\phi$
is~injective.
\end{enumerate}
If $\cM$ is downset-finite then each homogeneous element of~$\cM\!$
divides a cogenerator of~$\cM$.
\end{thm}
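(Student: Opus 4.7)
The plan is to reduce to Theorem~\ref{t:divides} for downsets by working with the cyclic submodule of~$\cM$ generated by~$m$. Given a nonzero homogeneous element $m \in \cM_\bb$, I would form the cyclic $\cQ$-graded submodule $N := \cM \cdot m \subseteq \cM$. As a cyclic subquotient of $\kk[\bb + \cQ_+]$, the module~$N$ is isomorphic to the interval module $\kk[I_m]$, where $I_m = \{q \succeq \bb : m_q \neq 0 \text{ in } \cM_q\}$. The structure map $\cM_q \to \cM_{q'}$ sends $m_q$ to $m_{q'}$ whenever $\bb \preceq q \preceq q'$, so $I_m$ is a downset within the upset $\bb + \cQ_+$; its downward closure $D := I_m - \cQ_+$ is a genuine downset of~$\cQ$ containing~$\bb$ that coincides with~$I_m$ at and above~$\bb$.

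Next I would apply Theorem~\ref{t:divides} to~$D$ at $\bb \in D$, producing faces $\tau \subseteq \sigma$ of~$\cQ_+$ and a cogenerator~$\aa$ of~$D$ along~$\tau$ with nadir~$\sigma$ satisfying $\bb \preceq \aa$. Because cogenerator conditions are local near~$\aa$, and $D$ agrees with~$I_m$ there, the element~$\aa$ is equally a cogenerator of $N = \kk[I_m]$ along~$\tau$ with nadir~$\sigma$. The corresponding socle element $x \in (\delta^\sigma N)_\aa$ is the image of~$m$ under the natural map $N_\bb \to (\delta^\sigma N)_\aa$ from Lemma~\ref{l:natural}.

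Finally I would transport~$x$ to~$\cM$ through the inclusion $N \hookrightarrow \cM$. Exactness of the upper boundary functor (Lemma~\ref{l:exact-delta}) yields an injection $\delta^\sigma N \hookrightarrow \delta^\sigma \cM$ sending~$x$ to a nonzero element $y \in (\delta^\sigma \cM)_\aa$. The $\tau$-persistence and $\mm_\tau$-annihilation conditions transfer from~$N$ to~$\cM$ because they concern vanishing or non-vanishing of images under structure maps, which is preserved by the injection on upper boundary modules. Left-exactness of $\soct$ (Proposition~\ref{p:left-exact-tau}) then gives $\soct N \hookrightarrow \soct \cM$, so the class of~$y$ in $\soct \cM$ is nonzero. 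Thus~$y$ is a cogenerator of~$\cM$ along~$\tau$ with nadir~$\sigma$, and $m$ divides~$y$ by construction.

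The hard part will be verifying the strong divisibility condition $\bb \in \aa - \qns$ from Definition~\ref{d:divides}, which requires the support face of $\aa - \bb$ to contain~$\sigma$ and is strictly stronger than the conclusion $\bb \preceq \aa$ of Theorem~\ref{t:divides}. I expect this refinement to be implicit in the construction: the nadir~$\sigma$ is chosen minimal in $\nd[\bb] \cap \nabt$, and Lemma~\ref{l:cogenerator} produces~$\aa$ as a closed cogenerator of the upper boundary $\delta^\sigma D$ above~$\bb$, so the containment $\aa - \sigma^\circ \subseteq D$ combined with minimality of shape at~$\bb$ should force $\aa - \bb \in \sigma^\circ + \cQ_+ = \qns$. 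The downset-finite hypothesis enters indirectly, ensuring that~$\cM$ has enough cogenerators for this argument to locate one reachable from~$m$.
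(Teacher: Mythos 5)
Your route is genuinely different from the paper's. You reduce to the cyclic submodule $N=\langle m\rangle\cong\kk[I_m]$ and its downward closure $D=I_m-\cQ_+$, apply Theorem~\ref{t:divides} to~$D$, and push the resulting cogenerator back into~$\cM$ by left-exactness. The paper instead fixes a finite downset hull $\cM\into\bigoplus_j E_j$ (this is exactly where downset-finiteness enters), applies Theorem~\ref{t:divides} to a single summand, and uses Corollary~\ref{c:at-most-one} to assemble a sum of cogenerators of the~$E_j$ that lies in the image of $\delta^\sigma_\tau\cM\into\delta^\sigma_\tau E$. Working with a submodule rather than an overmodule is attractive, since cogenerators of submodules push forward automatically by Proposition~\ref{p:left-exact-tau}; note, though, that your argument never uses downset-finiteness, which should give you pause.

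There is a genuine gap at your central step. The justification ``cogenerator conditions are local near~$\aa$, and $D$ agrees with~$I_m$ there'' is false as a general principle, and the paper warns about precisely this failure: by Example~\ref{e:I->D} and Remark~\ref{r:hull-I}, a cogenerator of the downset cogenerated by an interval need not be a cogenerator of the interval module. Concretely, $D$ agrees with $I_m$ only on $\bb+\cQ_+$, whereas $(\delta^\sigma N)_\aa$ is a direct limit over $\aa-\sigma^\circ$; knowing only $\bb\preceq\aa$, which is all that the statement of Theorem~\ref{t:divides} provides, the set $\aa-\sigma^\circ$ may be disjoint from $\bb+\cQ_+$, in which case $(\delta^\sigma N)_\aa=0$ and $\aa$ is not a cogenerator of~$N$ at all; moreover the ``natural map $N_\bb\to(\delta^\sigma N)_\aa$'' you invoke in your second paragraph exists only when $\bb\in\aa-\qns$. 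So the refinement you defer to the end as ``the hard part'' is not a refinement: the middle of the argument is unsupported without it. Once one extracts from the proof of Theorem~\ref{t:divides} (equivalently, from Theorem~\ref{t:downset=union}) that $\bb\in\aa+\tau-\qns$ for a suitable $\RR\tau$-translate of~$\aa$, the argument does close up: $\aa-\epsilon s$ then lies in $D\cap(\bb+\cQ_+)=I_m$ for a cofinal subset of $\aa-\sigma^\circ$, so $(\delta^\sigma N)_\aa\cong(\delta^\sigma\kk[D])_\aa$ is nonzero and divisible by~$m$, and the persistence, annihilation, and nadir conditions transfer along the injections $\delta^\sigma N\into\delta^\sigma\kk[D]$ and $\delta^\sigma N\into\delta^\sigma\cM$ as you say. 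Until that divisibility statement is actually proved, the proposal is incomplete; it also never returns to deduce items (1) and~(2) from the divisibility claim, though that part is routine.
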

\begin{proof}
Item~\ref{i:phi=>soct} is a special case of
Proposition~\ref{p:left-exact-tau}.  Item~\ref{i:soct=>phi} follows
from the divisibility claim, for if $z$ divides a cogenerator~$s$
along~$\tau$ then $\phi(z) \neq 0$ whenever $\soct\phi(\wt s) \neq 0$,
where $\wt s$ is the image of~$s$ in~$\soct\cM$.

For the divisibility claim, fix a downset hull $\cM \into
\bigoplus_{j=1}^k E_j$ and a nonzero $z \in \cM_\bb$.  For some~$j$
the projection $z_j \in E_j$ of~$z$ divides a cogenerator of~$E_j$
along some face~$\tau$ with some nadir~$\sigma$ by
Theorem~\ref{t:divides}.  Choose one such cogenerator~$s_j$, and
suppose it has degree $\aa \in \cQ$.  There can be other indices~$i$
such that $(\soct[\sigma]E_i)_\wt\aa \neq 0$, where $\wt\aa$ is the
image of~$\aa$ in~$\qrt$.  For any such index~$i$, as long as $z_i
\neq 0$ it divides a unique cogenerator in $s_i \in \ds[\tau]E_i$ by
Corollary~\ref{c:at-most-one}.  Therefore the image of~$z$ in $E =
\bigoplus_{j=1}^k E_j$ divides the sum of these cogenerators~$s_j$.
But that sum is itself another cogenerator of~$E$ along~$\tau$ with
nadir~$\sigma$ in degree~$\aa$, and the fact that $z$ divides it
places the sum in the image of the injection
(Lemma~\ref{l:exact-delta}) $\ds[\tau]\cM \into \ds[\tau]E$.
\end{proof}

\begin{remark}\label{r:right-endpoints}
In terms of persistent homology, Theorem~\ref{t:injection} says that a
homomorphism of real multipersistence modules is injective if and only
if it takes the ``right endpoints'' of the source injectively to a
subset of the ``right endpoints'' of the target.
\end{remark}

\begin{cor}\label{c:essential-submodule}
Fix a downset-finite module $\cM$ over a real polyhedral~group.
\begin{enumerate}
\item\label{i:0}%
$\cM = 0$ if and only if $\soct\cM = 0$ for all faces~$\tau$.

\item\label{i:cap}%
$\soct\cM' \cap \soct\cM'' = \soct(\cM' \cap \cM'')$ in~$\soct\cM$ for
submodules $\cM'$ and~$\cM''$~of~$\cM$.
\end{enumerate}
\end{cor}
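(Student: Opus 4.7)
The plan is to deduce both parts from results already established: item~\ref{i:0} from the essentiality criterion (Theorem~\ref{t:injection}) and item~\ref{i:cap} from left-exactness of~$\soct$ (Proposition~\ref{p:left-exact-tau}).

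For item~\ref{i:0}, one direction is immediate, since $\cM = 0$ forces $\soct\cM = 0$ for every face~$\tau$. For the converse, I would consider the zero homomorphism $\phi\colon \cM \to 0$. For each face~$\tau$, the induced map $\soct\phi\colon \soct\cM \to \soct(0) = 0$ is automatically injective once $\soct\cM = 0$. Since $\cM$ is downset-finite by hypothesis, Theorem~\ref{t:injection}.\ref{i:soct=>phi} then forces $\phi$ itself to be injective, which yields $\cM = 0$.

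For item~\ref{i:cap}, the strategy is to observe that $\soct$ is an additive left-exact endofunctor on the abelian category of $\cQ$-modules, so it preserves finite limits, and in particular pullbacks. The intersection $\cM' \cap \cM''$ inside~$\cM$ is the pullback of the inclusions $\cM' \hookrightarrow \cM \hookleftarrow \cM''$, so applying $\soct$ exhibits $\soct(\cM' \cap \cM'')$ as the pullback of $\soct\cM' \to \soct\cM \leftarrow \soct\cM''$. Left-exactness also ensures that both induced maps $\soct\cM' \to \soct\cM$ and $\soct\cM'' \to \soct\cM$ are injective, so this pullback reduces to the set-theoretic intersection $\soct\cM' \cap \soct\cM''$ inside~$\soct\cM$. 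Combining these identifications yields the desired equality.

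The only point requiring even mild attention is the general categorical fact that additive left-exact functors on abelian categories preserve pullbacks. I would verify this directly by writing the pullback as the kernel of the difference map $\cM' \oplus \cM'' \to \cM$, $(y',y'') \mapsto y' - y''$, where both summands are viewed as submodules of~$\cM$, and then noting that $\soct$ preserves both kernels and finite direct sums. Beyond that, both items are short, clean applications of the foundational theorems already in place, so there is no real obstacle to overcome; note in particular that the downset-finite hypothesis on~$\cM$ is used for item~\ref{i:0} but is not needed for item~\ref{i:cap}.
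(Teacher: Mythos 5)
Your proof is correct and follows essentially the same route as the paper: item~\ref{i:0} via Theorem~\ref{t:injection}.\ref{i:soct=>phi} applied to $\cM \to 0$, and item~\ref{i:cap} via left-exactness of $\soct$ (Proposition~\ref{p:left-exact-tau}). The only difference is cosmetic: the paper realizes $\cM'\cap\cM''$ as $\ker(\cM' \to \cM/\cM'')$ and uses $\soct\cM/\soct\cM'' \into \soct(\cM/\cM'')$, while you realize it as the kernel of the difference map on $\cM'\oplus\cM''$; both are routine consequences of left-exactness, and your remark that downset-finiteness is needed only for item~\ref{i:0} is accurate.
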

\begin{proof}
That $\cM = 0 \implies \cM = 0$ is trivial.  On the other hand, if
$\soct\cM = 0$ for all~$\tau$ then $\cM$ is a submodule of~$0$ by
Theorem~\ref{t:injection}.\ref{i:soct=>phi}.

The second equality follows from left-exactness
(Proposition~\ref{p:left-exact-tau}):
\begin{align*}
  \soct(\cM' \cap \cM'')
& =
  \soct\ker(\cM' \to \cM/\cM'')
\\*
& =
  \ker\bigl(\soct\cM' \to \soct(\cM/\cM'')\bigr)
\\*
& =
  \ker(\soct\cM' \to \soct\cM/\soct\cM'')
\\*
& =
  \soct\cM' \cap \soct\cM'',
\end{align*}
where the penultimate equality is because $\soct\cM''$ is the kernel
of the homomorphism $\soct\cM \to \soct(\cM/\cM'')$, so that
$\soct\cM/\soct\cM'' \into \soct(\cM/\cM'')$.
\end{proof}

There is a much stronger statement connecting socles to essential
submodules (Theorem~\ref{t:essential-submodule}), but it requires
language to speak of density in socles as well as tools to produce
submodules from socle elements, which are the main themes of
Section~\ref{s:minimality}.

\section{Minimality of socle functors}\label{s:minimality}

Socles capture any downset by hanging coprincipal downsets from
maximal elements in closures along faces; that is the main content of
socle essentiality (Theorem~\ref{t:injection}), or more precisely
Theorem~\ref{t:divides}.  But since limits and closures are involved,
it is reasonable to ask if anything smaller than the full socle still
captures the entirety of any given downset.  Algebraically, for
arbitrary modules, this asks for subfunctors of cogenerator functors.
The particular subfunctors here concern the graded degrees of socle
elements, for which notation is needed.

\begin{defn}\label{d:degree}
The \emph{degree set} of any module~$\cN$ over a poset~$\cP$ is
$$
  \deg\cN  = \{\aa \in \cP \mid \cN_\aa \neq 0\}.
$$
Write $\deg_\cP = \deg$ if more than one poset could be intended.
\end{defn}

Determining whether certain cogenerators along given faces can be
omitted from the socle requires notions of limit and closure.  The
relevant topologies are introduced in Section~\ref{b:nbds}
(Definitions~\ref{d:vicinity}, \ref{d:sigma-topology},
and~\ref{d:sigma-closure}), with numerous explicit examples and some
consequences for interval modules, such as
Proposition~\ref{p:sigma-nbd-cogen} and
Corollary~\ref{c:soc(coprimary)}.  This topological infrastructure is
applied to characterize which cogenerators are needed for a given
downset in Theorem~\ref{t:downset=union}, one of the main results of
the paper by virtue of generalizing irredundant irreducible
decomposition of monomial ideals to arbitrary downsets in real
polyhedral groups.  (See also Theorem~\ref{t:interval=union} and
Corollary~\ref{c:interval=union}, which are versions for arbitrary
intervals instead of downsets.)  Theorem~\ref{t:downset=union} is the
only result in Section~\ref{b:dense-downsets}, albeit surrounded by
examples, remarks, and a couple of useful corollaries.  These lead to
the goal of Section~\ref{s:minimality}---another main result of the
paper---the topological characterization in
Theorem~\ref{t:dense-subfunctor} of socle subfunctors that detect
injectivity.  (The same topological characterization detects essential
submodules; see Theorem~\ref{t:essential-submodule}.)

\subsection{Neighborhoods of group elements}\label{b:nbds}\mbox{}

\medskip
\noindent
The topological condition characterizing when enough cogenerators are
present is a sort of density in the set of all cogenerators.
Lemma~\ref{l:downset-upper-closure} has a related closure notion.

Recall the statement and context of Lemma~\ref{l:<<}, which says that
$\qns = \sigma^\circ + \cQ_+$.

\begin{defn}\label{d:vicinity}
Fix faces~$\sigma \supseteq \tau$ of a real polyhedral group~$\cQ$.  A
\emph{$\sigma$-vicinity} of a point $\wt\aa \in \qrt$ is a subset of
$\qrt$ of the form $(\uu + \qns)/\hspace{.2ex}\RR\tau$ such that $\aa
- \uu \in \sigma^\circ$ and $\aa$~is a representative for the coset
$\wt\aa = \aa + \RR\tau \in \qrt$.
\end{defn}

\begin{example}\label{e:sigma-vicinity}
Let $\cQ = \RR^2$ and $\tau = \{\0\}$.  Take for $X \subset \RR^2$ the
convex hull of $\0,\ee_1,\ee_2$ but with the first standard basis
vector $\ee_1$ removed.  If $\sigma$ is the $x$-axis of~$\RR^2_+$,
then in
$$
  X =
  \begin{array}{@{}c@{}}\includegraphics[height=20mm]{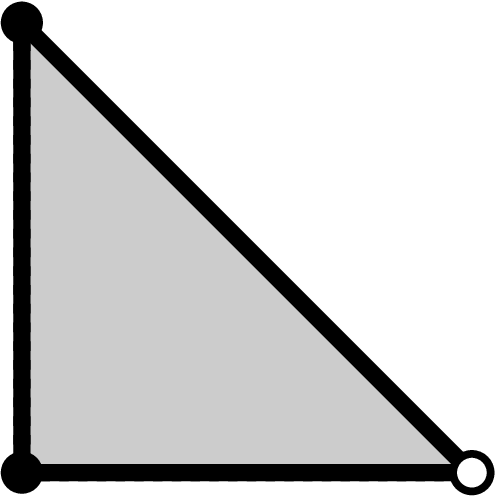}\end{array}
  \text{the blue points of}\quad
  \begin{array}{@{}c@{}}\includegraphics[height=20mm]{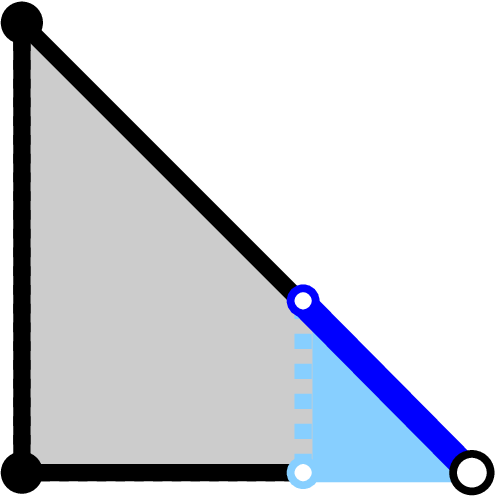}\end{array}
$$
constitute a $\sigma$-vicinity of~$\ee_1$ in~$X$ (i.e., the intersection
with~$X$ of a $\sigma$-vicinity of~$\ee_1$).  In addition, the bold blue
segment in the half-open hypotenuse~$H$ is a $\sigma$-vicinity
of~$\ee_1$~in~$H$.  Further examples expanding on this one can be found
in Example~\ref{e:sigma-nbd-cogen}.
\end{example}

\begin{lemma}\label{l:vicinity}
The $\sigma$-vicinities of points in~$\qrt$ form a base for a
topology~on~$\qrt$.
More strongly, the intersection of any finite set of
$\sigma$-vicinities (for perhaps different points in~$\qrt$) contains
a $\sigma$-vicinity of each point in their intersection.
\end{lemma}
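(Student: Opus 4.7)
The plan is to prove the stronger finite-intersection claim, which immediately implies the usual base condition for pairs. The cover requirement is separate and straightforward: for any $\wt\aa \in \qrt$ with chosen representative $\aa \in \cQ$ and any $\vv \in \sigma^\circ$ (which is nonempty, as $\sigma^\circ$ is the relative interior of the face~$\sigma$), the set $(\aa - \vv + \qns)/\RR\tau$ is a $\sigma$-vicinity of~$\wt\aa$ by Definition~\ref{d:vicinity}, and it contains $\wt\aa$ because $\vv \in \sigma^\circ \subseteq \qns$ by Lemma~\ref{l:<<}.

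Given $\sigma$-vicinities $V_1,\dots,V_k$ with common point $\wt\bb$, fix a representative $\bb \in \cQ$ and write each $V_i = (\uu_i + \qns)/\RR\tau$. The hypothesis $\wt\bb \in V_i$ yields $\sss_i \in \sigma^\circ$, $\cc_i \in \cQ_+$, and $\rr_i \in \RR\tau$ with $\bb = \uu_i + \sss_i + \cc_i + \rr_i$. I would proceed in two moves. First, re-center each $V_i$ about~$\wt\bb$: since $\sigma^\circ$ is open in its affine span~$\RR\sigma$, I can choose $\vv_i \in \sigma^\circ$ small enough that $\sss_i - \vv_i \in \sigma^\circ$, so that $W_i := (\bb - \vv_i + \qns)/\RR\tau$ is a $\sigma$-vicinity of~$\wt\bb$; then $W_i \subseteq V_i$, because any element $\bb - \vv_i + \sss + \cc$ with $\sss \in \sigma^\circ$ and $\cc \in \cQ_+$ rewrites modulo~$\RR\tau$ as $\uu_i + (\sss_i - \vv_i + \sss) + (\cc_i + \cc)$, which lies in $\uu_i + \qns$. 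Second, consolidate: fix any $\vv_0 \in \sigma^\circ$ and choose $\epsilon > 0$ small enough that $\vv_i - \epsilon \vv_0 \in \sigma^\circ$ for every $i$. Then $W := (\bb - \epsilon \vv_0 + \qns)/\RR\tau$ is a $\sigma$-vicinity of~$\wt\bb$ contained in each $W_i$ (hence in each $V_i$), by the analogous rewriting and using $\vv_i - \epsilon\vv_0 + \sss \in \sigma^\circ + \sigma^\circ \subseteq \sigma^\circ$.

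Everything reduces to the standard convex-geometric identity $\sigma + \sigma^\circ \subseteq \sigma^\circ$---a consequence of $\sigma^\circ$ being open in its affine span---together with the openness of $\sigma^\circ$ itself, which permits the ``small'' perturbations above. The main subtlety, and the place requiring the most care when writing up the proof, is the interaction of these identities with the quotient map $\cQ \to \qrt$: the condition ``$\aa - \uu \in \sigma^\circ$'' in Definition~\ref{d:vicinity} is not invariant under shifting $\aa$ and $\uu$ independently by elements of~$\RR\tau$. The hypothesis $\sigma \supseteq \tau$ is essential here because it forces $\RR\tau \subseteq \RR\sigma$, which is what lets $\RR\tau$-translations be absorbed into the $\uu_i + \qns$ side without disturbing the $\sigma^\circ$-condition on the opposite side.
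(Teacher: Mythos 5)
Your proof is correct and follows essentially the same route as the paper's: perturb the common point slightly into $-\sigma^\circ$ (using openness of $\sigma^\circ$ in $\RR\sigma$) and exploit $\sigma^\circ + \qns \subseteq \qns$ to get a single $\sigma$-vicinity inside all the given ones. The only differences are organizational --- you treat all $k$ vicinities at once and absorb the $\RR\tau$-ambiguity of representatives into the quotient, where the paper inducts down to $k=2$ and instead translates the chosen representative far along $\tau^\circ$ so that it lies in both sets on the nose.
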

\begin{proof}
The $\sigma$-vicinities of the points of~$\qrt$ cover $\qrt$ because
$\wt\aa$ lies in any of its $\sigma$-vicinities.  So it suffices to
prove the stronger claim, which by induction reduces to: the
intersection of any $\sigma$-vicinity $(\uu +
\qns)/\hspace{.2ex}\RR\tau$ of~$\wt\aa$ and any $\sigma$-vicinity
$(\vv + \qns)/\hspace{.2ex}\RR\tau$ of~$\wt\bb$ contains a
$\sigma$-vicinity of each point $\wt\cc$ in their intersection.  Pick
a coset representative $\cc \in \cQ$ for $\cc + \RR\tau = \wt\cc$.
Translating $\cc$ by a vector far inside of~$\tau^\circ$ (any
sufficiently high multiple of any vector in~$\tau^\circ$), if
necessary, assume that $\cc \in \uu + \qns$.  Perhaps translating
along~$\tau^\circ$ further, assume that $\cc \in \vv + \qns$ also.
Then $\uu + \qns$ contains the intersection with $\cc - \sigma^\circ$
of a neighborhood of~$\cc$ that is open in the usual topology, as does
$\vv + \qns$.  Any vector $\ww \in \cc - \sigma^\circ$ in the
intersection of these neighborhoods yields a $\sigma$-vicinity $(\ww +
\qns)/\hspace{.2ex}\RR\tau$ of~$\wt\cc$ contained in both of the given
$\sigma$-vicinities.
\end{proof}

\begin{defn}\label{d:sigma-topology}
The topology in Lemma~\ref{l:vicinity} is called the
\emph{$\nabs$-topology} on~$\qrt$.
\end{defn}

\begin{remark}\label{r:topology}
In this paper, all topological notions in real vector spaces---limit,
closure, neighborhood, and so on---refer to the usual topology unless
explicitly otherwise stated.  For example, Remark~\ref{r:nabs-closure}
refers to $\sigma$-closure, $\nabs$-closure, and $\nabs$-open
neighborhoods.
\end{remark}

\begin{remark}\label{r:compare-topologies}
The $\nabs$-topologies for various~$\sigma$ are more general than the
$\gamma$-topologies or Alexandrov topologies from
\cite{kashiwara-schapira2018} because the cone $\qns$ is not
necessarily closed.  Its non-closedness reflects directions that ought
to be thought of as inverted, and the image of $\qns$ in the collapse
modulo the inverted directions is closed, but the $\nabs$-topology is
needed on the vector space before this collapse.  The fact that the
image of $\qns$ modulo $\RR\sigma$ is closed implies that when $\sigma
= \tau$ in Definition~\ref{d:vicinity}, the $\nabs$-topology on~$\qrs$
is the Alexandrov topology: the only $\sigma$-vicinity of
the~coset~$\wt\aa =\nolinebreak \aa +\nolinebreak \RR\sigma
\in\nolinebreak \qrs$ is the principal upset $\wt\aa + (\qrs)_+$
itself.  Indeed, $\wt\aa + (\qrs)_+ = (\aa +
Q_+)/\hspace{.2ex}\RR\sigma = (\aa + \qns)/\hspace{.2ex}\RR\sigma =
(\uu + \qns)/\hspace{.2ex}\RR\sigma$ whenever $\aa - \uu \in
\sigma^\circ$ because $\aa - \uu \in \sigma^\circ \implies \wt\aa =
\wt\uu$.
\end{remark}

\begin{remark}\label{r:vicinity}
The strength of Lemma~\ref{l:vicinity} beyond providing a base for a
topology rests on a $\sigma$-vicinity of~$\wt\aa$ not being the same
as a basic $\nabs$-open set containing~$\wt\aa$.  Indeed, a
$\sigma$-vicinity $\uu + \qns$ is required to contain a representative
for~$\wt\aa$ that lies in the face $\uu + \sigma^\circ$, not merely
somewhere arbitrary in $\uu + \qns$.
\end{remark}

\begin{defn}\label{d:sigma-closure}
Fix faces~$\sigma \supseteq \tau$ of a real polyhedral group~$\cQ$.
\begin{enumerate}
\item\label{i:limit-point}%
A \emph{$\sigma$-limit point} of a subset $X \subseteq \qrt$ is a
point $\wt\aa \in \qrt$ that is a limit (in the usual topology) of
points in~$X$ each of which lies in a $\sigma$-vicinity of~$\wt\aa$.

\item\label{i:closure}%
The \emph{$\sigma$-closure} of $X \subseteq \qrt$ is the set of points
$\wt\aa \in \qrt$ such that $X$ has at least one point in every
$\sigma$-vicinity of~$\wt\aa$.
\end{enumerate}
\end{defn}

\begin{example}\label{e:sigma-limit}
In Example~\ref{e:sigma-vicinity}, the point~$\ee_1$ itself is a
$\sigma$-limit point of~$H$, because
$\alpha\ee_1 + \qns$ contains~$\ee_1$ as long as $\alpha < 1$.  Once
$\alpha = 1$, the intersection of $\alpha\ee_1 + \qns$ with~$X$ (and
hence with the half-open hypotenuse~$H$) is empty.  Thus the
$\sigma$-closure of~$H$ is the closed hypotenuse $H \cup \ee_1$.
\end{example}

\begin{remark}\label{r:nabs-closure}
The $\sigma$-closure of~$X$ equals its $\nabs$-closure, by which is
meant the closure of~$X$ in the $\nabs$-topology.  The reason: every
basic $\nabs$-open neighborhood of a point contains a
$\sigma$-vicinity of that point by Lemma~\ref{l:vicinity}.
\end{remark}

\begin{remark}\label{r:usual-closure}
The sets $X$ to which Definition~\ref{d:sigma-closure} is applied are
typically decomposed as finite unions of antichains (but see
Proposition~\ref{p:sigma-nbd-cogen} for an instance where this is not
the case).  Such sets ``cut across'' subsets of the form $(\uu +
\qns)/\hspace{.2ex}\RR\tau$, rather than being swallowed by them, so
$\sigma$-vicinities have a fighting chance of reflecting some concept
of closeness in antichains.  If $\sigma = \cQ_+$ and $\tau = \{\0\}$,
for example, and $X$ is an antichain in~$\cQ$, then a
$\sigma$-vicinity in~$X$ of a point $\aa \in X$ is simply a usual open
neighborhood of~$\aa$ in~$X$, because projection of~$X$ from the
origin~$\0$ to any hyperplane~$H$ transverse to the positive
cone~$\cQ_+$ is a homeomorphism from~$X$ to its image in~$H$.
\end{remark}

The concept of $\sigma$-vicinity provides a means to connect socles
(Definition~\ref{d:soct}) with support (Definition~\ref{d:support})
and primary decomposition (Definition~\ref{d:primDecomp}).

\begin{prop}\label{p:sigma-nbd-cogen}
In a real polyhedral group~$\cQ$, every cogenerator of a downset~$D$
along~$\tau$ with nadir~$\sigma$ has a $\sigma$-vicinity $\OO$ in~$D
\subseteq \cQ$ (so $\sigma \supseteq \{\0\}$ are the pair of faces in
Definition~\ref{d:sigma-closure}) such that $\kk[\OO] \subseteq
\kk[D]$ is $\tau$-coprimary and globally supported~on~$\tau$.
\end{prop}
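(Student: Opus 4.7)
The first step is to extract from the cogenerator hypothesis more than the bare fact $\sigma \in \del(\nda \cap \nabt)$ of Example~\ref{e:soct}. A cogenerator of $D$ along $\tau$ with nadir $\sigma$ arises from a nonzero map $\kk[\aa+\tau] \to \ds\kk[D]$ (before quotient-restriction), so the $\sigma$-component of the upper boundary must be nonzero at every $\aa+q$ for $q \in \tau$; equivalently, $\sigma \in \nd[\aa+q]$ for every $q \in \tau$, giving $\aa + \tau - \sigma^\circ \subseteq D$. Combined with $D$ being a downset, and using $\sigma \supseteq \tau$ from $\sigma \in \nabt$, this yields the ``cogenerator tube'' $\aa + \RR\tau - \sigma^\circ - \cQ_+ \subseteq D$ that will underlie the whole argument.

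With the tube in hand, pick $\vv \in \sigma^\circ$ and set $\uu = \aa - \vv$, $\OO := (\uu + \qns) \cap D$. Because $\qns = \sigma^\circ + \cQ_+$ is closed under addition of $\cQ_+$, the set $\uu + \qns$ is an upset of $\cQ$, so $\OO$ is an upset in $D$ and $\kk[\OO] \subseteq \kk[D]$ is a submodule. For $\tau$-coprimarity I would invoke Proposition~\ref{p:elementary-coprimary} and show that every element of $\OO$ is itself $\tau$-coprimary. The $\tau$-persistence $\OO + \tau \subseteq \OO$ uses $\qns + \tau \subseteq \qns$ on the vicinity side together with the containment $\OO \subseteq \aa + \RR\tau - \sigma^\circ - \cQ_+$ on the $D$ side, the tube being visibly $\tau$-invariant. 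The $\ol\tau$-transience along each $f \in \cQ_+ \minus \tau$ follows from maximality of $\wt\aa$ in $\qrt$: once $\lambda$ is large enough that $\wt{\ww + \lambda f}$ rises strictly above $\wt\aa$, the nadir-$\sigma$ cogenerator condition fails at that $\qrt$-degree, forcing $\ww + \lambda f \notin D$ and hence out of $\OO$.

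The remaining assertion---global $\tau$-support, meaning $\kk[\OO]_{\tau'} = 0$ for every face $\tau' \not\subseteq \tau$---follows from the same $\ol\tau$-transience applied to a ray of $\tau' \minus \tau$: every element of $\OO$ is annihilated by sufficiently large monomials on that ray. The main obstacle I anticipate is justifying the tube containment $\OO \subseteq \aa + \RR\tau - \sigma^\circ - \cQ_+$: while the intersection of the vicinity with the tube is manifestly well behaved, any ``stray'' points of $(\uu+\qns) \cap D$ lying outside the tube must be ruled out. This is precisely where the maximality of $\wt\aa$ in $\qrt$ enters most crucially, and may force $\vv \in \sigma^\circ$ to be chosen so that the vicinity stays tight around $\aa$ rather than extending to far-flung regions of $D$ whose cogenerators sit at other $\qrt$-degrees.
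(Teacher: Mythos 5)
Your setup is correct as far as it goes---the tube $\aa+\RR\tau-\sigma^\circ-\cQ_+\subseteq D$ does follow from the cogenerator hypothesis, $\uu+\qns$ is an upset so $\kk[\OO]$ is a submodule, and reducing transience and support to ``every point of $\OO$ is governed by $\aa$'' is the right shape of argument---but the obstacle you flag at the end is the entire content of the proposition, and neither of your fallback arguments closes it. The tube containment $\OO\subseteq\aa+\RR\tau-\sigma^\circ-\cQ_+$ is genuinely \emph{false}, not merely unproven: already for $\tau=\{\0\}$ and $\sigma$ the $x$-axis in $\RR^2$, the downset $D=(\aa-\qns)\cup(\bb-\cQ_+)$ with $\bb$ incomparable to $\aa$ but lying in $\uu+\qns$ keeps $\aa$ as a cogenerator with nadir~$\sigma$ while putting $\bb\in\OO$ outside the tube; and such stray pieces can accumulate at~$\aa$, so no choice of $\vv$ can be justified by inspection. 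Your transience argument is also invalid: the failure of the nadir-$\sigma$ cogenerator condition at a degree $\wt\bb\succ\wt\aa$ in $\qrt$ only says that $\wt\bb$ is not a socle degree; it in no way forces $D$ to omit points of that degree (any downset with two incomparable cogenerators already shows this). So persistence, transience, and global support all remain unestablished for the stray points of~$\OO$.

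The missing idea is a limiting argument exploiting the finiteness of the face lattice, which is how the paper produces a good basepoint rather than a good radius. Take any sequence $\aa_k\in\aa-\sigma^\circ$ converging to~$\aa$; each $\aa_k$ is supported on a face containing $\tau$, since $\aa_k+\tau\subseteq\aa+\tau-\sigma^\circ\subseteq D$. If no $\aa_k$ were supported exactly on~$\tau$, then infinitely many would be supported on one common face $\tau'\supsetneq\tau$ (there are only finitely many faces), and passing to the limit in Definition~\ref{d:atop-sigma} would place $\aa+\tau'$ inside $\dsd$, contradicting that $\aa$, being a cogenerator along~$\tau$, is supported on~$\tau$ in~$\dsd$. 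Taking $\uu=\aa_k$ for such a~$k$, every $\ww\in\OO=(\uu+\qns)\cap D$ satisfies $\ww\succeq\aa_k$, and support can only decrease upon going up, so all of~$\OO$ is globally supported on~$\tau$; $\ol\tau$-transience then follows from global support as you intended, and injectivity of $\kk[\OO]\into\kk[\OO]_\tau$ from the fact that $\aa$ remains a cogenerator of the localization of~$D$ along~$\tau$ (Proposition~\ref{p:local-vs-global}). The moral is that the vicinity need not sit inside the tube; what makes it coprimary is that its basepoint already has the correct support.
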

\begin{proof}
Let $\aa$ be such a cogenerator of~$D$.  Suppose $\{\aa_k\}_{k\in\NN}
\subseteq \aa - \sigma^\circ \subseteq D$ is any sequence converging
to~$\aa$.  If $\aa_k$ is supported on a face~$\tau'$, then $\tau'
\supseteq \tau$ because $\aa \succeq \aa_k$ and $\aa$ remains a
cogenerator of the localization of~$D$ along~$\tau$ by
Proposition~\ref{p:local-vs-global}.  The same argument shows that the
$\sigma$-vicinity $\OO = \aa_k + \qns$ yields a submodule $\kk[\OO]
\subseteq \kk[D]$ such that $\kk[\OO] \into \kk[\OO]_\tau$.  The goal
is therefore to show that some $\aa_k$ is supported on~$\tau$, for
then~$\OO$ is supported on~$\tau$, as support can only
decrease~upon~going up~in~$\cQ$.

If each $\aa_k$ is supported on a face properly containing~$\tau$,
then, restricting to a subsequence if necessary, assume that it is the
same face~$\tau'$ for all~$k$.  (This uses the finiteness of the
number of faces.)  But then $\aa + \tau' = \lim_k(\aa_k + \tau')$ is
contained in $\dsd$ by Definition~\ref{d:atop-sigma}, contradicting
the fact that $\aa$ is supported on~$\tau$ in~$\dsd$.
\end{proof}

\begin{example}\label{e:sigma-nbd-cogen}
All three of the downsets
$$
\begin{array}{@{}*3{c@{\qquad\qquad}}c}
 \begin{array}{@{}c@{}}\includegraphics[height=30mm]{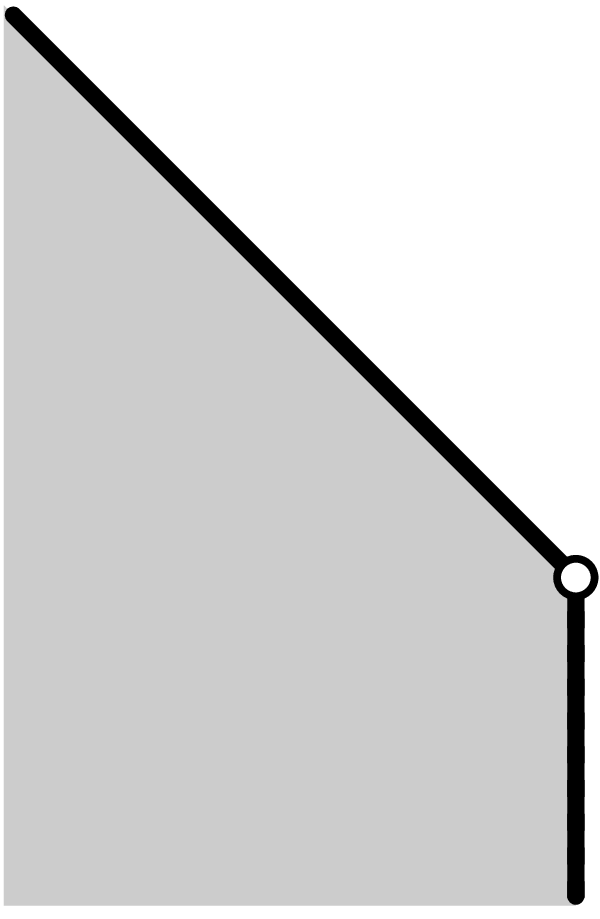}\end{array}
&\begin{array}{@{}c@{}}\includegraphics[height=30mm]{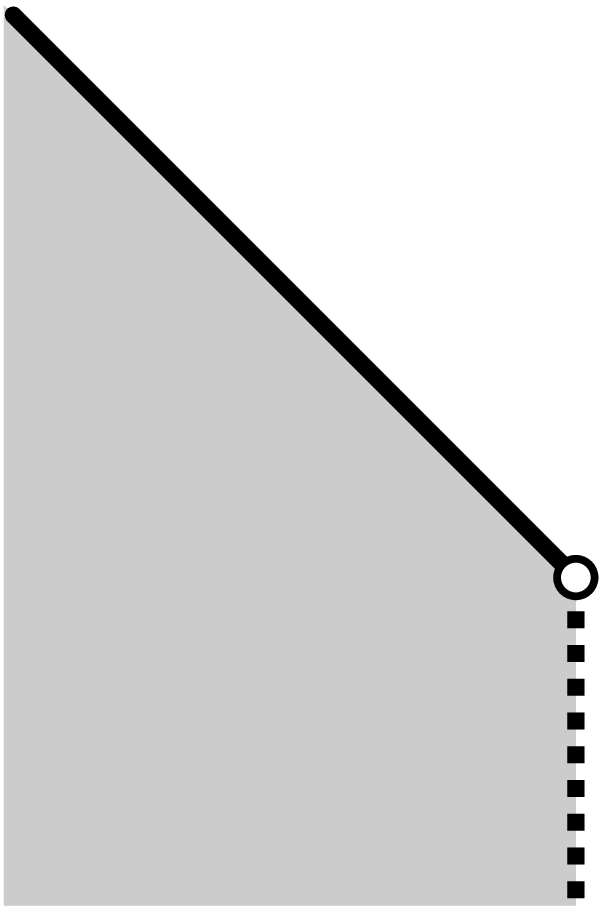}\end{array}
&\begin{array}{@{}c@{}}\includegraphics[height=30mm]{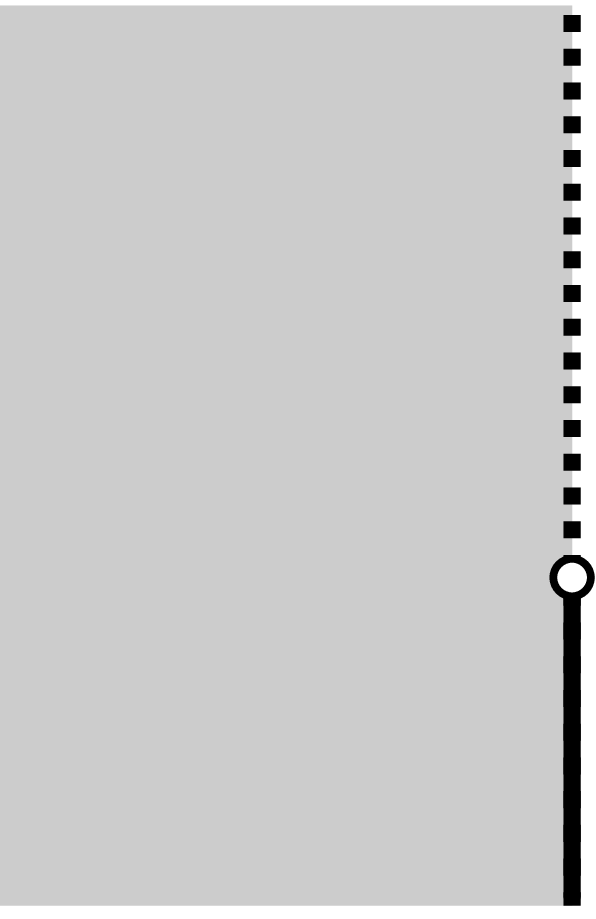}\end{array}
\\
D_1 & D_2 & D_3
\\[-.5ex]
\end{array}
$$
in~$\RR^2$ have a cogenerator at the open corner~$\aa = \ee_1$ (so the
top-left corner of each region is~$\ee_2$) along the face $\tau = \{\0\}$,
but their behaviors near~$\aa$ differ in character.  Write $\sigma_x$
and~$\sigma_y$ for the faces of~$\RR^2_+$ that are its horizontal and
vertical axes, respectively.
\begin{enumerate}
\item\label{i:pinch-0}%
Here $\aa$ has two nadirs: it is a cogenerator along~$\tau = \{\0\}$
for both~$\sigma_x$ and~$\sigma_y$ by
Proposition~\ref{p:downset-upper-closure} and Example~\ref{e:socc}.
The blue set in Example~\ref{e:sigma-vicinity} constitutes a
$\tau$-coprimary $\sigma_x$-vicinity of~$\aa$ globally supported
on~$\tau$, as in Proposition~\ref{p:sigma-nbd-cogen}.

\item\label{i:pinch-negative-y}%
Here $\aa$ has only the nadir~$\sigma_x$, because the downset has no
points in $\aa + \RR\sigma_y$ to take the closure of in
Lemma~\ref{l:downset-upper-closure}.  Again, the blue set in
Example~\ref{e:sigma-vicinity} constitutes the desired
$\sigma_x$-vicinity of~$\aa$.

\item\label{i:half-plane}%
Here $\aa$ has only the nadir~$\sigma_y$.  It is possible to compute
this directly, but it is more apropos to note that
Proposition~\ref{p:sigma-nbd-cogen} rules out $\sigma_x$ as a nadir.
Indeed, every $\sigma_x$-vicinity of~$\aa$ in~$D_3$ is an infinite
vertical strip.  None of these $\sigma_x$-vicinities are supported on
$\tau = \{\0\}$, since elements therein persist forever
along~$\sigma_y$.  In contrast, every choice of $\vv \in
\sigma_y^\circ$ yields a $\sigma_y$-vicinity $(-\vv + \RR^2_{\nabs_y})
\cap D_3$ supported on~$\{\0\}$; that is, the entire negative $y$-axis
is supported on~$\{\0\}$.
\end{enumerate}
\end{example}

Compare the following with Example~\ref{e:soct-k[tau]}; it is the
decisive more or less explicit calculation that justifies the general
theory of socles and provides its foundation.

\begin{cor}\label{c:soc(coprimary)}
Fix a face~$\tau$ of a real polyhedral group~$\cQ$ and a subquotient
$\cM$ of\/~$\kk[\cQ]$ that is $\tau$-coprimary and globally supported
on~$\tau$.  Then $\socp\cM = 0$ unless $\tau' = \tau$.
\end{cor}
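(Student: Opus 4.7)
The plan is to split into two cases based on the set-theoretic relation between $\tau'$ and $\tau$, dispatching each with a different tool.

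\emph{Case 1: $\tau' \not\subseteq \tau$.} Here the global-support hypothesis does the work. By Definition~\ref{d:support}, the map $\cM \to \cM_{\tau''}$ vanishes for every face $\tau'' \not\subseteq \tau$, and in particular for $\tau'' = \tau'$. Since $\cM_{\tau'}$ is generated as a $\kk[\cQ_+ + \ZZ\tau']$-module by the image of $\cM$, this vanishing forces $\cM_{\tau'} = 0$, hence $\cM/\tau' = 0$, and Proposition~\ref{p:local-vs-global} delivers the injection $\socp\cM \hookrightarrow \soco(\cM/\tau') = 0$.

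\emph{Case 2: $\tau' \subsetneq \tau$.} Here global support is silent and I would exploit $\tau$-coprimariness. Assume toward contradiction that $\socp\cM$ has a nonzero element. By Proposition~\ref{p:either-order} (with $\tau'$ in place of $\tau$), such an element is realized as a closed cogenerator along $\tau'$ of a module built from $\delta_{\tau'}\cM$; concretely, there is a nonzero $x \in (\dsp\cM)_\aa$ for some $\sigma' \in \nabla\tau'$ and $\aa \in \cQ$, annihilated by every element of $\mm_{\tau'}$ acting in the $\cQ$-direction. Writing $\cM = \kk[I]$, I would pick a representative $y \in \cM_\cc$ of $x$ with $\cc \in (\aa - (\sigma')^\circ) \cap I$ chosen so that the cone $(\cc + \cQ_+) \cap (\aa - (\sigma')^\circ)$ sits inside~$I$; such a $\cc$ exists because $x \neq 0$ in the direct limit.

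Now choose $f \in \tau \setminus \tau'$, which is nonempty because $\tau' \subsetneq \tau$; then $f \in \mm_{\tau'}$, so $f \cdot x = 0$ in $(\dsp\cM)_{\aa + f}$. Unpacking this direct-limit vanishing yields $r \in \cQ_+$ with $\aa - \cc - r \in (\sigma')^\circ$ and $\cc + f + r \notin I$. The cone inclusion above places $\cc + r$ in $I$, so $y' := r \cdot y \in \cM_{\cc + r}$ is nonzero, and Proposition~\ref{p:elementary-coprimary} supplies $g' \in \cQ_+$ for which $z' := g' \cdot y'$ is $\tau$-coprimary. On one hand, $\tau$-persistence of $z'$ at $f \in \tau$ demands $f \cdot z' \neq 0$; on the other,
\[
  f \cdot z' = (f + g' + r) \cdot y = g' \cdot \bigl((f + r) \cdot y\bigr) = 0,
\]
because $(f + r) \cdot y$ is the image of $y$ in $\cM_{\cc + f + r}$ and $\cc + f + r \notin I$. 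This contradiction completes the proof.

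The main obstacle I anticipate is Case~2, specifically threading the witness $r$ of upper-boundary vanishing through the coprimary shift $g'$ to produce the identity $f \cdot z' = 0$. What makes this clean is that $\cM = \kk[I]$ has one-dimensional graded pieces and structure maps that are either isomorphisms or zero, which lets one reduce the death of $f \cdot y$ at $\cc + f + r$ to the death of $z'$ at $f$ without worrying about alternate representatives in the direct system defining $\dsp\cM$.
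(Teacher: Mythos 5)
Your proof is correct. Case~1 is exactly the paper's first step: global support on~$\tau$ kills the localization along any $\tau' \not\subseteq \tau$, hence kills $\cM/\tau'$, and Proposition~\ref{p:local-vs-global} finishes. For Case~2 ($\tau' \subsetneq \tau$) you take a genuinely different route. The paper embeds $\cM = \kk[I]$ into the downset module $\kk[D]$ it cogenerates, notes via left-exactness (Proposition~\ref{p:left-exact-tau}) that a cogenerator of~$\cM$ along~$\tau'$ is one of~$\kk[D]$, and then invokes Proposition~\ref{p:sigma-nbd-cogen} to extract a $\tau'$-coprimary submodule globally supported on~$\tau'$, which is incompatible with $\tau$-coprimariness. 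You instead unwind the colimit defining $(\dsp\cM)_\aa$ by hand: a nonzero socle element gives a representative $y \in \cM_\cc$ whose forward cone inside $\aa - (\sigma')^\circ$ lies in~$I$, the annihilation $f \cdot x = 0$ for $f \in \tau \setminus \tau'$ produces a witness $r$ with $\cc + f + r \notin I$, and Proposition~\ref{p:elementary-coprimary} then manufactures a $\tau$-persistent element that $f$ must both preserve and kill. This is a clean contradiction, and every step checks out; in particular your reduction to $\cM = \kk[I]$ is legitimate since subquotients of~$\kk[\cQ]$ are interval modules, and for interval modules a colimit class is controlled by any single representative because structure maps between degrees of~$I$ are identities. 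What your route buys is self-containment — it needs none of the $\sigma$-vicinity machinery of Section~\ref{sub:nbds} and only the elementary coprimary criterion. What it gives up is robustness: the paper's argument via vicinities transfers almost verbatim to arbitrary downset-finite coprimary modules (Corollary~\ref{c:soc(coprimary)'} via Proposition~\ref{p:nearby}), whereas your representative-chasing leans on the one-dimensionality of the graded pieces of~$\kk[I]$ and would need reworking there.
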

\begin{proof}
Proposition~\ref{p:local-vs-global} implies that $\socp\cM = 0$ unless
$\tau' \supseteq \tau$ by definition of global support: localizing
along~$\tau'$ yields $\cM_{\tau'} = 0$ unless $\tau' \subseteq \tau$.
On the other hand, $\cM$ being a subquotient of~$\kk[\cQ]$ means that
$\cM \subseteq \kk[D]$ for some downset~$D$.  By left-exactness of
socles (Proposition~\ref{p:left-exact-tau}), every cogenerator
of~$\cM$ is a cogenerator of~$\kk[D]$.  Applying
Proposition~\ref{p:sigma-nbd-cogen} to any such cogenerator
along~$\tau'$ implies that $\tau = \tau'$, because no $\tau$-coprimary
module has a submodule supported on a face strictly contained
in~$\tau$.
\end{proof}

\begin{remark}\label{r:soc(coprimary)}
Remember that being $\tau$-coprimary does not require the whole module
to be globally supported on~$\tau$; only an essential submodule need
be globally supported on~$\tau$.  This occurs for the global support
on $\tau = \{\0\}$ in Example~\ref{e:global-support}, which is
strictly contained in the corresponding $\tau$-primary component from
Example~\ref{e:min-primary}.  Dually, being globally supported
on~$\tau$ allows for elements with support strictly contained
in~$\tau$.
\end{remark}

\begin{remark}\label{r:sigma-closure}
Definition~\ref{d:sigma-closure}.\ref{i:limit-point} stipulates no
condition the generators of the relevant $\sigma$-vicinities---the
vectors $\uu$ in Definition~\ref{d:vicinity}.  The a~priori difference
between being a $\sigma$-limit point and lying in the $\sigma$-closure
is hence that for $\sigma$-closure, the convergence is stipulated on
the generators of the $\sigma$-vicinities rather than on the points
of~$X$.  That said, the a~priori weaker (that is, more inclusive)
notion of $\sigma$-limit point is equivalent: the generators can be
forced to converge.
\end{remark}

\begin{prop}\label{p:aa_k}
If a sequence $\{\aa'_k\}_{k\in\NN}^{\,}$ in a real polyhedral
group~$\cQ$ has $\aa'_k \to \aa$ and $\aa'_k \in \aa_k + \qns$ for
some $\aa_k \in \aa - \sigma^\circ$, where $\sigma$ is a fixed face,
then it is possible to choose the elements~$\aa_k$ so that $\aa_k \to
\aa$.  Consequently, if~$\sigma \supseteq \tau$ then the
$\sigma$-closure of any set $X \subseteq \qrt$ equals the set of its
$\sigma$-limit points.
\end{prop}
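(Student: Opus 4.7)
My plan is to prove the main claim first (a geometric statement on sequences in $\cQ$) and then derive the topological consequence in $\qrt$ by lifting and invoking the main claim. The main claim quantifies how points of $\qns$ can be approached from the face $\sigma$; the consequence merely dresses this up in terms of vicinities. The only genuine obstacle is assembling the facet description of $\qns$ versus $\RR\sigma + \cQ_+$; once that is in place, the main claim amounts to a one-line functional inequality and the consequence is bookkeeping.

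For the main claim, translate by $-\aa$ and set $\rr_k = \aa'_k - \aa$, so that $\rr_k \to 0$ and, using $-\sigma^\circ + \sigma^\circ = \RR\sigma$ together with Lemma~\ref{l:<<}, the hypothesis yields $\rr_k \in -\sigma^\circ + \qns = \RR\sigma + \cQ_+$. The task reduces to producing $\vv_k \in \sigma^\circ$ with $\vv_k \to 0$ and $\rr_k + \vv_k \in \qns$, after which $\aa_k := \aa - \vv_k$ will do. Writing $\cQ_+ = \bigcap_i\{\ell_i \geq 0\}$ for the facet-defining linear functionals and letting $I$ index the facets containing $\sigma$,
$$
\RR\sigma + \cQ_+ = \{\xx : \ell_i(\xx) \geq 0 \text{ for } i \in I\},
\quad
\qns = \{\xx : \ell_i(\xx) \geq 0 \text{ for } i \in I,\ \ell_j(\xx) > 0 \text{ for } j \notin I\},
$$
and any fixed $\ww \in \sigma^\circ$ satisfies $\ell_i(\ww) = 0$ for $i \in I$ and $\ell_j(\ww) > 0$ for $j \notin I$. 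Choose $t_k > 0$ with $t_k \to 0$ and $t_k > -\ell_j(\rr_k)/\ell_j(\ww)$ for every $j \notin I$---feasible because $\rr_k \to 0$ drives each lower bound to zero---and set $\vv_k = t_k \ww$. Then the weak inequalities $\ell_i(\rr_k + \vv_k) = \ell_i(\rr_k) \geq 0$ for $i \in I$ are inherited from $\rr_k$ (since $\ell_i(\ww) = 0$), while the strict inequalities $\ell_j(\rr_k + \vv_k) = \ell_j(\rr_k) + t_k\ell_j(\ww) > 0$ for $j \notin I$ are forced by the choice of $t_k$, so $\rr_k + \vv_k \in \qns$.

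For the consequence, assume $\sigma \supseteq \tau$. Given a $\sigma$-limit point $\wt\aa$ of $X$ witnessed by $\wt\aa'_k \to \wt\aa$ with each $\wt\aa'_k$ in some $\sigma$-vicinity, lift to representatives $\aa'_k \to \aa$ in $\cQ$; using $\RR\tau \subseteq \RR\sigma$, the vicinity condition yields $\aa'_k \in \aa - \sigma^\circ + \qns$. The main claim furnishes $\aa_k \in \aa - \sigma^\circ$ with $\aa_k \to \aa$ and $\aa'_k \in \aa_k + \qns$, so $V'_k = (\aa_k + \qns)/\RR\tau$ is a $\sigma$-vicinity of $\wt\aa$ containing $\wt\aa'_k$. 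For any prescribed $\sigma$-vicinity $V = (\aa - \vv_0 + \qns)/\RR\tau$ of $\wt\aa$, the inclusion $V'_k \subseteq V$ reduces to $\vv_0 - \vv_k \in \qns + \RR\tau$ (with $\vv_k = \aa - \aa_k \to 0$), which holds for $k$ large because $\vv_0 - \vv_k \to \vv_0 \in \sigma^\circ$ eventually lies in the relatively open set $\sigma^\circ \subseteq \qns$ of $\RR\sigma$. Hence $\wt\aa'_k \in V'_k \subseteq V$, placing $\wt\aa$ in the $\sigma$-closure. The reverse inclusion is obtained dually by picking $\wt\aa'_k \in X \cap V_k$ along the shrinking sequence $V_k = (\aa - \tfrac{1}{k}\ww + \qns)/\RR\tau$ with fixed $\ww \in \sigma^\circ$ and applying the main claim to the lifts to enforce usual-topology convergence of the chosen representatives.
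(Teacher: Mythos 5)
Your argument for the first (and main) sentence is correct, and it takes a genuinely different route from the paper's. The paper decomposes $\aa'_k - \aa = -\vv_k + \zz_k$ with $\vv_k \in \RR\sigma$ and $\zz_k \in \sigma^\perp$, constructs a single vector $\sss \in \sigma^\circ$ with $\sss + V + Z \subseteq \cQ_+$ for the unit ball $V \subseteq \RR\sigma$ and a unit-bounded set $Z \subseteq \sigma^\perp$ of admissible orthogonal parts, and then sets $\aa_k = \aa - 2\epsilon_k\sss$ with $\epsilon_k = |\aa'_k - \aa|$. You instead encode $\qns$ and $\RR\sigma + \cQ_+$ by an irredundant facet presentation $\cQ_+ = \bigcap_i\{\ell_i \geq 0\}$ and push $\rr_k$ into $\qns$ by adding $t_k\ww$ with $t_k \to 0$ chosen to beat the finitely many strict inequalities. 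Both proofs hinge on the same idea---travel a distance on the order of $|\aa'_k - \aa|$ into $\sigma^\circ$---and yours is arguably more elementary, at the cost of invoking the standard facet description of faces and their open stars (consistent with how Lemma~\ref{l:<<} is proved in the paper). Your treatment of the containment ``$\sigma$-limit point $\Rightarrow$ $\sigma$-closure'' is also fine and matches the paper in substance: where the paper cites Proposition~\ref{p:<<} to find $k$ with $\uu \preceq \aa_k$, you verify $V'_k \subseteq V$ directly for large $k$; these amount to the same estimate.

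The last sentence of your proof, however, does not do what you claim. Choosing $\wt\aa'_k \in X \cap V_k$ for vicinities $V_k = (\aa - \tfrac{1}{k}\ww + \qns)/\RR\tau$ with shrinking generators does not produce a sequence converging to $\wt\aa$ in the usual topology: each $V_k$ is an unbounded translate of $\qns$ modulo $\RR\tau$, and these sets do not shrink to $\{\wt\aa\}$ as the generator approaches $\aa$. Indeed, a single point $\wt\bb$ lying in $(\aa + \qns)/\RR\tau$ belongs to every $\sigma$-vicinity of $\wt\aa$ because $\epsilon\ww + \qns \subseteq \qns$, so $\wt\aa$ lies in the $\sigma$-closure of $X = \{\wt\bb\}$ no matter how far $\wt\bb$ is from $\wt\aa$. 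The main claim cannot repair this: it repositions the generators $\aa_k$ of the vicinities for a sequence $\aa'_k$ that is already assumed to converge; it cannot manufacture convergence of the $\aa'_k$ themselves. To be fair, the paper's own proof establishes only the containment you proved correctly and disposes of the reverse containment by declaring it ``a priori'' clear (Remark~\ref{r:sigma-closure}), so your gap coincides with a point the source itself glosses over; but as written your final sentence asserts a mechanism that fails, and you should either restrict the class of sets $X$ (the intended applications take $X$ inside antichain-like sets of socle degrees) or explicitly flag this containment as the one the paper also leaves informal.
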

\begin{proof}
Writing $\aa'_k = \aa - \vv_k + \zz_k$ with $\vv_k \in \RR\sigma$ and
$\zz_k \in \sigma^\perp$, the only relevant consequence of the
hypothesis $\aa'_k \in \aa_k + \qns$ is to force~$\zz_k$ to land
in~$\qrsp$ when projected to~$\qrs$.
Consider the set $Z \subseteq \sigma^\perp$ of vectors
in~$\sigma^\perp$ whose images in~$\qrs$ lie in~$\qrsp$ and have
magnitude $\leq 1$.  Let $V \subseteq \RR\sigma$ be the ball of
radius~$1$.  Find $\sss \in \sigma^\circ$ so that $\sss + V + Z
\subseteq \cQ_+$.  To see that such an~$\sss$ exists, first find
$\sss' \in \sigma^\circ$ so that $\sss' + Z \subseteq \cQ_+$.  To
construct~$\sss'$, rescale any element $\sss'' \in \sigma^\circ$; this
works because the projection of $(\sss'' + \sigma^\perp) \cap \cQ_+$
to~$\qrs$ contains a neighborhood of~$\0$ in~$\qrsp$.  Then observe
that the condition $\sss' + Z \subseteq \cQ_+$ remains true after
adding any element of~$\sigma$ to~$\sss'$.  In particular,
construct~$\sss$ by adding the center of any ball in~$\sigma^\circ$ of
radius~$1$, which exists because $\sigma^\circ$ is nonempty, open
in~$\RR\sigma$, and closed under positive scaling.

Having fixed~$\sss$ with $\sss + V + Z \subseteq \cQ_+$, set $\aa_k =
\aa - 2\epsilon_k\sss$, where $\epsilon_k = |\aa'_k - \aa|$.  The
reason for this choice of~$\epsilon_k$ is that $2\epsilon_k \to 0$
(because $\aa_k' \to \aa$) and $\epsilon_k$ bounds the magnitudes
of~$\vv_k$ and~$\zz_k$.  This latter condition implies $\aa'_k = \aa -
\vv_k + \zz_k \in \aa + \epsilon_k V + \epsilon_k Z = (\aa -
2\epsilon_k\sss) + \epsilon_k\sss + (\epsilon_k\sss + \epsilon_k V +
\epsilon_k Z) \subseteq \aa_k + \sigma^\circ + \cQ_+ = \aa_k + \qns$
by Lemma~\ref{l:<<}.

The claim involving~$\tau$ follows, when $\tau = \{\0\}$, from
Proposition~\ref{p:<<}: it implies that each element $\bb \in \aa -
\sigma^\circ$ precedes some~$\aa_k$, and hence the $\sigma$-vicinity
generated by~$\bb$ contains~$\aa'_k$.  The case of arbitrary~$\tau$
reduces to $\tau = \{\0\}$ by working modulo~$\RR\tau$.
\end{proof}

\subsection{Dense cogeneration of downsets}\label{b:dense-downsets}\mbox{}

\medskip
\noindent
The subfunctor version of density in socles for modules requires first
a geometric version for downsets.  For geometric intuition, it is
useful to again recall Lemma~\ref{l:<<}, which says that $\qns =
\sigma^\circ + \cQ_+$.  Thus $\aa - \qns$ is the ``coprincipal''
downset with apex~$\aa$ and shape~$\nabs$.  Adding $\tau$ to get
$\aa+\tau-\qns$ takes the union of these downsets along $\aa + \tau$.
The downset thus constructed is preserved under translation
by~$\RR\tau$, since $-\tau \subseteq -\cQ_+$.

\begin{lemma}\label{l:aa+tau-qns}
Let $\cQ$ be a real polyhedral group with faces $\sigma \supseteq
\tau$.  Write $\wt\aa \in \qrt$ for the coset $\aa + \RR\tau$
containing $\aa \in \cQ$.  Then $\aa+\tau-\qns = \aa'+\tau-\qns$ for
all $\aa' \in \wt\aa$.  Hence there is only one downset $\aa+\tau-\qns
= \wt\aa-\qns$ per coset $\wt\aa = \aa + \RR\tau$.
\end{lemma}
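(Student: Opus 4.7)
The plan is to reduce the identity $\aa + \tau - \qns = \aa' + \tau - \qns$, for $\aa'$ in the same $\RR\tau$-coset as~$\aa$, to showing that $\tau - \qns$ is invariant under translation by every $\vv \in \RR\tau$. Setting $\vv = \aa' - \aa$, the target identity becomes $\vv + (\tau - \qns) = \tau - \qns$, and the ``one downset per coset'' conclusion is then just a renaming.

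The crux will be an absorption identity $\tau + \qns = \qns$. By Lemma~\ref{l:<<}, $\qns = \sigma^\circ + \cQ_+$; since $\tau$ is a face of~$\cQ_+$, in particular $\tau \subseteq \cQ_+$, so $\tau + \qns \subseteq \cQ_+ + \sigma^\circ + \cQ_+ = \sigma^\circ + \cQ_+ = \qns$, while the reverse containment is trivial because $\0 \in \tau$. With this in hand, I will use that $\tau$ spans $\RR\tau$ (so every $\vv \in \RR\tau$ can be written as $\vv_1 - \vv_2$ with $\vv_1, \vv_2 \in \tau$) to rewrite
$$
  \vv + \tau - \qns = (\vv_1 + \tau) - (\vv_2 + \qns).
$$
Closure of~$\tau$ under addition gives $\vv_1 + \tau \subseteq \tau$, and the absorption identity gives $\vv_2 + \qns = \qns$, so the right-hand side lies in $\tau - \qns$. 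Replacing $\vv$ with $-\vv$ yields the reverse inclusion, completing the translation invariance.

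I do not expect any genuine obstacle; the one mild subtlety worth flagging is that the standing hypothesis $\sigma \supseteq \tau$ plays no role in the argument, only the containment $\tau \subseteq \cQ_+$ coming from $\tau$ being a face. The hypothesis $\sigma \supseteq \tau$ is nevertheless a natural part of the setup in which $\qns$ is paired with $\qrt$ via Lemma~\ref{l:nabt}, so its presence in the statement is expected even though it is technically inert in this particular proof.
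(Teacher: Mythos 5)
Your proof is correct and is essentially the paper's argument spelled out in more detail: the paper likewise invokes Lemma~\ref{l:<<} to write $\aa+\tau-\qns = \aa+\tau-\sigma^\circ-\cQ_+$ and observes this is $\RR\tau$-translation-invariant because $-\tau \subseteq -\cQ_+$. One tiny overstatement: $\vv_2 + \qns = \qns$ is false in general (only $\vv_2 + \qns \subseteq \qns$ holds), but the inclusion is all your containment argument uses, and the symmetric step with $-\vv$ supplies the reverse inclusion, so nothing breaks.
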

\begin{proof}
Using Lemma~\ref{l:<<} to write $\aa + \tau - \qns = \aa + \tau -
\sigma^\circ - \cQ_+$, this set is translation-invariant
along~$\RR\tau$ because $-\cQ_+$ contains~$-\tau$.
\end{proof}

\begin{example}\label{e:aa+tau-qns}
Let $\cQ = \RR^3_+$ with $\tau = y$-axis and $\sigma = xy$-plane.  View $D = \aa +
\tau - \qns$ from behind the $xz$-plane at positive height.  Then $D$ is a
quadrant in~$\RR^3$ (meaning that it occupies one quarter of~$\RR^3$)
whose horizontal top boundary contains the relatively open negative
half-plane bounded by the line $\aa + \RR\tau$ but does not contain the
line~itself:
$$
  \psfrag{x}{\footnotesize$x$}
  \psfrag{y}{\footnotesize$y$}
  \psfrag{z}{\footnotesize$z$}
  \psfrag{a}{\footnotesize$\aa$}
  \psfrag{s}{\footnotesize$\sigma$}
  \psfrag{t}{\footnotesize$\tau$}
  \includegraphics[height=30ex]{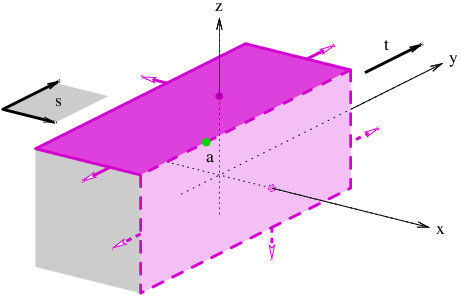}
\vspace{-1ex}%
$$
The difference bewteen $D$ and its closure is the missing vertical
half-plane beneath the line $\aa + \RR\tau$ parallel to the $y$-axis.  As per
Lemma~\ref{l:aa+tau-qns}, nothing about~$D$ changes if $\aa$ is replaced
by another point along the line $\aa + \RR\tau$.
\end{example}

The question is which downsets from Lemma~\ref{l:aa+tau-qns} must
appear in any expression of a given downset as a union of downsets of
that form.  It suffices, for instance, by Theorem~\ref{t:divides}, to
use the set of all cogenerators along~$\tau$ with nadir~$\sigma$, for
all pairs $\sigma \supseteq \tau$.  But using all of these
cogenerators is usually unnecessary, in part because of the redundancy
resulting from Lemma~\ref{l:aa+tau-qns} but in part because of density
considerations.  The main decomposition theorem for downsets isolates
the precise condition on a set $\ats[\,] \subseteq \cQ$ of
cogenerators of a downset~$D$ along a face~$\tau$ to suffice.  In the
statement, the union separates cogenerators according to their
nadirs~$\sigma$ because the component $\aa + \tau - \qns$ is
constructed using~$\sigma$; but in the end the condition on~$\ats[\,]$
relies on the $\sigma$-closure of all of~$\ats[\,]$ for each $\sigma
\supseteq \tau$, not merely the closure of the set of cogenerators
in~$\ats[\,]$ that happen to have nadir~$\sigma$.

\begin{thm}\label{t:downset=union}
Let $\ats \subseteq \cQ$ be a set of cogenerators of a downset~$D$ in
a real polyhedral group~$\cQ$ along a face~$\tau$ with nadir~$\sigma$
for each $\sigma \in \nabt$, and let $\ats[\,]\! = \bigcup_{\sigma
\supseteq \tau} \ats$.  Then
\begin{equation*}
D
  =
\bigcup_{\substack{\hbox{\footnotesize\rm faces $\sigma,\tau$}\\
		   \hbox{\footnotesize\rm with $\sigma\supseteq\tau$}}}
\bigcup_{\raisebox{-.7ex}{\footnotesize$\ \aa\!\in\!\!\ats$}} \aa + \tau - \qns
\end{equation*}
if for each face~$\tau$, the $\sigma$-closure of the image
of~$\ats[\,]\!$~in~$\qrt$ for each $\sigma \supseteq \tau$ contains
the projection modulo~$\RR\tau$ of every cogenerator of~$D$
along~$\tau$ with nadir~$\sigma$.
\end{thm}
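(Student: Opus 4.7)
The proof splits into the two containments, with $\supseteq$ being mechanical and $\subseteq$ being the substantive direction.

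For $\supseteq$, let $\aa \in \ats$. Being a cogenerator of $D$ along $\tau$ with nadir $\sigma$ means, by Example~\ref{e:soct} via Proposition~\ref{p:either-order}, that $\sigma \in \nd[\aa']$ for every $\aa' \in \aa + \tau$. Proposition~\ref{p:downset-upper-boundary}.\ref{i:nabla} then yields $\aa' - \sigma^\circ \subseteq D$ for each such $\aa'$, so $\aa + \tau - \sigma^\circ \subseteq D$. Using that $D$ is a downset together with Lemma~\ref{l:<<} gives $\aa + \tau - \qns = \aa + \tau - \sigma^\circ - \cQ_+ \subseteq D$.

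For $\subseteq$, fix $\bb \in D$ and apply Theorem~\ref{t:divides} to obtain faces $\tau \subseteq \sigma$ and a cogenerator $\aa^*$ of $D$ along $\tau$ with nadir $\sigma$ satisfying $\bb \preceq \aa^*$. The first geometric reduction is to exhibit $\bb \in \aa^* + \tau - \qns$, possibly after refining the choice of $\sigma \in \nd[\aa^*] \cap \nabt$ so that its relative interior absorbs the direction of the difference $\aa^* - \bb$ modulo $\RR\tau$, exploiting Lemma~\ref{l:aa+tau-qns} to select a convenient representative of $\aa^*$ within its $\RR\tau$-coset. When $\aa^* \in \ats$ for this nadir $\sigma$, the coverage of $\bb$ by the displayed union is immediate.

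Otherwise, invoke the density hypothesis for the pair $(\sigma, \tau)$: the image $\wt{\aa^*} \in \qrt$ lies in the $\sigma$-closure of the image of $\ats[\,]$. Proposition~\ref{p:aa_k} identifies this closure with the set of $\sigma$-limit points, yielding a sequence $\aa_k \in \ats[\,]$ together with representatives $\aa_k^\#$ of $\wt{\aa_k}$ in $\cQ$ and generators $\uu_k \in \aa^* - \sigma^\circ$ satisfying $\aa_k^\# \in \uu_k + \qns$ and $\uu_k \to \aa^*$. By the finiteness of the face poset of $\cQ_+$, pass to a subsequence on which all $\aa_k$ share a single nadir $\sigma' \supseteq \tau$; the goal becomes $\bb \in \aa_k^\# + \tau - \qnp$ for $k$ large.

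The main obstacle is this final transfer, which requires moving the coverage of $\bb$ across the small perturbation $\aa_k^\# - \aa^* \in \RR\sigma + \cQ_+$ (a direct consequence of $\aa_k^\# \in \uu_k + \qns$ and $\uu_k \in \aa^* - \sigma^\circ$) while potentially replacing the nadir $\sigma$ witnessing density with the possibly different nadir $\sigma'$ of the approximating cogenerators. The bridge is that $\aa_k^\# + \tau - \qnp$ is a downset contained in $D$ by the $\supseteq$ step and is $\RR\tau$-invariant by Lemma~\ref{l:aa+tau-qns}, while $\sigma' \supseteq \tau$ ensures enough open $\sigma'$-room in the target downset to absorb the $\RR\sigma$-bounded perturbation coming from the $\sigma$-vicinity structure; this places $\bb \in \aa_k^\# + \tau - \qnp$ for large $k$ and completes the inclusion $\subseteq$.
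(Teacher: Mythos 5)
Your overall strategy tracks the paper's: verify $\supseteq$ directly, then for $\subseteq$ reduce via Theorem~\ref{t:divides} to covering each point of~$D$ by coprincipal downsets hanging from the dense subfamily, using a sequence of approximating cogenerators.  But the two places where the real content lives are exactly the places you gesture at rather than argue, and as written neither goes through.

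First, Theorem~\ref{t:divides} only gives $\bb \preceq \aa^*$, i.e.\ $\bb \in \aa^* - \cQ_+$, which is strictly weaker than $\bb \in \aa^* + \tau - \qns$ when $\sigma \supsetneq \tau$ (a point of $\tau$ never lies in $\qns$ unless $\sigma = \tau$, by Lemma~\ref{l:<<}).  Your fix --- ``refining the choice of $\sigma$ so that its relative interior absorbs the direction of $\aa^* - \bb$'' --- is dangerous: the density hypothesis is stated only for the pairs $(\sigma,\tau)$ for which $\aa^*$ genuinely has nadir~$\sigma$, and $\sigma$-closure in one $\nabs$-topology says nothing about closure in another, so after changing $\sigma$ you may have no approximating elements of $\ats[\,]$ at all.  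The paper instead leans on Example~\ref{e:soct} and Remark~\ref{r:nabt} to justify that the union over \emph{all} cogenerators with their \emph{actual} nadirs already equals~$D$, and then only has to compare the full family with the dense subfamily.  Second, and more seriously, your closing claim that ``$\sigma' \supseteq \tau$ ensures enough open $\sigma'$-room \dots to absorb the $\RR\sigma$-bounded perturbation'' is an assertion, not an argument.  The cones $\qnp$ are not open in the usual topology, so a point of $\aa^* + \tau - \qns$ translated by $\aa_k^\# - \aa^* \in \RR\sigma + \cQ_+$ need not land in $\aa_k^\# + \tau - \qnp$; whether it does depends delicately on $\sigma$, on~$\sigma'$, and on the direction of approach.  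The paper's mechanism is concrete: after quotienting by $\RR\tau$ (and disposing of $\sigma = \tau$ separately, a case you skip and where density degenerates because every antichain is $\tau$-closed), Proposition~\ref{p:aa_k} renormalizes the vicinity generators $\uu_k$ so that $\uu_k \to \aa^*$ while $\uu_k \in \aa^* - \sigma^\circ$; Proposition~\ref{p:<<} then yields $\bigcup_k (\uu_k - \cQ_+) = \aa^* - \qns \ni \bb$; and each $\uu_k - \cQ_+$ sits inside the coprincipal downset of the nearby cogenerator because $\uu_k$ itself does and that coprincipal set is a downset.  You invoke Proposition~\ref{p:aa_k} only to identify $\sigma$-closure with $\sigma$-limit points and never invoke Proposition~\ref{p:<<}, so the covering of~$\bb$ is never actually established.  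To repair the proof, replace the ``absorb the perturbation'' paragraph with this limit argument.
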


\begin{proof}[Proof of Theorem~\ref{t:downset=union}]
Theorem~\ref{t:divides} is equivalent to the desired result in the
case that every $\ats$ is the set of all cogenerators of~$D$
along~$\tau$ with nadir~$\sigma$, by Example~\ref{e:soct} and
Remark~\ref{r:nabt}.  Hence it suffices to show~that
$$
  \bigcup_{\raisebox{-.6ex}{$\scriptstyle\sigma'\supseteq\tau$}}
  \bigcup_{\ \aa'\in\ats[\sigma']} \aa' + \tau - \qnp
  \supseteq
  \aa + \tau - \qns
$$
for any fixed cogenerator~$\aa$ of~$D$ along~$\tau$ with
nadir~$\sigma$.  In fact, by definition of $\sigma$-limit point, it is
enough to show that
$$
  \bigcup_{k=1}^\infty \aa'_k + \tau - \qnk
  \supseteq
  \aa + \tau - \qns,
$$
where $\{\aa'_k\}_{k\in\NN}^{\,}$ is a sequence of elements 
of~$\ats[\,]$ such that
\begin{itemize}
\item%
$\aa_k$ lands in a $\sigma$-vicinity of the image~$\wt\aa$ of~$\aa$
when projected to~$\qrt$, and
\item%
these images $\wt\aa'_k$ converge to~$\wt\aa$ in~$\qrt$
\end{itemize}
and $\sigma_k$ is a nadir of the cogenerator $\aa'_k$ along~$\tau$.

Note that there is something to prove even when $\sigma = \tau$
because $\ats[\tau]$ only needs to have at least one closed
cogenerator in~$\cQ$ for each closed socle degree in~$\qrt$, whereas
the set of all closed cogenerators along~$\tau$ mapping to a given
socle degree might not be a single translate of~$\tau$.  On the other
hand, $\tau - \qnt = \tau - \tau^\circ - \cQ_+\!$~by Lemma~\ref{l:<<},
and this is just $\RR\tau - \cQ_+$.  Hence $\aa + \tau - \qnt$
contains the translate of the negative cone~$-\cQ_+$ at every point
mapping to $\wt\aa$, cogenerator or otherwise, completing the
case~$\sigma = \tau$.

For general $\sigma \supseteq \tau$, Lemma~\ref{l:aa+tau-qns} reduces
the question to the quotient~$\qrt$, where it becomes
$$
  \bigcup_{k=1}^\infty \wt\aa'_k - \qnk/\tau
  \supseteq
  \wt\aa - \qns/\tau.
$$
But as $\qnk/\tau = \sigma_k^\circ/\tau + (\qrt)_+$ by
Lemma~\ref{l:<<}, it does no harm (and helps the notation) to assume
that $\tau = \{\0\}$.  The desired statement is now
$$
  \bigcup_{k=1}^\infty \aa'_k - \qnk
  \supseteq
  \aa - \qns,
$$
the hypotheses being those of Proposition~\ref{p:aa_k}.  The proof is
completed by applying Proposition~\ref{p:<<} to the
sequence~$\{\aa_k\}_{k\in\NN}$ produced by Proposition~\ref{p:aa_k},
noting that $\aa_k - \cQ_+ \subseteq \aa'_k - \qnk$ as soon as
$\aa_k \in \aa'_k - \qnk$, because $\aa'_k - \qnk$ is a
downset.
\end{proof}

\begin{example}\label{e:downset=union}
In Example~\ref{e:sigma-nbd-cogen} $D_1$ and $D_2$ only have
cogenerators along $\tau = \{\0\}$ because their degree sets
in~$\RR^2$ are bounded above.  In contrast, $D_3$ also has
cogenerators along $\tau = \sigma_y$.  Here is what
Theorem~\ref{t:downset=union} has to say in all three cases.
\begin{enumerate}
\item\label{i:pinch-0'}%
All cogenerators~$\bb$ of~$D_1$ that lie on the open diagonal ray are
closed, meaning that their nadir is $\sigma = \{\0\}$.  All of these
cogenerators are forced to contribute a term $\bb - \cQ$ to the display
in Theorem~\ref{t:downset=union}.  The only other cogenerator of~$D_1$
resides at~$\aa = \ee_1$.  The shape of~$D$ at~$\aa$
(Proposition~\ref{p:shape}) is the cocomplex
(Definition~\ref{d:cocomplex}) $\nda$ with minimal faces~$\sigma_x$ and
$\sigma_y$, the two rays of~$\cQ_+$, both of which are nadirs for~$\aa$.  The
term $\aa - \qns[_x]$ that ``wants'' to be present due to
Theorem~\ref{t:divides} can in fact be omitted due to
Theorem~\ref{t:downset=union} because $\aa$ is a $\sigma_x$-limit point
of~$H$ in Example~\ref{e:sigma-limit}: the triangle there shares its
geometry with~$D_1$ on the relevant set near~$\aa$, namely the $x$-axis
and above.  However, Theorem~\ref{t:downset=union} does force a term
$\aa - \qns[_y]$ in its displayed union because every $\sigma_y$-vicinity
of~$\aa$ in~$D_1$ contains exactly one cogenerator, namely~$\aa$~itself.

\item\label{i:pinch-negative-y'}%
$D_2$ has the same cogenerators as~$D_1$ except that the
cogenerator~$\aa$ does not have~$\sigma_y$ as a nadir.  It is instructive to
note that if the term $\aa - \qns[_y]$ is omitted from the union for
$D_1$ described in the previous part of this Example, then what
results is a union that equals~$D_2$.  That is, $D_2$ equals the union
of the closed negative quadrants hanging from the open diagonal ray.
Abstractly, the ability to omit~$\aa$ as a cogenerator is because
(i)~$\aa$ is a $\sigma_x$-limit point of the open diagonal ray, and
(ii)~$D_2$ is missing precisely the negative $y$-axis that caused $\aa$
to be a cogenerator with nadir~$\sigma_y$.

\item\label{i:half-plane'}%
$D_3$ has precisely one cogenerator along~$\tau = 0$, namely $\aa$
with nadir~$\sigma_y$ by Example~\ref{e:sigma-nbd-cogen}.3.  The only
other nonzero socle of~$\kk[D_3]$ occurs along~$\tau = \sigma_y$ with
nadir $\sigma = \RR^2_+$.  The union in Theorem~\ref{t:downset=union}
is $D_3 = (\aa - \qns[_y]) \cup (\aa + \sigma_y - \cQ^\circ)$, where
$\cQ^\circ = \cQ_{\RR^2_+}$ is the interior of~$\cQ$, so $\aa +
\sigma_y - \cQ^\circ$ is an open left half-plane.
\end{enumerate}
\end{example}

\begin{example}\label{e:open-cogenerators}
The curve atop each of the following two downsets in~$\RR^2$ is a
hyperbola.
$$
\psfrag{x}{\footnotesize$x$}
\psfrag{y}{\footnotesize$y$}
\begin{array}{@{}*1{c@{\qquad\qquad\qquad}}c}
\\[-3.8ex]
\begin{array}{@{}c@{}}\includegraphics[height=30mm]{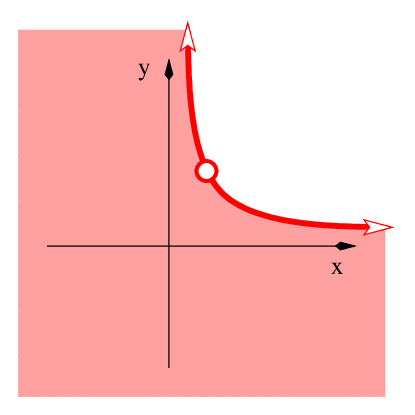}\end{array}
&
\begin{array}{@{}c@{}}\includegraphics[height=30mm]{half-open-hyperbola}\end{array}
\\
D_1 & D_2
\\[-.5ex]
\end{array}
$$
\begin{enumerate}
\item\label{i:plucked-hyperbola}%
Plucking out a single point from the hyperbola has an odd effect.  At
the frontier point Definition~\ref{d:soct} detects two open
cogenerators, with different nadirs as in Example~\ref{e:trouble-soc},
but they are redundant: $D_{\hbox{\tiny\ref{i:plucked-hyperbola}}}$
equals the union of the closed negative orthants cogenerated by the
points along the rest of the hyperbola.

\item\label{i:half-open-hyperbola}%
The ability to omit cogenerators is even more striking upon deleting
an interval from the hyperbola, instead of a single point, to get the
downset $D_{\hbox{\tiny\ref{i:half-open-hyperbola}}}$.  Along the
deleted curve, Definition~\ref{d:soct} detects cogenerators of the
same shape as those for~$D_{\hbox{\tiny\ref{i:plucked-hyperbola}}}$.
Hence $D_{\hbox{\tiny\ref{i:half-open-hyperbola}}}$ is the union of
coprincipal downsets of these shapes along the deleted curve together
with closed coprincipal downsets along the rest of the hyperbola.
However, any finite number of the coprincipal downsets along the
deleted curve can be omitted, as can be checked directly.  In fact,
any subset of them whose complement is dense in the deleted curve can
be omitted by Theorem~\ref{t:downset=union}.  Note that a closed
negative orthant is required at the lower endpoint of the deleted
curve, because the endpoint has not been deleted, whereas the
cogenerator at the upper endpoint of the deleted curve can always be
omitted due to open negative orthants hanging from points along the
hyperbola just below~it.
\end{enumerate}
\end{example}

Here is a restatement of Theorem~\ref{t:divides}, phrased as a special
case of Theorem~\ref{t:downset=union}, in terms of coprincipal
downsets via Lemma~\ref{l:aa+tau-qns}.

\begin{cor}\label{c:downset=union}
Every downset~$D$ in a real polyhedral group~$\cQ$ is the union of the
\emph{coprincipal downsets} $\wt\aa - \qns$ indexed by the degrees
$\wt\aa \in \qrt$ of socle elements of\/~$\kk[D]$ along all
faces~$\tau$ with all nadirs~$\sigma$:
\begin{equation*}
  D
  =
\bigcup_{\substack{\hbox{\footnotesize\rm faces $\sigma,\tau$}\\
		   \hbox{\footnotesize\rm with $\sigma\supseteq\tau$}}}
\bigcup_{\raisebox{-.7ex}{\footnotesize$\ \ \wt\aa\in\deg_{\qrt}\soct[\sigma]\kk[D]$}}
\wt\aa  - \qns.
\end{equation*}
\end{cor}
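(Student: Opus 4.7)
The plan is to deduce this corollary directly from Theorem~\ref{t:downset=union} in the special case where each $\ats$ is taken to be the \emph{entire} set of cogenerators of $D$ along $\tau$ with nadir~$\sigma$, ranging over all pairs $\sigma \supseteq \tau$ of faces of~$\cQ_+$. With this maximal choice, the hypothesis on $\sigma$-closure in Theorem~\ref{t:downset=union} becomes trivial: the image of $\ats[\,]$ in~$\qrt$ already contains, not merely densely approximates, the projection of every cogenerator of~$D$ along~$\tau$ with nadir~$\sigma$. Theorem~\ref{t:downset=union} therefore yields
\[
  D \;=\; \bigcup_{\sigma \supseteq \tau}\ \bigcup_{\aa \in \ats}\; \aa + \tau - \qns.
\]

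The next step is to reindex the inner union over $\qrt$ rather than over $\cQ$. By Lemma~\ref{l:aa+tau-qns}, the downset $\aa + \tau - \qns$ depends only on the coset $\wt\aa = \aa + \RR\tau$, so it may be written $\wt\aa - \qns$, and the inner union may be taken over the image in~$\qrt$ of the set of cogenerators of~$D$ along~$\tau$ with nadir~$\sigma$.

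It remains to identify this image with $\deg_{\qrt}\soct[\sigma]\kk[D]$. By the defining construction in Definition~\ref{d:soct}.\ref{i:global-soc-tau}, together with the computation of socles of indicator quotients carried out in Example~\ref{e:soct} (via Proposition~\ref{p:either-order} and Example~\ref{e:socct}), the nonvanishing of $\bigl(\soct[\sigma]\kk[D]\bigr){}_{\wt\aa}$ is equivalent to the existence of a cogenerator~$\aa$ of~$D$ along~$\tau$ with nadir~$\sigma$ whose projection to~$\qrt$ is~$\wt\aa$. Hence the two indexing sets coincide, and the displayed formula of the corollary follows.

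There is essentially no substantive obstacle beyond this bookkeeping, since all of the real work, both the topological pushing-up-to-cogenerators (Theorem~\ref{t:divides}) and the density-theoretic reduction to an arbitrary dense subset of cogenerators, has already been absorbed into Theorem~\ref{t:downset=union}. The only point requiring care is to check that the reindexing via Lemma~\ref{l:aa+tau-qns} is consistent with the quotient-restriction by~$\RR\tau$ that is built into the definition of~$\soct[\sigma]$; this is precisely what Remark~\ref{r:soct/tau} and Example~\ref{e:soct} arrange.
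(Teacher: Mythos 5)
Your proposal is correct and follows exactly the route the paper intends: the corollary is presented there as the special case of Theorem~\ref{t:downset=union} in which each $\ats$ is the full set of cogenerators (so the density hypothesis is vacuous), reindexed over $\qrt$ via Lemma~\ref{l:aa+tau-qns}. Your additional check that the image of the cogenerator set in $\qrt$ equals $\deg_{\qrt}\soct[\sigma]\kk[D]$ is the right bookkeeping and is justified by Definition~\ref{d:soct}.\ref{i:global-cogen} together with Example~\ref{e:soct}.
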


\begin{remark}\label{r:irred-decomp}
Theorem~\ref{t:downset=union} is an analogue for real polyhedral
groups of the fact that monomial ideals in affine semigroup rings
admit unique irredundant irreducible decompositions
\cite[Corollary~11.5]{cca}.  To see the analogy, note that expressing
a downset as a union is the same as expressing its complementary upset
as an intersection.  In Theorem~\ref{t:downset=union} the union is
neither unique nor irredundant, but only in the sense that a
topological space can have many dense subsets, each of which can
usually be made smaller by omitting some points.  The union in
Corollary~\ref{c:downset=union} is still canonical, though redundant
in a predictable manner, namely that of Theorem~\ref{t:downset=union}.
The analogy in this remark is not the tightest possible; see
Section~\ref{b:intervals} for the true analogy.
\end{remark}

\begin{remark}\label{r:isw}
In the case of $\cQ = \RR^n$ with componentwise partial order,
Ingebretson and Sather-Wagstaff characterized the downsets in $\cQ_+$
that admit decompositions as in Theorem~\ref{t:downset=union} with
finitely many terms
\cite[Theorem~4.12]{ingebretson--sather-wagstaff2013}: they are the
ones whose complementary upsets have finitely many generators.
The new aspects here, for arbitrary downsets, are the related notions
of minimality and density of cogenerating sets.
Theorem~\ref{t:downset=union} implies the characterization of
irreducible downsets
\cite[Theorem~3.9]{ingebretson--sather-wagstaff2013} because
irreducible decompositions are assumed finite~there.
\end{remark}

The final result in this subsection is applied in the proof of
Theorem~\ref{t:dense-subfunctor}.

\begin{cor}\label{c:sigma-vicinity}
Fix a cogenerator~$\aa$ of a downset~$D$ along a face~$\tau$ with
nadir~$\sigma$ in a real polyhedral group.  If\/ $\bb \in D$ and $\bb
\preceq \aa$, then the image $\wt\aa$ of~$\aa$ in~$\qrt$ has a
$\sigma$-vicinity~$\OO$ in $\deg_{\qrt}\soct\kk[D]$ such that $\wt\bb
\preceq \wt\aa'$ for all\/ $\wt\aa' \in \OO$.
\end{cor}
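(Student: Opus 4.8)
The plan is to produce the vicinity by hand: choose a generator $\uu$ for a $\sigma$-vicinity of $\wt\aa$ that still lies above $\bb$, intersect it with the socle degree set, and then check only the two things the statement asks for — that the outcome is a $\sigma$-vicinity of $\wt\aa$ and that every socle degree in it dominates $\wt\bb$. Both become immediate once $\uu$ is chosen well, so essentially all the content is concentrated in that choice.

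The elementary fact driving this is the following. For any $\sigma$-vicinity $V=(\uu+\qns)/\RR\tau$ of $\wt\aa$ — so $\aa-\uu\in\sigma^\circ$ for the chosen representative — and any $\wt\aa'\in V$, a representative of $\wt\aa'$ lies in $\uu+\qns+\RR\tau$, and since $\qns=\sigma^\circ+\cQ_+\subseteq\cQ_+$ by Lemma~\ref{l:<<}, this forces $\wt\uu\preceq\wt\aa'$ in $\qrt$. Hence it suffices to pick $\uu$ with the extra property $\wt\bb\preceq\wt\uu$. This is the only place the hypothesis is used, and it is used in the form that $\bb$ divides the upper boundary element atop $\sigma$ at $\aa$ in the sense of Definition~\ref{d:divides} — which is exactly how a divisor $\bb$ sits below the cogenerator $\aa$ produced for it by Theorem~\ref{t:divides} — namely $\aa-\bb\in\qns+\RR\tau=\sigma^\circ+\cQ_+\!+\RR\tau$ (Lemma~\ref{l:<<}). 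Writing $\aa-\bb=\ff+q+t$ with $\ff\in\sigma^\circ$, $q\in\cQ_+$, $t\in\RR\tau$ and setting $\uu=\aa-\ff$, one gets $\aa-\uu=\ff\in\sigma^\circ$ and $\wt\uu-\wt\bb=\wt q\in(\qrt)_+$, exactly as needed.

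With $\uu$ so chosen, put $\OO=\bigl((\uu+\qns)/\RR\tau\bigr)\cap\deg_{\qrt}\soct\kk[D]$. By construction this is a $\sigma$-vicinity of $\wt\aa$ in $\deg_{\qrt}\soct\kk[D]$, and it is nonempty: $\aa-\uu=\ff\in\sigma^\circ\subseteq\qns$ puts $\wt\aa$ in $(\uu+\qns)/\RR\tau$, while $\wt\aa\in\deg_{\qrt}\soct\kk[D]$ because $\aa$ is a cogenerator. For every $\wt\aa'\in\OO$ the fact recorded above gives $\wt\bb\preceq\wt\uu\preceq\wt\aa'$, which is the assertion. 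I would also cite Proposition~\ref{p:sigma-nbd-cogen}, if the fuller conclusion wanted in the proof of Theorem~\ref{t:dense-subfunctor} is needed, to note that after shrinking $\ff$ within $\sigma^\circ$ if necessary (which does not disturb $\wt\bb\preceq\wt\uu$) the submodule $\kk[(\uu+\qns)\cap D]$ of $\kk[D]$ is $\tau$-coprimary and globally supported on $\tau$; the bare statement itself only uses the containment $(\uu+\qns)/\RR\tau\subseteq D/\RR\tau$, which follows from $\aa-\sigma^\circ\subseteq D$ (valid since $\sigma\in\nda$) and Lemma~\ref{l:<<}. The main obstacle is thus the middle step — identifying the precise sense in which $\bb$ lies below $\aa$ relative to the nadir $\sigma$, i.e.\ that $\aa-\bb$ splits off a vector of $\sigma^\circ$ modulo $\RR\tau$; after that, no further topology (Proposition~\ref{p:aa_k}, $\sigma$-limit points) enters, only Lemma~\ref{l:<<} and the definition of $\sigma$-vicinity.
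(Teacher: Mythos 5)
Your construction of the vicinity coincides with the paper's: writing $\aa-\bb=\ff+\qq+t$ with $\ff\in\sigma^\circ$, $\qq\in\cQ_+$, $t\in\RR\tau$ and taking $\OO=\bigl((\uu+\qns)/\RR\tau\bigr)\cap\deg_{\qrt}\soct\kk[D]$ with $\uu=\aa-\ff=\bb+\qq+t$ is exactly the vicinity $(\wt\bb+\wt\qq+\qns)\cap\deg_{\qrt}\soct\kk[D]$ of the paper's proof, and your check that every $\wt\aa'\in\OO$ satisfies $\wt\bb\preceq\wt\aa'$ (via $\qns\subseteq\cQ_+$ from Lemma~\ref{l:<<}) is fine.

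The gap is precisely in the step you flag as carrying all the content: the membership $\aa-\bb\in\sigma^\circ+\cQ_+\!+\RR\tau$. You justify it by saying this is ``exactly how a divisor $\bb$ sits below the cogenerator $\aa$ produced for it by Theorem~\ref{t:divides}.'' But the corollary does not assume that $\aa$ is the cogenerator produced from $\bb$ by Theorem~\ref{t:divides}, nor that an element of degree $\bb$ divides the cogenerator at $\aa$ in the sense of Definition~\ref{d:divides}; it assumes only that $\aa$ is \emph{some} cogenerator with $\bb\in D$ and $\bb\preceq\aa$. From $\bb\preceq\aa$ alone you get only $\aa-\bb\in\cQ_+$, and $\cQ_+\not\subseteq\RR\tau+\qns$ whenever $\sigma\supsetneq\tau$: indeed $\qns\cap\RR\tau=\nothing$, so already $\0\notin\RR\tau+\qns$. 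Thus the divisibility relation you invoke is a strictly stronger hypothesis than the stated one, and Theorem~\ref{t:divides} runs in the wrong direction to supply it --- it produces \emph{some} cogenerator divisible by a given element, and constrains nothing about an arbitrary cogenerator lying above that element. The paper closes this step by appealing instead to Theorem~\ref{t:downset=union} to place $\bb$ in the coprincipal downset $\aa+\tau-\qns$ cogenerated by the given $\aa$; your write-up needs either that derivation or an explicit strengthening of the hypothesis to ``$\bb$ divides the cogenerator at $\aa$'' (which is what actually holds in the corollary's one application, inside the proof of Theorem~\ref{t:dense-subfunctor}). Everything after that membership is correct, so the repair is localized, but as written the pivotal inclusion is asserted rather than proved.
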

\begin{proof}
Assume $\bb \in D$ and $\bb \preceq \aa$.
Theorem~\ref{t:downset=union} implies that $\bb \in \aa + \tau - \qns
= \aa + \RR\tau - \sigma^\circ - \cQ_+$.  Therefore $\aa + \RR\tau =
\bb + \RR\tau + \sss + \qq$ for some $\sss \in \sigma^\circ$ and $\qq
\in \cQ_+$.  The $\sigma$-vicinity in question is $(\wt\bb + \wt\qq +
\qns) \cap \deg_{\qrt}\soct\kk[D]$.
\end{proof}

\subsection{Dense subfunctors of socles}\label{b:subfunctors}\mbox{}

\medskip
\noindent
In general, a subfunctor $\Phi: \cA \to \cB$ of a covariant functor
$\Psi: \cA \to \cB$ is a natural transformation $\Phi \to \Psi$ such
that $\Phi(A) \subseteq \Psi(A)$ for all objects $A \in \cA$
\cite[Chapter~III]{eilenberg-maclane1945};
denote this by $\Phi \subseteq \Psi$.  (This notation assumes that the
objects of~$\cB$ are sets, which they are here; in general, $\Phi(A)
\to \Psi(A)$ should be monic.)

\begin{defn}\label{d:dense-subfunctor}
A subfunctor $\cst = \bigoplus_{\sigma \in \nabt} \cst[\sigma]
\subseteq \soct$ from modules over~$\cQ$ to modules over $\qrt
\times\nolinebreak \nabt$ is \emph{dense} if the $\sigma$-closure of
$\deg_{\qrt}\cst\kk[D]$ contains $\deg\soct[\sigma]\kk[D]$ for all
faces $\sigma \supseteq \tau$ and downsets $D \subseteq \cQ$.  An
\emph{$\cS$-cogenerator} of a $\cQ$-module~$\cM$ is a cogenerator
of~$\cM$ along some face~$\tau$ whose image in $\soct\cM$ lies
in~$\cst\cM$.
\end{defn}

\begin{thm}\label{t:dense-subfunctor}
Fix subfunctors $\cst \subseteq \soct$ for all faces~$\tau$ of a real
polyhedral group.  Theorem~\ref{t:injection} holds with $\cS$ in place
of $\soc$ if and only if $\cst$ is dense in~$\soct$ for all~$\tau$.
\end{thm}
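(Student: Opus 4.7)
The plan is to prove both implications by building on Theorem~\ref{t:injection} and the density machinery from Sections~\ref{sub:nbds}--\ref{sub:dense-downsets}. Since $\cst$ is a subfunctor of the left-exact functor $\soct$, it inherits left-exactness, so the analog of Theorem~\ref{t:injection}.\ref{i:phi=>soct} for $\cst$ is immediate: $\cst\phi$ is the restriction of the injective $\soct\phi$. The substance therefore lies in item~\ref{i:soct=>phi}.

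For $(\Leftarrow)$, assume $\cst$ is dense for all $\tau$. I will reduce the analog of Theorem~\ref{t:injection}.\ref{i:soct=>phi} for $\cst$ to the divisibility claim that every nonzero homogeneous element $y$ of a downset-finite module $\cM$ divides an $\cS$-cogenerator of $\cM$. Granted this claim, the remainder mirrors the proof of Theorem~\ref{t:injection}: if $\cst\phi$ is injective for all $\tau$ while $\phi$ has nonzero kernel, apply the claim to $\ker\phi$ (which inherits downset-finiteness as a submodule of $\cM$) to obtain an $\cS$-cogenerator $\tilde s$ of $\ker\phi$; the left-exact injection $\cst(\ker\phi) \hookrightarrow \cst\cM$ then sends $\tilde s$ to a nonzero element killed by $\cst\phi$, contradicting injectivity. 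To prove the divisibility claim I would follow the template from Theorem~\ref{t:injection}: fix a finite downset hull $\cM \hookrightarrow \bigoplus_{j=1}^k \kk[D_j]$, choose $j$ with $y_j \neq 0$, invoke Theorem~\ref{t:injection} inside $\kk[D_j]$ to obtain a cogenerator at $\wt\aa$ along $\tau$ with nadir $\sigma$ divided by $y_j$, and apply Corollary~\ref{c:sigma-vicinity} to produce a $\sigma$-vicinity $\OO$ of $\wt\aa$ with $\wt\bb \preceq \wt\aa'$ for every $\wt\aa' \in \OO$. Density then supplies $\wt\aa' \in \OO \cap \deg_{\qrt}\cst\kk[D_j]$, an $\cS$-cogenerator of $\kk[D_j]$ with some nadir $\sigma'$, and the sum-over-$i$ construction combined with the uniqueness in Corollary~\ref{c:at-most-one} will lift this to an $\cS$-cogenerator of $\cM$ divided by $y$.

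For $(\Rightarrow)$, assume the analog of Theorem~\ref{t:injection} holds for $\cst$, and suppose density fails at some $(\tau, \sigma, D, \wt\aa)$: $\wt\aa \in \deg\soct[\sigma]\kk[D]$ is missed by the $\sigma$-closure of $\deg_{\qrt}\cst\kk[D]$, so some $\sigma$-vicinity $\OO$ of $\wt\aa$ is disjoint from $\deg_{\qrt}\cst\kk[D]$. The plan is to manufacture a counterexample by building a nonzero downset-finite submodule $\cM \hookrightarrow \kk[D]$---for instance an interval or coprincipal submodule of $\kk[D]$ supported near $\wt\aa$---whose socle degrees in $\qrt$ are confined to $\OO$. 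The degree-preserving injection $\cst\cM \hookrightarrow \cst\kk[D]$ from left-exactness, together with the vanishing of $\cst\kk[D]$ at every degree in $\OO$, then forces $\cst\cM = 0$. The zero map $\phi: \cM \to 0$ thus makes $\cst\phi$ vacuously injective while $\phi$ itself is not, contradicting the hypothesis.

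The main obstacle will be the divisibility verification in the forward direction. Invoking Corollary~\ref{c:sigma-vicinity} and choosing a suitable lift $\aa''$ of $\wt\aa'$ yields only $\aa'' - \bb \in \sigma^\circ + \cQ_+$, whereas for $y$ to \emph{divide} the $\cS$-cogenerator with its possibly different nadir $\sigma'$, Definition~\ref{d:divides} demands $\aa'' - \bb \in \sigma'^\circ + \cQ_+$, a constraint on the support face of $\aa'' - \bb$. Bridging this gap will require exploiting the flexibility afforded by Proposition~\ref{p:aa_k} to choose $\aa'$ within the $\sigma$-vicinity so that $\aa'' - \bb$ lands in the correct stratum. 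Dually, the counterexample construction in the reverse direction will require care to guarantee that the chosen submodule $\cM$ is nonzero while maintaining $\deg_{\qrt}\soct\cM \subseteq \OO$.
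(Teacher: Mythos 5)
Your overall strategy coincides with the paper's in both directions: the forward implication reduces to showing that every homogeneous element of a downset-finite module divides an $\cS$-cogenerator, via a downset hull, Theorem~\ref{t:injection}, Corollary~\ref{c:sigma-vicinity}, and the density hypothesis, with the final reassembly borrowed from the proof of Theorem~\ref{t:injection}; the reverse implication manufactures a counterexample from a vicinity submodule of~$\kk[D]$ mapped to~$0$. The forward half is essentially the paper's argument. The ``nadir mismatch'' you single out as the main obstacle is a genuine subtlety, but it is one the paper's own proof also passes over in a single sentence; your plan to resolve it with the flexibility of Proposition~\ref{p:aa_k} is in the right spirit and is not where your proposal actually breaks.

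The genuine gap is in the reverse direction. Your construction establishes only that $\cst\cM = 0$ for the single face~$\tau$ at which density fails. But the hypothesis of Theorem~\ref{t:injection}.\ref{i:soct=>phi}---and hence of its $\cS$-analogue---quantifies over \emph{all} faces: to contradict it you must exhibit a non-injective~$\phi$ with $\csp\phi$ injective for \emph{every} face~$\tau'$. For $\phi: \cM \to 0$ this forces $\csp\cM = 0$ for all~$\tau'$, not just $\tau' = \tau$, and a generic ``interval supported near~$\wt\aa$'' can easily acquire cogenerators along other faces, destroying the counterexample. The paper secures this by taking $\cM = \kk[\OO']$ for a $\sigma$-vicinity $\OO'$ of the cogenerator~$\aa$ inside~$D$ as in Proposition~\ref{p:sigma-nbd-cogen}, shrunk via Lemma~\ref{l:vicinity} to lie inside the offending set $(\uu + \qns)/\RR\tau$, so that $\kk[\OO']$ is $\tau$-coprimary and globally supported on~$\tau$; Corollary~\ref{c:soc(coprimary)} then gives $\socp\kk[\OO'] = 0$, hence $\csp\kk[\OO'] = 0$, for all $\tau' \neq \tau$. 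You need to build this coprimality into the choice of~$\cM$ explicitly; the caveat you do state (keeping $\deg_{\qrt}\soct\cM$ inside~$\OO$ while keeping $\cM$ nonzero) concerns only the face~$\tau$ and does not address the other faces.
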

\begin{proof}
Every subfunctor of any left-exact functor takes injections to
injections; therefore Theorem~\ref{t:injection}.\ref{i:phi=>soct}
holds for any subfunctor of~$\soct$ by
Proposition~\ref{p:left-exact-tau}.  The content is that
Theorem~\ref{t:injection}.\ref{i:soct=>phi} is equivalent to density
of $\cst$ in $\soct$ for all~$\tau$.

First suppose that $\cst$ is dense in $\soct$ for all~$\tau$.  It
suffices to show that each homogeneous element $y \in \cM$ divides
some $\cS$-cogenerator~$s$, for then $\phi(y) \neq 0$ whenever
$\cst\phi(\wt s) \neq 0$, where $\wt s$ is the image of~$s$
in~$\cst\cM \subseteq \soct\cM$.  There is no harm in assuming that
$\cM$ is a submodule of its downset hull: $\cM \subseteq E =
\bigoplus_{j=1}^k E_j$.  Theorem~\ref{t:injection} produces a
cogenerator~$x$ of~$E$ that is divisible by~$y$, and $x$ is
automatically a cogenerator of~$\cM$---say $x \in \ds[\tau]\cM
\subseteq \ds[\tau]E$---because $y$~divides~$x$.  Write $x =
\sum_{j=1}^k x_j \in \ds[\tau]E = \bigoplus_{j=1}^k \ds[\tau]E_j$.
For any index~$j$ such that $x_j \neq 0$,
Corollary~\ref{c:sigma-vicinity} and the density hypothesis yields a
$\sigma$-vicinity of~$\wt\aa$ containing a socle element~$\wt s_j$
mapped to by an $\cS$-cogenerator~$s_j$ that is divisible by~$y_j$.
An $\cS$-cogenerator~$s$ of~$\cM$ divisible by~$y$ is constructed
from~$s_j$ just as an ordinary cogenerator is constructed from~$s_j$
in the
proof of Theorem~\ref{t:injection}.

Now suppose that $\cst[\,]$ is not dense in~$\soct$ for some
face~$\tau$, so some downset $D \subseteq\nolinebreak \cQ$ has a
cogenerator $\aa \in \cQ$ whose image $\wt\aa \in
\deg\soct[\sigma]\kk[D] \subseteq \qrt$ has a $\sigma$-vicinity
$\deg_{\qrt}\soct\kk[D] \cap (\uu + \qns)/\hspace{.2ex}\RR\tau$ devoid
of images of $\cS$-cogenerators along~$\tau$.  Appealing to
Lemma~\ref{l:vicinity}, the intersection of $\uu + \qns$ with a
$\sigma$-vicinity~$\OO$ of~$\aa$ in~$D$ from
Proposition~\ref{p:sigma-nbd-cogen} contains another
$\sigma$-vicinity~$\OO'$ of~$\aa$ that still satisfies the conclusion
of Proposition~\ref{p:sigma-nbd-cogen} because every submodule of any
$\tau$-coprimary module globally supported on~$\tau$ is also
$\tau$-coprimary and globally supported on~$\tau$.
The injection $\kk[\OO'] \into \kk[D]$ yields an injection
$\cst\kk[\OO'] \into \cst\kk[D]$, but by construction $\cst\kk[D]$
vanishes in all degrees from $\deg_{\qrt}\soct\kk[\OO']$, so
$\cst\kk[\OO'] = 0$.  On the other hand, $\socp\kk[\OO'] = 0$ for
$\tau' \neq \tau$ by Corollary~\ref{c:soc(coprimary)}, so the
subfunctor $\csp$ vanishes on $\kk[\OO']$ for all faces~$\tau'$.
Consequently, applying~$\csp$ to the homomorphism $\phi: \kk[\OO'] \to
0$ yields an injection $0 \into 0$ for all faces~$\tau'$ even though
$\phi$ is not injective.
\end{proof}

\section{Essential submodules via density in socles}\label{s:density}

Given a closed cogenerator of~$\cM$, there is an obvious submodule
of~$\cM$ containing the socle element, namely the submodule generated
by the cogenerator itself.  In contrast, open cogenerators are not
elements of~$\cM$ itself.  How, then, do cogenerators detect
injectivity in Theorem~\ref{t:injection}?  Each cogenerator must still
yield a submodule to witness the injectivity, because injectivity
means there is no actual submodule of~$\cM$ that goes to~$0$.  The
cogenerator merely indicates the presence of such a submodule, rather
than being an element of it.  This section reconstructs an honest
submodule around each cogenerator.  It requires much of the theory in
earlier sections.  As a consequence, a submodule $\cM' \subseteq \cM$
is an essential submodule precisely when the socle of~$\cM'$ is dense
in that of~$\cM$ (Theorem~\ref{t:essential-submodule}), the goal of
this section and a main result of the paper.

More precisely, the $\sigma$-vicinities in
Proposition~\ref{p:sigma-nbd-cogen} transfer cogenerators back into
honest submodules; they are, in that sense, the reverse of
Definition~\ref{d:atop-sigma}.  In fact this transference of
cogenerators into submodules works not merely for indicator quotients
but for arbitrary modules with finite downset hulls, as in
Theorem~\ref{t:essential-submodule}.  The key is the generalization of
$\sigma$-vicinities to arbitrary downset-finite modules.

\begin{defn}\label{d:nearby}
Fix a module~$\cM$ over a real polyhedral group~$\cQ$ and a face
$\tau$.
\begin{enumerate}
\item\label{i:nearby}%
A $\sigma$-divisor (Definition~\ref{d:divides}) $z \in \cM$ of a
cogenerator of~$\cM$ along~$\tau$ with nadir~$\sigma$
(Definition~\ref{d:soct}) is \emph{nearby} if $z$ is globally
supported on~$\tau$ (Definition~\ref{d:support}).

\item\label{i:neighborhood-in-M}%
A \emph{$\sigma$-vicinity in~$\cM$} of a cogenerator $s \in
\ds[\tau]\cM$ is a submodule of~$\cM$ generated by a nearby
$\sigma$-divisor of~$s$.

\item\label{i:neighborhood-in-soc}%
A \emph{neighborhood in~$\soct\cM$} of a homogeneous socle element
$\wt s \in \soct[\sigma]\cM \subseteq (\ds[\tau]\cM)/\tau$ is $\soct\cN$ for a $\sigma$-vicinity
$\cN$ in~$\cM$ of a cogenerator in $\ds[\tau]\cM$ that maps to~$\wt s$.

\item\label{i:dense}%
An inclusion $S_\tau \subseteq \soct\cM$ of
$(\qrt\times\nabt)$-modules is \emph{dense} if for all $\sigma
\supseteq \tau$, every neighborhood of every homogeneous element
of~$\soct[\sigma]\cM$ intersects $S_\tau$ nontrivially.
\end{enumerate}
\end{defn}

\begin{example}\label{e:nearby}
The convex hull of $\0,\ee_1,\ee_2$ in $\RR^2$ but with the first standard
basis vector $\ee_1$ removed (see Example~\ref{e:sigma-vicinity})
defines a subquotient~$\cM$ of~$\kk[\RR^2]$.  It has submodule~$\cM'$ that
is the indicator module for the same triangle but with the entire
$x$-axis removed:
$$
\begin{array}{@{}*6{c@{}}c}
\\[-2.8ex]
 \begin{array}{@{}c@{}}\includegraphics[height=20mm]{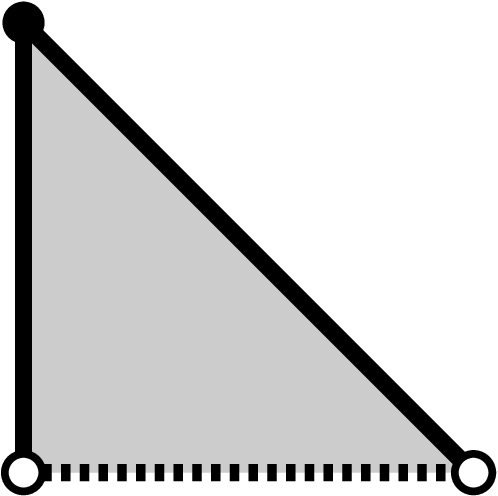}\end{array}
&\quad \subseteq \quad\ \
&\begin{array}{@{}c@{}}\includegraphics[height=20mm]{ambient-essential}\end{array}
&\quad \text{with $\sigma$-vicinities} \quad\ \
&\begin{array}{@{}c@{}}\includegraphics[height=20mm]{sigma-vicinity}\end{array}
&\quad\ \ \text{of~$\ee_1$.}
\end{array}
$$
All of the cogenerators of both modules occur along the face $\tau =
\{\0\}$ because both modules are globally supported on~$\{\0\}$.
However, the ambient module---but not the submodule---has a cogenerator $z
\in \ds[\tau]\cM$ with nadir $\sigma = x$-axis$_+$ of degree~$\ee_1$.  A typical
$\sigma$-vicinity of~$z$ in~$\cM$ is shaded in light blue (in fact, $\cM$
itself is also a $\sigma$-vicinity of~$z$).

As the face~$\tau$ here is the origin, the quotient-restriction
modulo~$\tau$ in Definition~\ref{d:nearby}.\ref{i:neighborhood-in-soc}
has no effect, so the relatively open bold blue segment along the
hypotenuse degree set of the corresponding vicinity of~$\wt z \in
\soct[\sigma]\cM$.  Every such vicinity contains socle elements
in~$\soct[\sigma]\cM'$.  Since the socle of~$M'$ agrees with the socle
of~$M$ away from $\ee_1$, it follows that $\soct\cM' \subseteq
\soct\cM$ is dense.
\end{example}

\begin{lemma}\label{l:nearby}
Every neighborhood in~$\cM$ of every homogeneous element
in~$\soct[\sigma]\cM$ is a $\tau$-coprimary submodule of~$\cM\!$
globally supported on~$\tau$.
\end{lemma}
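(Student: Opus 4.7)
Fix the nearby $\sigma$-divisor $y\in\cM_\bb$ generating the submodule $\cN$, so that $\bb\in\aa-\sigma^\circ$ for some degree $\aa$ of the underlying cogenerator $s\in\ds[\tau]\cM_\aa$, with $y\mapsto s$ via the structure map $\cM_\bb\to\cM_{\aa-\sigma}$, and $y\in\gt\cM$ by Definition~\ref{d:nearby}.\ref{i:nearby}. Global support of $\cN$ on~$\tau$ is then immediate: since $\gt\cM$ is a submodule of $\cM$ (an intersection of kernels of localization maps) by Definition~\ref{d:support}, the submodule $\cN=\kk[\cQ_+]\cdot y$ generated by $y$ lies inside $\gt\cM$.

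For $\tau$-coprimacy, I apply Proposition~\ref{p:elementary-coprimary}: every homogeneous element of $\cN$ must divide a $\tau$-coprimary element. First, $y$ itself is $\tau$-coprimary. Its $\ol\tau$-transience follows from global support, since for each $f\in\cQ_+\minus\tau$ the support face $\tau_f$ of~$f$ satisfies $\tau_f\not\subseteq\tau$, so the vanishing of $y$ in $\cM_{\tau_f}$ forces $\xx^{\lambda f}y=0$ for $\lambda\gg 0$. Its $\tau$-persistence comes from being a $\sigma$-divisor of~$s$: since $s$ has annihilator $\mm_\tau$ in $\ds[\tau]\cM$ (Remark~\ref{r:witness}), one has $\xx^t s\neq 0$ for all $t\in\tau$, and Lemma~\ref{l:natural} forces $\xx^t y\neq 0$ in $\cM_{\bb+t}$ because that element still maps to~$\xx^t s$.

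To extend the divisibility conclusion to an arbitrary nonzero $\xx^c y\in\cN$, I reduce to the downset case via a downset hull $\cM\into\bigoplus_j E_j$ with $E_j=\kk[D_j]$: the image of $y$ in any component where it is nonzero is a nearby $\sigma$-divisor of a cogenerator of~$D_j$, so Proposition~\ref{p:sigma-nbd-cogen} yields a $\tau$-coprimary, globally $\tau$-supported submodule inside each $\kk[D_j]$. Assembling these through the injection and using left-exactness of the cogenerator functors (Proposition~\ref{p:left-exact-tau}) produces the required $\tau$-coprimary divisors of $\xx^c y$ inside~$\cN$. The main obstacle is precisely this divisibility step when $c\notin\tau$: $\tau$-persistence of~$y$ does not transfer a priori to $\xx^c y$, since $\xx^{c+t}y=\xx^c(\xx^t y)$ can vanish even when neither factor does. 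The resolution is the geometric content of Proposition~\ref{p:sigma-nbd-cogen}, whose construction of a $\tau$-coprimary $\sigma$-vicinity inside a downset supplies the necessary divisors, which then combine via the downset hull to populate~$\cN$ with enough $\tau$-coprimary elements.
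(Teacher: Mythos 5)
Your opening steps are sound and agree with the paper's: the containment $\cN=\<y\>\subseteq\gt\cM$ gives global support, and your direct verification that $y$ itself is a $\tau$-coprimary \emph{element} is correct ($\ol\tau$-transience from global support, $\tau$-persistence because $\xx^t y$ maps to $\xx^t s\neq 0$ by Lemma~\ref{l:natural}). The genuine gap is exactly at the step you yourself flag as the main obstacle, and the patch you propose does not close it. First, a downset hull exists only for downset-finite modules, a hypothesis the lemma does not impose and the paper's proof does not use. Second, and more seriously, the chain of assertions does not connect: the image $y_j$ of~$y$ in a component $\kk[D_j]$ where it is nonzero need not be a $\sigma$-divisor of any cogenerator of~$D_j$ along~$\tau$ with nadir~$\sigma$ (the image of~$s$ in $\ds[\tau]E_j$ can vanish or fail to be a cogenerator of~$D_j$), so Proposition~\ref{p:sigma-nbd-cogen} need not apply componentwise as you want. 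Even where it applies, it produces \emph{some} $\tau$-coprimary $\sigma$-vicinity $\kk[\OO_j\cap D_j]$, with no control tying $\OO_j$ to the degree of the particular multiple $\xx^\cc y$; and left-exactness of $\soct$ transports socle elements, not honest $\tau$-coprimary elements of~$\cN$ of the kind Proposition~\ref{p:elementary-coprimary} requires. Nothing in your argument produces, above the degree of a given nonzero $\xx^\cc y$, an element of~$\cN$ that is $\tau$-persistent.

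The paper closes this step by a different and shorter route, working from Definition~\ref{d:coprimary} rather than Proposition~\ref{p:elementary-coprimary}: once $\cN$ is globally supported on~$\tau$, coprimariness reduces to injectivity of $\cN\to\cN_\tau$, that is, to every nonzero homogeneous multiple~$x$ of~$y$ surviving localization along~$\tau$; this is deduced from the fact that $s$ remains a cogenerator after localizing (Proposition~\ref{p:local-vs-global}), so that $x$ still divides it there. Whichever route you take, the burden is to prove that \emph{every} nonzero $\xx^\cc y$ is $\tau$-persistent --- persistence of $y$ alone does not transfer, as you correctly observe --- and this is precisely where the hypotheses that $y$ $\sigma$-divides~$s$ and is globally supported on~$\tau$ must be brought to bear on the multiples themselves; your proposal never does so.
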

\begin{proof}
Let $z$ be a nearby $\sigma$-divisor of a cogenerator $s \in
\ds[\tau]\cM$.  Let $x$ be a homogeneous multiple of~$z$.  That~$x$ is
supported on~$\tau$ is automatic from the hypothesis that $z$ is
supported on~$\tau$.  To say that the submodule~$\<z\>$ generated
by~$z$ is $\tau$-coprimary means, given that it is supported
on~$\tau$, that $\<z\>$ is a submodule of its localization
along~$\tau$.  But $s$ remains a cogenerator after localizing
along~$\tau$ by Proposition~\ref{p:local-vs-global}, so~$x$ must
remain nonzero because it still divides~$s$ after localizing.
\end{proof}

\begin{prop}\label{p:nearby}
Fix a downset-finite module $\cM$ over a real polyhedral group with
faces $\sigma \supseteq \tau$.  Every cogenerator in $\ds[\tau]\cM$
has a $\sigma$-vicinity in~$\cM$.
\end{prop}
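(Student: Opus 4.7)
The plan is to produce a nearby $\sigma$-divisor $y \in \cM$ of the given cogenerator $s \in \ds[\tau]\cM$ by pulling back ``vicinity submodules'' from Proposition~\ref{p:sigma-nbd-cogen} through a finite downset hull. Fix such a hull $\iota\colon \cM \into E = \bigoplus_{j=1}^k \kk[D_j]$, and let $s$ have degree $\aa \in \cQ$, so $s$ lives in the filtered colimit $\cM_{\aa-\sigma} = \dirlim_{\aa' \in \aa - \sigma^\circ}\cM_{\aa'}$. Left-exactness of $\ds$ (Lemma~\ref{l:exact-delta}) decomposes $\iota(s) = \sum_j s_j$ in $\bigoplus_j\ds[\tau]\kk[D_j]$. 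Because the kill conditions defining $\hhom_{\cQ\times\nabt}(\kk[\tau],-)$ act componentwise across a direct sum, and a nonzero element of the 1-dimensional component of an indicator module at a single degree cannot be killed by the $/\tau$ quotient, every nonzero $s_j$ is itself a cogenerator of $\kk[D_j]$ along $\tau$ with nadir~$\sigma$ at degree~$\aa$. Call such $j$ \emph{good}; for each good $j$, Proposition~\ref{p:sigma-nbd-cogen} supplies an apex $\aa_j \in \aa - \sigma^\circ$ arbitrarily close to~$\aa$ whose $\sigma$-vicinity $\OO_j = (\aa_j + \qns) \cap D_j$ makes $\kk[\OO_j] \subseteq \kk[D_j]$ both $\tau$-coprimary and globally supported on~$\tau$.

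Pick any representative $y_0 \in \cM_{\bb_0}$ of $s$ with $\bb_0 \in \aa - \sigma^\circ$, and decompose $\iota(y_0) = \sum_j y_{0,j}$. For each bad index $j$ (one with $s_j = 0$) satisfying $y_{0,j} \neq 0$, the vanishing of $s_j$ in the filtered colimit $\kk[D_j]_{\aa - \sigma}$ forces $\aa - \sigma^\circ \not\subseteq D_j$ by Proposition~\ref{p:downset-upper-boundary}, producing an escape point $\aa_j' \in (\aa - \sigma^\circ)\setminus D_j$. Now fix $\sss \in \sigma^\circ$. Since $\tau$-support in any $D_j$ only improves upon going upward, the proof of Proposition~\ref{p:sigma-nbd-cogen} provides a single scalar $\epsilon_K > 0$ so that $\aa_j := \aa - \epsilon_K\sss$ works for every good $j$ simultaneously. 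Choose $\bb = \aa - \lambda\sss$ with $\lambda \in (0, \epsilon_K)$ small enough that also $\vv_0 - \lambda\sss \in \cQ_+$ (with $\vv_0 = \aa - \bb_0$) and $\vv_j' - \lambda\sss \in \cQ_+$ for each bad $j$ with $y_{0,j} \neq 0$ (with $\vv_j' = \aa - \aa_j'$). Each of these finitely many constraints is an open condition satisfied strictly at $\lambda = 0$, because $\vv_0$ and the $\vv_j'$ all lie in the relative interior of the face $\sigma \subseteq \cQ_+$; so a sufficiently small $\lambda$ exists.

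Let $y \in \cM_\bb$ be the image of $y_0$ under the structure map $\cM_{\bb_0} \to \cM_\bb$ afforded by $\bb \succeq \bb_0$. Then $y \neq 0$ (otherwise $s$ would vanish in the colimit), $\bb \in \aa - \sigma^\circ$, and $y \mapsto s$ in $\cM_{\aa-\sigma}$, so $y$ is a $\sigma$-divisor of~$s$. Its image $\iota(y)$ has $j$-component equal to $y_{0,j}$ when $\bb \in D_j$ and~$0$ otherwise: for every good $j$ this component lies in $\kk[\OO_j]_\bb$, because $\bb - \aa_j = (\epsilon_K - \lambda)\sss \in \sigma^\circ \subseteq \qns$ places $\bb$ inside $\OO_j$; for every bad $j$ the component vanishes, since either $y_{0,j}$ was already zero or $\bb \succeq \aa_j'$ forces $\bb \notin D_j$. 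Hence $\iota(\<y\>) \subseteq \bigoplus_{j \text{ good}} \kk[\OO_j]$, which is globally supported on~$\tau$; combined with exactness of localization and injectivity of $\iota$ (giving $\gt\cM = \cM \cap \gt E$), this shows $\<y\>$ is globally supported on~$\tau$ in $\cM$, so $y$ is nearby and $\<y\>$ is the desired $\sigma$-vicinity. The main obstacle is coordinating the apices $\aa_j$, escape points $\aa_j'$, and representative degree~$\bb$—all points of $\aa - \sigma^\circ$ whose mutual comparisons live in $\qns = \sigma^\circ + \cQ_+$ rather than in $\cQ_+$ itself; parametrizing everything along a single ray $\aa - t\sss$ with $\sss \in \sigma^\circ$ collapses the problem to finitely many open conditions on one scalar, which is then handled by an openness argument in the relative interior of the face~$\sigma$.
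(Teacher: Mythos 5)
Your proof is correct and follows essentially the same route as the paper's: push $s$ into a finite downset hull, apply Proposition~\ref{p:sigma-nbd-cogen} to each summand, and choose a representative of~$s$ deep enough in the directed system $\aa - \sigma^\circ$ that the submodule it generates lands inside the resulting coprimary vicinities. The only differences are minor: where you coordinate the apices and escape points along a single ray $\aa - t\sss$ with finitely many open conditions on one scalar, the paper instead intersects the vicinities $\OO_j$ via Lemma~\ref{l:vicinity}; and your explicit treatment of the indices with $s_j = 0$ makes precise a step the paper's ``by construction'' leaves implicit.
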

\begin{proof}
Let $s \in \ds[\tau]\cM$ be the cogenerator, and let its degree be
$\deg_\cQ(s) = \aa \in \cQ$.  Choose a downset hull $\cM \into E =
\bigoplus_{j=1}^k E_j$, so $E_j = \kk[D_j]$ for a downset~$D_j$.
Express $s = s_1 + \dots + s_k \in \ds[\tau]E = \bigoplus_{j=1}^k
\ds[\tau]E_j$.  \mbox{Proposition}~\ref{p:sigma-nbd-cogen} produces~a
$\sigma$-vicinity $\OO_j$ of~$\aa$ in~$\cQ$, for each index~$j$, such
that $\kk[\OO_j \cap D_j]$ is a $\sigma$-vicinity in~$E_j$ of the
image $\wt s_j \in \soct[\sigma]E_j$.  Lemma~\ref{l:vicinity} then
yields a single $\sigma$-vicinity $\OO = \aa - \vv + \cQ_+$ of~$\aa$
in~$\cQ$ that lies in the intersection $\bigcap_{j=1}^k \OO_j$.  The
cogenerator $s \in \ds[\tau]$ is a direct limit over $\aa -
\sigma^\circ$; since $\OO$ contains a neighborhood (in the usual
topology) of~$\aa$ in~$\sigma^\circ$, some element $z \in \cM$ with
degree in~$\OO$ is a $\sigma$-divisor of~$s$.  This element~$z$ is
nearby~$s$~by~construction.
\end{proof}

The following generalization of Corollary~\ref{c:soc(coprimary)} to
modules with finite downset hulls is again the decisive computation.

\begin{cor}\label{c:soc(coprimary)'}
Fix a downset-finite $\tau$-coprimary $\cQ$-module~$\cM$ globally
supported on a face~$\tau$ of a real polyhedral group~$\cQ$.  Then
$\socp\cM = 0$ unless~$\tau' = \tau$.
\end{cor}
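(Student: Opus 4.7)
The plan is to imitate the proof of Corollary~\ref{c:soc(coprimary)}, with two adaptations. First, instead of an ambient inclusion $\cM \subseteq \kk[\cQ]$, I will use the downset-finiteness hypothesis to fix a downset hull $\cM \into \bigoplus_{j=1}^k \kk[D_j]$. Second, I will replace the appeal to Proposition~\ref{p:sigma-nbd-cogen}, which only produces a vicinity inside an indicator module, with the module-level construction of Proposition~\ref{p:nearby} together with Lemma~\ref{l:nearby}, which turns a cogenerator of~$\cM$ itself into an honest nonzero submodule with controlled support and coprimariness.

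First I would handle the easy direction: if $\tau' \not\subseteq \tau$, then global support of~$\cM$ on~$\tau$ means every element of~$\cM$ is killed on localization along~$\tau'$, so $\cM_{\tau'} = 0$, hence $\cmt[\tau'] = 0$. Proposition~\ref{p:local-vs-global} then gives $\socp\cM \into \soco(\cmt[\tau']) = 0$.

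The remaining case is $\tau' \subsetneq \tau$, where I expect the real content to lie. Suppose for contradiction that $\socp\cM \neq 0$ and pick a cogenerator $s \in \ds[\tau']\cM$ with some nadir $\sigma \supseteq \tau'$ whose image in $\socp\cM$ is nonzero. Proposition~\ref{p:nearby} supplies a $\sigma$-vicinity $\cN \subseteq \cM$ of~$s$ in~$\cM$, and Lemma~\ref{l:nearby} guarantees that $\cN$ is a nonzero $\tau'$-coprimary submodule globally supported on~$\tau'$. Since $\tau' \subsetneq \tau$ forces $\tau \not\subseteq \tau'$, global support of~$\cN$ on~$\tau'$ gives $\cN_\tau = 0$. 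But $\tau$-coprimariness of~$\cM$ makes $\cM \into \cM_\tau$ injective, so the restriction $\cN \into \cM_\tau$ is injective as well; this map factors through the zero module $\cN_\tau$, forcing $\cN = 0$, a contradiction.

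The main obstacle to beware of is making sure that the submodule $\cN$ produced by Proposition~\ref{p:nearby} really is \emph{nonzero} and really has its global support confined to~$\tau'$ rather than to~$\tau$; both points are precisely what Lemma~\ref{l:nearby} asserts, since a nearby $\sigma$-divisor generating~$\cN$ remains nonzero upon localizing along~$\tau'$ (as it still divides~$s$ there by Proposition~\ref{p:local-vs-global}). Once that is in hand, the coprimary-versus-support clash $\cN \subseteq \cM \into \cM_\tau$ against $\cN_\tau = 0$ delivers the contradiction with essentially no further work.
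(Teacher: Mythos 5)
Your proof is correct and follows the paper's own argument: the paper likewise disposes of $\tau' \not\subseteq \tau$ via Proposition~\ref{p:local-vs-global} and global support, and then applies Proposition~\ref{p:nearby} to a cogenerator along~$\tau'$ to produce a submodule supported on~$\tau'$, which is impossible in a $\tau$-coprimary module when $\tau' \subsetneq \tau$. Your extra detail (citing Lemma~\ref{l:nearby} and spelling out the clash $\cN \into \cM_\tau$ versus $\cN_\tau = 0$) just makes explicit what the paper compresses into its final clause.
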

\begin{proof}
Proposition~\ref{p:local-vs-global} implies that $\socp\cM = 0$ unless
$\tau' \supseteq \tau$ by definition of global support: localizing
along~$\tau'$ yields $\cM_{\tau'} = 0$ unless $\tau' \subseteq \tau$.
On the other hand, applying Proposition~\ref{p:nearby} to any
cogenerator of~$\cM$ along a face~$\tau'$ implies that $\tau = \tau'$,
because no $\tau$-coprimary module has a submodule supported on a face
strictly contained in~$\tau$.
\end{proof}

\begin{thm}\label{t:essential-submodule}
In a downset-finite module $\cM$ over a real polyhedral~group, $\cM'$
is an essential submodule if and only if $\soct\cM' \subseteq
\soct\cM$ is dense for all faces~$\tau$.
\end{thm}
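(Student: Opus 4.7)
The plan is to prove both directions by reducing to the decisive computation in Corollary~\ref{c:soc(coprimary)'}, which controls which socle components can be nonzero for a $\tau$-coprimary module globally supported on~$\tau$. Throughout, note that every submodule of the downset-finite module~$\cM$ is itself downset-finite (inject it into any downset hull of~$\cM$), so Corollary~\ref{c:essential-submodule} applies to all the submodules that arise.

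First I would prove the forward direction: assume $\cM'$ is essential. Fix a face~$\tau$, a face $\sigma \supseteq \tau$, a homogeneous element $\wt s \in \soct[\sigma]\cM$, and a neighborhood $\soct\cN$ of~$\wt s$ in~$\soct\cM$, where by Definition~\ref{d:nearby}.\ref{i:neighborhood-in-soc} the $\sigma$-vicinity $\cN \subseteq \cM$ is generated by a nearby $\sigma$-divisor~$y$ of a cogenerator $s \in \ds[\tau]\cM$ mapping to~$\wt s$. By Lemma~\ref{l:nearby}, $\cN$ is $\tau$-coprimary and globally supported on~$\tau$, and hence so is $\cM' \cap \cN$. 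Essentiality gives $\cM' \cap \cN \neq 0$, so Corollary~\ref{c:essential-submodule}.\ref{i:0} yields some face~$\tau'$ with $\socp(\cM' \cap \cN) \neq 0$. Corollary~\ref{c:soc(coprimary)'} forces $\tau' = \tau$, and then Corollary~\ref{c:essential-submodule}.\ref{i:cap} identifies
\[
  0 \neq \soct(\cM' \cap \cN) = \soct\cM' \cap \soct\cN
\]
inside~$\soct\cM$. Thus every neighborhood of~$\wt s$ meets $\soct\cM'$ nontrivially, establishing density.

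For the reverse direction, assume $\soct\cM' \subseteq \soct\cM$ is dense for every face~$\tau$, and let $\cN \subseteq \cM$ be any nonzero submodule. Since $\cN$ is downset-finite, Corollary~\ref{c:essential-submodule}.\ref{i:0} produces a face~$\tau$ with $\soct\cN \neq 0$, and then some nonzero cogenerator $s \in \ds[\tau]\cN$ with some nadir~$\sigma \supseteq \tau$. Proposition~\ref{p:nearby}, applied inside~$\cN$, gives a nearby $\sigma$-divisor $y \in \cN$ of~$s$; the submodule $\<y\> \subseteq \cN$ is simultaneously a $\sigma$-vicinity in~$\cM$ of the (nonzero) image of~$s$ in $\ds[\tau]\cM$, since being globally supported on~$\tau$ is an intrinsic property of~$y$. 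Left-exactness of $\ds[\tau]$ (Lemma~\ref{l:exact-delta}) and of $\soct$ (Proposition~\ref{p:left-exact-tau}) ensure that the image of~$\wt s$ in $\soct\cM$ is a nonzero homogeneous element, so $\soct\<y\>$ is a genuine neighborhood of it. By density, $\soct\cM' \cap \soct\<y\> \neq 0$, and another application of Corollary~\ref{c:essential-submodule}.\ref{i:cap} rewrites this as $\soct(\cM' \cap \<y\>) \neq 0$. Corollary~\ref{c:essential-submodule}.\ref{i:0} then gives $\cM' \cap \<y\> \neq 0$, hence $\cM' \cap \cN \neq 0$, proving essentiality.

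The hard part is the forward direction: one must ensure that the nonzero intersection $\cM' \cap \cN$ produced by essentiality actually contributes socle in the \emph{correct} face~$\tau$, not some other face~$\tau'$. This is exactly what Corollary~\ref{c:soc(coprimary)'} is designed for, and it is crucial here that the $\sigma$-vicinities supplied by Proposition~\ref{p:nearby} are not merely ``close'' to the cogenerator in a geometric sense but are in fact $\tau$-coprimary and globally $\tau$-supported---so that the coprimary property is inherited by every submodule of the vicinity. Once this is in hand, Corollary~\ref{c:essential-submodule} handles the bookkeeping on both sides essentially formally.
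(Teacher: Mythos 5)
Your proof is correct and uses exactly the same machinery as the paper's: Proposition~\ref{p:nearby} and Lemma~\ref{l:nearby} to produce $\tau$-coprimary $\sigma$-vicinities, Corollary~\ref{c:soc(coprimary)'} to pin down the face, and Corollary~\ref{c:essential-submodule} for the intersection bookkeeping. The only difference is that you argue both implications directly while the paper argues both by contrapositive, which is a purely cosmetic distinction.
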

\begin{proof}
First assume that $\cM'$ is not an essential submodule, so $\cN \cap
\cM' = 0$ for some nonzero submodule $\cN \subseteq \cM$.  Let $s \in
\ds[\tau]\cN$ be a cogenerator.  Any $\sigma$-vicinity of~$s$
in~$\cN$, afforded by Proposition~\ref{p:nearby}, has a socle
along~$\tau$ that is a neighborhood of~$\wt s$ in $\soct\cM$ whose
intersection with $\soct\cM'$ is~$0$.  Therefore $\soct\cM' \subseteq
\soct\cM$ is not dense.

Now assume that $\soct\cM' \subseteq \soct\cM$ is not dense for
some~$\tau$.  That means $\soct[\sigma]\cM$ for some nadir~$\sigma$
has an element $\wt s$ with a neighborhood $\soct\cN$ that intersects
$\soct\cM'$ in~$0$.  But $\soct\cN \cap \soct\cM' = \soct(\cN \cap
\cM')$ by Corollary~\ref{c:essential-submodule}.\ref{i:cap}.  The
vanishing of this socle along~$\tau$ means that $\socp(\cN \cap \cM')
= 0$ for all faces~$\tau'$ by Corollary~\ref{c:soc(coprimary)'}, and
thus $\cN \cap \cM' = 0$ by
Corollary~\ref{c:essential-submodule}.\ref{i:0}.  Therefore $\cM'$ is
not an essential submodule of~$\cM$.
\end{proof}

\begin{example}\label{e:essential-submodule}
The conclusion of Example~\ref{e:nearby} implies that $\cM' \subseteq \cM$ is
an essential submodule by Theorem~\ref{t:essential-submodule}.  Trying
to mimic this example in a finitely generated context is instructive:
pixelated rastering of the horizontal lines either isolates the socle
element at the right-hand endpoint of the bottom edge or prevents it
from existing in the first place by aligning with the right-hand end
of the line above it.
\end{example}

\section{Primary decomposition over real polyhedral groups}\label{s:hulls}

This section takes the join of \cite{prim-decomp-pogroup}, which
develops primary decomposition as far as possible over arbitrary
partially ordered groups with finitely many faces, and
Section~\ref{s:socle}, which develops socles over real polyhedral
groups.  That is, it investigates how socles interact with primary
decomposition in real polyhedral groups.

Having a functorial notion of socle allows the introduction in
Section~\ref{b:ass} of \mbox{coprimary} modules in a way that mirrors
the usual method from ordinary commutative algebra
(Definition~\ref{d:associated}), including characterization via unique
associated face (Theorem~\ref{t:coprimary}).  Section
\ref{b:intervals} then leverages associated faces and the socle
criterion for essentiality (Theorem~\ref{t:essential-submodule}) to
construct minimal irreducible (Theorem~\ref{t:interval=union} and
Corollary~\ref{c:interval=union}) as well as primary
(Theorem~\ref{t:hull-I}) decompositions of interval modules.  The
transition from indicator modules to arbitrary modules by downset and
interval hulls (Theorems~\ref{t:dense-hull-M}
and~\ref{t:interval-hull}) occupies Section~\ref{b:hulls}.  That makes
way in Section~\ref{b:minimal-primary} for formulations of minimal
primary decomposition via socle isomorphism
(Definition~\ref{d:minimal-primary} and
Theorem~\ref{t:minimal-primary}).

\subsection{Associated faces and coprimary modules}\label{b:ass}\mbox{}

\medskip
\noindent
What makes the theory for real polyhedral groups stronger than for
arbitrary partially ordered groups is the following notion that is
familiar from commutative algebra, except that (as noted in
Section~\ref{s:density}) socle elements do not lie in the original
module.

\begin{defn}\label{d:associated}
A face $\tau$ of a real polyhedral group~$\cQ$ is \emph{associated} to
a downset-finite $\cQ$-module~$\cM$ if $\soct\cM \neq 0$.  If $\tau$
is associated to $\cM = \kk[D]$ for a downset~$D$ then $\tau$ is
\emph{associated}~to~$D$.  The set of associated faces of~$\cM$ or~$D$
is denoted by $\ass\cM$ or~$\ass D$.
\end{defn}

\begin{example}\label{e:associated}
Instances of associated faces have occurred numerous times.  Any
Example that produces cogenerators or socles along a face~$\tau$ can be
viewed identifying~$\tau$ as an associated face.  A~selection of specific
cases follows.
\begin{enumerate}
\item%
The downset $D_3$ in
Example~\ref{e:sigma-nbd-cogen}.\ref{i:half-plane} has $\ass D_3 =
\ass\kk[D_3] = \{\sigma_\0, \sigma_y\}$, where $\sigma_\0 = \{\0\}$ is the face of
dimension~$0$ containing only the origin and $\sigma_y$ is the positive
$y$-axis.  This assertion is justified by
Example~\ref{e:downset=union}.\ref{i:half-plane'}.

\item\label{i:min-primary}%
The left-hand module
$\kk[\raisebox{-.6ex}[0pt][0pt]{\includegraphics[height=2.8ex]{decomp}}]$
in Example~\ref{e:min-primary}, has
$\ass\kk[\raisebox{-.6ex}[0pt][0pt]{\includegraphics[height=2.8ex]{decomp}}]
= \{\sigma_\0, \sigma_x\}$.  Indeed, the two summands on the right-hand side
there are coprimary, with associated faces $\sigma_\0$ and~$\sigma_x$,
respectively.  Theorem~\ref{t:injection} guarantees that
$\ass\kk[\raisebox{-.6ex}[0pt][0pt]{\includegraphics[height=2.8ex]{decomp}}]
\subseteq \{\sigma_\0, \sigma_x\}$.  Example~\ref{e:global-socc-tau} explicitly
demonstrates that $\sigma_x$ is associated.  Direct inspection shows that
$\sigma_\0$ is associated, since $\Hom_Q(\kk[\aa],
\kk[\raisebox{-.6ex}[0pt][0pt]{\includegraphics[height=2.8ex]{decomp}}])
\cong \kk$ for any point~$\aa$ along the curved portion of the upper boundary.

\item%
The ray $\sigma_x$ is associated to the interval module appearing on the
left side of Example~\ref{e:nadir-x} by Example~\ref{e:intro-soc}.
\end{enumerate}
\end{example}

\begin{thm}\label{t:coprimary}
A downset-finite module~$\cM$ over a real polyhedral group is
$\tau$-coprimary (Definition~\ref{d:coprimary}) if and only if
$\socp\cM \!=\! 0$ whenever $\tau' \!\neq\! \tau$\hspace{-1.63pt} or
equivalently \mbox{$\ass(\cM) \!=\! \{\tau\}$}.
\end{thm}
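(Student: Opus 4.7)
The plan is to pivot on the elementary characterization of $\tau$-coprimary modules in Proposition~\ref{p:elementary-coprimary}---that a $\cQ$-module is $\tau$-coprimary precisely when every homogeneous element divides a $\tau$-coprimary element---as the bridge between Definition~\ref{d:coprimary} and the socle hypothesis. Divisibility of arbitrary elements by cogenerators (Theorem~\ref{t:injection}) together with the realization of cogenerators by honest $\tau$-coprimary submodules globally supported on~$\tau$ (Proposition~\ref{p:nearby} and Lemma~\ref{l:nearby}) will convert vanishing of socles into the existence of coprimary divisors, and conversely.

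For the forward direction I argue by contrapositive. Suppose $\socp\cM\neq 0$ for some face $\tau'\neq\tau$, pick a cogenerator $s$ along~$\tau'$ with some nadir, and use Proposition~\ref{p:nearby} together with Lemma~\ref{l:nearby} to realize~$s$ by an element $z\in\cM$ whose cyclic submodule $\<z\>$ is $\tau'$-coprimary and globally supported on~$\tau'$. If $\tau\not\subseteq\tau'$, then $\tau$ itself is one of the faces appearing in the intersection defining $\Gamma_{\!\tau'}\cM$ in Definition~\ref{d:support}, so $z\mapsto 0$ in $\cM_\tau$, contradicting the injectivity of $\cM\into\cM_\tau$ provided by $\tau$-coprimary. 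If instead $\tau\subsetneq\tau'$, then Proposition~\ref{p:elementary-coprimary} applied to~$\cM$ produces a $\tau$-coprimary multiple $z'=z\cdot r\in\<z\>$; the $\tau'$-coprimary structure of $\<z\>$ makes $z'$ itself $\tau'$-persistent, and choosing any $f\in\tau'\smallsetminus\tau$ makes $z'\cdot\lambda f$ nonzero for all $\lambda\ge 0$, contradicting the $\ol\tau$-transience of~$z'$.

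For the reverse direction, assume $\socp\cM=0$ for every $\tau'\neq\tau$. By Proposition~\ref{p:elementary-coprimary} it suffices to produce, for each nonzero homogeneous $y\in\cM$, a $\tau$-coprimary element divisible by~$y$. Theorem~\ref{t:injection} furnishes a cogenerator of~$\cM$ divisible by~$y$ along some face; the socle hypothesis forces that face to be~$\tau$ with some nadir~$\sigma$. Now $y$ lives in the downset-finite principal submodule $\<y\>\subseteq\cM$, and the cogenerator of~$\cM$ lifts (via left-exactness of $\soct$ in Proposition~\ref{p:left-exact-tau}) to a cogenerator $\tilde s$ of~$\<y\>$ with the same nadir. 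Proposition~\ref{p:nearby} applied to~$\<y\>$ yields a nearby $\sigma$-divisor $z'\in\<y\>$ of~$\tilde s$; necessarily $z'=y\cdot q$ for some $q\in\cQ_+$. Lemma~\ref{l:nearby} makes $\<z'\>$ a $\tau$-coprimary submodule globally supported on~$\tau$, and one more application of Proposition~\ref{p:elementary-coprimary} (now to~$\<z'\>$) yields a $\tau$-coprimary element $z''$ divisible by~$z'$ and hence by~$y$. The equivalence with $\ass(\cM)=\{\tau\}$ follows from Definition~\ref{d:associated} together with Corollary~\ref{c:essential-submodule}.\ref{i:0}, which prevents a nonzero downset-finite module from having all socles vanish.

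The main obstacle is the lifting step in the reverse direction: ensuring that the cogenerator of~$\cM$ divisible by~$y$ really lifts to a cogenerator of~$\<y\>$, not merely to a nonzero element of $\dt\<y\>$. Because $y$ itself already divides~$s$, the witnessing $\cQ\times\nabt$-homomorphism for~$s$ factors through $\dt\<y\>$, and applying $\soct$ to the injection $\<y\>\into\cM$ shows (by Proposition~\ref{p:left-exact-tau}) that the preimage $\tilde s$ survives as a nonzero element of $\soct\<y\>$, completing the promotion from $\cM$-level to $\<y\>$-level cogenerator.
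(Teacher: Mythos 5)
Your proof is correct, but it follows a genuinely different route from the paper's in both directions. For ``$\tau$-coprimary $\Rightarrow$ $\socp\cM=0$ for $\tau'\neq\tau$,'' the paper notes that $\gt\cM$ is an essential submodule, invokes the density characterization of essential submodules (Theorem~\ref{t:essential-submodule}), and then applies Corollary~\ref{c:soc(coprimary)'}; you instead run an element-level contradiction, splitting into the case $\tau\not\subseteq\tau'$ (where global support of $\<z\>$ on~$\tau'$ forces $z$ into $\ker(\cM\to\cM_\tau)$, against injectivity of localization) and the case $\tau\subsetneq\tau'$ (where $\tau'$-persistence of every nonzero element of the $\tau'$-coprimary submodule $\<z\>$ clashes with $\ol\tau$-transience of the coprimary multiple~$z'$ supplied by Proposition~\ref{p:elementary-coprimary}). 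This bypasses Theorem~\ref{t:essential-submodule} entirely, at the cost of re-deriving by hand what Corollary~\ref{c:soc(coprimary)'} packages. For the converse the paper argues contrapositively --- a non-coprimary module contains a nonzero submodule~$\cN$ all of whose cogenerators lie along faces other than~$\tau$, and left-exactness finishes --- whereas you go directly through Proposition~\ref{p:elementary-coprimary}, producing a $\tau$-coprimary multiple of each homogeneous~$y$ by restricting the cogenerator to~$\<y\>$ and applying Proposition~\ref{p:nearby} and Lemma~\ref{l:nearby} there. Your lifting step is sound: since $y$ divides~$s$, exactness of the upper boundary functor (Lemma~\ref{l:exact-delta}) places the witnessing element in $\dt\<y\>\subseteq\dt\cM$, and left-exactness of $\soct$ (Proposition~\ref{p:left-exact-tau}) keeps its class nonzero in $\soct\<y\>$. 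The paper's argument is shorter; yours makes the bridge to the elementary divisibility characterization of coprimary modules explicit. I see no gaps.
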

\begin{proof}
If $\cM$ is not $\tau$-coprimary then either $\cM \to \cM_\tau$ has
nonzero kernel~$\cN$, or $\cM \to\nolinebreak \cM_\tau$ is injective
while $\cM_\tau$ has a submodule~$\cN_\tau$ supported on a face
strictly containing~$\tau$.  In the latter case, moving up by an
element of~$\tau$ shows that $\cN = \cN_\tau \cap \cM$ is nonzero.  In
either case, any cogenerator of~$\cN$ lies along a face $\tau' \neq
\tau$, so $0 \neq\nolinebreak \socp\cN \subseteq\nolinebreak \socp
\cM$.

On the other hand, if $\cM$ is $\tau$-coprimary then
$\gt\cM$ is an essential submodule of~$\cM$ because every
nonzero submodule of $\cM \subseteq \cM_\tau$ has nonzero intersection
with $\gt\cM_\tau$, and hence with $M \cap
\gt\cM_\tau = \gt\cM$, inside of the ambient
module~$\cM_\tau$ by Definition~\ref{d:coprimary}.
Theorem~\ref{t:essential-submodule} says that $\socp\gt\cM
\subseteq \socp\cM$ is dense for all~$\tau'$.  But
$\socp\gt\cM = 0$ for $\tau' \neq \tau$ by
Corollary~\ref{c:soc(coprimary)'}, so density implies $\socp\cM = 0$
for $\tau' \neq \tau$.
\end{proof}

\begin{lemma}\label{l:antichain}
A downset $D$ in a real polyhedral group is $\tau$-coprimary if and
only if
$$
  D
  =
  \bigcup_{\substack{\text{\rm faces }\sigma\\\text{\rm with }\sigma\supseteq\tau}}
  \bigcup_{\raisebox{-.7ex}{$\scriptstyle\ \aa\in\ats$}} \aa + \tau - \qns
$$
for sets $\ats \subseteq \cQ$ such that for each $\sigma \supseteq
\tau$, the image in $\qrt \times \nabt$ of\/
$\bigcup_{\sigma\supseteq\tau} \ats \times \{\sigma\}$ is an
antichain.  In that case, $\ats$ projects to a subset of
$\deg_{\qrt}\soct[\sigma]\kk[D] \subseteq \qrt$.
\end{lemma}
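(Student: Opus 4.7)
\emph{Forward direction.} I will use Theorem~\ref{t:coprimary} to rephrase $\tau$-coprimary as $\soct[\sigma']\kk[D]=0$ for every face $\tau'\neq\tau$. The universal decomposition of Corollary~\ref{c:downset=union} then collapses to
\[
  D \;=\; \bigcup_{\sigma\supseteq\tau}\;\bigcup_{\wt\aa\in\deg_{\qrt}\soct[\sigma]\kk[D]}\wt\aa-\qns.
\]
Choosing $\ats\subseteq\cQ$ as any preimage of $\deg_{\qrt}\soct[\sigma]\kk[D]$ under $\cQ\to\qrt$, Lemma~\ref{l:aa+tau-qns} rewrites each $\wt\aa-\qns$ as $\aa+\tau-\qns$, and Corollary~\ref{c:at-most-one} supplies the antichain condition in $\qrt\times\nabt$: incomparability of socle degrees for fixed nadir and of nadirs for fixed degree together force incomparability in the product.

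\emph{Reverse direction: setup and first subcase.} Given the decomposition with antichain, Lemma~\ref{l:aa+tau-qns} makes every piece $\aa+\tau-\qns$ invariant under translation by $\RR\tau$, so $D$ is $\RR\tau$-invariant; hence $\kk[D]=\kk[D]_\tau$ and the localization map is the identity, establishing the injectivity clause of Definition~\ref{d:coprimary}. By Theorem~\ref{t:coprimary} it remains to show $\soct[\sigma']\kk[D]=0$ for all $\tau'\neq\tau$, so suppose for contradiction that some $\aa'\in\cQ$ is a cogenerator along $\tau'$ with nadir $\sigma'\supseteq\tau'$. If $\tau\not\subseteq\tau'$, pick $\vv\in\tau\setminus\tau'$; pointedness of $\cQ_+$ gives $\cQ_+\cap\RR\tau'=\tau'$, so $\vv\notin\RR\tau'$ and its image in $\cQ/\RR\tau'$ is a nonzero positive element. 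The $\RR\tau$-invariance of $D$ then yields $(\aa'+\vv)-(\sigma')^\circ=\vv+(\aa'-(\sigma')^\circ)\subseteq D$, placing $\sigma'$ in $\nd[\aa'+\vv]$ via Proposition~\ref{p:downset-upper-boundary}.\ref{i:nabla}, while $\wt{\aa'+\vv}\succ\wt{\aa'}$ in $\cQ/\RR\tau'$ contradicts the maximality of $\wt{\aa'}$ (Example~\ref{e:soct}).

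\emph{Main obstacle and conclusion.} The subcase $\tau\subsetneq\tau'$ is where I expect the real work, since $\RR\tau$-invariance no longer moves degrees in $\cQ/\RR\tau'$. My plan is to combine Proposition~\ref{p:sigma-nbd-cogen}, which delivers a $\sigma'$-vicinity $\OO\subseteq D$ of $\aa'$ with $\kk[\OO]$ globally supported on $\tau'$, with the observation that persistence along $\tau'$ plus the downset property give $\aa'+\tau'-\cQ_{\nabla\sigma'}\subseteq D$. Picking $\vv'\in\tau'\setminus\tau$, so that $\wt{\vv'}\succ 0$ in $\qrt$, the pieces $\aa_\lambda+\tau-\cQ_{\nabla\sigma_\lambda}$ covering the points $\aa'+\lambda\vv'-\uu\in D$ for $\lambda\to\infty$ and $\uu\in(\sigma')^\circ$ small must have $\wt{\aa_\lambda}$ unbounded in the $\wt{\vv'}$ direction in $\qrt$. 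A pigeonhole on the finite face set, aligned carefully with the $\sigma'$-vicinity structure at $\aa'$, should produce two pairs comparable in $\qrt\times\nabt$ and violate the antichain; resolving this finishes the reverse direction via Theorem~\ref{t:coprimary}. For the ``and in that case'' clause, $\sigma\in\nda$ is immediate from $\aa-\sigma^\circ\subseteq\aa+\tau-\qns\subseteq D$, and any hypothetical failure of either minimality of $\sigma$ in $\nda\cap\nabt$ or maximality of $\wt\aa$ in $\qrt$ among classes with $\sigma$ in their shape would, by covering the offending witness with pieces of the decomposition and pigeonholing on the finite face set, yield a comparable pair in $\bigcup_\sigma\ats\times\{\sigma\}$, contradicting antichain and placing $\wt\aa\in\deg_{\qrt}\soct[\sigma]\kk[D]$ via Example~\ref{e:soct}.
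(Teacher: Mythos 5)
Your forward direction follows the paper's (Theorem~\ref{t:coprimary} collapses Corollary~\ref{c:downset=union} to the single face $\tau$), but your justification of the antichain condition is flawed: Corollary~\ref{c:at-most-one} only gives incomparability of socle degrees \emph{with fixed nadir} and of nadirs \emph{with fixed degree}, and these two slice-wise conditions do not imply the antichain condition in the product $\qrt\times\nabt$ --- a pair $(\wt\aa,\sigma)$, $(\wt\bb,\sigma')$ with $\wt\aa\prec\wt\bb$ and $\sigma\supsetneq\sigma'$ is comparable in the product yet excluded by neither slice. What actually yields the product antichain is Example~\ref{e:soc-Rn-downset}: a socle element of $\delta\kk[D]$ in degree $(\aa,\sigma)$ is by definition annihilated by every strict move up in $\cQ\times\fqo$, so no other socle degree can sit weakly above it.

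The serious gap is in the reverse direction. Your subcase $\tau\subsetneq\tau'$ is only a plan, flagged as such (``should produce\dots'', ``resolving this finishes\dots''), and as stated it does not close: pigeonholing the infinitely many covering pieces $\aa_k+\tau-\qnk$ onto the finitely many faces produces infinitely many pieces with equal nadir whose apexes $\wt\aa_k$ are unbounded in $\qrt$, but unboundedness does not produce two \emph{comparable} elements of $\qrt$, so no antichain violation follows without substantial further argument. The paper's route for this direction is different and sidesteps the case analysis entirely: after reducing to $\tau=\{\0\}$ by $\RR\tau$-invariance, the antichain condition forces each $\aa\in\ats$ to be an honest cogenerator of $D$ with nadir $\sigma$ (Example~\ref{e:soc-Rn-downset}); Proposition~\ref{p:sigma-nbd-cogen} then supplies a $\sigma$-vicinity $\oas\subseteq D$ globally supported on $\{\0\}$, hence coprimary, and since every $\bb\in D$ lies in some piece of the union and therefore precedes some element of some $\oas$, Proposition~\ref{p:elementary-coprimary} (a module is coprimary once every homogeneous element divides a coprimary element) gives the conclusion directly, with no appeal to Theorem~\ref{t:coprimary} and no face-by-face vanishing argument. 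That same step is what delivers the ``and in that case'' clause, which in your write-up is likewise only sketched.
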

\begin{proof}
If $D$ is $\tau$-coprimary, then it is such a union by
Theorem~\ref{t:coprimary} and Theorem~\ref{t:downset=union}, keeping
in mind the antichain consequences of Example~\ref{e:soc-Rn-downset}.

On the other hand, if $D$ is such a union, then first of all it is
stable under translation by~$\RR\tau$ by Lemma~\ref{l:aa+tau-qns}.
Working in~$\qrt$, therefore, assume that $\tau = \{\0\}$.
Example~\ref{e:soc-Rn-downset} implies that every element of~$\ats$ is
a cogenerator of~$D$ with nadir~$\sigma$.
Proposition~\ref{p:sigma-nbd-cogen} produces a
$\sigma$-vicinity~$\oas$ of~$\aa$ in~$D$ that is globally supported
on~$\{\0\}$ (and hence $\{\0\}$-coprimary).  But every element $\bb
\in D$ that precedes~$\aa$ also precedes some element in~$\oas$; that
is, $\bb \preceq \aa \implies (\bb + \cQ_+) \cap \oas \neq \nothing$.
The union of the $\sigma$-vicinities $\oas$ over all faces~$\sigma$
and elements $\aa \in \ats$ therefore cogenerates~$D$, so $D$ is
coprimary by Proposition~\ref{p:elementary-coprimary}.
\end{proof}

\begin{remark}\label{r:antichain}
The antichain condition in Lemma~\ref{l:antichain} is necessary: $\cQ$
itself is the union of all translates of~$-\cQ_+$, but $\cQ$ is
$\cQ_+$-coprimary, whereas $-\cQ_+$ is $\{\0\}$-coprimary.  Moreover,
the $\nabt$ component of the antichain condition is important; that
is, the nadirs also come into play.  For a specific example, take $D
\subseteq \RR^2$ to be the union of the
\end{remark}\vskip -1.6ex
\begin{wrapfigure}{R}{0.15\textwidth}
  \vspace{-1.5ex}
  \includegraphics[height=11ex]{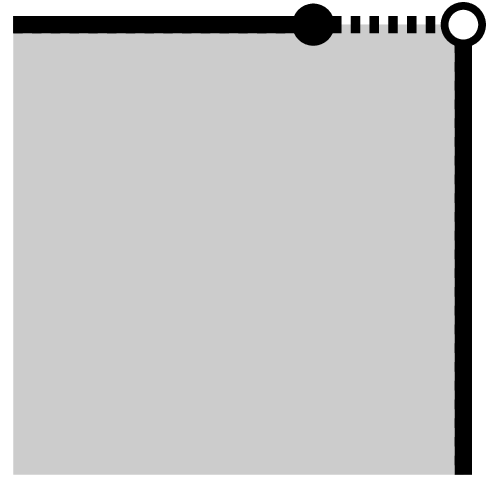}
  \vspace{-2ex}
\end{wrapfigure}
\noindent
open negative quadrant cogenerated by~$\0$ and the closed negative
quadrant cogenerated by any point on the strictly negative $x$-axis.
The $\cQ$-components of the two cogenerators are comparable in~$\cQ$,
but the nadirs are comparable the other way (it is crucial to remember
that the ordering on the nadirs is by~$\fqo$, not~$\cfq$, so smaller
faces are higher in the poset).  Of course, no claim can be made that
$\deg_{\qrt}\soct[\sigma]\kk[D]$ equals the image in~$\qrt$ of~$\ats$;
only the density claim in Theorem~\ref{t:downset=union} can be made.

\begin{lemma}\label{l:sub-coprimary}
Over any real polyhedral group, every submodule of a coprimary module
is coprimary.
\end{lemma}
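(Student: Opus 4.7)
The plan is to work directly from Definition~\ref{d:coprimary} rather than Theorem~\ref{t:coprimary}, since the lemma is stated without a downset-finite hypothesis. Suppose $\cM$ is $\tau$-coprimary and $\cM' \subseteq \cM$ is a submodule. There are two things to verify: that the localization map $\cM' \to \cM'_\tau$ is injective, and that $\gt \cM'_\tau$ is an essential submodule of $\cM'_\tau$.

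For injectivity I would exploit that localization along~$\tau$ is exact (the monoid algebra $\kk[\cQ_+ + \ZZ\tau]$ is flat over~$\kk[\cQ_+]$, as noted in the proof of Lemma~\ref{l:exact-qr}). Thus $\cM'_\tau \subseteq \cM_\tau$, and the composition $\cM' \into \cM \into \cM_\tau$ factors through $\cM'_\tau$, forcing $\cM' \into \cM'_\tau$ to be injective.

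For essentiality, the key observation is that $\gt\cM'_\tau = \cM'_\tau \cap \gt\cM_\tau$ inside $\cM_\tau$. This follows from Definition~\ref{d:support} by writing $\gt\cM_\tau = \bigcap_{\tau' \not\subseteq \tau}\ker(\cM_\tau \to (\cM_\tau)_{\tau'})$ and using that each kernel in $\cM'_\tau$ is just the intersection of $\cM'_\tau$ with the corresponding kernel in $\cM_\tau$, once again by exactness of localization along~$\tau'$. Given this identification, let $\cN \subseteq \cM'_\tau$ be any nonzero submodule. Then $\cN$ is a nonzero submodule of $\cM_\tau$, so essentiality of $\gt\cM_\tau$ in $\cM_\tau$ (provided by $\cM$ being $\tau$-coprimary) gives $\cN \cap \gt\cM_\tau \neq 0$. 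But since $\cN \subseteq \cM'_\tau$, this intersection equals $\cN \cap \cM'_\tau \cap \gt\cM_\tau = \cN \cap \gt\cM'_\tau$, which is therefore nonzero.

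The potential obstacle is the lemma-level compatibility $\gt\cM'_\tau = \cM'_\tau \cap \gt\cM_\tau$; I expect this to be routine from exactness of localization applied twice (once to land in $\cM_\tau$, once to pass to $(\cM_\tau)_{\tau'}$ for each face $\tau' \not\subseteq \tau$), but it is the step that must be written carefully because $\gt$ is defined as an intersection of kernels rather than as a simple functor that commutes with inclusions on the nose. No density, downset-hull, or socle machinery is needed for this proof; the conclusion reduces to exactness of localization plus the definition of essential submodule, so the face of coprimariness is preserved (the submodule is $\tau$-coprimary for the same face~$\tau$).
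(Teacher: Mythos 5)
Your proof is correct, but it takes a genuinely different route from the paper's. The paper disposes of this lemma in one line by citing Theorem~\ref{t:coprimary} (the characterization of $\tau$-coprimary downset-finite modules as those with $\ass = \{\tau\}$), Theorem~\ref{t:injection} (left-exactness of $\socp$, so $\socp\cM' \into \socp\cM$ vanishes for $\tau' \neq \tau$), and Definition~\ref{d:associated}; that argument inherits the downset-finite hypothesis baked into Theorem~\ref{t:coprimary}. You instead verify Definition~\ref{d:coprimary} directly: exactness of localization gives injectivity of $\cM' \into \cM'_\tau$ via the factorization through $\cM \into \cM_\tau$, and the compatibility $\gt\cM'_\tau = \cM'_\tau \cap \gt\cM_\tau$ — which does hold, since for each face $\tau' \not\subseteq \tau$ the square formed by $\cM'_\tau \into \cM_\tau$ and the two localization maps to $(\cM'_\tau)_{\tau'} \into (\cM_\tau)_{\tau'}$ commutes with injective bottom row, so the kernel in the submodule is the intersection with the kernel in the ambient module — reduces essentiality of $\gt\cM'_\tau$ in $\cM'_\tau$ to essentiality of $\gt\cM_\tau$ in $\cM_\tau$. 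What your approach buys is that it proves the lemma exactly as stated, with no downset-finiteness assumption and no socle machinery, and it makes explicit that the submodule is coprimary \emph{for the same face}~$\tau$; what the paper's approach buys is brevity given the theory already built, and consistency with the narrative in which coprimariness is governed by associated faces. Both are valid; yours is the more self-contained and more general argument.
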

\begin{proof}
Theorem~\ref{t:coprimary}, Theorem~\ref{t:injection}, and
Definition~\ref{d:associated}.
\end{proof}

\subsection{Canonical decompositions of intervals}\label{b:intervals}\mbox{}

\medskip
\noindent
The true real polyhedral generalizations of unique monomial minimal
irreducible and primary decomposition for monomial ideals in ordinary
polynomial rings are Theorem~\ref{t:interval=union} and
Theorem~\ref{t:hull-I}, respectively.  These concern intervals
rather than downsets because while the set of (exponents of) monomials
outside of an ideal in~$\kk[\NN^n]$ is a downset in~$\NN^n$, as a
subset of~$\ZZ^n$ this set of exponents is merely an interval.
These real polyhedral decompositions make full use of the notions of
socle, cogenerator, and density introduced in earlier sections: the
topological graded algebra is essential.

Theorem~\ref{t:divides}, on the existence of enough downset
cogenerators, holds verbatim for intervals instead of downsets.  The
proof is a telling application of the dense-socle characterization of
essential submodules in Theorem~\ref{t:essential-submodule}, combined
with its precursor, Theorem~\ref{t:downset=union}, that dense subsets
of cogenerators suffice to express a downset as a union.  With no
additional effort, enough cogenerators of an interval can be located
in any dense subset of the full set of cogenerators, generalizing
Theorem~\ref{t:downset=union} directly.  To state this version it is
useful to make a final definition concerning density.

\begin{defn}\label{d:dense-cogenerator}
Fix a real polyhedral group~$\cQ$.  Let $S$ be a family of
sets~$\sts$, indexed by pairs of faces $\sigma$ and~$\tau$ of~$\cQ_+$
with $\sigma \supseteq \tau$, such that $\sts \subseteq \qrt$.  This
family $S$ is \emph{dense} in another such family~$\hat S$ if the
$\sigma$-closure (Definition~\ref{d:sigma-closure}.\ref{i:closure})
of~$\sts[\,]\! = \bigcup_{\sigma \supseteq \tau} \sts$ in~$\qrt$
contains~\noheight{$\hat\sts$} for all $\sigma \supseteq \tau$.
\end{defn}

\begin{example}\label{e:dense-cogenerator}
Let $R$ be the open antidiagonal ray in the upper boundary of~$D_1$
from Examples~\ref{e:sigma-nbd-cogen}.\ref{i:pinch-0}
and~\ref{e:downset=union}.\ref{i:pinch-0'}.  Shortening $\sigma_\0$, $\sigma_x$,
and~$\sigma_y$ to $\0$, $x$, and~$y$, set
\begin{itemize}
\item%
$S^\0_\0 = R = \hat S^\0_\0$ and
\item%
$S^y_\0 = \{\ee_1\} = \hat S^y_\0$ but
\item%
$S^x_\0 = \nothing$ while $\hat S^x_\0 = \{\ee_1\}$.
\end{itemize}
The content of Example~\ref{e:downset=union}.\ref{i:pinch-0'} is that
$S$ is dense in~$\hat S$.  Consequently,
Example~\ref{e:downset=union}.\ref{i:pinch-0'} also serves as an
example of Theorem~\ref{t:interval=union}.
\end{example}

\begin{thm}\label{t:interval=union}
Fix an interval $I$ in a real polyhedral group~$\cQ$ and~$\bb \in I$.
There are faces $\tau \subseteq \sigma$ of~$\cQ_+\!$~and a
cogenerator~$\aa$ of~$I$ along~$\tau$ with nadir~$\sigma$ such that
$\bb \preceq \aa$.  In fact, the cogenerator\/~$\aa$ can be selected
from any given family $A \hspace{-1pt}=\hspace{-1pt}
\{\hspace{-1pt}\ats\}_{\sigma\supseteq\tau}$,~where~\mbox{$\ats
\hspace{-1pt}\subseteq\hspace{-1pt} \cQ$} is a set of cogenerators
of~$I$ along~$\tau$ with nadir~$\sigma$ such that the family
$\{\ats/\RR\tau\}_{\sigma\supseteq\tau}$ of sets of cosets in $\qrt$
is dense in $\deg\soc\kk[I] =
\{\deg_{\qrt}\soct[\sigma]\kk[I]\}_{\sigma\supseteq\tau}$.  In this
case,
\begin{equation*}
  I\ \
  =
\bigcup_{\substack{\hbox{\footnotesize\rm faces $\sigma,\tau$}\\
		   \hbox{\footnotesize\rm with $\sigma\supseteq\tau$}}}
\bigcup_{\raisebox{-.7ex}{\footnotesize$\ \aa\!\in\!\!\ats$}}(\aa+\tau-\qns)\cap I.
\end{equation*}
\end{thm}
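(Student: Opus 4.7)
The plan is to adapt the proofs of Theorems~\ref{t:divides} and~\ref{t:downset=union} to intervals, as indicated in the discussion preceding the statement. The crucial observation enabling the adaptation is that for any $\bb \in I$ the upset $\bb + \cQ_+$ is contained in~$U$ (since $U$ is an upset containing~$\bb$), so
$$
  I \cap (\bb + \cQ_+) \;=\; D \cap (\bb + \cQ_+).
$$
Thus the portion of~$I$ weakly above any of its points is a genuine downset inside the principal sub-cone, and the notions of shape, upshape, and cogenerators (Definitions~\ref{d:cocomplex} and~\ref{d:upshape}, Proposition~\ref{p:shape}) transfer to this local setting.

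For the first claim, I would re-run the proofs of Lemma~\ref{l:cogenerator} and Theorem~\ref{t:divides} intrinsically within $\bb + \cQ_+$, applied to the downset $I \cap (\bb + \cQ_+)$ in place of~$D$. The combinatorial selection of~$\aa$ of minimal upshape in the relative boundary of this downset inside $\bb + \cQ_+$ produces faces $\tau \subseteq \sigma$ and an element $\aa \succeq \bb$ that is a cogenerator of~$\kk[I]$ along~$\tau$ with nadir~$\sigma$. The reason~$\aa$ inherits the global cogenerator property (not merely a relative one inside $\bb + \cQ_+$) is that the direct limits defining $(\ds[\tau]\kk[I])_\aa$ are cofinally determined by degrees arbitrarily close to~$\aa$, for which the local structure of~$I$ is dictated by $I \cap (\bb + \cQ_+)$.

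For the second claim, I would follow the approach of Theorem~\ref{t:downset=union}. The density hypothesis on $\{\ats/\RR\tau\}$, combined with the interval analog of Corollary~\ref{c:sigma-vicinity} (whose proof relies only on the socle formalism of Section~\ref{s:socle} and the $\sigma$-vicinity structure of Section~\ref{s:minimality} and so extends verbatim), lets one replace the cogenerator~$\aa^0$ produced in the first step by an $\aa \in \ats$ whose $\qrt$-image lies in a suitable $\sigma$-vicinity of~$\wt\aa^0$, with $\bb \preceq \aa$ preserved in the relevant sense. The union identity then follows: the inclusion $\bigcup (\aa + \tau - \qns) \cap I \subseteq I$ is automatic since each summand is intersected with~$I$, while the reverse inclusion reduces, for each $\bb \in I$, to the positive decomposition $\bb \in \aa + \tau - \qns = \aa + \RR\tau - \sigma^\circ - \cQ_+$ (via Lemma~\ref{l:<<}), exactly as in the last step of the proof of Theorem~\ref{t:downset=union}.

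The main obstacle is verifying that the locally obtained cogenerator really contributes to $\soct[\sigma]\kk[I]$ as a global functor on $\cQ$-modules, given that the upper-boundary direct limits formally sample degrees anywhere in the open cone $\aa - \sigma^\circ$, some of which may lie outside $\bb + \cQ_+$ when $\aa - \bb$ is on the boundary of~$\cQ_+$. The resolution is the coset freedom $\aa \in \aa + \RR\tau$ (Lemma~\ref{l:aa+tau-qns}, Remark~\ref{r:soct/tau}): a suitable representative can be chosen placing~$\aa$ closer to the interior of $\bb + \cQ_+$, and coupled with cofinality of the direct limit near~$\aa$ this restores the local comparison between~$I$ and~$D$ needed to identify the cogenerator of the restricted downset with a genuine cogenerator of~$\kk[I]$.
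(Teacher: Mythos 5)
Your overall strategy diverges from the paper's, and the divergence lands exactly on the gap you flag at the end. The paper does not localize to $\bb + \cQ_+$ at all: it sets $D = I - \cQ_+$, the downset cogenerated by~$I$, observes that $\kk[I] \subseteq \kk[D]$ is an essential submodule, invokes Theorem~\ref{t:essential-submodule} to conclude that $\soc\kk[I]$ is dense in $\soc\kk[D]$ --- so any family dense in $\soc\kk[I]$ is automatically dense in $\soc\kk[D]$ --- and then applies Theorem~\ref{t:downset=union} to~$D$ with that family. Every element of~$D$, hence every element of~$I$, therefore precedes one of the given cogenerators of~$I$, and intersecting with~$I$ yields the displayed union. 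The essentiality step is the whole point: it converts cogenerators of~$I$ into a family capable of covering the honest downset~$D$, without ever having to show that a cogenerator produced by a boundary analysis of some downset is a cogenerator of~$\kk[I]$.

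Your proposed repair of the ``main obstacle'' does not work. The trouble is not that $\aa - \sigma^\circ$ leaks slightly outside $\bb + \cQ_+$; it can be entirely disjoint from it. Take $\cQ = \RR^2$, $\bb = \0$, $\aa = (1,0)$, and $\sigma$ the $y$-axis: then $\aa - \sigma^\circ = \{(1,-t) \mid t > 0\}$ misses $\RR^2_+$ completely, so the direct limit defining $(\ds\,\kk[I])_\aa$ is governed by degrees at which $I$ and $I \cap (\bb + \cQ_+)$ need have nothing to do with one another, and no cofinality argument inside $\bb + \cQ_+$ can recover it. The coset freedom along $\RR\tau$ is vacuous when $\tau = \{\0\}$, which is the generic case. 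The paper explicitly warns (Example~\ref{e:I->D}, Remark~\ref{r:hull-I}) that cogenerators do not transfer between an interval and a downset containing it --- only density of socles does --- which is why Theorem~\ref{t:essential-submodule} is the indispensable ingredient. Your second step also has a circularity: the ``interval analog of Corollary~\ref{c:sigma-vicinity}'' you invoke would rest on the interval version of Theorem~\ref{t:downset=union}, i.e., on the statement being proved.
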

\begin{proof}
Let $D$ be the downset cogenerated by~$I$.  The interval
module~$\kk[I]$ is an essential submodule of~$\kk[D]$ by construction.
Theorem~\ref{t:essential-submodule} implies that $\soc \kk[I]$ is
dense in $\soc \kk[D]$, and hence any family that is dense in $\soc
\kk[I]$ is dense in $\soc \kk[D]$.  Theorem~\ref{t:downset=union}
therefore expresses $D$ as the union of downsets cogenerated by the
given cogenerators of~$I$.  This means that every element of~$D$
precedes one of the given cogenerators of~$I$, so certainly every
element of~$I$ precedes such a cogenerator of~$I$.
\end{proof}

\begin{cor}\label{c:interval=union}
Every interval~$I$ in a real polyhedral group has a canonical
\emph{irreducible decomposition} as a union of intervals in~$I$
cogenerated by the family of global cogenerators of~$I$.  This
decomposition is minimal in the sense that the only subfamilies
yielding a union that still equals~$I$ are dense in the canonical
family.
\end{cor}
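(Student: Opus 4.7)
For existence, apply Theorem~\ref{t:interval=union} with $A$ the full family of all global cogenerators of~$I$; since $A$ is trivially dense in itself (Definition~\ref{d:dense-cogenerator}), the theorem yields $I=\bigcup_{\sigma\supseteq\tau}\bigcup_{\aa\in\ats}(\aa+\tau-\qns)\cap I$. By Lemma~\ref{l:aa+tau-qns} and the calculation in Example~\ref{e:soct-k[tau]} (adapted from a face $\rho$ to the coprincipal shape $\aa+\tau-\qns$ intersected with~$I$), the indicator module of each summand carries a single socle degree $\wt\aa\in\qrt$ along~$\tau$ with nadir~$\sigma$; this is what makes each summand irreducible and the decomposition canonical, paralleling Remark~\ref{r:irred-decomp}.

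For minimality, let $A'\subseteq A$ be any subfamily yielding $I=\bigcup(\aa'+\tau'-\qnp)\cap I$. I will argue $A'$ is dense in~$A$ by contradiction, following the template of the second half of the proof of Theorem~\ref{t:dense-subfunctor}. If $A'$ is not dense, there exist faces $\sigma_0\supseteq\tau_0$ and a cogenerator $\aa_0$ of~$I$ along~$\tau_0$ with nadir~$\sigma_0$ whose image $\wt\aa_0\in\cQ/\RR\tau_0$ has a $\sigma_0$-vicinity $\OO=(\uu+\cQ_{\nabs_0})/\RR\tau_0$ disjoint from the image of $\bigcup_{\sigma'\supseteq\tau_0}\ats[\sigma']'$ in $\cQ/\RR\tau_0$. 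Since $\kk[I]$ is essential in $\kk[D]$ for $D$ the downset cogenerated by~$I$, the element $\aa_0$ is also a cogenerator of~$\kk[D]$; Proposition~\ref{p:sigma-nbd-cogen}, combined with Lemma~\ref{l:vicinity} to shrink, yields a $\sigma_0$-vicinity $\OO_\cQ=\aa_k+\cQ_{\nabs_0}\subseteq D$ whose image in $\cQ/\RR\tau_0$ lies inside~$\OO$ and for which $\kk[\OO_\cQ]\subseteq\kk[D]$ is $\tau_0$-coprimary and globally supported on~$\tau_0$. Essentiality then produces a nonzero homogeneous $z\in\kk[\OO_\cQ]\cap\kk[I]$ of some degree $\bb\in\OO_\cQ\cap I$, and Lemma~\ref{l:sub-coprimary} (together with the evident inheritance of $\tau_0$-support by submodules) keeps $\kk[\OO_\cQ]\cap\kk[I]$ both $\tau_0$-coprimary and globally supported on~$\tau_0$.

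Now $\bb$ lies in some summand $(\aa'+\tau'-\qnp)\cap I$ with $\aa'\in\ats[\sigma']'$, so the composite $\phi\colon\kk[\OO_\cQ]\cap\kk[I]\into\kk[I]\onto\kk[(\aa'+\tau'-\qnp)\cap I]$ is nonzero on~$z$, with target $\tau'$-coprimary having unique socle degree $\wt\aa'$ along~$\tau'$ with nadir~$\sigma'$. Comparing $\overline{\tau_0}$-transience of~$z$ (inherited from global support on~$\tau_0$) with $\tau'$-persistence of~$\phi(z)$ (from the fact that $\phi(z)$ divides the target's $\tau'$-cogenerator, via Theorem~\ref{t:divides}) forces $\tau'\subseteq\tau_0$; the reverse containment $\tau_0\subseteq\tau'$ comes from the source having only $\{\tau_0\}$ as associated face (Corollary~\ref{c:soc(coprimary)'}), which any nonzero map into a coprimary module must respect. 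Once $\tau'=\tau_0$, the containment $\bb\in\aa'+\tau_0-\qnp\subseteq\aa'+\tau_0-\cQ_+$ gives $\wt\aa'\succeq\wt\bb$ in $\cQ/\RR\tau_0$, while $\bb\in\OO_\cQ$ gives $\wt\bb\in\OO$; upward closedness of $\OO=(\uu+\sigma_0^\circ+\cQ_+)/\RR\tau_0$ (Lemma~\ref{l:<<}) then places $\wt\aa'\in\OO$, contradicting the defining property of~$\OO$. The main obstacle will be the $\tau'=\tau_0$ step, which demands disentangling the persistence-transience behavior of the composite homomorphism on both sides and verifying both containments $\tau'\subseteq\tau_0$ and $\tau_0\subseteq\tau'$ carefully, echoing the $\sigma$-vicinity manipulations that drive Theorem~\ref{t:dense-subfunctor}.
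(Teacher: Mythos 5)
Your existence and canonicity argument coincides with the paper's: the entire proof there is the one-line instantiation of Theorem~\ref{t:interval=union} (via Theorem~\ref{t:downset=union}) with $\ats$ taken to be \emph{all} cogenerators of~$I$ along~$\tau$ with nadir~$\sigma$, so that half of your proposal is fine.

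The gap is in the minimality half, precisely at the step you flag yourself: forcing $\tau'=\tau_0$. The containment $\tau'\subseteq\tau_0$ can be salvaged (an element of degree~$\bb$ that lies in the kernel of localization along every face not contained in~$\tau_0$ cannot lie in a coprincipal piece along such a face, by an interval-convexity argument), although even there your appeal to ``$\tau'$-persistence of $\phi(z)$'' is not justified, since an element of a $\tau'$-coprimary module need not itself be $\tau'$-persistent. The reverse containment $\tau_0\subseteq\tau'$ does not follow from your stated reason. Lemma~\ref{l:sub-coprimary} together with Theorem~\ref{t:coprimary} says that an \emph{injection} into a $\tau'$-coprimary module forces the source to be $\tau'$-coprimary; your composite $\phi$ is merely nonzero, and a nonzero non-injective homomorphism from a $\tau_0$-coprimary module to a $\tau'$-coprimary module with $\tau'\subsetneq\tau_0$ is entirely possible. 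Concretely, a point $\bb$ of the $\tau_0$-coprimary vicinity $\OO_\cQ\cap I$ can genuinely lie in a piece $(\aa'+\tau'-\qnp)\cap I$ with $\tau'\subsetneq\tau_0$: an element that persists along the $x$-axis may still precede a $\{\0\}$-cogenerator sitting above and to its right, as already happens in Example~\ref{e:min-primary}. So choosing one arbitrary $\bb\in\OO_\cQ\cap I$ yields no contradiction. What actually has to be excluded is that the \emph{whole} vicinity---which is unbounded in the $\tau_0$-directions---gets absorbed by infinitely many pieces along strictly smaller faces marching off to infinity along~$\tau_0$; one must show that such a family would either force cogenerators of~$I$ along~$\tau_0$ with degrees inside the forbidden $\sigma_0$-vicinity or destroy the cogenerator at~$\wt\aa_0$ altogether (in the spirit of the limiting argument at the end of the proof of Proposition~\ref{p:sigma-nbd-cogen}). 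That argument is absent, so the proof as written does not close; the final upset-closedness step is correct only once $\tau'=\tau_0$ has been secured.
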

\begin{proof}
Let $\ats$ in Theorem~\ref{t:downset=union} comprise all cogenerators
of~$I$ along~$\tau$ with nadir~$\sigma$.
\end{proof}

\begin{remark}\label{r:asw}
The case of Theorem~\ref{t:interval=union} corresponding to finitely
generated monomial ideals in real-exponent polynomial rings is treated
in \cite{andersen--sather-wagstaff2015}.
\end{remark}

\begin{example}\label{e:I->D}
Looking at the proof of Theorem~\ref{t:interval=union}, it is tempting
to posit that if $D$ is the downset cogenerated by an interval $I$ in
a real polyhedral group~$\cQ$, then the inclusion $I \subseteq D$
induces a socle isomorphism $\soc \kk[I] \simto \soc \kk[D]$.  But
alas, it fails: the module~$M'$ in
Examples~\ref{e:nearby} and~\ref{e:essential-submodule}
cogenerates a downset that shares with the ambient module~$M$ its
cogenerator~$z$, which is not a cogenerator of~$M'$ itself.
Consequently, socle density as in Theorem~\ref{t:essential-submodule}
is the best one can hope for.  (See Remark~\ref{r:hull-I}.)
\end{example}

\begin{defn}\label{d:minimal-primDecomp-downset}
A primary decomposition (Example~\ref{e:interval}) $I =
\bigcup_{j=1\!}^k I_j$ of an interval in a real polyhedral group is
\emph{minimal} if
\begin{enumerate}
\item%
each face associated to~$I$ is associated to precisely one of the
downsets~$I_j$, and
\item%
the natural map $\soct\kk[I] \to \soct\bigoplus_{j=1}^k \kk[I_j]$ is
an isomorphism for all~faces~$\tau$.
\end{enumerate}
\end{defn}

\begin{example}\label{e:minimal-primDecomp-downset}
The primary decomposition in Example~\ref{e:min-primary} (see
Example~\ref{e:primDecomp}) is minimal by
Example~\ref{e:associated}.\ref{i:min-primary}.  Indeed, the socle
along~$\sigma_x$ on the right-hand side has dimension~$1$ over~$\kk$, so the
injection in Example~\ref{e:primDecomp} must be an isomorphism because
the socle along~$\tau = \sigma_x$ on the left-hand side is nonzero, owing
to~$\sigma_x$ being associated.  The direct inspection in
Example~\ref{e:associated}.\ref{i:min-primary} works on both the left
and the right to demonstrate the isomorphism of socles along~$\tau =
\{0\}$ induced by the injection.
\end{example}

\begin{thm}\label{t:hull-I}
Every interval $I \subseteq \cQ$ in a real polyhedral group~$\cQ$ has
a canonical primary decomposition (Example~\ref{e:interval}) whose
corresponding primary decomposition of the interval module $\kk[I]$ is
minimal.  Explicitly, this decomposition expresses~$I$ as a~union
$$
  I\
  =
  \bigcup_{\raisebox{-.7ex}{$\scriptstyle\tau\in\ass I$}}
  \bigcup_{\substack{\sigma\supseteq\tau\\\,\aa\in\ats}}
  (\aa + \tau - \qns) \cap I
$$
of coprimary intervals, where $\ats
\hspace{-1pt}\subseteq\hspace{-1pt} \cQ$ is the set of cogenerators
of~$I\hspace{-1pt}$ along~$\tau$ with~nadir~$\sigma$.
\end{thm}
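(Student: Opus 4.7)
The theorem asserts three things: that the proposed union exhausts $I$, that each piece $I_\tau$ is a coprimary interval, and that the resulting primary decomposition of $\kk[I]$ is socle-minimal per Definition~\ref{d:minimal-primDecomp-downset}.  My plan is to verify these in order, reducing each as far as possible to results already in place.

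First, I will apply Theorem~\ref{t:interval=union} using the full family $\ats$ of cogenerators of $I$ along $\tau$ with nadir $\sigma$; the density hypothesis is trivially satisfied since the family is dense in itself.  Rearranging the resulting triple union by grouping on the outer variable $\tau$ and discarding faces with $\soct\kk[I] = 0$ yields the asserted formula, indexed by $\tau \in \ass I$.  Each $I_\tau$ is an interval in $\cQ$, being the intersection of the downset $D_\tau = \bigcup_{\sigma \supseteq \tau,\, \aa \in \ats}(\aa + \tau - \qns)$ with the interval $I$, and it is a downset in the subposet $I$.

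For the coprimary property, I will first apply Lemma~\ref{l:antichain} to the downset $D_\tau$: the antichain condition in $\qrt \times \nabt$ for the family $\bigcup_\sigma \ats \times \{\sigma\}$ holds by Corollary~\ref{c:at-most-one} and a short argument tracing how shapes decrease as degrees increase, so $D_\tau$ is $\tau$-coprimary.  To transfer this to $\kk[I_\tau]$, I will invoke Proposition~\ref{p:elementary-coprimary}: given $\aa \in I_\tau$, pick $\aa_0 \in \ats$ and $\sigma_0 \supseteq \tau$ with $\aa \in (\aa_0 + \tau - \cQ_{\nabla\sigma_0}) \cap I$.  Proposition~\ref{p:sigma-nbd-cogen} supplies a $\sigma_0$-vicinity of $\aa_0$ whose corresponding submodule is $\tau$-coprimary and globally supported on $\tau$; using Lemma~\ref{l:vicinity} to intersect this vicinity with $(\aa_0 + \tau - \cQ_{\nabla\sigma_0})$ and with $I$ shrinks it to a vicinity $\OO \subseteq I_\tau$ retaining the coprimacy, so that $\aa$ divides a $\tau$-coprimary element of $\kk[\OO]$.

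For minimality, Theorem~\ref{t:coprimary} gives $\ass\kk[I_\tau] = \{\tau\}$, so the associated faces of the components are distinct and index $\ass I$ bijectively.  Applying the left-exact functor $\soct$ (Proposition~\ref{p:left-exact-tau}) to the injection $\kk[I] \into \bigoplus_{\tau' \in \ass I} \kk[I_{\tau'}]$ from step one yields an injection $\soct \kk[I] \into \soct \kk[I_\tau]$, the other summands vanishing by coprimacy.  For surjectivity I will argue that the cogenerators of $\kk[I_\tau]$ along $\tau$ coincide with those of $\kk[I]$ along $\tau$: every cogenerator $\aa \in \ats$ remains a cogenerator of $I_\tau$ with the same nadir since $\aa - \sigma^\circ \subseteq I_\tau$ by construction, and conversely any cogenerator of $I_\tau$ at $\aa'$ with nadir $\sigma$ must be a cogenerator of $I$ with the same nadir, for if the nadir in $I$ were some strictly smaller face $\sigma' \subsetneq \sigma$, then $\aa' \in \ats[\sigma']$ and the construction of $I_\tau$ would include $\aa' - \sigma'^\circ$, contradicting minimality of $\sigma$ in the shape of $I_\tau$ at $\aa'$.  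The main obstacle will be this surjectivity of the socle map, since ruling out new cogenerators of $I_\tau$ requires the careful bookkeeping of shapes and nadirs sketched above; the key leverage is that $\ats$ contains all cogenerators of $I$ for every nadir $\sigma$.
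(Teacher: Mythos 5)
Your overall architecture tracks the paper's: Theorem~\ref{t:interval=union} with the full cogenerator family gives the union, Lemma~\ref{l:antichain} gives $\tau$-coprimarity of the ambient downset $D_\tau$, and left-exactness (Proposition~\ref{p:left-exact-tau}) gives injectivity of the socle maps.  For coprimarity of $\kk[I_\tau]$ itself the detour through Proposition~\ref{p:elementary-coprimary} and vicinities is unnecessary: since $D_\tau$ is contained in the downset generated by~$I$, the set $I_\tau = D_\tau \cap I$ is an upset in~$D_\tau$, so $\kk[I_\tau]$ is a \emph{submodule} of the coprimary module $\kk[D_\tau]$ and Lemma~\ref{l:sub-coprimary} applies directly.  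That is a simplification, not a gap.

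The genuine gap is in the surjectivity of $\soct\kk[I] \to \soct\kk[I_\tau]$.  Your converse argument presupposes that a cogenerator of~$I_\tau$ at~$\aa'$ along~$\tau$ is already a socle element of~$I$ along~$\tau$, and only rules out the possibility that its nadir shrinks.  But $I_\tau$ is a downset in the subposet~$I$, so $\kk[I_\tau]$ is a \emph{quotient} of~$\kk[I]$, and quotients can acquire genuinely new cogenerators: the upper boundary class at~$\aa'$ may be annihilated in $\kk[I_\tau]$ by moving up in every direction outside~$\tau$ while surviving such a push in the larger module $\kk[I]$ (because $I \smallsetminus I_\tau$ contains material above~$\aa'$), in which case $\aa'$ indexes no socle element of $\kk[I]$ along~$\tau$ at all and the map misses it.  This is exactly the failure mode isolated in Example~\ref{e:I->D} and Remark~\ref{r:hull-I}, and no bookkeeping of shapes and nadirs alone excludes it, since being a cogenerator along~$\tau$ is a condition on annihilation and persistence, not only on the shape at~$\aa'$.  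The paper closes this gap differently: by Proposition~\ref{p:nearby} and Lemma~\ref{l:nearby}, a cogenerator of $I_\tau$ along~$\tau$ with nadir~$\sigma$ admits a $\sigma$-vicinity inside $\kk[I_\tau]$ that is $\tau$-coprimary and globally supported on~$\tau$; because $I_\tau \subseteq I$ this vicinity lies inside~$I$, producing an honest $\tau$-coprimary submodule of $\kk[I]$ that witnesses the cogenerator as one of~$I$.  Some argument of this kind is required precisely where your sketch defers to ``careful bookkeeping of shapes and nadirs.''
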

\begin{proof}
That $I$ equals the union is a special case of
Theorem~\ref{t:interval=union}.  The inner union for fixed~$\tau$ is
$\tau$-coprimary by Lemma~\ref{l:sub-coprimary} because it is
coprimary when the intersection with~$I$ is omitted, by
Lemma~\ref{l:antichain}.  To see that the socle maps are isomorphisms,
let $I^\tau$ be the inner union for fixed~$\tau \in \ass I$.  Applying
$\soct$ to the primary decomposition $\kk[I] \into \bigoplus_\tau
\kk[I^\tau]$~ yields $\soct\kk[I] \into \soct\kk[I^\tau]$ by
Theorem~\ref{t:injection}.  The question is whether every cogenerator
of~$I^\tau$ is indeed a cogenerator of~$I$.  Each cogenerator
of~$I^\tau$ has a nadir~$\sigma$ for some face $\sigma \supseteq
\tau$, and Lemma~\ref{l:nearby} produces a corresponding
$\sigma$-vicinity
(Definition~\ref{d:nearby}.\ref{i:neighborhood-in-M})
in~$\kk[I^\tau]$.  But $I^\tau$ is contained in~$I$, explicitly by the
way it is defined as a union, given Theorem~\ref{t:interval=union}, so
this vicinity is contained in~$I$.  Therefore the cogenerator
of~$I^\tau$ in question must also be a cogenerator of~$I$.
\end{proof}

\begin{example}\label{e:redundant-primary}
The global support at the right-hand end of
Example~\ref{e:global-support} yields a redundant primary component,
and hence is not part of the minimal primary decomposition in
Example~\ref{e:min-primary} that comes from Theorem~\ref{t:hull-I},
because the interval~$I =
\raisebox{-.6ex}[0pt][0pt]{\includegraphics[height=2.8ex]{decomp}}$
has no cogenerators along the $y$-axis, in the functorial sense of
Definition~\ref{d:soct}.  The illustrations in
Example~\ref{e:global-support} show that $\kk[I]$ has elements
supported on the face of~$\RR^2$ that is the (positive) $y$-axis, but
the socle of~$\kk[I]$ along the $y$-axis is~$0$, as the top half of
the boundary curve of~$\kk[I]$ is supported on the origin.
\end{example}

\begin{remark}\label{r:hull-I}
The sole reason why Example~\ref{e:I->D} happens is the penultimate
sentence of the proof of Theorem~\ref{t:hull-I}: the downset
cogenerated by~$I^\tau$ need not be contained in~$I$, so a vicinity of
a cogenerator of this downset can have empty intersection with~$I$.
\end{remark}

\begin{example}\label{e:not-Gamma_tau}
The canonical $\tau$-primary component in Theorem~\ref{t:hull-I} can
differ from the $\tau$-primary component $P_\tau(D)$ in
\cite[Definition~3.2.6 and Corollary~3.11]{prim-decomp-pogroup},
namely the downset $\gt(D_\tau) - Q_+$ cogenerated by the local
$\tau$-support of~$D$.  However, it takes dimension at least~$3$ to
force a difference.  For a specific case, let $\tau$ be the $z$-axis
in~$\RR^3$, and let~$D_1$ be the $\{\0\}$-coprimary
(Lemma~\ref{l:antichain}) downset in~$\RR^3$ cogenerated by the
nonnegative points on the surface $z = 1/(x^2 + y^2)$.  Then every
point on the positive $z$-axis is supported on~$\tau$ in~$D_1$.  That
would suffice, for the present purpose, but for the fact that $\tau$
fails to be associated to~$D_1$.  The remedy is to force $\tau$ to be
associated by taking the union of~$D_1$ with any downset $D_2 = \aa +
\tau - \RR^3_+$ with $\aa = (x,y,z)$ satisfying $xy < 0$, the goal
being for $D_2 \not\subseteq D_1$ to be $\tau$-coprimary but not
contain the $z$-axis itself.  The canonical $\tau$-primary component
of $D = D_1 \cup D_2$ is just $D_2$ itself, but by construction $\gt
D$ also contains the positive $z$-axis.  (Note: $D = D_1 \cup D_2$ is
not the canonical primary decomposition of~$D$ because $D_2$ swallows
an open set of cogenerators of~$D_1$, so these cogenerators must be
omitted from the $\{\0\}$-primary component to induce an isomorphism
on socles.)  The reason why three dimensions are needed is that $\tau$
must have positive dimension, because elements supported on~$\tau$
must be cloaked by those supported on a smaller face; but $\tau$ must
have codimension more than~$1$, because there must be enough room
modulo~$\RR\tau$ to have incomparable~elements.
\end{example}

\subsection{Downset and interval hulls of modules}\label{b:hulls}\mbox{}%

\medskip
\noindent
Recall the notion of coprimary module from
Definition~\ref{d:coprimary} and its equivalent characterization in
Theorem~\ref{t:coprimary}.

\begin{defn}\label{d:dense-hull}
A downset hull $\cM \into E = \bigoplus_{j=1}^k E_j$
(Definition~\ref{d:downset-hull}) of a module over a real polyhedral
group is
\begin{enumerate}
\item%
\emph{coprimary} if $E_j = \kk[D_j]$ is coprimary for all~$j$, so
$D_j$ is a coprimary downset,~and
\item%
\emph{dense} if the induced map $\soct\cM \into \soct E$ is dense
(Definition~\ref{d:nearby}.\ref{i:dense}) for all~$\tau$.
\end{enumerate}
\end{defn}

\begin{example}\label{e:dense-hull}
The two modules $\cM'$ and~$\cM$ from Example~\ref{e:nearby}
both include into the indicator module~$\kk[D_2]$ for the downset~$D_2$
from Example~\ref{e:sigma-nbd-cogen}.  In fact, both $\cM'$ and~$\cM$
cogenerate~$\kk[D_2]$, and both inclusions $\cM' \into \kk[D_2]$ and $\cM \into
\kk[D_2]$ are coprimary, since $\kk[D_2]$ is coprimary, as noted in
Example~\ref{e:downset=union}.  However, the inclusion of~$\cM$
induces an isomorphism on socles, whereas the inclusion of~$\cM'$ does
not.  Therefore both inclusions into~$\kk[D_2]$ are dense---this is the
conclusion of Example~\ref{e:nearby}---even though one socle properly
contains the other.
\end{example}

\begin{remark}\label{r:dense=essential}
The density condition in Definition~\ref{d:dense-hull} is equivalent
to $\cM$ being an essential submodule of the downset hull~$E$, by
Theorem~\ref{t:essential-submodule}.
\end{remark}

\begin{thm}\label{t:dense-hull-M}
Every downset-finite module~$\cM$ over a real polyhedral group admits
a dense coprimary downset hull.
\end{thm}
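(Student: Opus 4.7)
The plan is to combine the canonical primary decomposition of downset modules (Theorem~\ref{t:hull-I}) with the socle-essentiality criterion for submodules (Theorem~\ref{t:essential-submodule}, cf.\ Remark~\ref{r:dense=essential}) to pass from the given finite downset hull to a finite coprimary downset hull in which $\cM$ sits essentially. The construction proceeds in three stages: start with an arbitrary finite downset hull, decompose each summand into coprimary pieces, then prune each coprimary piece to retain only cogenerators witnessed by~$\cM$.

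First I would fix a finite downset hull $\cM \into E^0 = \bigoplus_{j=1}^k \kk[D_j]$ from Definition~\ref{d:downset-hull}. Since each $D_j$ is itself a downset, hence an interval in~$\cQ$, Theorem~\ref{t:hull-I} supplies a canonical minimal primary decomposition $\kk[D_j] \into \bigoplus_{\tau \in \ass D_j} \kk[D_j^\tau]$ into coprimary components that remain downsets (each $D_j^\tau$ is a union of intersections of the downset~$D_j$ with coprincipal downsets). Composing across~$j$ yields a finite coprimary downset hull $\cM \into E^1 = \bigoplus_{j,\tau} \kk[D_j^\tau]$.

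Next I would prune $E^1$. For each pair $(j,\tau)$, let $\pi_{j,\tau}\colon E^1 \onto \kk[D_j^\tau]$ be the canonical projection and $N_{j,\tau} = \pi_{j,\tau}(\cM)$, a $\tau$-coprimary submodule by Lemma~\ref{l:sub-coprimary}. Let $\tilde D_{j,\tau} \subseteq D_j^\tau$ be the downset union of the coprincipal downsets $\aa+\tau-\qns$ over all cogenerators $\aa$ of~$N_{j,\tau}$ along~$\tau$ with every admissible nadir $\sigma \supseteq \tau$; this is $\tau$-coprimary by Lemma~\ref{l:antichain} and contains~$N_{j,\tau}$. Since the image of~$\cM$ in~$E^1$ projects into $\kk[\tilde D_{j,\tau}]$ in each summand, it lies in $E := \bigoplus_{j,\tau} \kk[\tilde D_{j,\tau}]$, giving a finite coprimary downset hull $\cM \into E$.

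Finally I would verify density via Theorem~\ref{t:essential-submodule}, that is, show that $\cM$ is essential in~$E$. Given a nonzero submodule $\cN \subseteq E$, Theorem~\ref{t:injection} produces a cogenerator of~$\cN$ along some face~$\tau$ with nadir~$\sigma$, and Proposition~\ref{p:nearby} promotes it to a $\tau$-coprimary $\sigma$-vicinity $U \subseteq \cN$ globally supported on~$\tau$ (Lemma~\ref{l:nearby}). By construction $\tilde D_{j,\tau}$ is cogenerated by the images of cogenerators of~$\cM$, so Theorem~\ref{t:downset=union} forces these images to be $\sigma$-dense in the cogenerators of~$\tilde D_{j,\tau}$; Corollary~\ref{c:sigma-vicinity} then lets me select cogenerator degrees in~$U$ that are simultaneously attained by a single socle element of~$\cM$, and Corollary~\ref{c:essential-submodule}.\ref{i:cap} closes with $U \cap \cM \neq 0$, hence $\cN \cap \cM \neq 0$. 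The main obstacle is precisely this coordinate-matching step: summand-wise density of $N_{j,\tau} \subseteq \kk[\tilde D_{j,\tau}]$ does not automatically upgrade to density of the diagonal image $\cM \subseteq E$, as elementary diagonal-submodule examples demonstrate, so the proof must genuinely exploit the continuous nature of $\sigma$-vicinities (Proposition~\ref{p:sigma-nbd-cogen} and Proposition~\ref{p:nearby}) to select approximating cogenerators in each summand that share a common preimage in~$\cM$.
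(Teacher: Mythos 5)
Your first stage (refining an arbitrary finite downset hull into a coprimary one via Theorem~\ref{t:hull-I}) matches the paper, but the pruning stage does not work, and you have in fact put your finger on exactly why without resolving it. Shrinking each summand $\kk[D_j^\tau]$ to the downset $\tilde D_{j,\tau}$ cogenerated by the projection $N_{j,\tau}$ of~$\cM$ never merges or discards summands, and redundant copies of the same downset destroy essentiality. Concretely: let $D = -\RR^2_+$ and let $\cM$ be the diagonal copy of~$\kk[D]$ inside $E^1 = \kk[D] \oplus \kk[D]$. This is a legitimate coprimary downset hull, both projections $N_{j,\{\0\}}$ equal $\kk[D]$, so your pruning returns $E = E^1$ unchanged; yet the antidiagonal submodule $\cN = \{(x,-x)\}$ satisfies $\cN \cap \cM = 0$, so $\cM$ is not essential in~$E$ and the hull is not dense. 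Your proposed repair---selecting approximating cogenerators in each summand sharing a common preimage in~$\cM$---does not help here: the cogenerators of the two summands at~$\0$ do share a common preimage (the diagonal element), but the antidiagonal still witnesses failure of essentiality. The problem is not one of approximation or continuity at all; it is that the target has a whole superfluous summand, and no amount of shrinking the individual downsets removes it.

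The paper's proof handles precisely this by inducting on the number~$k$ of summands after reducing to the $\tau$-coprimary case: it sets $\cM' = \ker(\cM \to E_k)$ and $\cM'' = \cM/\cM'$, hulls $\cM'$ by induction inside the first $k-1$ summands, hulls $\cM''$ by the base case ($\cM''$ is an interval module inside the single downset module~$E_k$, and one takes the downset it cogenerates), and then glues using the exactness and automatic splitting of $0 \to \soct\cM' \to \soct\cM \to \soct\cM''$ (Remark~\ref{r:soc-as-k-vect}), the key point being surjectivity of $\soct\cM \to \soct\cM''$. In the diagonal example this machinery collapses the two summands to one, because $\cM' = 0$ and $\cM'' \cong \kk[D]$ gets a fresh one-summand hull. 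Any correct argument must perform some such collapse; your construction, which only ever shrinks summands in place, cannot.
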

\begin{proof}
Suppose that $\cM \to \bigoplus_{j=1}^k E_j = E$ is any finite downset
hull.  Replacing each~$E_j$ by a primary decomposition of~$E_j$, using
Theorem~\ref{t:hull-I}, assume that this downset hull is coprimary.
Let $E^\tau$ be the direct sum of the $\tau$-coprimary summands
of~$E$.  Then $\soct E = \soct E^\tau$ by Theorem~\ref{t:coprimary}.
Replacing~$\cM$ with its image in~$E^\tau$, it therefore suffices to
treat the case where $\cM$ is $\tau$-coprimary and $E = E^\tau$.

The proof is by induction on the number~$k$ of summands of~$E$.  If $k
= 1$ then $M = \kk[I] \subseteq \kk[D] = \kk[D^\tau]$ is an interval
submodule of a $\tau$-coprimary downset module by hypothesis.  If $D'$
is the downset cogenerated by~$I$, then $\kk[I] \subseteq \kk[D']$ is
an essential submodule by construction and coprimary by
Lemma~\ref{l:sub-coprimary}.  Thus $\cM = \kk[I] \subseteq \kk[D'] =
E$ is a dense coprimary downset hull by
Theorem~\ref{t:essential-submodule}.

When $k > 1$, let $\cM' = \ker(\cM \to E_k)$.  Then $\cM' \into
\bigoplus_{j=1}^{k-1} E_j$, so it has a dense coprimary downset hull
$\cM' \into E'$ by induction.  The $k = 1$ case proves that the
quotient $\cM'' = \cM/\cM'$ has a dense coprimary downset hull $\cM''
\into E''$.  The exact sequence $0 \to \cM' \to \cM \to \cM'' \to 0$
yields an exact sequence
$$
  0 \to \soct\cM' \to \soct\cM \to \soct\cM''
$$
which, if exact, automatically splits by Lemma~\ref{l:soc-as-k-vect}.
Hence it suffices to prove that $\soct\cM \to \soct\cM''$ is
surjective.  For that, note that the image of~$\soct\cM$ in~$\soct E$
surjects onto its projection to~$\soct E_k$, but the image of
$\soct\cM \to \soct E_k$ is the image of the injection $\soct\cM''
\into \soct E_k$ by construction.
\end{proof}

\begin{remark}\label{r:analogue-of-injres}
Theorem~\ref{t:dense-hull-M} is the analogue of existence of minimal
injective hulls for finitely generated modules over noetherian rings
\cite[Section~3.2]{bruns-herzog} (see also
\cite[Proposition~5.7 or Theorem~5.19]{hom-alg-poset-mods} for
finitely determined $\ZZ^n$-modules, which need not be finitely
generated).  The difference here is that a direct sum---as opposed to
direct product---can only be attained by gathering cogenerators into
finitely many~bunches.
\end{remark}

\begin{example}\label{e:downset-indecomposable}
The indicator module for the disjoint union of the strictly negative
axes in the plane injects in an appropriate way into one downset
module (the punctured negative quadrant) or a direct sum of two
(negative quadrants missing one boundary axis each).  Thus the
``required number'' of downsets for a downset hull of a given module
is not necessarily obvious and might not be a functorial invariant.
This may sound bad, but it should not be unexpected: the quotient by
an artinian monomial ideal in an ordinary polynomial ring can have
socle of arbitrary finite dimension, so the number of coprincipal
downsets required is well defined, but if downsets that are not
necessarily coprincipal are desired, then any number between $1$ and
the socle dimension would suffice.  This phenomenon is related to
Remark~\ref{r:soc-as-k-vect}: breaking the socle of a downset into two
reasonable pieces expresses the original downset as a union of the two
downsets cogenerated by the pieces.
\end{example}

\begin{defn}\label{d:interval-hull}
An \emph{interval hull} of a module~$\cM$ over an arbitrary poset is
an injection $\cM \into H = \bigoplus_{j \in J} H_j$ with each $H_j$
being an interval module (Definition~\ref{d:indicator}.\ref{i:interval}).
The hull is \emph{finite} if $J$ is~finite.  Over a real polyhedral
group a finite interval hull is
\begin{enumerate}
\item%
\emph{coprimary} if $H_j = \kk[I_j]$ is coprimary for all~$j$, so
$I_j$ is a coprimary interval,~and
\item%
\emph{minimal} if the induced map $\soct\cM \into \soct H$ is an
isomorphism~ for all faces~$\tau$.
\end{enumerate}
\end{defn}

\begin{example}\label{e:interval-hull}
Any downset hull (Definition~\ref{d:downset-hull}) of a module~$\cM$
is an interval hull of~$\cM$.  If~$\cM$ only has nonzero graded pieces
in (say) the the nonnegative orthant---either open or closed---then
intersecting each of the downsets with that orthant would yield an
interval hull of~$\cM$.
\end{example}

\begin{remark}\label{r:min-vs-dense}
Minimality of interval hulls is stronger than density of downset
hulls: the induced socle inclusion is required to be an isomorphism
rather than merely dense.
\end{remark}

\begin{thm}\label{t:interval-hull}
Every downset-finite module~$\cM$ over a real polyhedral group admits
a minimal coprimary interval hull.
\end{thm}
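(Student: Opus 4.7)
The plan is to obtain the minimal hull by refining the dense coprimary downset hull provided by Theorem~\ref{t:dense-hull-M}, replacing each downset summand by the image of $\cM$ under the corresponding projection. Starting from $\cM \hookrightarrow E = \bigoplus_{j=1}^k \kk[D_j]$ with each $D_j$ coprimary, let $\kk[I_j] = \pi_j(\cM) \subseteq \kk[D_j]$ be the image under the $j$th projection; this image is the indicator module of an interval $I_j \subseteq D_j$ because any $\cQ$-graded subspace of an indicator downset module that is closed under the structure maps is itself the indicator module of an interval. Each $I_j$ is coprimary by Lemma~\ref{l:sub-coprimary}, and the original embedding factors as $\cM \hookrightarrow H = \bigoplus_j \kk[I_j] \subseteq E$, so $H$ is a coprimary interval hull of $\cM$.

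To verify minimality---meaning $\soct\cM \simto \soct H$ for every face $\tau$---injectivity is immediate from Theorem~\ref{t:injection}. Surjectivity I would prove by induction on $k$, mirroring the inductive structure of the proof of Theorem~\ref{t:dense-hull-M}. The base case $k=1$ has $\cM$ an indicator interval submodule of~$\kk[D]$, say $\cM = \kk[I]$, and Theorem~\ref{t:hull-I} directly furnishes a primary decomposition of $\kk[I]$ whose components are coprimary interval modules inducing an isomorphism on socles; this is exactly the minimal coprimary interval hull. For the inductive step $k>1$, split $\cM' = \ker(\cM \to \kk[I_k])$ and $\cM'' = \cM/\cM' \hookrightarrow \kk[I_k]$, producing $H'$ (minimal hull of $\cM'$, by induction) and $H''$ (minimal hull of the interval module $\cM'' = \kk[I'']$, by the base case). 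The short exact sequence $0 \to \cM' \to \cM \to \cM'' \to 0$ yields a left-exact socle sequence by Proposition~\ref{p:left-exact-tau}, and the image-chasing argument from the proof of Theorem~\ref{t:dense-hull-M} shows that $\soct\cM \to \soct\cM''$ is surjective: the image of $\soct\cM$ in $\soct H$ projects onto $\soct\kk[I_k]$, which is the injective image of $\soct\cM''$. The resulting short exact socle sequence splits automatically by Remark~\ref{r:soc-as-k-vect}, assembling $H' \oplus H''$ into the desired minimal hull.

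The main obstacle will be the compatibility of the inductive data with the embedding, since the inductive hull $H'$ of $\cM'$ lives naturally inside $\bigoplus_{j<k}\kk[D_j]$ but the image of $\cM$ in each $\kk[D_j]$ need not agree with that of~$\cM'$---so the map $\cM \to \bigoplus_{j<k}\kk[D_j]$ coming from the original hull does not automatically factor through~$H'$. The cleanest fix is to carry out the induction abstractly, constructing the embedding $\cM \to H' \oplus H''$ from the two natural maps $\cM \to H''$ (via $\cM \twoheadrightarrow \cM'' \hookrightarrow H''$) and $\cM \to H'$ (obtained by applying the image-interval construction to $\cM$ itself in the first $k-1$ summands and then invoking functoriality to identify the result with~$H'$), rather than tracking specific ambient inclusions. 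Once this compatibility is set up, the socle verification reduces to left-exactness of socle functors (Proposition~\ref{p:left-exact-tau}) and the splitting afforded by Remark~\ref{r:soc-as-k-vect}, completing the argument.
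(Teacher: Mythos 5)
Your overall strategy is the paper's own: the paper proves this theorem by rerunning the induction from Theorem~\ref{t:dense-hull-M}, with the base case $k=1$ replaced by the observation that a submodule of a downset module is already a (coprimary) interval module, so the identity map is the minimal hull, and with ``dense'' upgraded to ``minimal'' throughout. Your base case via Theorem~\ref{t:hull-I} is an acceptable substitute and even compensates for skipping the paper's preliminary reduction to the $\tau$-coprimary case.

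However, the obstacle you flag at the end is a genuine one, and your fix does not close it. The inductive hull $H'$ is built from the images $\pi_j(\cM')$ of $\cM'=\ker(\cM\to\kk[I_k])$ for $j<k$, and these are in general \emph{proper} submodules of the images $\pi_j(\cM)$; there is no functoriality identifying $\bigoplus_{j<k}\pi_j(\cM)$ with $H'$, and since interval modules are not injective objects the inclusion $\cM'\into H'$ need not extend to a homomorphism $\cM\to H'$. So the embedding $\cM\to H'\oplus H''$ your induction requires is not constructed. Relatedly, your opening claim that $H=\bigoplus_j\pi_j(\cM)$ is itself the minimal hull is not what the induction proves (it produces a different candidate), nor does it follow from density of the ambient downset hull: essentiality of $\cM$ in $\bigoplus_j\pi_j(\cM)$ yields only density of $\soct\cM$ in $\soct H$, not an isomorphism, and surjectivity of $\soct\cM\to\bigoplus_j\soct\pi_j(\cM)$ would require every cogenerator of every image $\pi_j(\cM)$ to lift, with vanishing components in the other summands, to a cogenerator of $\cM$ --- which is essentially the statement at issue. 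What the splitting argument actually supports (see Remark~\ref{r:filtration}) is that the summands of a minimal hull correspond to the successive filtration quotients $\cM_j/\cM_{j-1}$ with $\cM_j=\ker\bigl(\cM\to\bigoplus_{i>j}E_i\bigr)$, not to the images $\pi_j(\cM)$; exhibiting the actual hull map $\cM\to H$ whose $j$th component realizes a hull of $\cM_j/\cM_{j-1}$ is the step that still needs an argument in your write-up.
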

\begin{proof}
The transition from downsets to intervals alters one key aspect of the
proof of Theorem~\ref{t:dense-hull-M}, namely the base of the
induction: when $k = 1$ the module is already an interval module, so
the identity map is a minimal interval hull, as the socle inclusion is
the identity isomorphism.  The rest of the proof goes through mutatis
mutandis, changing ``downset'' to ``interval'', ``dense'' to
``minimal'', and all
instances~of~``$\hspace{-.25ex}E\hspace{.25ex}$''~to~%
``$\hspace{-.15ex}H\hspace{.15ex}$''.
\end{proof}

\begin{remark}\label{r:filtration}
The proof of Theorem~\ref{t:interval-hull} shows more than its
statement: any coprimary interval hull $\cM \into H = H_1 \oplus \dots
\oplus H_k$ of a coprimary module~$\cM$ induces a filtration $0 =
\cM_0 \subset \cM_1 \subset \dots \subset \cM_k = \cM$ such that
$\soct\cM = \bigoplus_{j=1}^k \soct(\cM_j/\cM_{j-1})$, and furthermore
$\cM \into H$ can be ``minimalized'', in the sense that a minimal
hull~$H'$ can be constructed inside of~$H$ so that $\soct\cM \cong
\soct H'$ decomposes as direct sum of factors $\soct(\cM_j/\cM_{j-1})
\cong \soct H'_j$.  Reordering the summands~$H_j$ yields another
filtration of~$\cM$ with the same property.  That $\soct\cM$ breaks up
as a direct sum in so many ways should not be shocking, in view of
Remark~\ref{r:soc-as-k-vect}.  The main content is that all of the
socle elements of $\cM/\cM_{k-1}$ are inherited from~$\cM$,
essentially because $\cM_{k-1}$ is the kernel of a homomorphism to a
direct sum of downset modules, so $\cM_{k-1}$ has no cogenerators that
are not inherited from~$\cM$.
\end{remark}

\subsection{Minimal primary decomposition of modules}\label{b:minimal-primary}

\begin{defn}\label{d:minimal-primary}
A primary decomposition $\cM \into \bigoplus_{i=1}^r \cM/\cM_i$
(Definition~\ref{d:primDecomp}) of a module over a real polyhedral
group is \emph{minimal} if $\soct\cM \to \soct\bigoplus_{i=1}^r
\cM/\cM_i$ is an isomorphism for all faces~$\tau$.
\end{defn}

\begin{defn}\label{d:primary-component}
Given a coprimary interval hull $\cM \into H$ of an arbitrary
downset-finite module~$\cM$ over a real polyhedral group, write
$H^\tau$ for the direct sum of all summands of~$H$ that are
$\tau$-coprimary.  The kernel $\cM^\tau$ of the composite homomorphism
$\cM \to H \to H^\tau$ is the \emph{$\tau$-primary component
of\hspace{1ex}$0$} for this particular interval hull of~$\cM$.
\end{defn}

\begin{example}\label{e:primary-component}
To get an idea what Definition~\ref{d:primary-component} does, it
suffices to consider a simple case like Example~\ref{e:min-primary}.
Take $\tau = \sigma_x$, for instance, the positive $x$-axis.  The kernel of
the homomorphism from
$\kk[\raisebox{-.6ex}[0pt][0pt]{\includegraphics[height=2.8ex]{decomp}}]$
to the rightmost module in Example~\ref{e:min-primary} is the set~$K$
of elements in
$\kk[\raisebox{-.6ex}[0pt][0pt]{\includegraphics[height=2.8ex]{decomp}}]$
that are nonzero only in degrees lying strictly above the horizontal
line at the upper boundary of the half-space.  This submodule~$K$ is
a~priori nondescript, but the quotient
$\kk[\raisebox{-.6ex}[0pt][0pt]{\includegraphics[height=2.8ex]{decomp}}]/K$
is $\tau$-coprimary: it is the part of
$\kk[\raisebox{-.6ex}[0pt][0pt]{\includegraphics[height=2.8ex]{decomp}}]$
that lands in the closed lower half-space, and it is $\tau$-coprimary
because every submodule of a coprimary module is coprimary.  This is
the (standard commutative algebra) explanation for the ``co'' in
``$\tau$-coprimary'': a module is coprimary when it is the quotient of an
ambient module modulo a primary submodule.
\end{example}

\begin{thm}\label{t:minimal-primary}
Every downset-finite module~$\cM$ over a real polyhedral group admits
a minimal primary decomposition.  In fact, if $\cM \into H$ is a
coprimary interval hull then $\cM \into \bigoplus_\tau \cM/\cM^\tau$
is a primary decomposition that is minimal if $\cM \into H$ is
minimal.
\end{thm}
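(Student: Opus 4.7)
The plan is to reduce the theorem to the existence and properties of minimal coprimary interval hulls (Theorem~\ref{t:interval-hull}), and then trace through what the indexing by associated faces does to socles. Throughout, let $\cM \into H = \bigoplus_j H_j$ be a coprimary interval hull and $H^\tau$ the subsum of $\tau$-coprimary summands, so $H = \bigoplus_\tau H^\tau$. Define $\cM^\tau = \ker(\cM \to H \onto H^\tau)$ as in Definition~\ref{d:primary-component}.

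First I would verify that $\cM \into \bigoplus_\tau \cM/\cM^\tau$ really is a primary decomposition. Each quotient $\cM/\cM^\tau$ embeds by construction into~$H^\tau$, and a submodule of a $\tau$-coprimary module is $\tau$-coprimary by Lemma~\ref{l:sub-coprimary}. Injectivity of $\cM \to \bigoplus_\tau \cM/\cM^\tau$ is immediate: an element of~$\cM$ mapping to zero in every $\cM/\cM^\tau$ lies in $\bigcap_\tau \cM^\tau$, hence has zero image in every $H^\tau$, hence zero image in~$H$, and $\cM \into H$ is injective.

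The crux is the socle computation under the additional hypothesis that $\cM \into H$ is minimal, that is, $\soct\cM \cong \soct H$ for every face~$\tau$. For any face~$\sigma$, Theorem~\ref{t:coprimary} tells us $\soct H^\sigma = 0$ unless $\sigma = \tau$, so $\soct H = \soct H^\tau$ and, since $\cM/\cM^\sigma \into H^\sigma$ with $\soct$ left-exact (Proposition~\ref{p:left-exact-tau}), also $\soct(\cM/\cM^\sigma) = 0$ for $\sigma \neq \tau$. Hence the right-hand side $\soct\bigoplus_\sigma \cM/\cM^\sigma$ collapses to $\soct(\cM/\cM^\tau)$. Next, the containment $\cM^\tau \into \bigoplus_{\sigma \neq \tau} H^\sigma$ (from the definition of $\cM^\tau$) combined with Theorem~\ref{t:injection} and the vanishing just observed forces $\soct\cM^\tau = 0$. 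Applying $\soct$ to the short exact sequence $0 \to \cM^\tau \to \cM \to \cM/\cM^\tau \to 0$ and using left-exactness then gives an injection $\soct\cM \into \soct(\cM/\cM^\tau)$. The composition
$$\soct\cM \into \soct(\cM/\cM^\tau) \into \soct H^\tau = \soct H$$
coincides with the socle map of the hull $\cM \into H$ and is an isomorphism by minimality, so both factors are isomorphisms. In particular $\soct\cM \simto \soct(\cM/\cM^\tau) = \soct\bigoplus_\sigma \cM/\cM^\sigma$, which is Definition~\ref{d:minimal-primary}.

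For the first sentence of the theorem, existence of a minimal coprimary interval hull is Theorem~\ref{t:interval-hull}, and the construction above turns it into a minimal primary decomposition. The main technical step I anticipate needing care with is the vanishing $\soct\cM^\tau = 0$: it relies on $H^\sigma$ being genuinely $\sigma$-coprimary (not merely $\sigma$-primary in some looser sense) so that Theorem~\ref{t:coprimary} applies face-by-face; this is precisely where the coprimary interval hull hypothesis, rather than an arbitrary downset hull, is indispensable, and it is the reason the functorial/minimality hypothesis on the hull propagates cleanly to a socle-minimal primary decomposition.
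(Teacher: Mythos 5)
Your proof is correct and follows essentially the same route as the paper: coprimariness of the components $\cM/\cM^\tau$ via Lemma~\ref{l:sub-coprimary}, injectivity by observing that $\cM \into H$ factors through $\bigoplus_\tau \cM/\cM^\tau$, vanishing of the off-diagonal socles via Theorem~\ref{t:coprimary}, and the factorization of the isomorphism $\soct\cM \to \soct H^\tau$ through the injection $\soct(\cM/\cM^\tau) \into \soct H^\tau$. Your additional step establishing $\soct\cM^\tau = 0$ from the short exact sequence is harmless but unnecessary, since the factorization argument alone already forces both maps to be isomorphisms.
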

\begin{proof}
Fix a coprimary interval hull $\cM \into H$.  The quotient
$\cM/\cM^\tau$ is $\tau$-coprimary by Lemma~\ref{l:sub-coprimary}
since it is a submodule of the coprimary module~$H^\tau$, and $\cM \to
\bigoplus_\tau \cM/\cM^\tau$ is injective because the injection $\cM
\into \bigoplus_\tau H^\tau = H$ factors through $\bigoplus_\tau
\cM/\cM^\tau \subseteq H$.

Theorem~\ref{t:coprimary} implies that $\socp(\cM/\cM^\tau) = 0$
unless $\tau = \tau'$, regardless of whether $\cM \into H$ is minimal.
And if the hull is minimal, then $\soct\cM \to \soct H^\tau$ is an
isomorphism (by hypothesis) that factors through the injection
$\soct(\cM/\cM^\tau) \into \soct H^\tau$ (by construction), forcing
$\soct\cM \cong \soct(\cM/\cM^\tau)$ to be an isomorphism for
all~$\tau$.
\end{proof}

\begin{remark}\label{r:prim-decomp}
Theorem~\ref{t:minimal-primary} enables full access to interpretations
of primary decomposition in persistent homology, now with a notion of
minimality for multiple real parameters instead of versions without
minimality for partially ordered groups \cite{prim-decomp-pogroup} or
with minimality in the discrete case
\cite{harrington-otter-schenck-tillmann2019}.  Primary decomposition
has important statistical implications for applications of
multipersistence \cite{primary-distance}.
\end{remark}

\section{Socles and essentiality over discrete polyhedral groups}\label{s:discrete}

The theory developed for real polyhedral groups in
Sections~\ref{s:socle}--\ref{s:hulls} applies as well to discrete
polyhedral groups (Example~\ref{e:polyhedral-discrete}).  The theory
is easier in the discrete case, in the sense that only closed
cogenerator functors are needed, and none of the density
considerations in Sections~\ref{s:minimality}--\ref{s:density} are
relevant.  The results in this section are known or easily deduced
from well known theory when the module is assumed to be finitely
generated.  The deduction of the discrete case in the generality of
downset-finite modules is elementary and not a major extension, but it
is worthwhile to record the results, both because they are useful and
for comparison with the real polyhedral case.  Highlights include
detection of injective homomorphisms and essential submodules via
socles (Theorems~\ref{t:discrete-injection}
and~\ref{t:discrete-essential-submodule}), minimal primary
decomposition of intervals via socle isomorphism
(Definition~\ref{d:discrete-minimal-primDecomp-interval},
Theorem~\ref{t:discrete-hull-I}, and
Corollary~\ref{c:discrete-irred-decomp}), and minimal primary
decomposition of modules via socle isomorphism
(Definition~\ref{d:discrete-minimal-primary} and
Theorem~\ref{t:discrete-minimal-primary}).

For the analogue of Theorem~\ref{t:injection}, the notion of
divisibility in Definition~\ref{d:divides} makes sense, when $\sigma =
\{\0\}$, verbatim in the discrete polyhedral setting: an element $y
\in \cM_\bb$ \emph{divides} $x \in \cM_\aa$ if $\bb \in \aa - \cQ_+$
and $y \mapsto x$ under the natural map $\cM_\bb \to \cM_\aa$.

\begin{thm}[Discrete essentiality of socles]\label{t:discrete-injection}
Fix a homomorphism $\phi: \cM \to \cN$ of modules over a discrete
polyhedral group~$\cQ$.
\begin{enumerate}
\item\label{i:discrete-phi=>soct}%
If $\phi$ is injective then $\socct\phi: \socct\cM \to \socct\cN$ is
injective for all faces~$\tau$ of~$\cQ_+$.
\item\label{i:discrete-soct=>phi}%
If $\socct\phi: \socct\cM \to \socct\cN$ is injective for all
faces~$\tau$ of~$\cQ_+\!$~and~$\cM$ is downset-finite, then $\phi$
is~injective.
\end{enumerate}
Each homogeneous element of~$\cM\!$ divides some closed cogenerator
of~$\cM$.
\end{thm}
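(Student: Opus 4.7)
The plan is to mirror the proof of Theorem~\ref{t:injection}, exploiting the fact that over a discrete polyhedral group only closed cogenerator functors play a role and no density considerations are needed. Item~\ref{i:discrete-phi=>soct} is immediate from left-exactness of $\socct$ (Proposition~\ref{p:left-exact-tau-closed}). For item~\ref{i:discrete-soct=>phi}, the crux is the final divisibility claim: once it is established, if $y \in \cM_\bb$ is nonzero and divides a closed cogenerator $s \in \cM_\aa$ along some face~$\tau$, then injectivity of $\socct\phi$ forces $\phi(s) \neq 0$; since $\phi$ is a graded homomorphism and $s$ is the image of~$y$ under the structure map $\cM_\bb \to \cM_\aa$, vanishing of $\phi(y)$ would force $\phi(s) = 0$, a contradiction.

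To prove the divisibility claim, fix a finite downset hull $\cM \into E = \bigoplus_{j=1}^k \kk[D_j]$. The argument reduces to the discrete analogue of Lemma~\ref{l:cogenerator}: for any downset $D$ in~$\cQ$ and any $\bb \in D$, there exist a face~$\tau$ of~$\cQ_+$ and a closed cogenerator~$\aa$ of~$D$ along~$\tau$ with $\bb \preceq \aa$. I would prove this by setting $D' = \{q \in \cQ_+ : \bb + q \in D\}$, which is a downset in~$\cQ_+$, so that $I = \cQ_+ \minus D'$ is the exponent set of a monomial ideal in the monoid algebra $\kk[\cQ_+]$. Because $\cQ_+$ is a finitely generated monoid (Example~\ref{e:polyhedral-discrete}), $\kk[\cQ_+]$ is a noetherian $\kk$-algebra, and the nonzero cyclic module $\kk[D'] = \kk[\cQ_+]/\langle I\rangle$ has an associated prime; since associated primes of $\cQ$-graded monomial modules over $\kk[\cQ_+]$ are monomial primes~$\mm_\tau = \cQ_+ \minus \tau$, some monomial $\xx^{q_0}$ in $\kk[D']$ satisfies $(I : q_0) = \mm_\tau$, which translates to $(q_0 + \cQ_+) \cap D' = q_0 + \tau$. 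Setting $\aa = \bb + q_0$ then produces the desired closed cogenerator of~$D$ along~$\tau$.

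Having the downset lemma, the combining argument in the proof of Theorem~\ref{t:injection} finishes divisibility for~$\cM$: choose an index~$j$ with $y_j \neq 0$; apply the lemma to $D_j$ to get $\aa$ and~$\tau$ making $\aa$ a closed cogenerator of~$D_j$ along~$\tau$; and observe that in each other index~$i$ where the degree-$\aa$ component of $\socct\kk[D_i]$ is nonzero, the unique closed cogenerator at degree~$\aa$ (socles of indicator modules being at most one-dimensional per degree, the discrete analogue of Corollary~\ref{c:at-most-one}) is automatically divisible by~$y_i$ under the structure map. Their sum in~$E_\aa$ is a closed cogenerator of~$E$ along~$\tau$; since $\cM \into E$ is a submodule inclusion, the image of~$y$ in~$E_\aa$ lies in $\cM_\aa$ and equals this sum, which is thus a closed cogenerator of~$\cM$ divisible by~$y$. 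The main obstacle is the downset lemma itself: it boils down to verifying that associated primes of cyclic $\cQ$-graded monomial modules over the affine semigroup ring $\kk[\cQ_+]$ are monomial primes, which follows from standard noetherian combinatorial commutative algebra given that $\kk[\cQ_+]$ is noetherian.
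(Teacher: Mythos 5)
Your item~\ref{i:discrete-phi=>soct}, your reduction of item~\ref{i:discrete-soct=>phi} to the divisibility claim, and your downset lemma are all sound; in particular, proving the downset lemma by translating to a monomial quotient $\kk[D']$ of the noetherian ring $\kk[\cQ_+]$ and taking a witness of an associated (necessarily monomial) prime is exactly the right mechanism. The gap is in the combining step across the summands of the downset hull. You assert that the image of~$y$ in~$E_\aa$ equals the sum of the closed cogenerators of the summands $\kk[D_i]$ at degree~$\aa$ along~$\tau$. That equality fails in general: the image of~$y$ in~$E_\aa$ also receives contributions from summands~$E_i$ in which $\aa \in D_i$ but $\aa$ is \emph{not} a closed cogenerator of~$D_i$ along~$\tau$, and those contributions survive upward in directions outside~$\tau$. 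Concretely, over $\cQ = \ZZ^2$ take $D_1 = -\NN^2$, $D_2 = \{(c,d) \mid c \leq 0\}$, $D_3 = \{(c,d) \mid d \leq 0\}$, and let $\cM = \<y\> \subseteq E = \bigoplus_{i=1}^3\kk[D_i]$ for $y = (1,1,1) \in E_\0$. Your recipe with $j = 1$ yields $\aa = \0$ and $\tau = \{\0\}$; the downsets $D_2$ and~$D_3$ have no maximal elements, hence no closed cogenerators along~$\{\0\}$, so your ``sum of cogenerators'' is $(1,0,0)$, which does not equal the image $(1,1,1)$ of~$y$, does not lie in~$\cM_\0 = \kk\cdot(1,1,1)$, and is not divided by~$y$. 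Worse, the image $(1,1,1)$ itself has annihilator $\cQ_+ \minus (\tau_x \cup \tau_y)$, where $\tau_x$ and $\tau_y$ are the axis faces, and this is not $\mm_{\tau'}$ for any face~$\tau'$, so the image of~$y$ at degree~$\aa$ is not a closed cogenerator along any face. (The theorem still holds here: $y$ divides closed cogenerators in degrees $(1,0)$ and $(0,1)$; your recipe just does not locate them.)

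The paper's proof sidesteps the hull and the combining step entirely by applying your associated-prime idea one level up: the cyclic submodule $\<y\> \subseteq \cM$ is a noetherian $\cQ$-graded $\kk[\cQ_+]$-module, so it has an associated prime, necessarily of the form $\mm_\tau$ by \cite[Section~7.2]{cca}, witnessed by a homogeneous $s \in \<y\>$ with annihilator~$\mm_\tau$. Since $\<y\>$ is a submodule of~$\cM$, the annihilator of~$s$ computed in~$\cM$ is the same, so $s$ is a closed cogenerator of~$\cM$ divisible by~$y$. Besides repairing the gap, this proves the divisibility claim with no downset-finiteness hypothesis, which is why the final sentence of the theorem carries none.
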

\begin{proof}
Item~\ref{i:discrete-phi=>soct} is a special case of
Proposition~\ref{p:left-exact-tau-closed}.  Item~\ref{i:soct=>phi}
follows from the divisibility claim, for if $z$ divides a closed
cogenerator~$s$ along~$\tau$ then $\phi(z) \neq 0$ whenever
$\socct\phi(\wt s) \neq\nolinebreak 0$, where $\wt s$ is the image
of~$s$ in~$\socct\cM$.  The divisibility claim follows from the case
where $\cM$ is generated by $z \in \cM_\bb$.  But $\<z\>$ is a
noetherian $\kk[\cQ_+]$-module and hence has an associated prime.
This prime equals the annihilator of some homogeneous element
of~$\<z\>$, and the quotient of~$\kk[\cQ_+]$ modulo this prime
is~$\kk[\tau]$ for some face~$\tau$ \cite[Section~7.2]{cca}.  That
means, by definition, that the homogeneous element is a closed
cogenerator along~$\tau$ divisible by~$z$.
\end{proof}

The discrete analogue of Theorem~\ref{t:dense-subfunctor} is simpler
in both statement and proof.

\begin{thm}\label{t:subfunctor-discrete}
Fix subfunctors $\oS_\tau \subseteq \socct$ for all faces~$\tau$ of a
discrete polyhedral group.  Theorem~\ref{t:discrete-injection} holds
with $\oS$ in place of\/ $\socc$ if and only if\/ $\oS_\tau = \socct$
for all~$\tau$.
\end{thm}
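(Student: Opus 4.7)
The plan is to prove both directions. The ``if'' direction ($\oS_\tau = \socct$ for every face~$\tau$ trivially makes the analog of Theorem~\ref{t:discrete-injection} hold) is immediate. For the ``only if'' direction I argue the contrapositive: if $\oS_\tau \subsetneq \socct$ for some~$\tau$, a single cyclic submodule produces a counterexample to part~\ref{i:discrete-soct=>phi} of Theorem~\ref{t:discrete-injection} with~$\oS$ in place of~$\socc$.

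Given $\cM$ with $\oS_\tau\cM \subsetneq \socct\cM$, pick a class $s \in \socct\cM \minus \oS_\tau\cM$ and lift it to a closed cogenerator $y \in \cM_\aa$ along~$\tau$---that is, a homogeneous element whose annihilator under the action of~$\cQ_+$ equals~$\mm_\tau$ (Remark~\ref{r:witness}). The cyclic submodule $\cN = \<y\> \subseteq \cM$ is then isomorphic to the shifted face-indicator module $\kk[\aa+\tau]$. The module~$\cN$ is downset-finite because $\aa+\tau$ is an upset in the downset $\aa+\tau-\cQ_+$---if $\bb = \aa+t \in \aa+\tau$ and $\bb+s \in \aa+\tau-\cQ_+$, writing $\bb+s = \aa+t'-q$ gives $t+s+q = t' \in \tau$, so the face property forces $s \in \tau$---hence $\cN$ embeds as an interval submodule of $\kk[\aa+\tau-\cQ_+]$.

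The crucial step is to verify $\oS_{\tau'}\cN = 0$ for every face~$\tau'$. For $\tau' \neq \tau$, the discrete analog of Example~\ref{e:soct-k[tau]} gives $\socp\kk[\aa+\tau] = 0$: localizing $\kk[\aa+\tau]$ along~$\tau'$ vanishes unless $\tau' \subseteq \tau$ (any $s \in \tau' \minus \tau$ acts as zero on $\kk[\aa+\tau]$ by the face property), while a nonzero homomorphism $\kk[\bb+\tau'] \to \kk[\aa+\tau]$ requires $\mm_{\tau'}$ to annihilate the image and hence $\tau \subseteq \tau'$; combining forces $\tau' = \tau$. Therefore $\oS_{\tau'}\cN \subseteq \socp\cN = 0$ for $\tau' \neq \tau$. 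For $\tau' = \tau$, the space $\socct\cN$ is one-dimensional, spanned by the class of~$y$; if $\oS_\tau\cN$ were nonzero it would equal $\socct\cN$, and then naturality of the subfunctor inclusion $\oS_\tau \subseteq \socct$ applied to $\iota: \cN \into \cM$ would send the generator to~$s$, placing~$s$ in~$\oS_\tau\cM$ against the choice of~$s$. So $\oS_\tau\cN = 0$ as well. The homomorphism $\phi: \cN \to 0$ then has $\oS_{\tau'}\phi$ trivially injective for every~$\tau'$, yet $\phi$ is not injective since $\cN \neq 0$, which together with downset-finiteness of~$\cN$ falsifies the $\oS$-version of Theorem~\ref{t:discrete-injection}.\ref{i:discrete-soct=>phi}, completing the contrapositive. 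The main obstacle is the vanishing $\socp\kk[\aa+\tau] = 0$ for $\tau' \neq \tau$, which requires pairing the two face-theoretic conditions (from localization and from the annihilator of the image) to pin down the unique associated face~$\tau$ of this cyclic module.
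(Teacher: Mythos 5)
Your proof is correct and follows essentially the same route as the paper's: choose a closed cogenerator $s$ whose class lies in $\socct\cM \minus \oS_\tau\cM$, observe that the cyclic submodule $\<s\> \cong \kk[\aa+\tau]$ has $\socp\<s\> = 0$ for $\tau' \neq \tau$ and $\oS_\tau\<s\> = 0$ by naturality of the subfunctor inclusion, and conclude that $\<s\> \to 0$ violates the $\oS$-version of Theorem~\ref{t:discrete-injection}.\ref{i:discrete-soct=>phi}. Your explicit check of downset-finiteness of $\<s\>$ is a detail the paper leaves implicit, but the argument is the same.
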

\begin{proof}
Every subfunctor of any left-exact functor takes injections to
injections; therefore
Theorem~\ref{t:discrete-injection}.\ref{i:phi=>soct} holds for any
subfunctor of~$\socct$ by Proposition~\ref{p:left-exact-tau-closed}.
The content is that
Theorem~\ref{t:discrete-injection}.\ref{i:soct=>phi} fails as soon as
$\oS_\tau\cM \subsetneq \socct\cM$ for some module~$\cM$ and some
face~$\tau$.  To prove that failure, suppose $\wt s \in \socct\cM
\minus \oS_\tau\cM$ for some closed cogenerator~$s$ of~$\cM$
along~$\tau$.  Then $\<s\> \subseteq \cM$ induces an injection
$\oS_\tau\<s\> \into \oS_\tau\cM$, but by construction the image of
this homomorphism is~$0$, so~$\oS_\tau\<s\> = 0$.  But
$\socct[\tau']\<s\> = 0$ for all $\tau' \neq \tau$ because $\<s\>$ is
abstractly isomorphic to~$\kk[\tau]$, which has no associated primes
other than the kernel of~$\kk[\cQ_+] \onto \kk[\tau]$.  Consequently,
applying~$\oS_{\tau'}$ to the homomorphism $\phi: \<s\> \to 0$ yields
an injection $0 \into 0$ for all faces~$\tau'$ even though $\phi$ is
not injective.
\end{proof}

The analogue of Theorem~\ref{t:essential-submodule} is similarly
simpler.

\begin{thm}\label{t:discrete-essential-submodule}
In any module $\cM$ over a discrete polyhedral group, $\cM'$ is an
essential submodule if and only if\/ $\socct\cM' = \socct\cM$ for all
faces~$\tau$.
\end{thm}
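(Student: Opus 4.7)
The natural map $\socct\cM' \to \socct\cM$ induced by the inclusion $\cM' \into \cM$ is injective by left-exactness of $\socct$ (Proposition~\ref{p:left-exact-tau-closed}), so both directions come down to showing when it is surjective. The key dictionary, used in both directions, is the description of the quotient-restriction (Definition~\ref{d:quotient-restriction} and Lemma~\ref{l:quotient-restriction}) applied to $\hhom_\cQ(\kk[\tau],\cM)$: identifying $\phi$ with $\phi(1)$ exhibits $\hhom_\cQ(\kk[\tau],\cM)$ as the $\kk[\cQ_+]$-submodule $\{s \in \cM : \mm_\tau s = 0\}$ of $\cM$, and two such elements $s, s'$ represent the same class in $\socct\cM$ precisely when $x^\vv s = x^{\vv'} s'$ in $\cM$ for some $\vv, \vv' \in \tau$ (after clearing denominators from the localization along $\tau$, monomials from $\tau$ act as the identity on the quotient-restriction).

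For the forward direction, assume $\cM'$ is essential. A nonzero class $\ol s \in \socct\cM$ is represented by a nonzero element $s \in \cM_\aa$ with $\mm_\tau s = 0$. The cyclic submodule $\<s\> \subseteq \cM$ is nonzero, and since $\mm_\tau$ is an ideal it still annihilates every element of $\<s\>$. Essentiality supplies a nonzero $y \in \<s\> \cap \cM'$, which must have the form $y = c \, x^\vv s$ for some $c \in \kk^\times$ and $\vv \in \tau$. Then $\mm_\tau y = 0$, so $y$ represents a class $\ol y \in \socct\cM'$, and by the dictionary above $\ol y = c\, \ol s$ inside $\socct\cM$. Hence $\ol s = c^{-1} \ol y$ lies in the image of $\socct\cM' \to \socct\cM$, proving surjectivity.

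For the backward direction, assume $\socct\cM' = \socct\cM$ for every face~$\tau$ and, for contradiction, suppose $\cM' \subseteq \cM$ is not essential: some nonzero submodule $\cN \subseteq \cM$ satisfies $\cN \cap \cM' = 0$. The divisibility statement in Theorem~\ref{t:discrete-injection}, applied to $\cN$, yields a closed cogenerator $s \in \cN$ along some face~$\tau$; by Remark~\ref{r:witness} the annihilator of $s$ in $\kk[\cQ_+]$ is exactly $\mm_\tau$, so $\<s\> \cong \kk[\deg(s) + \tau]$ inside $\cN$, and in particular $x^\vv s \neq 0$ in $\cN$ for every $\vv \in \tau$. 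Viewed inside $\cM$ the same $s$ determines a nonzero class $\ol s \in \socct\cM$, which by hypothesis lifts to a class in $\socct\cM'$ represented by some $s' \in \cM'$ with $\mm_\tau s' = 0$. The dictionary then gives $x^\vv s = x^{\vv'} s'$ in $\cM$ for some $\vv, \vv' \in \tau$. The left side is a nonzero element of $\cN$, the right side lies in $\cM'$, so $0 \neq x^\vv s \in \cN \cap \cM'$, a contradiction.

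The main obstacle is just establishing the dictionary in the first paragraph cleanly; once in place, both implications are short and formal. This simplicity reflects the distinctive feature of the discrete setting, absent in the real polyhedral case: every nonzero class in $\socct\cM$ is witnessed by an honest nonzero element of $\cM$, so essentiality can be tested directly on submodules generated by individual cogenerators, with no density considerations required.
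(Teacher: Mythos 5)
Your proof is correct and takes essentially the same route as the paper's: both implications come down to the fact that the cyclic submodule generated by a closed cogenerator along~$\tau$ consists of its $\tau$-translates, so that submodule meets $\cM'$ nontrivially exactly when the corresponding class lies in $\socct\cM'$. You simply organize the two directions contrapositively and spell out explicitly the identification of classes in $\socct$ (equality up to multiplication by monomials from~$\tau$) that the paper's terser argument leaves implicit.
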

\begin{proof}
First assume that $\cM'$ is not an essential submodule, so $\cN \cap
\cM' = 0$ for some nonzero submodule $\cN \subseteq \cM$.  Any closed
cogenerator~$s$ of~$\cN$ along any face~$\tau$ maps to a nonzero
element of $\socct\cM$ that lies outside of~$\socct\cM'$.  Conversely,
if $\socct\cM' \neq \socct\cM$, then any closed cogenerator of~$\cM$
that maps to an element $\socct\cM \minus \socct\cM'$ generates a
nonzero submodule of~$\cM$ whose intersection with~$\cM'$ is~$0$.
\end{proof}

The analogue of Theorem~\ref{t:hull-I} uses slightly modified
definitions but its proof~is~\mbox{easier}.

\begin{defn}\label{d:discrete-minimal-primDecomp-interval}
A primary decomposition (Definition~\ref{e:interval}) $I =
\bigcup_{j=1\!}^k I_j$ of an interval in a discrete polyhedral group
is \emph{minimal} if
\begin{enumerate}
\item%
the intervals~$I_j$ are coprimary for distinct associated faces of~$I$,
and
\item%
the natural map $\socct\kk[I] \to \socct\bigoplus_{j=1}^k \kk[I_j]$ is
an isomorphism for all~faces~$\tau$,
\end{enumerate}
where $\tau$ is \emph{associated} if some element generates an upset
in~$I$ that is a translate of~$\tau$.
\end{defn}

\begin{thm}\label{t:discrete-hull-I}
Every interval $I$ in a discrete polyhedral group has a canonical
minimal primary decomposition $I = \bigcup_\tau I^\tau$ as a union of
coprimary intervals
$$
  I^\tau\
  =
  \bigcup_{\aa_\tau \in \deg\socct\kk[I]} (\aa_\tau - \cQ_+) \cap I,
$$
where $\aa_\tau$ is viewed as an element in~$\qzt$ to write $\aa_\tau
\in \deg\socct\kk[I]$ but $\aa_\tau \subseteq \cQ$ is viewed as a
coset of\/~$\ZZ\tau$ to write $\aa_\tau - \cQ_+$.
\end{thm}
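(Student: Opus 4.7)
The plan is to mirror the argument of Theorem~\ref{t:hull-I} in the simpler discrete setting, using Theorem~\ref{t:discrete-injection} in place of its real-polyhedral analogue. The key simplification is that there are no open cogenerators and no density subtleties: socle inclusions become equalities on the nose and the union defining~$I^\tau$ is literal.

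First I verify $I = \bigcup_{\tau \in \ass I} I^\tau$. Given $\bb \in I$, Theorem~\ref{t:discrete-injection} produces a closed cogenerator $s$ of $\kk[I]$ along some face~$\tau$ at some degree $\aa \succeq \bb$ with $\aa_\tau \in \deg\socct\kk[I]$, so $\bb \in (\aa_\tau - \cQ_+) \cap I \subseteq I^\tau$. Each $I^\tau$ is an interval: writing $I = U \cap D$ for an upset~$U$ and downset~$D$, the set $D^\tau := \bigcup_{\aa_\tau}(\aa_\tau - \cQ_+)$ is a downset (in fact $\ZZ\tau$-invariant, since $\ZZ\tau - \cQ_+ = \tau - \cQ_+$ is the preimage of $-(\qzt)_+$ under $\cQ \to \qzt$), so $I^\tau = U \cap D \cap D^\tau$ is the intersection of an upset and a downset. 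Moreover $I^\tau$ is a downset in~$I$, so $\kk[I] \hookrightarrow \bigoplus_{\tau'} \kk[I^{\tau'}]$ is a primary decomposition in the sense of Example~\ref{e:primDecomp}. That $\kk[I^\tau]$ is $\tau$-coprimary follows because the discrete analogue of Lemma~\ref{l:antichain} renders~$\kk[D^\tau]$ $\tau$-coprimary and $\kk[I^\tau]$ injects into it (since $I^\tau$ is an upset in~$D^\tau$), hence inherits $\tau$-coprimariness by the discrete analogue of Lemma~\ref{l:sub-coprimary}.

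For minimality per Definition~\ref{d:discrete-minimal-primDecomp-interval}, condition~(1) is immediate. For condition~(2), fix a face~$\tau$ and apply $\socct$ (left-exact by Proposition~\ref{p:left-exact-tau-closed}) to the primary injection; the discrete analogue of Theorem~\ref{t:coprimary} annihilates summands with $\tau' \neq \tau$, yielding an injection $\socct\kk[I] \hookrightarrow \socct\kk[I^\tau]$. Since graded pieces of closed socles of indicator quotients are at most one-dimensional (Example~\ref{e:socct}), surjectivity reduces to the equality $\deg\socct\kk[I^\tau] = \deg\socct\kk[I]$ in~$\qzt$. The forward inclusion is immediate: each closed cogenerator~$\aa$ of~$I$ along~$\tau$ lies in~$I^\tau$, and $(\aa + \cQ_+) \cap I^\tau \subseteq (\aa + \cQ_+) \cap I = \aa + \tau \subseteq I^\tau$, so~$\aa$ remains a closed cogenerator of~$I^\tau$ along~$\tau$.

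The reverse inclusion is the main obstacle. Given a closed cogenerator $\bb$ of $I^\tau$ along $\tau$, pick $\aa_\tau \in \deg\socct\kk[I]$ with $\bb \in \aa_\tau - \cQ_+$ and a representative $\aa_1 \in I$ that is an actual closed cogenerator of $I$ along $\tau$. Writing $\bb = \aa_1 + z - q$ with $z \in \ZZ\tau$ and $q \in \cQ_+$, decompose $z = z_+ - z_-$ with $z_\pm \in \tau$ (possible because $\ZZ\tau$ is the Grothendieck group of the finitely generated monoid~$\tau$); then $\aa' := \aa_1 + z_+ \in \aa_1 + \tau \subseteq I$ represents~$\aa_\tau$ and satisfies $\bb \preceq \aa'$. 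Since $\aa' \in I^\tau$ (by $\ZZ\tau$-invariance of~$D^\tau$), the cogenerator property of~$\bb$ forces $\aa' \in (\bb + \cQ_+) \cap I^\tau = \bb + \tau$, so $\bb \equiv \aa' \pmod{\ZZ\tau}$, giving $\bb_\tau = \aa_\tau \in \deg\socct\kk[I]$. Canonicity is automatic, since $\deg\socct\kk[I]$ is intrinsically determined by the socle functor.
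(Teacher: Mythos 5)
Your proposal is correct, and its skeleton matches the paper's: the equality $I=\bigcup_\tau I^\tau$ comes from the final clause of Theorem~\ref{t:discrete-injection}, coprimariness of the components is established, and then the socle maps are shown to be isomorphisms. The differences are in the two supporting steps. For coprimariness the paper argues directly that every nonzero homogeneous element of $\kk[I^\tau]$ divides a closed cogenerator along~$\tau$, whereas you route through $\kk[I^\tau]\into\kk[D^\tau]$ and the (unstated but harmless) discrete analogues of Lemmas~\ref{l:antichain} and~\ref{l:sub-coprimary}; both are fine, though your route quietly uses that $D^\tau$ is contained in the downset cogenerated by~$I$ so that $I^\tau$ really is an upset in~$D^\tau$ --- worth a sentence. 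For minimality the paper observes that the closed cogenerators of~$I$ along~$\tau$ generate an essential submodule of~$\kk[I^\tau]$ and then invokes Theorem~\ref{t:discrete-essential-submodule}, while you compute $\deg\socct\kk[I^\tau]=\deg\socct\kk[I]$ by hand, using the $z=z_+-z_-$ decomposition in $\ZZ\tau$ to produce a comparison element $\aa'$ and the cogenerator condition $(\bb+\cQ_+)\cap I^\tau=\bb+\tau$ to force $\bb\equiv\aa'\pmod{\ZZ\tau}$. Your version is more explicit and makes the one-dimensionality of socle graded pieces (Example~\ref{e:socct}) do the work that essentiality does in the paper; the paper's version is shorter and reuses machinery already proved. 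Either way the argument closes.
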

\begin{proof}
The interval~$I$ is contained in the union by the final line of
Theorem~\ref{t:discrete-injection}, but the union is contained in~$I$
because every closed cogenerator of~$I$ is an element of~$I$.  It
remains to show that $I^\tau$ is coprimary and that the socle maps are
isomorphisms.

Each nonzero homogeneous element $z \in \kk[I^\tau]$ divides an
element~$s_z$ whose degree lies in some coset~$\aa_\tau \in
\deg\socct\kk[I]$ by construction.  As $I^\tau \subseteq I$, each such
element~$s_z$ is a closed cogenerator of~$I^\tau$ along~$\tau$.
Therefore $\kk[I^\tau]$ is coprimary, inasmuch as no prime other than
the one associated to~$\kk[\tau]$ can be associated to~$I^\tau$.  The
same argument shows that these elements~$s_z$ generate an essential
submodule of~$I^\tau$, and then
Theorem~\ref{t:discrete-essential-submodule} yields the isomorphism on
socles.
\end{proof}

\begin{cor}\label{c:discrete-irred-decomp}
Every interval~$I$ in a discrete polyhedral group~$\cQ$ has a unique
irredundant \emph{irreducible decomposition} as a union of its
\emph{irreducible components}, namely the coprincipal intervals
$(\aa_\tau - \cQ_+) \cap I$ in Theorem~\ref{t:discrete-hull-I}.
\end{cor}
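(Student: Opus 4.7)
The plan is to derive the corollary from Theorem~\ref{t:discrete-hull-I} together with the left-exactness of closed socle functors (Proposition~\ref{p:left-exact-tau-closed}) and the face property of~$\tau$. Theorem~\ref{t:discrete-hull-I} already expresses $I$ as the union $\bigcup_{(\tau,\aa_\tau)} (\aa_\tau - \cQ_+) \cap I$ indexed by the closed socle degrees of~$\kk[I]$, so only three things remain to check: irreducibility of each coprincipal component, irredundancy of this decomposition, and agreement with any other irredundant irreducible decomposition.

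For irreducibility of $J = (\aa_\tau - \cQ_+) \cap I$, suppose $J = J_1 \cup J_2$ with each~$J_i$ a sub-interval of~$I$ (i.e.\ a downset of~$I$, as in Example~\ref{e:primDecomp}). The induced injection $\kk[J] \into \kk[J_1] \oplus \kk[J_2]$, composed with~$\socct$, yields an injection of closed socles by Proposition~\ref{p:left-exact-tau-closed}. Example~\ref{e:socct} shows $\socct \kk[J]$ is one-dimensional, concentrated at the coset~$\aa_\tau$, so its image lies in, say, $\socct \kk[J_1]$, providing a representative $\aa \in J_1$ with $\aa + \tau \subseteq J_1$. Any $\bb \in J$ satisfies $\bb \preceq \aa + t_+$ for some $t_+ \in \tau$ (decomposing any $\ZZ\tau$-translate $t$ as $t_+ - t_-$ with $t_\pm \in \tau$ and using $-t_- \preceq 0$), so $\bb \in J_1$ by the downset property; hence $J_1 = J$.

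For irredundancy, a representative~$\aa$ of~$\aa_\tau$ cannot lie in any other coprincipal component $(\bb_{\tau'} - \cQ_+) \cap I$: if $\aa \preceq \bb$ for some representative~$\bb$ of~$\bb_{\tau'}$, then $(\aa + \cQ_+) \cap I = \aa + \tau$ forces $\bb = \aa + t$ with $t \in \tau$, and the face property applied to the coprimary identity $(\bb + \cQ_+) \cap I = \bb + \tau'$ forces $\tau' = \tau$ and $\bb_{\tau'} = \aa_\tau$. For uniqueness, let $I = \bigcup_k L_k$ be any irredundant irreducible decomposition. Applying Theorem~\ref{t:discrete-hull-I} to each~$L_k$ and invoking its irreducibility (the same socle argument above, applied inside~$L_k$) forces $L_k = (\aa^{(k)}_{\tau^{(k)}} - \cQ_+) \cap L_k$, and the downset-of-$I$ property of~$L_k$ extends this to $L_k = (\aa^{(k)}_{\tau^{(k)}} - \cQ_+) \cap I$ by the same argument that closed off irreducibility. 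The hard part will be to verify that $\{(\tau^{(k)}, \aa^{(k)}_{\tau^{(k)}})\}$ coincides with $\deg \socct \kk[I]$: the closed-socle injection $\socct \kk[I] \into \bigoplus_k \socct \kk[L_k]$ shows every canonical cogenerator appears among the~$L_k$, while irredundancy combined with a face-property argument in the same spirit as above rules out any~$L_k$ whose cogenerator fails to be a closed cogenerator of~$I$ (otherwise $L_k$ would be covered by the components whose cogenerators lie above~$\aa^{(k)}$).
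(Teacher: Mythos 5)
Your arguments for irreducibility of each coprincipal component and for irredundancy of the canonical decomposition are correct, and they supply detail that the paper's own proof compresses into the single sentence ``the irredundant condition is a consequence of the socle isomorphisms'' (leaning on Theorem~\ref{t:discrete-hull-I}). In particular, the reduction of irreducibility to the one-dimensionality of $\socct\kk[J]$ via left-exactness, and the face-property computation showing that a closed cogenerator of~$I$ lies in no coprincipal component other than its own, are both sound.

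The gap is in the uniqueness step, which you rightly flag as the hard part. Two issues. First, the decomposition $I=\bigcup_k L_k$ may be infinite (the set $\deg\socct\kk[I]$ can be infinite, e.g.\ the half-plane $\{a+b\le 0\}$ in $\ZZ^2$), and then $\kk[I]\to\bigoplus_k\kk[L_k]$ need not be well defined: a degree lying in infinitely many $L_k$ maps into the direct product, not the direct sum, so ``the closed-socle injection $\socct\kk[I]\into\bigoplus_k\socct\kk[L_k]$'' is not available as stated. Second, and more seriously, your parenthetical dismissal of a component $L_k$ whose cogenerator $\cc$ is not a closed cogenerator of~$I$ does not work as written: such an $L_k$ is strictly contained in a canonical component but need not be contained in any \emph{single} other $L_j$, so one cannot conclude redundancy by exhibiting one larger component; one must rule out the possibility that $L_k$ together with infinitely many other small components forms an irredundant cover (this is exactly what happens, redundantly, when $\{b\le 0\}\subseteq\ZZ^2$ is written as $\bigcup_n\bigl((n,0)-\NN^2\bigr)$).

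The repair uses irredundancy \emph{before} the socle functors rather than after. Irredundancy gives each $L_k$ a point $\xx_k\in L_k\setminus\bigcup_{j\ne k}L_j$. By the final line of Theorem~\ref{t:discrete-injection}, $\xx_k\preceq\aa^{(k)}$ for some closed cogenerator $\aa^{(k)}$ of~$I$ along a face $\tau^{(k)}$; since every $L_j$ is a downset of~$I$, the cogenerator $\aa^{(k)}$ also lies in $L_k$ and in no other $L_j$. The same downset argument forces $\aa^{(k)}+\tau^{(k)}\subseteq L_k$: any point of this translate lying in some $L_j$ would drag $\aa^{(k)}$ into~$L_j$. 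Hence $L_k$ contains, and by your containment computation is contained in, the canonical component $(\aa^{(k)}_{\tau^{(k)}}-\cQ_+)\cap I$, so $L_k$ equals it; and every canonical component arises this way because its cogenerator lies in some~$L_k$, which by your face-property argument must then be that very component. With this substitution your proof is complete.
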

\begin{proof}
The irredundant condition is a consequence of the socle isomorphisms.
\end{proof}

\begin{defn}\label{d:discrete-minimal-hull}
A downset hull $\cM \into E = \bigoplus_{j=1}^k E_j$
(Definition~\ref{d:downset-hull}) of a module over a discrete
polyhedral group is
\begin{enumerate}
\item%
\emph{coprimary} if $E_j = \kk[D_j]$ is coprimary for all~$j$, so
$D_j$ is a coprimary downset,~and
\item%
\emph{minimal} if the induced map $\socct\cM \to \socct E$ is an
isomorphism for all faces~$\tau$.
\end{enumerate}
\end{defn}

The discrete analogue of Theorem~\ref{t:dense-hull-M} appears to be new.

\begin{thm}\label{t:discrete-hull-M}
Every downset-finite module~$\cM$ over a discrete polyhedral group
admits a minimal coprimary downset hull.
\end{thm}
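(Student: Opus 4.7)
The plan is to follow the proof template of Theorem~\ref{t:dense-hull-M} step by step, substituting ``dense'' with ``minimal'' (an isomorphism on closed socles, not merely a dense inclusion) and the cogenerator functor $\soct$ with its discrete counterpart $\socct$ from Definition~\ref{d:socct}. The needed discrete analogues are Theorem~\ref{t:discrete-hull-I} in place of Theorem~\ref{t:hull-I}, Theorem~\ref{t:discrete-essential-submodule} in place of Theorem~\ref{t:essential-submodule}, the discrete version of Theorem~\ref{t:coprimary}, and the verbatim discrete analogue of Lemma~\ref{l:sub-coprimary}.

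First reduce to the coprimary case. Any finite downset hull $\cM \into E = \bigoplus_{j=1}^k E_j$, guaranteed by downset-finiteness, becomes coprimary after replacing each $E_j$ by its primary decomposition from Theorem~\ref{t:discrete-hull-I}. Grouping summands by their associated face gives $E = \bigoplus_\tau E^\tau$, and the discrete version of Theorem~\ref{t:coprimary} yields $\socct E = \socct E^\tau$. The image $\cM^\tau$ of $\cM$ in $E^\tau$ is $\tau$-coprimary, and the injection $\cM \into \bigoplus_\tau \cM^\tau$ reduces the task to constructing a minimal $\tau$-coprimary downset hull of each $\cM^\tau$.

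Now assume $\cM$ is $\tau$-coprimary with a finite coprimary downset hull $\bigoplus_{j=1}^k E_j$ and induct on $k$. When $k = 1$, the submodule $\cM$ of $\kk[D_1]$ is an interval module $\kk[I]$ (its support must be an upset in $D_1$, hence an interval in $\cQ$); the downset $D' = I - \cQ_+$ cogenerated by $I$ yields an essential inclusion $\kk[I] \into \kk[D']$ since every element of $D'$ lies below some element of $I$, so every nonempty upset in $D'$ meets $I$. Theorem~\ref{t:discrete-essential-submodule} then produces the closed socle isomorphism, and $\kk[D']$ inherits $\tau$-coprimality from its essential $\tau$-coprimary submodule. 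For $k > 1$, set $\cM' = \ker(\cM \to E_k)$ and $\cM'' = \cM/\cM'$: induction applied to $\cM' \into \bigoplus_{j<k} E_j$ gives a minimal coprimary downset hull $\cM' \into F'$, and the base case applied to $\cM'' \into E_k$ gives $\cM'' \into F''$. Applying $\socct$ to $0 \to \cM' \to \cM \to \cM'' \to 0$ yields a left-exact sequence that splits once $\socct\cM \to \socct\cM''$ is surjective (a direct sum of skyscraper modules admits no extensions), and surjectivity follows because the projection of $\socct\cM \subseteq \socct E$ onto $\socct E_k$ factors through the injection $\socct\cM'' \into \socct E_k$.

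The hardest step is combining $F'$ and $F''$ into a hull of $\cM$, since defining $\cM \to F' \oplus F''$ requires extending $\cM' \into F'$ to $\cM \to F'$. Unlike the real case, the discrete setting supplies this extension cleanly via Remark~\ref{r:flat}: further refining $F'$ into coprincipal summands by applying Corollary~\ref{c:discrete-irred-decomp} to each of its coprimary pieces preserves the minimality of the socle map while making every summand a $\cQ$-finite injective module, so the extension exists by injectivity (and possibly infinite direct sums of injectives remain injective because $\kk[\cQ_+]$ is noetherian). The resulting $\cM \into F := F' \oplus F''$ is coprimary, its kernel is trivial because any element mapped to $0$ in $F''$ lies in $\cM'$ and hence must be $0$ by injectivity on $F'$, and the socle splitting $\socct\cM \cong \socct\cM' \oplus \socct\cM''$ combined with the minimalities of $F'$ and $F''$ yields $\socct F \cong \socct\cM$, confirming minimality.
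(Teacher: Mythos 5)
Your overall template is the one the paper intends: its proof of Theorem~\ref{t:discrete-hull-M} literally says to run the proof of Theorem~\ref{t:dense-hull-M} with Theorem~\ref{t:discrete-hull-I}, Theorem~\ref{t:discrete-essential-submodule}, and the (trivial) discrete analogues of Theorem~\ref{t:coprimary} and Lemma~\ref{l:sub-coprimary} substituted in, and your reduction to the coprimary case, your base case $k=1$, and your socle-splitting argument in the inductive step all match that template.

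The gap is in the one place you deviate: the construction of the map $\cM \to F'$. You propose to refine $F'$ into coprincipal summands via Corollary~\ref{c:discrete-irred-decomp} so that $F'$ becomes injective and the extension of $\cM' \into F'$ over $\cM' \into \cM$ exists. But the irreducible decomposition of a coprimary downset in a discrete polyhedral group can have infinitely many components: the downset $D = \{(x,y) \in \ZZ^2 : x+y \le 0\}$ is $\{\0\}$-coprimary and its closed cogenerators are all the points $(a,-a)$, $a \in \ZZ$, so its irreducible components $(a,-a) - \NN^2$ are infinite in number. Refining $F'$ this way therefore destroys finiteness of the downset hull (the theorem, per Definition~\ref{d:discrete-minimal-hull}, asks for a finite one), and in general one must also justify that the natural map from a downset module to the direct \emph{sum}, rather than the direct product, of its irreducible components is even defined. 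The detour is also unnecessary. The inductive construction produces the new hull as an \emph{indicator quotient} of the given one: in the base case $D'$ is the downset cogenerated by $I$, so $D' \subseteq D$ and $\kk[I] \into \kk[D']$ is the composite $\kk[I] \into \kk[D] \onto \kk[D']$. Strengthening the induction hypothesis to record this (as Remark~\ref{r:filtration} does), the map $\cM \to F'$ is simply the composite of the given $\cM \to \bigoplus_{j=1}^{k-1} E_j$ with the quotient $\bigoplus_{j=1}^{k-1} E_j \onto F'$, and no injectivity, extension, or refinement into coprincipal pieces is needed. With that repair your argument closes up and coincides with the paper's.
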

\begin{proof}
The argument follows that of Theorem~\ref{t:dense-hull-M}, using
Theorem~\ref{t:discrete-hull-I} instead of Theorem~\ref{t:hull-I} and
Theorem~\ref{t:discrete-essential-submodule} instead of
Theorem~\ref{t:essential-submodule}.  In the course of the proof, note
that the discrete analogue of Theorem~\ref{t:coprimary} is the
definition of associated prime, making the analogue of
Lemma~\ref{l:sub-coprimary} trivial,
and that the analogue of Remark~\ref{r:soc-as-k-vect} holds (more
easily) in the discrete polyhedral setting.
\end{proof}

\begin{remark}\label{r:discrete-filtration}
Remark~\ref{r:filtration} holds verbatim over discrete polyhedral
groups.
\end{remark}

Finally, here is the discrete version of minimal primary
decomposition.

\begin{defn}\label{d:discrete-minimal-primary}
A primary decomposition $\cM \into \bigoplus_{i=1}^r \cM/\cM_i$
(Definition~\ref{d:primDecomp}) of a module over a discrete polyhedral
group is \emph{minimal} if $\socct\cM \to \socct\bigoplus_{i=1}^r
\cM/\cM_i$ is an isomorphism for all faces~$\tau$.
\end{defn}

\begin{defn}\label{d:discrete-primary-component}
Given a coprimary downset hull $\cM \into E$ of an arbitrary
downset-finite module~$\cM$ over a discrete polyhedral group, write
$E^\tau$ for the direct sum of all summands of~$E$ that are
$\tau$-coprimary.  The kernel $\cM^\tau$ of the composite homomorphism
$\cM \to E \to E^\tau$ is the \emph{$\tau$-primary component of~$0$}
for this particular downset hull of~$\cM$.
\end{defn}

\begin{thm}\label{t:discrete-minimal-primary}
Every downset-finite module~$\cM$ over a discrete polyhedral group
admits a minimal primary decomposition.  If $\cM \into E$ is a
coprimary downset hull then $\cM \into \bigoplus_{\tau\!}
\cM/\cM^\tau\!$ is a primary decomposition that is minimal if $\cM
\hspace{-.45pt}\into\hspace{-.45pt}\nolinebreak E$~is~\mbox{minimal}.
\end{thm}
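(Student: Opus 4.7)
The plan is to imitate the proof of Theorem~\ref{t:minimal-primary} almost verbatim, swapping the ingredients developed in Section~\ref{s:discrete} for their real-polyhedral counterparts. First, I would invoke Theorem~\ref{t:discrete-hull-M} to obtain a coprimary downset hull $\cM \into E$; if minimality of the decomposition is desired, I would take this hull to be minimal in the sense of Definition~\ref{d:discrete-minimal-hull}. Writing $E = \bigoplus_\tau E^\tau$ where $E^\tau$ collects the $\tau$-coprimary summands, the composite $\cM \to E \to E^\tau$ defines $\cM^\tau$ as its kernel, so $\cM/\cM^\tau$ embeds in $E^\tau$.

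Next I would verify that $\cM \to \bigoplus_\tau \cM/\cM^\tau$ is a primary decomposition. Coprimarity of each quotient $\cM/\cM^\tau$ is immediate in the discrete setting: a submodule of a $\tau$-coprimary module over~$\kk[\cQ_+]$ is $\tau$-coprimary because the only associated prime it can inherit is $\mm_\tau$ (the associated-prime characterization mentioned in the proof of Theorem~\ref{t:discrete-injection}, which makes the discrete analogue of Lemma~\ref{l:sub-coprimary} trivial). Injectivity of $\cM \to \bigoplus_\tau \cM/\cM^\tau$ follows formally: the injection $\cM \into E = \bigoplus_\tau E^\tau$ factors through $\bigoplus_\tau \cM/\cM^\tau \subseteq \bigoplus_\tau E^\tau$.

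For the minimality clause, the first condition in Definition~\ref{d:discrete-minimal-primary}---that $\socct[\tau'](\cM/\cM^\tau) = 0$ for $\tau' \neq \tau$---holds automatically from coprimarity of $\cM/\cM^\tau$, independent of any hypothesis on the hull, since a $\tau$-coprimary $\kk[\cQ_+]$-module has a single associated prime. Assuming now that the hull $\cM \into E$ is minimal, the composite
\[
  \socct\cM \too \socct(\cM/\cM^\tau) \too \socct E^\tau
\]
is an isomorphism by hypothesis (using that $\socct E = \socct E^\tau$, as $\socct[\tau'] E^\tau = 0$ for $\tau' \neq \tau$), while the second arrow is an injection by left-exactness (Proposition~\ref{p:left-exact-tau-closed}) applied to the inclusion $\cM/\cM^\tau \into E^\tau$. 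Hence the first arrow is an isomorphism for each~$\tau$, which is the required minimality condition.

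I do not anticipate any real obstacle: the whole argument is structural, and the two facts that drive it---existence of minimal coprimary downset hulls (Theorem~\ref{t:discrete-hull-M}) and the single-associated-prime characterization of coprimary modules in the discrete setting---are already in hand. The only place where care is needed is ensuring that $\socct E = \socct E^\tau$, which is why coprimarity of the hull summands (not merely being downset modules) is built into Definition~\ref{d:discrete-minimal-hull}; without it, the factorization $\socct\cM \to \socct(\cM/\cM^\tau) \to \socct E^\tau$ could fail to recover all of $\socct\cM$.
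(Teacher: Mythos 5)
Your proposal is correct and is essentially the paper's own argument: the paper's proof simply says to follow the proof of Theorem~\ref{t:minimal-primary} with downset hulls (Theorem~\ref{t:discrete-hull-M}) in place of interval hulls, which is exactly what you have spelled out, including the key factorization $\socct\cM \to \socct(\cM/\cM^\tau) \into \socct E^\tau$ and the vanishing $\socct[\tau'](\cM/\cM^\tau)=0$ for $\tau'\neq\tau$ coming from coprimarity. The only cosmetic slip is attributing that vanishing to a ``first condition'' of Definition~\ref{d:discrete-minimal-primary}, which has no such clause; it is nonetheless needed (and correctly justified by you) to identify $\socct\bigoplus_{\tau'}\cM/\cM^{\tau'}$ with $\socct(\cM/\cM^\tau)$.
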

\begin{proof}
Follow the proof of Theorem~\ref{t:minimal-primary}, using downset
hulls instead of interval hulls because, in contrast with the real
polyhedral case (Theorems~\ref{t:dense-hull-M}
and~\ref{t:interval-hull}), downset hulls are minimal---not merely
dense---in the discrete context (Theorem~\ref{t:discrete-hull-M}).
\end{proof}

\section{Generator functors and tops}\label{s:gen-functors}

The theory of generators is Matlis dual (Section~\ref{b:matlis}) to
the theory of cogenerators.  Every result for socles, downsets, and
cogenerators therefore has a dual.  All of these dual statements can
be formulated so as to be straightforward, but sometimes they are less
natural (see Remarks~\ref{r:global-closed-gen}
and~\ref{r:no-local-top}, for example), sometimes they are weaker (see
Remark~\ref{r:dir-vs-inv}), and sometimes there are natural
formulations that must be proved equivalent to the straightforward
dual (see Definition~\ref{d:topr} and Theorem~\ref{t:top=socvee}, for
example).  This section presents those Matlis dual notions that are
used in later~\mbox{sections}.

\subsection{Lower closure functors}\label{b:lower-closure}\mbox{}

\medskip
\noindent
In commutative algebra, the top of a module over a local ring is the
quotient of the module modulo the maximal ideal times the module.  As
such, the top is a vector space over~$\kk$ that can alternately be
characterized by tensoring the module with the residue field~$\kk$.
This vector space is interpreted as the space of minimal generators of
the module.  This description of tops is Matlis dual
(Section~\ref{b:matlis}) to the notion of socle (Section~\ref{b:socc};
compare the initial paragraph there).  Hence the development of tops
starting in Section~\ref{b:closed-gen} rests on the duals of upper
closure functors (Section~\ref{b:upper-closure}), which are the lower
closure functors introduced in this subsection
(Definitions~\ref{d:beneath-xi} and~\ref{d:lower-closure}).  Deducing
results about lower closure from those concerning upper closure
encounter a small wrinkle
(Definition~\ref{d:infinitesimally-Q-finite}) because the inverse
limits needed for lower closure have weaker exactness properties than
the direct limits needed for upper closure.

The following are Matlis dual to Definition~\ref{d:atop-sigma}.
Lemma~\ref{l:natural}, and Definition~\ref{d:upper-closure}.

\begin{defn}\label{d:beneath-xi}
For a module~$\cM$ over a real polyhedral group~$\cQ$, a face~$\xi$
of~$\cQ_+$, and a degree $\bb \in \cQ$, the \emph{lower closure
beneath~$\xi$} at~$\bb$ in~$\cM$ is the vector space
$$
  (\lx\cM)_\bb
  =
  \cM_{\bb+\xi}
  =
  \invlim_{\bb' \in \bb + \xi^\circ} \cM_{\bb'}.
$$
\end{defn}

\begin{lemma}\label{l:natural-dual}
The structure homomorphisms of~$\cM$ as a $\cQ$-module induce natural
homomorphisms $\cM_{\bb+\xi} \to \cM_{\cc+\eta}$ for $\bb \preceq
\cc$ in~$\cQ$ and faces $\xi \subseteq \eta$
of~$\cQ_+$.\hfill$\square$
\end{lemma}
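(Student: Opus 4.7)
The approach is to dualize the proof of Lemma~\ref{l:natural} by swapping source and target: whereas there one uses the universal property of the direct limit defining $\cM_{\aa-\sigma}$, here we use the universal property of the inverse limit $\cM_{\cc+\eta} = \invlim_{\cc'\in\cc+\eta^\circ} \cM_{\cc'}$ at the target. To construct $\cM_{\bb+\xi}\to \cM_{\cc+\eta}$ it therefore suffices to produce a compatible family of homomorphisms $\phi_{\cc'}: \cM_{\bb+\xi}\to \cM_{\cc'}$ indexed by $\cc' \in \cc+\eta^\circ$.

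Mirroring the two-step structure of Lemma~\ref{l:natural}, I would factor the map through $\cM_{\cc+\xi}$, handling first $\xi=\eta$ with $\bb \preceq \cc$ and then $\bb = \cc$ with $\xi\subseteq \eta$. In the former, the element $\bb'(\cc'):= \cc'-(\cc-\bb)$ lies in $\bb+\xi^\circ$ for every $\cc'\in \cc+\xi^\circ$, because translating by $\cc-\bb\in\cQ_+$ carries $\bb+\xi^\circ$ bijectively to $\cc+\xi^\circ$. Set $\phi_{\cc'}$ to be the composite of the projection $\pi_{\bb'(\cc')}: \cM_{\bb+\xi}\to \cM_{\bb'(\cc')}$ with the $\cQ$-module structure map $\cM_{\bb'(\cc')}\to \cM_{\cc'}$. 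Compatibility of $\{\phi_{\cc'}\}$ across $\cc'_1 \preceq \cc'_2$ is immediate from the inverse-limit identity $\pi_{\bb'(\cc'_1)}(x)\mapsto \pi_{\bb'(\cc'_2)}(x)$ combined with the commuting square afforded by functoriality of the $\cQ$-module structure on $\cM$.

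For the case $\bb=\cc$ with $\xi\subseteq \eta$, I would dualize the ``$\sigma\supseteq \tau$'' branch of Lemma~\ref{l:natural} that rests on Proposition~\ref{p:<<} and Lemma~\ref{l:<<}: for any $\cc'=\bb+\ww \in \bb+\eta^\circ$, fix once and for all some $\uu_0 \in \xi^\circ$ and observe that $\ww-\epsilon\uu_0\in \eta^\circ\subseteq \cQ_+$ for all sufficiently small $\epsilon>0$, since $\uu_0\in \RR\xi\subseteq \RR\eta$ while $\eta^\circ$ is open in~$\RR\eta$. Hence $\bb'(\cc'):=\bb+\epsilon\uu_0$ lies in $\bb+\xi^\circ$ and satisfies $\bb'(\cc')\preceq \cc'$, so $\phi_{\cc'}$ is defined by the same composite recipe. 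Composing the two constructions yields the general case $\bb \preceq \cc$, $\xi \subseteq \eta$.

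The step needing the most care is well-definedness of $\phi_{\cc'}$ in the second case: if $\bb'_1,\bb'_2\in \bb+\xi^\circ$ both lie below $\cc'$, must the two resulting composites into $\cM_{\cc'}$ agree? The subtlety is that the set $B(\cc') = \{\bb'\in \bb+\xi^\circ:\bb'\preceq \cc'\}$ need not be directed \emph{upward} in non-lattice polyhedral cones, so one cannot naively invoke a common upper bound. The remedy is to find a common \emph{lower} bound $\bb'' = \bb+\epsilon\uu_0 \in \bb+\xi^\circ$ with $\bb''\preceq \bb'_1,\bb'_2$, obtained by shrinking $\epsilon$ until $\vv_i-\epsilon\uu_0\in \cQ_+$, where $\bb'_i=\bb+\vv_i$; this is possible because each $\vv_i$ lies in the relative interior of~$\xi$, which is open in~$\RR\xi$. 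Inverse-limit compatibility then sends $\pi_{\bb''}(x) \mapsto \pi_{\bb'_i}(x)$ under the structure map $\cM_{\bb''}\to \cM_{\bb'_i}$, and functoriality along $\bb''\preceq\bb'_i\preceq\cc'$ forces both composites to coincide with the image of $\pi_{\bb''}(x)$ in $\cM_{\cc'}$. Finally, compatibility of $\{\phi_{\cc'}\}$ across $\cc'_1\preceq \cc'_2$ follows by picking a single $\bb'\preceq \cc'_1$ (which automatically satisfies $\bb'\preceq \cc'_2$) and invoking naturality once more.
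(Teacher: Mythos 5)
Your proof is correct. The paper offers no proof of this lemma at all --- it is stated with an immediate $\square$, the intended argument being precisely the dualization of Lemma~\ref{l:natural} that you carry out, with the universal property of inverse limits replacing that of direct limits and the openness of relative interiors of faces standing in for the appeal to Proposition~\ref{p:<<} and Lemma~\ref{l:<<}; your careful check that composing with different projections out of the inverse limit gives the same map (via a common lower bound in $\bb+\xi^\circ$) fills in the one point the paper leaves entirely implicit.
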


\begin{remark}\label{r:semilattice=monoid'}
In contrast with Remark~\ref{r:semilattice=monoid}, the relevant
monoid structure here on the face poset~$\cfq$ of the positive
cone~$\cQ_+$ is opposite to the monoid denoted~$\fqo$.  In this case
the monoid axioms use that $\cfq$ is a bounded join semilattice, the
monoid unit being $\{\0\}$.  The induced partial order on~$\cfq$ is
the usual one, with $\xi \preceq \eta$ if~$\xi \subseteq \eta$.
\end{remark}

\begin{defn}\label{d:lower-closure}
Fix a module~$\cM$ over a real polyhedral group~$\cQ$ and a degree
$\bb \in \cQ$.  The \emph{lower closure functor} takes~$\cM$ to the
$\cQ \times \cfq$-module $\del\cM$ whose fiber over $\bb \in \cQ$ is
the $\cfq$-module
$$
  (\del\cM)_\bb
  =
  \bigoplus_{\xi \in \cfq} \cM_{\bb+\xi}
  =
  \bigoplus_{\xi \in \cfq} (\lx\cM)_\bb.
$$
The fiber of $\del\cM$ over $\xi \in \cfq$ is the \emph{lower
closure~$\lx\cM\!$ of~$\cM$ beneath~$\xi$}.
\end{defn}

\begin{remark}\label{r:dir-vs-inv}
Direct and inverse limits play differently with vector space duality.
Consequently, although the notion of lower closure functor is
categorically dual to the notion of upper closure functor, the
duality only coincides unfettered with vector space duality in one
direction, and some results involving tops are necessarily weaker than
the corresponding results for socles; compare
Theorem~\ref{t:injection} with~\ref{t:surjection-RR} and
Example~\ref{e:surjection}, for instance.  To make precise statements
throughout this section on generator functors, starting with
Lemma~\ref{l:lx-vee}, it is necessary to impose a finiteness condition
that is somewhat stronger than $\cQ$-finiteness
(Definition~\ref{d:tame}.\ref{i:Q-finite}).
\end{remark}

\begin{defn}\label{d:infinitesimally-Q-finite}
A module~$\cM$ over a real polyhedral group~$\cQ$ is
\emph{infinitesimally $\cQ$-fin- ite} if its lower closure module
$\del\cM$ is $\cQ$-finite.
\end{defn}

\begin{example}\label{e:not-infinitesimally-Q-finite}
The type of behavior ruled out by
Definition~\ref{d:infinitesimally-Q-finite} has vector spaces whose
dimensions increase without bound as their locations approach a
limiting degree.  For the prototypical concrete example, let $\cQ =
\RR$ and define $\cM$ to have $\cM_\aa = \kk^n$ for $n \in \NN$
whenever $\aa \in [\frac 1n, \frac 1{n-1})$, with the structure map
$\kk^n \to \kk^{n-1}$ being the projection that acts on the basis
vectors $\ee_1,\ldots,\ee_n$ of~$\kk^n$ via $\ee_i \mapsto \ee_i$
for $i < n$ and $\ee_n \mapsto 0$.  The set of degrees $\aa \in \RR$
where $\cM_\aa \neq 0$ is the open positive ray $\RR_+^\circ$.  The
lower closure of~$\cM$ has the inverse limit $\invlim_{n\in\NN}\kk^n$
in the $\xi = \RR_+$ component of~$\cfq$ at $\cQ$-graded degree~$0$.

These kinds of inverse limits do not play well with vector space
duality; more precisely, these kinds of inverse limits tend to violate
exactness.  The infinitesimally $\cQ$-finite hypothesis rescues
exactness; see Proposition~\ref{p:exact-lower-closure}.
\end{example}

For behavior that infinitesimal $\cQ$-finiteness does still allow, see
Example~\ref{e:infinitesimally-Q-finite}, whose discussion rests on
Lemma~\ref{l:lx-vee}.

\begin{lemma}\label{l:lx-vee}
If $\xi$ is a face of a real polyhedral group~$\cQ$, then
\begin{enumerate}
\item\label{i:all}%
$\lx(\cM^\vee) = (\dx\cM)^\vee$ for all $\cQ$-modules~$\cM$, and
\item\label{i:inf}%
$(\lx\cM)^\vee = \dx(\cM^\vee)$ if $\cM$ is infinitesimally
$\cQ$-finite
\end{enumerate}
\end{lemma}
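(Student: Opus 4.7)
The plan is to derive part~\ref{i:all} by a direct unwinding of the Matlis dual relying on nothing more than $\Hom_\kk(-,\kk)$ turning direct limits into inverse limits, and then to bootstrap part~\ref{i:inf} from part~\ref{i:all} by applying it to~$\cM^\vee$ and resolving a single finiteness check with an elementary dimension count.

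For part~\ref{i:all}, I will verify the asserted identity in each fixed degree~$\bb$. By Definition~\ref{d:beneath-xi}, the value $(\lx(\cM^\vee))_\bb$ is the inverse limit over $\bb' \in \bb + \xi^\circ$ of $(\cM^\vee)_{\bb'} = \Hom_\kk(\cM_{-\bb'},\kk)$, and the substitution $\aa' = -\bb'$ reindexes this as an inverse limit over $\aa' \in -\bb - \xi^\circ$ of $\Hom_\kk(\cM_{\aa'},\kk)$. A direct check of transition maps shows that this inverse system is the $\Hom_\kk(-,\kk)$-dual of the direct system computing $(\dx\cM)_{-\bb}$ via Definition~\ref{d:atop-sigma}, so the inverse limit collapses to $\Hom_\kk((\dx\cM)_{-\bb},\kk) = ((\dx\cM)^\vee)_\bb$. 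Naturality in~$\bb$ is automatic, giving the identity as $\cQ$-modules.

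For part~\ref{i:inf}, I will apply part~\ref{i:all} to the module~$\cM^\vee$. Taking $\xi = \{\0\}$ in Definition~\ref{d:beneath-xi} shows $(\lx\cM)_\bb = \cM_\bb$, so infinitesimal $\cQ$-finiteness entails $\cQ$-finiteness, and Lemma~\ref{l:vee-vee} yields $(\cM^\vee)^\vee = \cM$. Part~\ref{i:all} applied to~$\cM^\vee$ then reads $\lx\cM = (\dx(\cM^\vee))^\vee$, and Matlis-dualizing both sides gives
$$
  (\lx\cM)^\vee = \bigl((\dx(\cM^\vee))^\vee\bigr)^\vee.
$$
Another appeal to Lemma~\ref{l:vee-vee} would identify the right-hand side with $\dx(\cM^\vee)$ provided that $\dx(\cM^\vee)$ is itself $\cQ$-finite.

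The main obstacle is this last $\cQ$-finiteness check, because $\dx(\cM^\vee)$ is built as a direct limit of $\cQ$-finite pieces and could, a~priori, blow up in each degree. The resolution is to read $\cQ$-finiteness off the identity $\lx\cM = (\dx(\cM^\vee))^\vee$ already established above: its left side is $\cQ$-finite by hypothesis, so the $\kk$-vector space $\Hom_\kk(\dx(\cM^\vee)_\aa,\kk)$ is finite-dimensional in every degree~$\aa$. The elementary fact that $\dim_\kk \Hom_\kk(V,\kk) \geq \dim_\kk V$ for every $\kk$-vector space~$V$, with equality if and only if $V$ is finite-dimensional, then forces $\dx(\cM^\vee)_\aa$ to be finite-dimensional. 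Hence $\dx(\cM^\vee)$ is $\cQ$-finite, Lemma~\ref{l:vee-vee} applies to collapse the double Matlis dual in the display, and part~\ref{i:inf} follows.
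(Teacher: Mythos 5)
Your proposal is correct and follows essentially the same route as the paper: part~\ref{i:all} is the degree-by-degree fact that $\Hom_\kk(-,\kk)$ turns the direct system of Definition~\ref{d:atop-sigma} into the inverse system of Definition~\ref{d:beneath-xi}, and part~\ref{i:inf} is obtained by applying part~\ref{i:all} to $\cM^\vee$ (using $\del^{\{\0\}}\cM = \cM$ to get $\cQ$-finiteness and Lemma~\ref{l:vee-vee}) and then collapsing the double dual. Your explicit dimension-count justification that $\dx(\cM^\vee)$ is $\cQ$-finite because its dual $\lx\cM$ is fills in exactly the step the paper compresses into its final sentence.
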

\begin{proof}
Degree by degree $\bb \in \cQ$, the first of these
is because the vector space dual of a direct limit is the inverse
limit of the vector space duals.
Swapping ``direct'' and ``inverse'' only works with additional
hypotheses, and one way to ensure these is to assume infinitesimal
$\cQ$-finiteness of~$\cM$.  Indeed, then $\cM = \del^{\{\0\}}\cM$ is
$\cQ$-finite, so replacing $\cM$ with~$\cM^\vee$ in the first item
yields $\lx\cM = \bigl(\dx(\cM^\vee)\bigr){}^\vee$ by
Lemma~\ref{l:vee-vee}.  Thus $(\lx\cM)^\vee = \dx(\cM^\vee)$, as
$\lx\cM$---and hence $\bigl(\dx(\cM^\vee)\bigr){}^\vee\!$ and
$\dx(\cM^\vee)$---is~also~\mbox{$\cQ$-finite}.
\end{proof}

\begin{example}\label{e:infinitesimally-Q-finite}
Any module~$\cM$ that is a quotient of a finite direct sum of upset
modules (``upset-finite'' in Definition~\ref{d:minimal-cover}) over a
real polyhedral group is infinitesimally $\cQ$-finite.  Indeed, the
Matlis dual of such a quotient is a downset hull demonstrating that
$\cM^\vee$ is downset-finite and hence $\cQ$-finite.
Proposition~\ref{p:downset-upper-closure} and exactness of upper
closure functors (Lemma~\ref{l:exact-delta}) implies that
$\delta(\cM^\vee)$ remains downset-finite and hence $\cQ$-finite.
Applying Lemma~\ref{l:lx-vee}.\ref{i:all} to~$\cM^\vee$ and using that
$(\cM^\vee)^\vee\! = \cM$ (Lemma~\ref{l:vee-vee}) on the left-hand
side yields that $\del\cM$ is $\cQ$-finite.  This example includes all
tame modules by the syzygy theorem
\cite[Theorem~6.12.4]{hom-alg-poset-mods}.
\end{example}

\begin{prop}\label{p:exact-lower-closure}
The category of infinitesimally $\cQ$-finite modules over a real
polyhedral group~$\cQ$ is a full abelian subcategory of the category
of $\cQ$-modules.  Moreover, the lower closure functor is exact on
this subcategory.
\end{prop}
\begin{proof}
Use Matlis duality, in the form of Lemma~\ref{l:lx-vee}, along with
Lemma~\ref{l:exact-delta}.
\end{proof}

\subsection{Closed generator functors}\label{b:closed-gen}\mbox{}

\medskip
\noindent
The development of the theory of tops is carried by applying duality
to the results surrounding socles rather than by dualizing the proofs.
This simplifies the exposition substantially.  The theory of closed
tops here is Matlis dual to the theory of closed socles in
Section~\ref{b:socc}.  To start, here is the Matlis dual of
Definition~\ref{d:socc}.  Recall the skyscraper $\cP$-module $\kk_p$
there.

\begin{defn}\label{d:topc}
Fix an arbitrary poset~$\cP$.  The \emph{closed generator functor}
$\kk \otimes_\cP -$ takes each $\cP$-module $\cN$ to its \emph{closed
top}: the quotient $\cP$-module
$$
  \topc\cN = \kk \otimes_\cP \cN = \bigoplus_{p\in\cP} \kk_p\otimes_\cP\cN.
$$
When it is important to specify the poset, the notation $\cptop$ is
used instead of~$\topc\!$.  A \emph{closed generator} of
\emph{degree}~$p \in P$ is a nonzero element in $(\topc\cN)_p$.
\end{defn}

\begin{example}\label{e:nttop}
Elements of~$\cN_p$ that persist from lower in the poset die in the
tensor product $\kk \otimes_P \cN$.  Consequently, $\RR$-modules like
the maximal monomial ideal $\mm \subseteq \kk[\RR_+]$ have vanishing
closed top, because every monomial with nonzero positive degree is
divisible by a monomial of smaller positive degree (its square root,
for instance).  This is merely the statement that $\mm$ is not
minimally generated.
\end{example}

\begin{remark}\label{r:nttop}
$\cpsoc\cN \into \cN$ is the universal $\cP$-module monomorphism that
is~$0$ when composed with all nonidentity maps induced by going up
in~$\cP$.  The Matlis dual is $\cN \onto \cptop\cN$, the universal
$\cP$-module epimorphism that is~$0$ when composed with all
nonidentity maps induced by going up in~$\cP$.  This is the essence of
Proposition~\ref{p:dual-topc}.
\end{remark}

\begin{remark}\label{r:asymmetry}
Matlis duality has an intrinsic asymmetry regarding the behavior of
tops and socles.  In the presence of sufficient finiteness, the
asymmetry disappears, but in general it requires care to insert some
finiteness appropriately.  The following definition, proposition, and
proof are presented in (perhaps too much) detail to highlight how
finiteness enters.  Local finiteness
(Definition~\ref{d:locally-finite}) can fail for modules over a
partially ordered abelian group, but it is useful for the discrete
(face lattice) half of the poset used to compute tops and socles over
real polyhedral groups; see Proposition~\ref{p:nrtop}.  The other
finiteness restriction, namely $\cP$-finiteness
(Definition~\ref{d:tame}.\ref{i:Q-finite}) has already appeared, with
consequences (see Example~\ref{e:not-exact-Q-infinite}).
\end{remark}

\begin{defn}\label{d:locally-finite}
A module~$\cN$ over a poset~$\cP$ is \emph{locally finite} if, for
each poset element $p \in \cP$, there is a finite subset $\cP'(p,\cN)
\subseteq \cP$ such that, if $\cN_p \to \cN_{p''}$ is nonzero for some
$p'' \in \cP$, there is some $p' \in \cP'(p,\cN)$ with $p' \prec p''$.
\end{defn}

Loosely, $\cP'(p,\cN)$ sits between $\cN_p$ and any of its nonzero
images higher in~$\cP$.

\begin{prop}\label{p:dual-topc}
Fix a poset~$\cP$ with opposite poset~$\cP^\op$.  For a
$\cP$-module~$\cN$, Matlis duality interacts with tops and socles as
follows:
\begin{enumerate}
\item%
$\cptop(\cN)^\vee = \posoc(\cN^\vee)$ for any $\cP$-module~$\cN$, and
\item%
$\cptop(\cN^\vee) = (\posoc\cN)^\vee$ for any $\cP^\op$-finite or
locally finite $\cP^\op$-module~$\cN$.
\end{enumerate}
All of these hold with $\cP$ and~$\cP^\op$ swapped.
\end{prop}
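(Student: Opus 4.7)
The plan is to prove item~(1) directly from the defining presentation of the closed top and then to derive item~(2) from item~(1) by a combination of double-dualization (in the $\cP^\op$-finite case) and a direct calculation (in the locally finite case). The final ``swap $\cP$ and $\cP^\op$'' clause will be immediate by applying what has just been proved with the opposite poset in place of~$\cP$.

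For item~(1), I would work degree by degree at each $p \in \cP$. The description of $\cptop$ in Remark~\ref{r:nttop} identifies $(\cptop\cN)_p$ with the cokernel of the structure map $\bigoplus_{p'\prec p}\cN_{p'} \to \cN_p$, giving a presentation
\[
  \bigoplus_{p' \prec p} \cN_{p'} \too \cN_p \too (\cptop\cN)_p \too 0.
\]
Since $\kk$ is injective over itself, Matlis duality is exact degree by degree, so dualizing turns this presentation into the exact sequence
\[
  0 \too (\cptop\cN)^\vee_p \too \cN^\vee_p \too \prod_{p' \prec p} \cN^\vee_{p'}.
\]
The kernel on the left is, by the unwinding of Definition~\ref{d:socc} in the opposite poset, exactly $(\posoc\cN^\vee)_p$: a functional $f \in \cN^\vee_p$ is killed by all structure maps of $\cN^\vee$ going up in $\cP^\op$ (that is, by the duals of the maps $\cN_{p'} \to \cN_p$ for $p' \prec_\cP p$) precisely when $f$ annihilates $\sum_{p' \prec p}\operatorname{image}(\cN_{p'}\to\cN_p)$. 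Naturality of the entire construction in~$\cN$ then upgrades the degreewise vector space isomorphism to an isomorphism of $\cP^\op$-modules.

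For item~(2), assume first that $\cN$ is $\cP^\op$-finite. Then $\cN^\vee$ is $\cP$-finite, so $\cptop(\cN^\vee)$ is $\cP$-finite as a degreewise quotient. Applying item~(1) to the $\cP$-module~$\cN^\vee$ gives $(\cptop\cN^\vee)^\vee = \posoc((\cN^\vee)^\vee)$; by Lemma~\ref{l:vee-vee} we have $(\cN^\vee)^\vee \cong \cN$, and a second application of Lemma~\ref{l:vee-vee} to the $\cP$-finite module $\cptop\cN^\vee$ yields $\cptop\cN^\vee \cong (\posoc\cN)^\vee$ as desired. In the locally finite case we instead argue directly: we must show
\[
  \sum_{q \prec_\cP p} \operatorname{image}\bigl((\cN^\vee)_q \to (\cN^\vee)_p\bigr)
  \;=\; \operatorname{ann}\bigl((\posoc\cN)_p\bigr)
  \ \subseteq\ \cN_p^\vee,
\]
and a priori only~$\subseteq$ holds because the Matlis dual of an infinite direct sum is a direct product. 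Here is where local finiteness enters: by Definition~\ref{d:locally-finite}, every nonzero structure map $\cN_p \to \cN_q$ (for $\cN$ as a $\cP^\op$-module, so with $q \prec_\cP p$) factors through some $\cN_p \to \cN_{p'}$ with $p'$ in the finite set $\cP'(p,\cN)$. This reduces the intersection of kernels defining $(\posoc\cN)_p$ to the finite intersection indexed by $\cP'(p,\cN)$, and the corresponding direct sum presentation
\[
  \bigoplus_{p'\in \cP'(p,\cN)}\cN^\vee_{p'} \too \cN_p^\vee \too (\posoc\cN)_p^\vee \too 0
\]
from exactness of vector space duality gives the required equality of images. I expect this final bookkeeping step to be the main obstacle: one must carefully track the order conventions of $\cP$ versus $\cP^\op$, and verify that enlarging the index set from $\cP'(p,\cN)$ to all $q \prec_\cP p$ does not enlarge the sum of images, which is true precisely because every individual $\cN^\vee_q \to \cN^\vee_p$ factors through some $\cN^\vee_{p'} \to \cN^\vee_p$ with $p' \in \cP'(p,\cN)$.
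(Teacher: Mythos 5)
Your proposal is correct.  Item~(1) and your treatment of the locally finite half of item~(2) follow essentially the same route as the paper: dualize the degreewise presentation of the cokernel (resp.\ the copresentation of the kernel), and use the finiteness hypothesis to replace the infinite product $\prod_{p''}\cN_{p''}$ by a finite sub-product over $\cP'(p,\cN)$ so that its vector-space dual becomes a direct sum and the image of the dualized map agrees with the sum of the images of the individual structure maps.  Your observation that a priori only the containment
$$
  \sum_{q \prec p} \image\bigl((\cN^\vee)_q \to (\cN^\vee)_p\bigr)
  \ \subseteq\
  \bigl((\posoc\cN)_p\bigr)^{\!\perp}
$$
holds, and that this is exactly where the hypothesis enters, is the same point the paper is making (tersely) when it says a finite sub-product must ``suffice to compute the kernel.''

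Where you genuinely diverge is the $\cP^\op$-finite half of item~(2): you deduce it from item~(1) by applying Lemma~\ref{l:vee-vee} twice to the $\cP$-finite modules $\cN^\vee$ and $\cptop(\cN^\vee)$, whereas the paper runs the same direct degreewise computation for both finiteness hypotheses, noting that finite-dimensionality of each $\cN_p$ also forces the descending intersection of kernels to stabilize at a finite stage.  Your double-dual argument is cleaner in that case and avoids repeating the computation, at the cost of not unifying the two hypotheses; the paper's single argument makes clear that both conditions play the identical role of finitizing the product.  Either way the content is the same, and the final ``swap $\cP$ and $\cP^\op$'' clause follows formally in both treatments.
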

\begin{proof}
View the $\cP$-module~$\cN$ as a diagram of vector spaces indexed
by~$\cP$.  Tensoring with~$\kk$ in Definition~\ref{d:topc} takes each
vector space~$\cN_p$ to the cokernel of the homomorphism
$\bigoplus_{p' \prec p} \cN_{p'} \to \cN_p$ induced by the maps going
up in~$\cP$ from~$p'$ to~$p$.  Let $L_p$ be the image in~$\cN_p$
of~$\bigoplus_{p' \prec p} \cN_{p'}$.  The vector space dual $(-)^*$
of the surjection $\cN_p \onto \cN_p/L_p$ is the kernel of the
homomorphism $\prod_{p' \succ p} \cN_{p'}^* \from \cN_p^*$, where $p'
\succ p$ in the partial order on $\cP^\op$ here.  This kernel is
$\Hom_{\cP^\op}(\kk,\cN^\vee)_p$, proving the first equation by
Definition~\ref{d:socc}.

The second equation is similar: $\Hom_{\cP^\op}(\kk,-)$ takes each
vector space $\cN_p$ in the diagram $\cN$ indexed by~$\cP^\op$ to the
kernel of the homomorphism $\prod_{p \prec p'} \cN_{p'} \from \cN_p$.
If a finite sub-product---over a subset $\cP'(p,\cN)$, say---suffices
to compute the kernel, as the $\cP^\op$-finite or locally finite
conditions guarantee, then the vector space dual of the kernel is the
cokernel of the homomorphism $\bigoplus_{p' \in \cP'(p,\cN)}
(\cN_{p'})^* \to (\cN_p)^*$ induced by the maps going up in~$\cP$
from~$p'$ to~$p$ in $\cN^\vee$.
\end{proof}

\subsection{Closed generator functors along faces}\label{b:closed-gen-along}\mbox{}

\medskip
\noindent
Closed generators along faces of partially ordered abelian groups make
sense just as closed cogenerators along faces do.  This section makes
the relevant definitions and explores the duality between closed
generators and cogenerators along faces
(Theorem~\ref{t:topc=socc^vee}).  The main complication, when it comes
to how to think about these things, is that generators along faces are
detected not by localization but by the Matlis dual operation in
Example~\ref{e:dual-of-localization}, which is likely unfamiliar (and
is surely less familiar than localization).  An element in the
following can be thought of as an inverse limit of elements of~$\cM$
taken along the negative of the face~$\rho$.  This is Matlis dual to the
construction of the localization~$\cM_\rho$ as a direct limit.

\begin{defn}\label{d:cmr}
Fix a face~$\rho$ of a partially ordered
group~$\cQ$ and a $\cQ$-module~$\cM$.~~Set
$$
  \cmr = \hhom_\cQ\bigl(\kk[\cQ_+]_\rho,\cM\bigr).
$$
\end{defn}

The following is Matlis dual to
Definition~\ref{d:socct}.\ref{i:global-socc-tau}; see
Theorem~\ref{t:topc=socc^vee}.  Duals for the rest of
Definition~\ref{d:socct} are omitted for reasons detailed in
Remarks~\ref{r:global-closed-gen} and~\ref{r:no-local-top}.

\begin{defn}\label{d:topct}
Fix a partially ordered abelian group~$\cQ$, a face~$\rho$, and a
$\cQ$-module~$\cM$.  The \emph{closed generator functor along~$\rho$}
takes $\cM$ to its \emph{closed top along~$\rho$}:
$$
  \topcr\cM
  =
  \bigl(\kk[\rho]\otimes_\cQ\cM\bigr){}^{\rho\!}\big/\rho.
$$
\end{defn}

\begin{example}\label{e:topct}
When $\cQ = \RR^2$ and $\rho$ is the face of~$\RR^2_+$ along the
$x$-axis, the module $\cmr =
\Hom_{\RR^2}\bigl({\pur\kk[\RR^2_+]_\rho}, {\blu \cM}\bigr)$ for the
depicted module~$\blu \cM$ is
$$
\Hom\left(
  \kk\!
  \left[
  \begin{array}{@{}c@{}}\includegraphics[height=20mm]{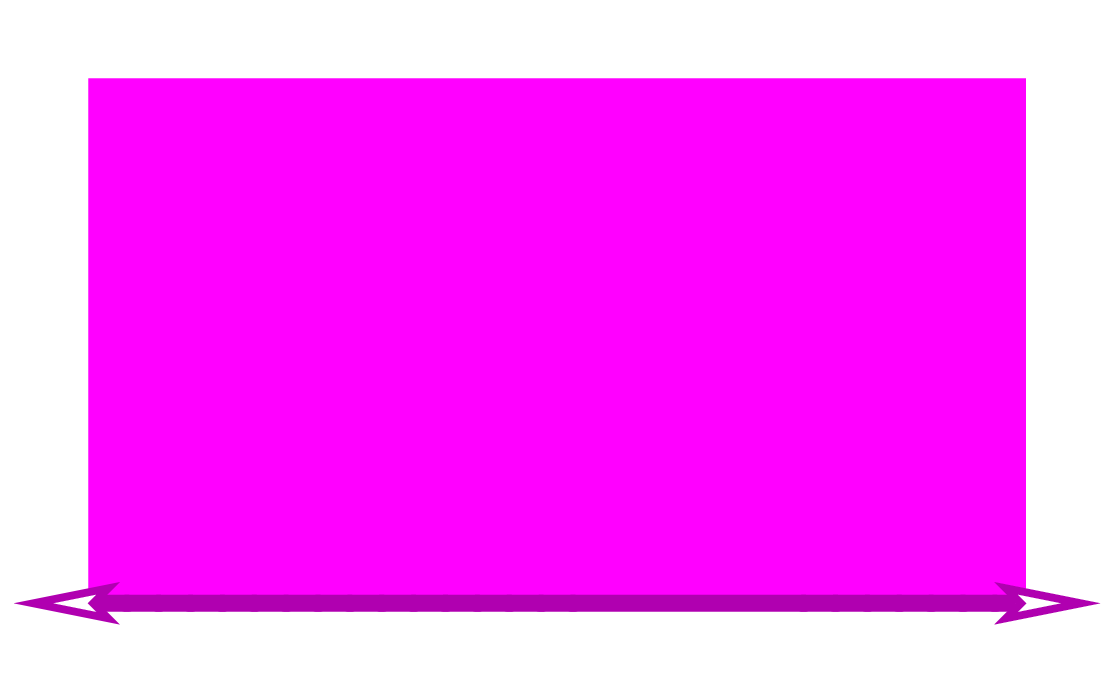}\end{array}
  \right]
\!,\
  \kk\!
  \left[
  \begin{array}{@{}c@{}}\includegraphics[height=20mm]{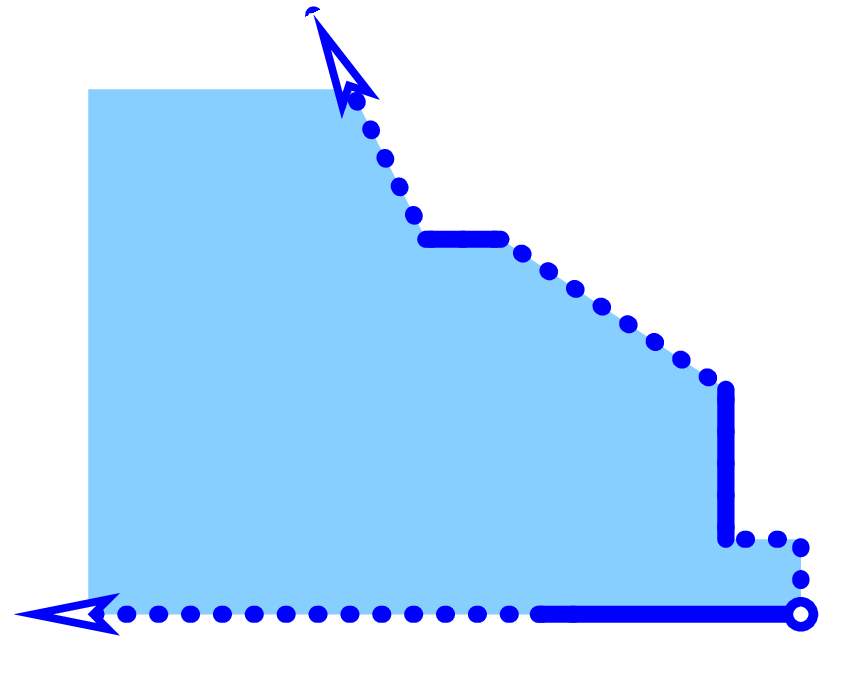}\end{array}
  \right]
\right)
\ \,=\,\
  \kk\!
  \left[
  \begin{array}{@{}c@{}}\includegraphics[height=20mm]{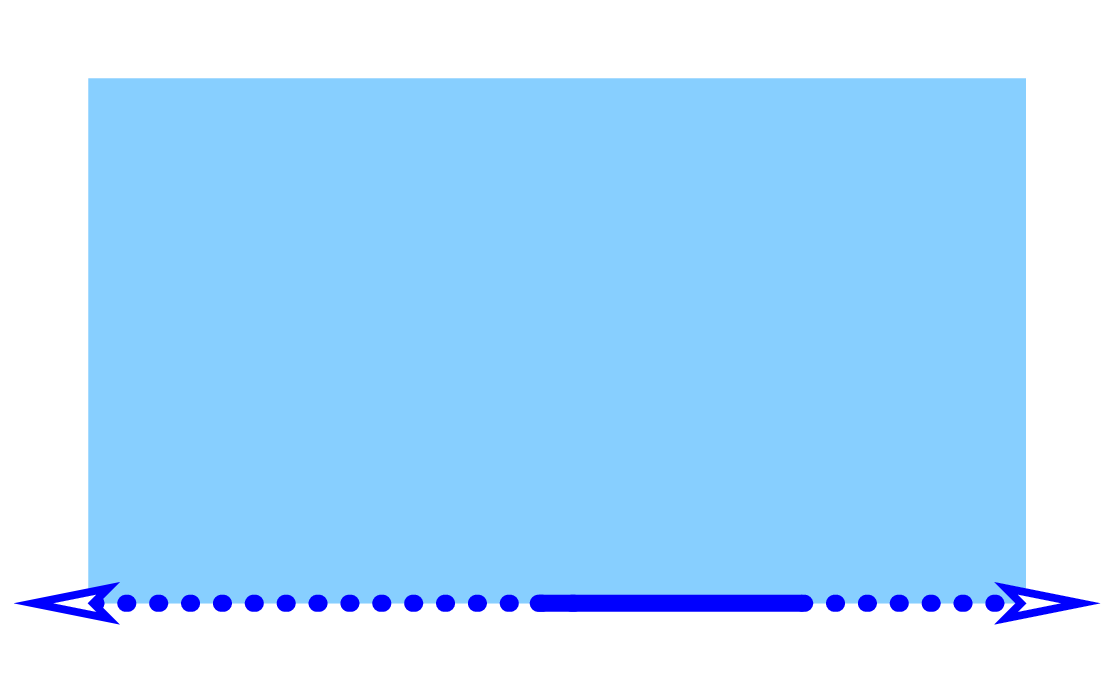}\end{array}
  \right].
$$
The effect is push the jagged right-hand boundary off to~$+\infty$.
It is interesting to note that the bottom edge of~$\blu \cM$ does
become homogenized in~$\cmr$, because there is no way to place the
closed purple upper half plane onto the blue module (either one,
actually, though $\blu \cM$ with the jagged upper boundary is meant
here).  Indeed, the dotted left-pointing horizontal ray has no
location to place the solid left-pointing boundary ray of the half
plane.  In contrast, in any given homomorphism
${\pur\kk[\RR^2_+]_\rho} \to {\blu \cM}$, the right-pointing solid
purple ray goes to~$0$ once it exits the blue region.

The tensor product $\kk[\rho] \otimes_{\RR^2} {\blu\cM}$ is nonzero
only on the solid blue segment along the bottom edge of~$\blu \cM$.
Therefore $\bigl(\kk[\rho] \otimes_{\RR^2} {\blu\cM}\bigr)^\rho = 0$.
Thus, while a nonzero homomorphism ${\pur\kk[\cQ_+]_\rho} \to
{\blu\cM}$ detects any ``infinitely backward-extending'' portions
of~$\blu\cM$, the closed top only detects the ``bottom-justified''
such portions.
\end{example}

\begin{remark}\label{r:global-closed-gen}
The notion of global closed cogenerator has a Matlis dual, but since
the dual of an element---equivalently, a homomorphism
from~$\kk[\cQ_+]$---is not an element, the notion of closed generator
is not Matlis dual to a standard notion related to socles.
A~generator along a face~$\rho$ can be defined as an element of~$\cmr$
that is not a multiple of any generator of lesser degree, but making
this precise requires care regarding what ``degree of generator'' and
``lesser'' mean.
\end{remark}

\begin{remark}\label{r:no-local-top}
The notion of local socle has a Matlis dual, but it is not in any
sense a local top, because localization does not Matlis dualize to
localization (Example~\ref{e:dual-of-localization}).  Instead, Matlis
dualizing the local socle yields a functor $\kk \otimes_{\qzr} \cmr$
that surjects onto~$\topcr\cM$ by the Matlis dual of
Proposition~\ref{p:local-vs-global}.  Local socles found uses in
proofs here and there, such as Corollary~\ref{c:at-most-one},
Proposition~\ref{p:sigma-nbd-cogen}, Corollary~\ref{c:soc(coprimary)},
Lemma~\ref{l:nearby}, and Corollary~\ref{c:soc(coprimary)'}, via
Proposition~\ref{p:local-vs-global}.  But since Matlis duals of
statements hold regardless of their proofs, given appropriate
finiteness conditions (Definition~\ref{d:infinitesimally-Q-finite}),
local socles and their Matlis~duals have no further use in this paper.
\end{remark}

In the next lemma, a prerequisite to duality of closed socles and
tops, a localization of~$\cN$ along~$\rho$ on the left-hand side is
hiding in the quotient-restriction
(Definition~\ref{d:quotient-restriction}).

\begin{lemma}\label{l:N/tau-dual}
For any module~$\cN$ over a partially ordered abelian group~$\cQ$ and
face~$\rho$,
$$
  (\cnr)^\vee = (\cN^\vee)^\rho/\rho.
$$
If $\cN\!$ is $\cQ$-finite, then
$$
  (\cN^\vee)/\rho = (\cN^\rho/\rho)^\vee.
$$
\end{lemma}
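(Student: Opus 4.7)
The plan is to prove the first identity by a direct computation of both sides in each $\qzr$-degree, and to deduce the second identity by applying the first with $\cN^\vee$ in place of $\cN$ and invoking Lemma~\ref{l:vee-vee} twice.

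For the first identity, fix a lift $q \in \cQ$ of $\wt q \in \qzr$. On the left, Lemma~\ref{l:quotient-restriction} identifies $(\cnr)_{-\wt q}$ with $\dirlim_{t \in \rho} \cN_{-q + t}$, and the standard fact that vector-space duality turns a direct limit into an inverse limit of duals then gives
\[
  \bigl((\cnr)^\vee\bigr)_{\wt q}
  = \Hom_\kk\bigl((\cnr)_{-\wt q},\kk\bigr)
  = \invlim_{t \in \rho}(\cN^\vee)_{q-t}.
\]
On the right, Definition~\ref{d:cmr} unpacks $(\cN^\vee)^\rho_a$ as $\Hom_\cQ(\kk[a + \cQ_+]_\rho,\cN^\vee)$. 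Since $\rho$ is a face, $\cQ_+ \cap \ZZ\rho = \rho$, so $\cQ_+ + \ZZ\rho = \cQ_+ - \rho$ and the set $a - \rho$ is cofinal from below in $a + \cQ_+ + \ZZ\rho$; hence such a homomorphism is determined by its compatible system of values on the degrees $a - t$ for $t \in \rho$, yielding $(\cN^\vee)^\rho_a = \invlim_{t \in \rho}(\cN^\vee)_{a-t}$. The module $(\cN^\vee)^\rho$ is already $\rho$-invariant (multiplication by $\xx^t$ for $t \in \rho$ is invertible there), so its quotient-restriction merely identifies the $\cQ$-graded components within each $\ZZ\rho$-coset, giving $\bigl((\cN^\vee)^\rho/\rho\bigr)_{\wt q} = (\cN^\vee)^\rho_q$, which matches the left-hand side.

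For the second identity, I apply the first identity with $\cN^\vee$ in place of $\cN$ to obtain $\bigl((\cN^\vee)/\rho\bigr)^\vee = \bigl((\cN^\vee)^\vee\bigr)^\rho/\rho$; the $\cQ$-finiteness hypothesis then lets me invoke Lemma~\ref{l:vee-vee} to replace $(\cN^\vee)^\vee$ by $\cN$, yielding $\bigl((\cN^\vee)/\rho\bigr)^\vee = \cN^\rho/\rho$. A final Matlis duality, together with another application of Lemma~\ref{l:vee-vee} to the components of $\cN^\vee/\rho$, delivers the stated identity. The main step requiring care is the cofinality argument identifying $(\cN^\vee)^\rho_a$ with the claimed inverse limit; this rests on the face property $\cQ_+ \cap \ZZ\rho = \rho$. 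A secondary subtlety is the finite-dimensionality needed for the double-dualization move in the second identity, which is intended to be supplied by the $\cQ$-finiteness hypothesis on~$\cN$.
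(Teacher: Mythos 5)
Your proof is correct and follows the paper's own route: the paper obtains the first identity by combining Example~\ref{e:dual-of-localization} (the Matlis dual of a localization is $\hhom$ from the localization) with the observation that quotient-restriction commutes with Matlis duality on modules already localized along~$\rho$, and your degree-by-degree computation of $\bigl((\cnr)^\vee\bigr)_{\wt q}$ and $\bigl((\cN^\vee)^\rho/\rho\bigr)_{\wt q}$ as the same inverse limit simply verifies those two facts directly rather than citing them. Your derivation of the second identity --- substitute $\cN^\vee$ into the first and double-dualize via Lemma~\ref{l:vee-vee} --- is verbatim the paper's argument, including the reliance on finite-dimensionality of graded components at the final dualization step, which you correctly single out as the delicate point.
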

\begin{proof}
This is Example~\ref{e:dual-of-localization} plus the observation that
quotient-restriction along~$\rho$ commutes with Matlis duality on
modules that are already localized along~$\rho$ as can be seen
directly from Definitions~\ref{d:matlis}
and~\ref{d:quotient-restriction}.  The detailed calculation goes like
this:
$$
  (\cnr)^\vee
  = (\cN_\rho/\rho)^\vee
  = (\cN_\rho)^\vee\!/\rho
  = (\cN^\vee)^\rho/\rho.
$$
To derive the second displayed equation, use Lemma~\ref{l:vee-vee} to
replace $\cN$ by~$\cN^\vee$ in the first equation, and then use
Lemma~\ref{l:vee-vee} again to take the Matlis duals of both sides.
\end{proof}

\begin{thm}\label{t:topc=socc^vee}
For a module~$\cM$ over a partially ordered abelian group~$\cQ$,
\begin{enumerate}
\item\label{i:top-then-dual}%
$(\topcr\cM)^\vee = \socct[\rho](\cM^\vee)$ if\/
$\kk[\rho]\otimes_\cQ\cM$ is $\cQ$-finite, and
\item\label{i:dual-then-top}%
$\topcr(\cM^\vee) = (\socct[\rho]\cM)^\vee$ if $\cM$ is $\cQ$-finite.
\end{enumerate}
\end{thm}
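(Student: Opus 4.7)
The plan is to derive both equations by routine manipulation of three ingredients already in hand: the Matlis--dual interchange of Lemma~\ref{l:N/tau-dual}, the Hom--tensor adjunction $(\cA\otimes_\cQ\cB)^\vee \cong \hhom_\cQ(\cA,\cB^\vee)$ of Example~\ref{e:matlis}, and double-duality $\cA^{\vee\vee}\cong\cA$ on $\cQ$-finite modules (Lemma~\ref{l:vee-vee}). Together these let one freely interconvert expressions built from $\kk[\rho]\otimes_\cQ-$, $(-)^\rho = \hhom_\cQ(\kk[\cQ_+]_\rho,-)$, quotient-restriction $-/\rho$, and Matlis~duality, so the only content is to pick an order of manipulations under which the finiteness hypothesis in each case is exactly what is granted.

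For part~\ref{i:top-then-dual}, I would dualize the definition $\topcr\cM = (\kk[\rho]\otimes_\cQ\cM)^\rho/\rho$ directly. Setting $\cN=\kk[\rho]\otimes_\cQ\cM$, the hypothesis says $\cN$ is $\cQ$-finite, so the second equation of Lemma~\ref{l:N/tau-dual} rewrites $(\cN^\rho/\rho)^\vee$ as $\cN^\vee/\rho$. Example~\ref{e:matlis} then identifies $\cN^\vee = (\kk[\rho]\otimes_\cQ\cM)^\vee$ with $\hhom_\cQ(\kk[\rho],\cM^\vee)$, and the quotient-restriction $\hhom_\cQ(\kk[\rho],\cM^\vee)/\rho$ is $\socct[\rho](\cM^\vee)$ by Definition~\ref{d:socct}.\ref{i:global-socc-tau}.

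For part~\ref{i:dual-then-top}, rather than invoking part~\ref{i:top-then-dual} and double-dualizing (which would require separately checking that $\topcr(\cM^\vee)$ is $\cQ$-finite), I would run the argument from the tensor side using only the \emph{unconditional} first equation of Lemma~\ref{l:N/tau-dual}. The key preliminary identification is $\kk[\rho]\otimes_\cQ\cM^\vee \cong (\hhom_\cQ(\kk[\rho],\cM))^\vee$: applying Example~\ref{e:matlis} to the pair $(\kk[\rho],\cM^\vee)$ and using $\cM^{\vee\vee}\cong\cM$ (valid since $\cM$ is $\cQ$-finite) gives $\hhom_\cQ(\kk[\rho],\cM) \cong (\kk[\rho]\otimes_\cQ\cM^\vee)^\vee$, and one more application of Lemma~\ref{l:vee-vee} flips the dual, valid because $\kk[\rho]\otimes_\cQ\cM^\vee \cong \cM^\vee/\mm_\rho\cM^\vee$ is degree-wise a quotient of the $\cQ$-finite module~$\cM^\vee$. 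Substituting into $\topcr(\cM^\vee)=(\kk[\rho]\otimes_\cQ\cM^\vee)^\rho/\rho$ produces $\bigl((\hhom_\cQ(\kk[\rho],\cM))^\vee\bigr){}^\rho/\rho$, and the first equation of Lemma~\ref{l:N/tau-dual} applied with $\cN=\hhom_\cQ(\kk[\rho],\cM)$ identifies this with $(\hhom_\cQ(\kk[\rho],\cM)/\rho)^\vee = (\socct[\rho]\cM)^\vee$.

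The only real obstacle is bookkeeping which $\cQ$-finiteness is needed for each appeal to Lemma~\ref{l:vee-vee}. Choosing to start the proof of~\ref{i:dual-then-top} on the tensor side rather than inferring it from~\ref{i:top-then-dual} is precisely what sidesteps needing $\cQ$-finiteness of the output; only finiteness of $\cM$ and of the degree-wise quotient $\kk[\rho]\otimes_\cQ\cM^\vee$ is required, the latter being automatic.
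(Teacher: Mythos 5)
Your proposal is correct and follows essentially the same route as the paper: part~\ref{i:top-then-dual} is the identical chain (second equation of Lemma~\ref{l:N/tau-dual} applied to $\kk[\rho]\otimes_\cQ\cM$, then Example~\ref{e:matlis}), and part~\ref{i:dual-then-top} is the paper's chain traversed in the opposite direction, hinging on the same identification $\hhom_\cQ\bigl(\kk[\rho],\cM\bigr){}^\vee \cong \kk[\rho]\otimes_\cQ\cM^\vee$ via Example~\ref{e:matlis}, double duality, and the unconditional first equation of Lemma~\ref{l:N/tau-dual}. Your explicit remark that $\kk[\rho]\otimes_\cQ\cM^\vee$ is degree-wise a quotient of~$\cM^\vee$, hence $\cQ$-finite, just spells out a finiteness check the paper leaves implicit in the phrase ``by Example~\ref{e:matlis} and $\cQ$-finiteness of~$\cM$.''
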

\begin{proof}
The two are similar, but to indicate why the finitenesses must be
assumed, both are written~out.  The first and last lines of each half
are by
Definitions~\ref{d:topct}~and~\ref{d:socct}.\ref{i:global-socc-tau}:
\begin{align*}
(\topcr\cM)^\vee
  &=
  \bigl((\kk[\rho]\otimes_\cQ\cM)^\rho/\rho\bigr){}^\vee
\\&=
  \bigl(\kk[\rho]\otimes_\cQ\cM\bigr){}^\vee\!\big/\rho
  &&\hspace{-3ex}
    \text{by Lemma~\ref{l:N/tau-dual} and~$\cQ$-finiteness of }
    \kk[\rho]\!\otimes_\cQ\!\cM\hspace{-3ex}
\\&=
  \hhom_\cQ\bigl(\kk[\rho],\cM^\vee\bigr)/\rho
  &&\hspace{-3ex}
    \text{by Example~\ref{e:matlis}}
\\&=
  \socct[\rho](\cM^\vee),
\\[1ex]
\hspace{-3ex}\text{and}\quad
(\socct[\rho]\cM)^\vee
  &=
  \bigl(\hhom_\cQ\bigl(\kk[\rho],\cM\bigr)/\rho\bigr){}^\vee
\\*&=
  \bigl(\hhom_\cQ\bigl(\kk[\rho],\cM\bigr){}^\vee\bigr){}^{\rho\!}\big/\rho\!\!\!
  &&\text{by Lemma~\ref{l:N/tau-dual}}
%
\\*&=
  \bigl(\kk[\rho]\otimes_\cQ\cM^\vee\bigr){}^{\rho\!}\big/\rho
  &&\text{by Example~\ref{e:matlis} and~$\cQ$-finiteness of }\cM
\\*&=
  \topcr(\cM^\vee).
  &&\qedhere
\end{align*}
\end{proof}

\begin{remark}
A blanket hypothesis that $\cM$ be $\cQ$-finite would suffice for both
parts of Theorem~\ref{t:topc=socc^vee}, because tensoring the
surjection $\kk[\cQ] \onto \kk[\rho]$ with~$\cM$ yields a surjection
$\cM \onto \kk[\rho]\otimes_\cQ\nolinebreak \cM$,
but the additional generality may be useful.
\end{remark}

\subsection{Generator functors over real polyhedral groups}\label{b:gen}\mbox{}

\medskip
\noindent
The rest of this section passes from closed generators to arbitrary
generators, carrying out in one subsection the Matlis dual of the
non-closed socle theory in Sections~\ref{b:soc} and~\ref{b:soc-along}.
This brevity is possible because, again, the proofs are by applying
Matlis duality to results about socles rather than by dualizing
proofs, as exemplified by the main result of the section,
Theorem~\ref{t:top=socvee}.

To begin, here is the Matlis dual of
Definition~\ref{d:upper-closure-tau}, using
Definition~\ref{d:lower-closure}.

\begin{defn}\label{d:lower-closure-rho}
For a face~$\rho$ of real polyhedral group, set $\nabro =
(\nabr)^\op$, the open star of~$\rho$ (Example~\ref{e:nabla}) with the
partial order opposite to Definition~\ref{d:upper-closure-tau}, so
$$
  \xi \preceq \eta \text{ in }\nabro\text{ if }\xi \subseteq \eta.
$$
The \emph{lower closure functor
along~$\rho$} takes~$\cM$ to the $\cQ \times \nabro$-module $\lr\cM =
\bigoplus_{\xi\in\nabro}\lx\cM$.
\end{defn}

\begin{prop}\label{p:nrtop}
Fix a face~$\rho$ of a real polyhedral group~$\cQ$.  The Matlis
dual~$\cN^\vee$ over~$\cQ$ of any module~$\cN$ over $\cQ\times\nabro$
is naturally a module over $\cQ\times\nabr$ (without altering degrees
in $\nabro = \nabr$).  Moreover,
\begin{enumerate}
\item\label{i:dtop-then-dual}%
$(\nrtop\cN)^\vee = \nrsoc(\cN^\vee)$ for any module $\cN$ over
$\cQ\times\nabro$, and
\item\label{i:dual-then-dtop}%
$\nrtop(\cN^\vee) = (\nrsoc\cN)^\vee$ for any module $\cN$ over
$\cQ\times\nabr$,
\end{enumerate}
where the Matlis duals are taken over~$\cQ$.  All of these hold with
$\nabr$ and~$\nabro$ swapped.
\end{prop}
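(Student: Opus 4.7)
The plan is to argue pointwise at each degree $(a,\xi) \in \cQ\times\nabr$, essentially specializing Proposition~\ref{p:dual-topc} to $\cP = \nabro$ (with $\cP^\op = \nabr$). The decisive observation is that $\nabr$ is finite, being a subposet of the face lattice of the polyhedral cone~$\cQ_+$. Consequently the colimits and limits defining $\nrtop$ and $\nrsoc$ range over only finitely many faces; finite direct sums and finite direct products of vector spaces coincide and are identified under $\kk$-duality, so kernels and cokernels swap cleanly with no extra finiteness hypothesis on~$\cN$. This is why both parts hold unconditionally, unlike in Proposition~\ref{p:dual-topc}.\ref{i:dual-then-top}.

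Before computing socles and tops, I would justify the opening structural claim. A structure map $\cN_{(a,\xi)} \to \cN_{(a',\xi')}$, with $a \preceq a'$ in~$\cQ$ and $\xi \subseteq \xi'$ (so $\xi \preceq \xi'$ in~$\nabro$), $\kk$-dualizes to a map $\cN_{(a',\xi')}^* \to \cN_{(a,\xi)}^*$; re-indexing via the self-duality $q \mapsto -q$ of~$\cQ$ (Example~\ref{e:self-dual}) turns this into a map $(\cN^\vee)_{(-a',\xi')} \to (\cN^\vee)_{(-a,\xi)}$, which is a structure map for $\cQ \times \nabr$ since $-a' \preceq -a$ and $\xi' \preceq \xi$ in~$\nabr$. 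The underlying set of faces indexing the grading is unchanged; only the partial order on it flips.

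With this in place, part~(1) follows from the pointwise identity that $(\nrtop\cN)_{(a,\xi)}$ is the cokernel of the finite-direct-sum map $\bigoplus_{\xi' \subsetneq \xi} \cN_{(a,\xi')} \to \cN_{(a,\xi)}$, whose $\kk$-dual (together with the self-duality of~$\cQ$) is the kernel of $(\cN^\vee)_{(a,\xi)} \to \bigoplus_{\xi' \subsetneq \xi} (\cN^\vee)_{(a,\xi')}$; unwinding Definition~\ref{d:socc} over the finite poset~$\nabr$, and recalling that moving up in~$\nabr$ means passing to a strictly smaller face, identifies this kernel with $\bigl(\nrsoc(\cN^\vee)\bigr)_{(a,\xi)}$. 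Part~(2) is the mirror assertion: a kernel of a finite direct product dualizes to a cokernel of a finite direct sum, yielding $\nrtop(\cN^\vee)_{(a,\xi)}$. Naturality in~$\cN$ is automatic from functoriality of $(-)^\vee$, $\nrtop$, and $\nrsoc$, and the ``$\nabr$ and $\nabro$ swapped'' claim is the same pair of identities read in the reverse direction; it holds because the roles of tops (cokernels) and socles (kernels) in the argument are symmetric and $(\nabro)^\op = \nabr$. The only real care needed is bookkeeping with the two opposite partial orders on the same underlying set of faces; no substance beyond Proposition~\ref{p:dual-topc} enters.
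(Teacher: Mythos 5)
Your proposal is correct and follows essentially the same route as the paper: the paper's proof is the adjointness identity $(\kk\otimes_{\nabro}\cN)^\vee = \hhom_{\nabr}(\kk,\cN^\vee)$ for part~(1) and an appeal to Proposition~\ref{p:dual-topc} together with the finiteness of~$\nabro$ for part~(2), which is exactly the kernel/cokernel duality you carry out pointwise. Your explicit identification of the finiteness of the face poset as the reason no hypothesis on~$\cN$ is needed (finite products equal finite sums under $\kk$-duality) is precisely the point the paper compresses into ``uses finiteness of~$\nabro$ via Proposition~\ref{p:dual-topc}.''
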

\begin{proof}
Matlis duality over~$\cQ$ reverses the arrows in the $\nabro$-module
structure on~$\cN$, making $\cN^\vee$ into a module
over~$\cQ\times\nabr$.  An adjointness calculation then yields
\begin{align*}
(\nrtop\cN)^\vee
  &=
  (\kk\otimes_{\nabro}\cN)^\vee
\\&=
  \hhom_{\nabr}(\kk,\cN^\vee)
\\&=
  \nrsoc(\cN^\vee).
\end{align*}
%
The other adjointness is similar, but it uses finiteness of~$\nabro$
via Proposition~\ref{p:dual-topc}:
\begin{align*}
\nrtop(\cN^\vee)
  &=
  \kk \otimes_{\nabro}(\cN^\vee)
\\&=
  \hhom_{\nabr}(\kk,\cN)^\vee
\\*&=
  (\nrsoc \cN)^\vee.
  \qedhere
\end{align*}
%
%
%
%
\end{proof}

\begin{cor}\label{c:lr-vs-dr}
For a face~$\rho$ over a real polyhedral group~$\cQ$, the lower
closure is Matlis dual over~$\cQ$ to the upper closure: as modules
over $\cQ\times\nabro$,
\begin{enumerate}
\item%
$\lr(\cM^\vee) = (\dr\cM)^\vee$ for all $\cQ$-modules~$\cM$ (see
Definitions~\ref{d:lower-closure-rho}
and~\ref{d:upper-closure-tau}), and
\item%
$(\lr\cM)^\vee = \dr(\cM^\vee)$ if $\cM$ is infinitesimally
$\cQ$-finite.
\end{enumerate}
\end{cor}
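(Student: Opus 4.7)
The plan is to deduce both items by bundling the face-by-face Matlis duality isomorphisms of Lemma~\ref{l:lx-vee} into compatible $\cQ \times \nabro$-module isomorphisms. Since $\nabro$ is finite (a subset of the finite face poset $\cfq$) and Matlis duality over $\cQ$ is additive and commutes with finite direct sums by Definition~\ref{d:matlis}, summing over $\xi \in \nabro$ converts face-by-face $\cQ$-module isomorphisms into $\cQ$-graded isomorphisms of the bundled objects. Concretely, for item~1, apply Lemma~\ref{l:lx-vee}.\ref{i:all} at each face $\xi \in \nabro$ to obtain $\lx(\cM^\vee) \cong (\dx\cM)^\vee$, and then
\begin{equation*}
\lr(\cM^\vee) = \bigoplus_{\xi \in \nabro} \lx(\cM^\vee) \cong \bigoplus_{\xi \in \nabro}(\dx\cM)^\vee = \Bigl(\bigoplus_{\xi \in \nabr} \dx\cM\Bigr){}^\vee = (\dr\cM)^\vee.
\end{equation*}
For item~2, the identical argument using Lemma~\ref{l:lx-vee}.\ref{i:inf} in place of \ref{i:all} does the job, since each $\lx\cM$ inherits $\cQ$-finiteness from the assumption that $\del\cM = \bigoplus_\xi \lx\cM$ is $\cQ$-finite, and the lemma then delivers $(\lx\cM)^\vee = \dx(\cM^\vee)$.

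The remaining task is to verify that these $\cQ$-graded isomorphisms respect the poset structure, which is where the $\nabr$ versus $\nabro$ bookkeeping from Proposition~\ref{p:nrtop} enters. For $\xi \subseteq \eta$ (that is, $\xi \preceq \eta$ in $\nabro$), the structure map on $\lr(\cM^\vee)$ is the natural map $\lx(\cM^\vee) \to \del^\eta(\cM^\vee)$ from Lemma~\ref{l:natural-dual}; on $(\dr\cM)^\vee$, the structure map is the transpose of the $\nabr$-structure map $\del^\eta\cM \to \dx\cM$ of Lemma~\ref{l:natural}, which points from $\eta$ to $\xi$ because $\eta \subseteq \xi$ is equivalent to $\eta \preceq \xi$ in $\nabr$; Matlis duality over $\cQ$ reverses it, yielding a map into the direction matching $\nabro$ as noted in Proposition~\ref{p:nrtop}. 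Compatibility under the isomorphisms of Lemma~\ref{l:lx-vee} then follows from the fact that, at each degree $\aa \in \cQ$, the identification
\begin{equation*}
(\dx\cM)_\aa^\vee = \bigl(\dirlim_{\aa' \in \aa - \xi^\circ} \cM_{\aa'}\bigr){}^\vee = \invlim_{\aa' \in \aa - \xi^\circ} (\cM^\vee)_{-\aa'} = (\lx(\cM^\vee))_{-\aa}
\end{equation*}
is natural in $\xi$: for $\xi \subseteq \eta$, the inclusion of shapes $\xi^\circ + \cQ_+ \subseteq \eta^\circ + \cQ_+$ guaranteed by Lemma~\ref{l:<<} makes the indexing set of the $\eta$-limit cofinal (after the appropriate sign flip) in that of the $\xi$-limit, and the universal properties of direct limits (before dualizing) and inverse limits (after dualizing) produce exactly the transition maps on both sides.

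The main obstacle is the orientation accounting for $\nabr$ and $\nabro$: one must be careful that Matlis duality over $\cQ$, while preserving the underlying $\cQ$-graded vector spaces of $\dr\cM$ and $\lr\cM$, reverses the arrows indexed by face inclusions, and that this reversal exactly matches the opposite partial order that distinguishes $\nabro$ from $\nabr$. Once this is unwound, no new technical input is needed beyond Lemma~\ref{l:lx-vee} and Proposition~\ref{p:nrtop}; the corollary is essentially a tautological assembly of the face-by-face duality across the finite poset~$\nabro$.
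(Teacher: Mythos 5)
Your proposal is correct and is essentially the paper's own proof, which consists of exactly the two ingredients you assemble: Lemma~\ref{l:lx-vee} applied face by face, plus the first part of Proposition~\ref{p:nrtop} to account for the arrow reversal that turns the $\nabr$-structure of $\dr\cM$ into the $\nabro$-structure after dualizing. One small slip in your naturality check: for $\xi \subseteq \eta$ the containment from Lemma~\ref{l:<<} goes the other way, namely $\eta^\circ + \cQ_+ \subseteq \xi^\circ + \cQ_+$ (equivalently $\eta^\circ \subseteq \xi^\circ + \cQ_+$), which is precisely what makes $\aa - \eta^\circ$ map into the index set of the $\xi$-colimit and yields the transition maps you need.
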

\begin{proof}
Lemma~\ref{l:lx-vee} plus the first part of Proposition~\ref{p:nrtop}.
\end{proof}

Next is the Matlis dual of Definition~\ref{d:kats}, using the
skyscraper modules~$\kbrx$ there, followed by the Matlis dual of
Definition~\ref{d:soct}.

\begin{defn}\label{d:kbrx}
Fix a partially ordered abelian group~$\cQ$, a face~$\rho$, and an
arbitrary commutative monoid~$\cP$.  Define a functor
$\kk[\rho]\otimes_{\cQ\times\cP}\ $ on modules~$\cN$ over $\cQ \times
\cP$ by
$$
  \kk[\rho]\otimes_{\cQ\times\cP}\cN =
  \bigoplus_{(\bb,\xi) \in \cQ\times\cP} \kbrx\otimes_{\cQ\times\cP}\cN.
$$
\end{defn}

\begin{defn}\label{d:topr}
Fix a real polyhedral group~$\cQ$, a face~$\rho$, and a
$\cQ$-module~$\cM$.  The \emph{generator functor along~$\rho$} takes
$\cM$ to its \emph{top along~$\rho$}: the $(\qrr\times\nabro)$-module
$$
  \topr\cM
  =
  \bigl(\kk[\rho]\otimes_{\cQ\times\nabro}\lr\cM\bigr){}^{\rho\!}\big/\rho.
$$
The $\nabro$-graded components of $\topr\cM$ are denoted by
$\topr[\xi]\cM$ for $\xi \in \nabro$.
\end{defn}


\begin{prop}\label{p:order-either}
Fix a real polyhedral group $\cQ$ and a module~$\cN$ such that
$\kk[\rho]\otimes_\cQ\cN$ is $\cQ$-finite.  The functors $\topcr\!$
and $\nrtop$ commute on~$\cN$.  In particular, if $\cM$ is a
$\cQ$-module such that $\kk[\rho]\otimes_\cQ\lr\cM$ is $\cQ$-finite
(e.g., if~$\cM$ is
$\cQ$-finite), then
$$
  \nrtop(\topcr\lr\cM)
  \cong
  \topr\cM
  =
  \topcr(\nrtop\lr\cM).
$$
\end{prop}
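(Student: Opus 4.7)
The plan is to dualize the proof of Proposition~\ref{p:either-order}, replacing the adjunction between $\hhom$ and $\otimes$ with the commutativity of tensor products in disjoint gradings, and replacing the role of Lemma~\ref{l:intersection-of-kernels} with the observation that $\nrtop$ commutes with the functor $(-)^\rho = \hhom_\cQ(\kk[\cQ_+]_\rho,-)$ on $\cQ$-finite modules.

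First I would record the purely formal natural isomorphism
$$\kbrx \otimes_{\cQ\times\nabro} \cN \cong \kk_\xi \otimes_{\nabro}\bigl(\kk[\bb+\rho]\otimes_\cQ\cN\bigr) \cong \kk[\bb+\rho]\otimes_\cQ\bigl(\kk_\xi\otimes_{\nabro}\cN\bigr),$$
which holds for any $(\cQ\times\nabro)$-module $\cN$ because tensor products in independent grading variables commute. Summing over $(\bb,\xi) \in \cQ\times\nabro$ as in Definition~\ref{d:kbrx} and invoking Definition~\ref{d:topc} yields
$$\kk[\rho]\otimes_{\cQ\times\nabro}\cN \cong \nrtop\bigl(\kk[\rho]\otimes_\cQ\cN\bigr) \cong \kk[\rho]\otimes_\cQ(\nrtop\cN).$$
Applying the quotient-restriction $(-)^\rho/\rho$ to the rightmost expression with $\cN = \lr\cM$ gives the right-hand equality $\topr\cM = \topcr(\nrtop\lr\cM)$ immediately, requiring no finiteness hypothesis.

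For the left-hand isomorphism, the same calculation gives
$$\topr\cM = \bigl(\nrtop(\kk[\rho]\otimes_\cQ\lr\cM)\bigr)^\rho\big/\rho
  \quad\text{and}\quad
  \nrtop(\topcr\lr\cM) = \nrtop\bigl((\kk[\rho]\otimes_\cQ\lr\cM)^\rho/\rho\bigr),$$
so the claim reduces to the commutation of $\nrtop$ with $(-)^\rho/\rho$ on the $(\cQ\times\nabro)$-module $\cK = \kk[\rho]\otimes_\cQ\lr\cM$. The augmentation quotient $(-)/\rho$ trivially commutes with $\nrtop$ because the two operations act on disjoint gradings and both are right-exact. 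The crux is whether $(-)^\rho = \hhom_\cQ(\kk[\cQ_+]_\rho,-)$ commutes with $\nrtop$. Writing $\cK = \bigoplus_{\xi\in\nabro}\cK_\xi$ and letting $L_\xi \subseteq \cK_\xi$ be the sum of images of the structure maps $\cK_{\xi'}\to\cK_\xi$ for $\xi'\prec\xi$, so that $(\nrtop\cK)_\xi = \cK_\xi/L_\xi$, the question becomes whether $\hhom_\cQ(\kk[\cQ_+]_\rho,-)$ sends each short exact sequence $0\to L_\xi\to \cK_\xi\to \cK_\xi/L_\xi\to 0$ to a short exact sequence. Lemma~\ref{l:matlis-pair} guarantees this on $\cQ$-finite modules; the hypothesis that $\cK$ is $\cQ$-finite forces $\cK_\xi$ and $L_\xi$ to be $\cQ$-finite as well, so the required exactness carries through degree by degree.

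The main obstacle is thus the interchange of $(-)^\rho$ with the $\nabro$-tensor, which is the Matlis dual of Lemma~\ref{l:intersection-of-kernels} and is precisely where the $\cQ$-finiteness hypothesis is indispensable, mirroring on the generator side the subtlety described in Remark~\ref{r:soc-vs-supp} on the cogenerator side.
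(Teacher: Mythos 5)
Your proposal is correct and follows essentially the same route as the paper: both arguments rest on the formal commutation of the $\cQ$- and $\nabro$-tensor factors, exactness of quotient-restriction (Lemma~\ref{l:exact-qr}) to move $/\rho$ past $\nrtop$, and Lemma~\ref{l:matlis-pair} to move $(-)^\rho=\hhom_\cQ(\kk[\cQ_+]_\rho,-)$ past the cokernel defining $\nrtop$, which is exactly where the $\cQ$-finiteness of $\kk[\rho]\otimes_\cQ\cN$ enters. Your explicit unpacking of the cokernel $\cK_\xi/L_\xi$ and the observation that one of the two equalities needs no finiteness are finer-grained than the paper's chain of identities but carry the same content.
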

\begin{proof}
This is Matlis dual to Proposition~\ref{p:either-order}, but to prove
it without an infinitesimal $\cQ$-finiteness restriction requires a
direct argument:
\begin{align*}
\nrtop(\topcr\cN)
  &=
  \kk\otimes_{\nabro}\!\bigl(\kk[\rho]\otimes_\cQ\cN\bigr){}^{\rho\!}\big/\rho
  &&\text{by Definitions~\ref{d:topc} and~\ref{d:topct}}
\\
  &=
  \kk\otimes_{\nabro}\!\bigl(\hhom\bigl(\kk[\cQ_+]_\rho,\kk[\rho]\otimes_\cQ\!\cN\bigr)\big/\rho\bigr)\!\!
  &&\text{by Definition~\ref{d:cmr}}
\\
  &=
  \bigl(\kk\otimes_{\nabro}\hhom\bigl(\kk[\cQ_+]_\rho,\kk[\rho]\otimes_\cQ\!\cN\bigr)\hspace{-.2ex}\bigr)\big/\rho\!\!
  &&\text{by Lemma~\ref{l:exact-qr}}
\\
  &=
  \bigl(\kk[\rho]\otimes_\cQ\cN\otimes_{\nabro}\kk\bigr){}^{\rho\!}\big/\rho
  &&\text{by Lemma~\ref{l:matlis-pair}}
\\
  &=
  \topcr(\nrtop\cN).
\end{align*}
The penultimate line is equal to
$\bigl(\kk[\rho]\otimes_{\cQ\times\nabro}\cN\bigr){}^{\rho\!}\big/\rho$,
and $\cN\hspace{-.2ex} = \lr\cM$ yields~$\topr\cM$.
\end{proof}

\begin{remark}\label{r:order-either}
The condition in Proposition~\ref{p:order-either} that
$\kk[\rho]\otimes_\cQ\lr\cM$ be $\cQ$-finite is weaker than $\cM$
being $\cQ$-finite.  Roughly, in each degree the former counts
generators supported on~$\rho$ while the latter takes into account all
elements, generator or otherwise.  The difference is visible when $\cQ
= \RR$ and $\rho = \0$, in which case $\kk[\rho] = \kk$.  The module
$\cM = \bigoplus_{\alpha \in \RR} \kk[\alpha + \RR_+]$ with one
generator in each real degree~$\alpha$ yields a module $\kk
\otimes_\RR \cM$ that is $\RR$-finite, since it has dimension~$1$ in
every graded degree, but $\cM$ itself has uncountable dimension in
each graded degree.
\end{remark}

\noindent
\parbox{\linewidth}{%
\begin{thm}\label{t:top=socvee}
Over a real polyhedral group, the generator functor along a
face~$\rho$ is Matlis dual to the cogenerator functor along~$\rho$:
if~$\cM$ is infinitesimally $\cQ$-finite, then
\begin{enumerate}
\item%
$\topr(\cM^\vee) = (\socr\cM)^\vee$ and
\item%
$(\topr\cM)^\vee = \socr(\cM^\vee)$.
\end{enumerate}
\end{thm}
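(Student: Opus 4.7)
The plan is to chain together three Matlis duality identities established earlier in the paper via the two-step decompositions of $\socr$ and $\topr$. Taking $\tau=\rho$ in Proposition~\ref{p:either-order} writes $\socr\cM = \nrsoc(\socct[\rho]\dr\cM)$, and Proposition~\ref{p:order-either} writes $\topr\cM = \nrtop(\topcr\lr\cM)$. The three duality ingredients to paste together are Corollary~\ref{c:lr-vs-dr} (exchanging the upper boundary $\dr$ with the lower boundary $\lr$), Theorem~\ref{t:topc=socc^vee} (exchanging the closed socle $\socct[\rho]$ with the closed top $\topcr$ along~$\rho$), and Proposition~\ref{p:nrtop} (exchanging the $\nabr$-socle $\nrsoc$ with the $\nabr$-top $\nrtop$).

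For Part~(ii) I would carry out the chain
\begin{align*}
(\topr\cM)^\vee
 &= \bigl(\nrtop(\topcr\lr\cM)\bigr)^\vee
   && \text{by Proposition~\ref{p:order-either}}\\
 &= \nrsoc\bigl((\topcr\lr\cM)^\vee\bigr)
   && \text{by Proposition~\ref{p:nrtop}}\\
 &= \nrsoc\bigl(\socct[\rho]((\lr\cM)^\vee)\bigr)
   && \text{by Theorem~\ref{t:topc=socc^vee}(i)}\\
 &= \nrsoc\bigl(\socct[\rho]\dr(\cM^\vee)\bigr)
   && \text{by Corollary~\ref{c:lr-vs-dr}(ii)}\\
 &= \socr(\cM^\vee)
   && \text{by Proposition~\ref{p:either-order}.}
\end{align*}
The finiteness checks reduce to $\kk[\rho]\otimes_\cQ\lr\cM$ being $\cQ$-finite (required by both Proposition~\ref{p:order-either} and Theorem~\ref{t:topc=socc^vee}(i)): under the hypothesis of infinitesimal $\cQ$-finiteness, $\lr\cM$ is a direct summand of $\del\cM$ and hence $\cQ$-finite, and tensoring with the cyclic $\kk[\cQ_+]$-module $\kk[\rho]$ is degreewise a quotient, so $\cQ$-finiteness is preserved. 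Corollary~\ref{c:lr-vs-dr}(ii) is precisely the place where infinitesimal $\cQ$-finiteness of~$\cM$ is used in its raw form.

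Part~(i) I would derive as a formal consequence of Part~(ii) applied to $\cM^\vee$ in place of~$\cM$: this gives $(\topr\cM^\vee)^\vee = \socr((\cM^\vee)^\vee) = \socr\cM$ by Lemma~\ref{l:vee-vee}, and dualizing once more delivers $\topr(\cM^\vee) = (\socr\cM)^\vee$ provided the intermediate modules are $\cQ$-finite. The main obstacle is the finiteness bookkeeping here: applying Part~(ii) to $\cM^\vee$ requires $\cM^\vee$ itself to be infinitesimally $\cQ$-finite, which by Lemma~\ref{l:lx-vee}.\ref{i:all} amounts to the upper boundary $\delta\cM$ being $\cQ$-finite, not merely the lower boundary $\del\cM$. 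For the modules most pertinent to the theory---those with finite downset hulls or upset covers, as in Example~\ref{e:infinitesimally-Q-finite}---both $\delta\cM$ and $\del\cM$ are $\cQ$-finite via Proposition~\ref{p:downset-upper-boundary} and its Matlis dual, so the argument goes through; thus the cleanest closing move is to observe that the infinitesimal $\cQ$-finiteness hypothesis is meant symmetrically in the intended scope of the theorem, or alternatively to verify Part~(i) by a parallel direct chain starting from $\socr\cM = \nrsoc(\socct[\rho]\dr\cM)$ after promoting the hypothesis on $\cM$ to include $\cQ$-finiteness of $\dr\cM$ needed for Theorem~\ref{t:topc=socc^vee}(ii).
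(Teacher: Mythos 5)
Your chain for Part~(ii) is exactly the paper's proof, line for line, with the same justifications. The divergence is in Part~(i). The paper does \emph{not} deduce (i) from (ii) by double dualization; it runs the mirror-image chain directly, starting from $(\socr\cM)^\vee = \bigl(\nrsoc(\socct[\rho]\dr\cM)\bigr)^\vee$ and invoking the \emph{other} half of each duality ingredient: Proposition~\ref{p:nrtop}.\ref{i:dual-then-dtop}, Theorem~\ref{t:topc=socc^vee}.\ref{i:dual-then-top}, and Corollary~\ref{c:lr-vs-dr}.\ref{i:all}. This is the whole point of those results being stated in two asymmetric halves: in the reverse chain the finiteness burden falls on $\cM$ and its upper boundary $\dr\cM$ rather than on $\cM^\vee$ and $\del(\cM^\vee)$, so no ``promotion'' of the hypothesis is needed --- in particular Corollary~\ref{c:lr-vs-dr}.\ref{i:all} holds for \emph{all} $\cQ$-modules, which is what lets the chain close where your double-dualization route stalls.

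The gap you correctly identify --- that applying Part~(ii) to $\cM^\vee$ requires $\cM^\vee$ to be infinitesimally $\cQ$-finite, i.e.\ $\delta\cM$ to be $\cQ$-finite, which does not follow from $\cQ$-finiteness of $\del\cM$ --- is real, and your proposal does not close it: you offer two exits (``the hypothesis is meant symmetrically'' or ``run the parallel direct chain after promoting the hypothesis'') without committing to or executing either. The first is not a proof, and the second concedes more than necessary. As written, Part~(i) of your argument is therefore incomplete. The fix is simply to write out the reverse chain as the paper does, being careful at each step to cite the half of Proposition~\ref{p:nrtop}, Theorem~\ref{t:topc=socc^vee}, and Corollary~\ref{c:lr-vs-dr} whose finiteness hypothesis (if any) is borne by the un-Matlis-dualized module.
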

\begin{proof}
\mbox{}
\vskip -6.5ex
\begin{align*}
\qquad\qquad\quad
(\topr\cM)^\vee
  &=
  \bigl(\nrtop(\topcr\lr\cM)\bigr){}^\vee
  &&\text{by Proposition~\ref{p:order-either}}\qquad\qquad\quad
\\&=
  \nrsoc\bigl((\topcr\lr\cM)^\vee\bigr)
  &&\text{by Proposition~\ref{p:nrtop}.\ref{i:dtop-then-dual}}
\\&=
  \nrsoc\bigl(\socct[\rho]\bigl((\lr\cM)^\vee\bigr)\bigr)
  &&\text{by Theorem~\ref{t:topc=socc^vee}.\ref{i:top-then-dual}}
\\&=
  \nrsoc\bigl(\socct[\rho]\dr(\cM^\vee)\bigr)
  &&\text{by Corollary~\ref{c:lr-vs-dr}.\ref{i:inf}}
\\&=
  \socr(\cM^\vee)
  &&\text{by Proposition~\ref{p:either-order},}
%
%
\\[1ex]
\qquad\qquad\text{and}\quad
(\socr\cM)^\vee
  &=
  \bigl(\nrsoc(\socct[\rho]\dr\cM)\bigr){}^\vee
  &&\text{by Proposition~\ref{p:either-order}}\qquad\qquad\quad
\\&=
  \nrtop\bigl((\socct[\rho]\dr\cM)^\vee\bigr)
  &&\text{by Proposition~\ref{p:nrtop}.\ref{i:dual-then-dtop}}
\\&=
  \nrtop\bigl(\topcr\bigl((\dr\cM)^\vee\bigr)\bigr)
  &&\text{by Theorem~\ref{t:topc=socc^vee}.\ref{i:dual-then-top}}
\\&=
  \nrtop\bigl(\topcr\lr(\cM^\vee)\bigr)
  &&\text{by Corollary~\ref{c:lr-vs-dr}.\ref{i:all}}
\\&=
  \topr(\cM^\vee)
  &&\text{by Proposition~\ref{p:order-either}.}\qedhere
\end{align*}
\end{proof}
}%

\section{Essential properties of tops}\label{s:tops}

Characterizing injective homomorphisms (Theorem~\ref{t:injection})
via socles admits a dual assertion characterizing surjections, in both
the real (Theorem~\ref{t:surjection-RR}) and discrete
(Theorem~\ref{t:surjection-ZZ}) settings.  Furthermore, notions
surrounding minimal primary decomposition and associated faces dualize
to secondary decomposition and attached faces
(Theorem~\ref{t:cover-M}), with corresponding concepts of density
(Definition~\ref{d:nearby^}) and a corresponding dense relaxation of
the essentiality of tops (Theorem~\ref{t:dense-top}).

To start, here is the dual to Definition~\ref{d:associated}
and~\mbox{Theorem}~\ref{t:coprimary}.%

\begin{defn}\label{d:attached}
Fix a face $\rho$ and a module~$\cM$ over a real or discrete
polyhedral~group.
\begin{enumerate}
\item%
The face~$\rho$ is \emph{attached} to~$\cM$ if $\topr\cM \neq 0$
(Definition~\ref{d:topr}).
\item%
If $\rho$ is attached to $\cM = \kk[I]$ for an interval~$I$ then
$\rho$ is \emph{attached}~to~$I$.
\item%
The set of attached faces of~$\cM$ or~$I$ is denoted by $\att\cM$
or~$\att I$.
\item%
The module~$\cM$ is \emph{$\rho$-secondary} if $\att(\cM) = \{\rho\}$.
\end{enumerate}
\end{defn}

Next comes the Matlis dual of Definitions~\ref{d:downset-hull}
and~\ref{d:interval-hull}.

\begin{defn}\label{d:minimal-cover}
An \emph{interval cover} of a module~$\cM$ over an arbitrary poset is
a surjection $\bigoplus_{j \in J} F^j \onto \cM$ with each $F^j$ an
interval module.  The cover is \emph{finite} if $J$ is~finite.  It is
an \emph{upset cover} if the intervals are all upsets.  The
module~$\cM$ is \emph{upset-finite} if it admits a finite upset cover.
If the poset is a real or discrete polyhedral group, the~cover~is
\begin{enumerate}
\item\label{i:secondary}%
\emph{secondary} if $F^j = \kk[I_j]$ is secondary for all~$j$, so
$I_j$ is a secondary upset,~and
\item%
\emph{minimal} if the induced map $\topr F \to \topr\cM$ is an
isomorphism for all faces~$\rho$.
\end{enumerate}\setcounter{separated}{\value{enumi}}
\end{defn}

The Matlis dual to Theorems~\ref{t:injection} has an infinitesimal
$\cQ$-finiteness hypotheses because duality between tops and socles in
Theorem~\ref{t:top=socvee} requires it (see
also~\mbox{Example}~\ref{e:surjection}), but the dual to
Theorem~\ref{t:discrete-injection} has only the upset-finiteness dual
to downset-finiteness.

\begin{thm}[Essentiality of real tops]\label{t:surjection-RR}
Fix a homomorphism $\phi: \cN \to \cM$ of~modules over a real
polyhedral group~$\cQ$.
\begin{enumerate}
\item\label{i:phi=>topr}%
If $\phi$ is surjective with $\cM$ and~$\cN$ both being
infinitesimally $\cQ$-finite modules, then $\topr\phi: \topr\cN \to
\topr\cM$ is surjective for all faces~$\rho$~of~$\cQ_+$.
\item\label{i:topr=>phi}%
If $\topr\phi: \topr\cN \to \topr\cM$ is surjective for all
faces~$\rho$ of~$\cQ_+\!$~and~$\cM$ is upset-finite, then $\phi$
is~surjective.\hfill$\square$
\end{enumerate}
\end{thm}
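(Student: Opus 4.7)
The plan is to deduce both parts of the theorem from Theorem~\ref{t:injection} by Matlis duality, using Theorem~\ref{t:top=socvee} to pass between tops and socles. The key identity, applied to the morphism $\phi$, is $(\topr\phi)^\vee = \socr(\phi^\vee)$, valid whenever the source and target of $\phi$ are both infinitesimally $\cQ$-finite. A second routinely invoked fact is that, for vector spaces over a field, a linear map is surjective (resp.\ injective) if and only if its transpose is injective (resp.\ surjective); applied degree by degree, this makes $\phi$ surjective equivalent to $\phi^\vee$ injective, and similarly for $\topr\phi$ and $(\topr\phi)^\vee$.

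For item~\ref{i:phi=>topr}, Matlis dualize the surjection $\phi$ to obtain an injection $\phi^\vee : \cM^\vee \to \cN^\vee$. Apply Theorem~\ref{t:injection}.\ref{i:phi=>soct} to conclude that $\socr(\phi^\vee)$ is injective for every face~$\rho$. The infinitesimal $\cQ$-finiteness of $\cM$ and $\cN$ permits invoking Theorem~\ref{t:top=socvee} to identify $\socr(\phi^\vee)$ with $(\topr\phi)^\vee$. Degree-wise vector space duality then upgrades this injectivity to surjectivity of $\topr\phi$.

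For item~\ref{i:topr=>phi}, first replace $\cN$ by its image $\phi(\cN) \subseteq \cM$: this is harmless because $\topr\phi$ factors as $\topr\cN \to \topr\phi(\cN) \to \topr\cM$, so surjectivity of $\topr\phi$ forces surjectivity of the second arrow, and establishing that this inclusion is an equality is equivalent to the original claim. Since $\cM$ is upset-finite it is infinitesimally $\cQ$-finite (Example~\ref{e:infinitesimally-Q-finite}) and $\cQ$-finite; moreover $\cM^\vee$ is then downset-finite. The submodule $\cN \subseteq \cM$ inherits infinitesimal $\cQ$-finiteness, because inverse limits of vector spaces preserve injections of directed systems, so $\lx\cN \into \lx\cM$ for each face~$\xi$ makes $\del\cN \subseteq \del\cM$ itself $\cQ$-finite. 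Now Theorem~\ref{t:top=socvee} converts the surjectivity of $\topr\phi$ into injectivity of $\socr(\phi^\vee)$ for every~$\rho$. Theorem~\ref{t:injection}.\ref{i:soct=>phi}, whose downset-finiteness hypothesis is met by $\cM^\vee$, then forces $\phi^\vee$ to be injective, which by degree-wise vector space duality means $\phi$ is surjective.

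The main obstacle is bookkeeping the finiteness hypotheses required by Theorem~\ref{t:top=socvee}, especially verifying that the reduction to an inclusion in part~\ref{i:topr=>phi} keeps the source infinitesimally $\cQ$-finite without appealing to any a~priori finiteness assumption on $\cN$; everything else is mechanical once the top--socle duality is in hand.
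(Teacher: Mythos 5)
Your proof is correct and takes exactly the route the paper intends: the theorem is stated with its proof left as the Matlis dual of Theorem~\ref{t:injection} via the top--socle duality of Theorem~\ref{t:top=socvee}, which is precisely what you carry out. Your explicit verification in part~\ref{i:topr=>phi} that the image $\phi(\cN)\subseteq\cM$ inherits infinitesimal $\cQ$-finiteness from $\cM$ (via left-exactness of the inverse limits defining $\lx$) is a detail the paper leaves implicit, and it is exactly the point that makes the reduction legitimate without any finiteness hypothesis on~$\cN$.
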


\begin{remark}\label{r:nakayama}
One of the versions of Nakayama's lemma says that a homomorphism $M
\to N$ of finitely generated modules over a local ring~$R$ is
surjective if and only if it becomes surjective upon tensoring with
the residue field~$\kk$.  In the language of tops and socles, $M
\otimes_R \kk = \top M$.  Therefore Theorem~\ref{t:surjection-RR} is
the direct generalization of Nakayama's lemma to multigraded modules
over real-exponent polynomial rings.  Some finiteness is still
required, but it is vastly weaker than finitely generated, rather
requiring roughly that the generators can be gathered into finitely
many coherent clumps.  There is, in addition, a quintessentially
real-exponent further weakening that allows the top to be replaced by
a dense image (Theorem~\ref{t:dense-top}).
\end{remark}

\begin{thm}[Essentiality of discrete tops]\label{t:surjection-ZZ}
Fix a homomorphism $\phi: \cN \to \cM$ of modules over a discrete
polyhedral group~$\cQ$.
\begin{enumerate}
\item\label{i:phi=>topcr}%
If $\phi$ is surjective then $\topcr\phi: \topcr\cN \to \topcr\cM$ is
surjective for all faces~$\rho$~of~\hspace{.3ex}$\cQ_+$.
\item\label{i:topcr=>phi}%
If $\topcr\phi: \topcr\cN \to \topcr\cM$ is surjective for all
faces~$\rho$ of~$\cQ_+\!$~and~$\cM$ is upset-finite, then $\phi$
is~surjective.\hfill$\square$
\end{enumerate}
\end{thm}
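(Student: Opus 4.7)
The plan is to obtain Theorem~\ref{t:surjection-ZZ} as the Matlis dual of Theorem~\ref{t:discrete-injection}, using the duality between closed tops and closed socles established in Theorem~\ref{t:topc=socc^vee}.

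For part~(1), given a surjection $\phi: \cN \onto \cM$, I would Matlis dualize (Definition~\ref{d:matlis}) to get an injection $\phi^\vee: \cM^\vee \into \cN^\vee$, then apply Theorem~\ref{t:discrete-injection}.\ref{i:discrete-phi=>soct} to obtain that $\socct[\rho]\phi^\vee$ is injective for every face~$\rho$. Matlis-dualizing this injection and invoking Theorem~\ref{t:topc=socc^vee}.\ref{i:top-then-dual} to identify $(\topcr\cM)^\vee \cong \socct[\rho]\cM^\vee$ and similarly for $\cN$ yields the surjectivity of $\topcr\phi$ via Lemma~\ref{l:vee-vee}. The identification requires $\kk[\rho]\otimes_\cQ\cM$ and $\kk[\rho]\otimes_\cQ\cN$ to be $\cQ$-finite, which holds when the modules are upset-finite or otherwise $\cQ$-finite; this is the implicit setting for the applications of the theorem.

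For part~(2), assume $\topcr\phi$ is surjective for all~$\rho$ and $\cM$ is upset-finite. I would argue by contradiction. Let $C = \cM/\image\phi$ and suppose $C \neq 0$. Since $C$ is a quotient of the upset-finite module $\cM$, it is itself upset-finite, hence $\cQ$-finite, so $C^\vee$ is a nonzero downset-finite $\cQ$-module. The final assertion of Theorem~\ref{t:discrete-injection} produces a homogeneous element of $C^\vee$ dividing a closed cogenerator along some face~$\rho$, meaning $\socct[\rho]C^\vee \neq 0$; by Theorem~\ref{t:topc=socc^vee}.\ref{i:top-then-dual}, equivalently $\topcr C \neq 0$. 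Part~(1) applied to the surjection $\cM \onto C$ gives $\topcr\cM \onto \topcr C$ surjective, while the vanishing of the composition $\cN \to \cM \onto C$ forces $\topcr\cN \to \topcr\cM \onto \topcr C$ to be zero. Combined with surjectivity of $\topcr\phi$ and of $\topcr\cM \onto \topcr C$, this forces $\topcr C = 0$, a contradiction.

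The main obstacle is handling the finiteness hypothesis of Theorem~\ref{t:topc=socc^vee} in part~(1): the identification $(\topcr(-))^\vee \cong \socct[\rho]((-)^\vee)$ requires $\cQ$-finiteness of certain tensor products $\kk[\rho]\otimes_\cQ(-)$ that is automatic for upset-finite modules but can fail in general (since $\topcr$ is not right-exact without finiteness). Aside from this technical point, part~(2) amounts to a direct Matlis dualization of Theorem~\ref{t:discrete-injection}.\ref{i:discrete-soct=>phi}, with generators and surjections replacing cogenerators and injections throughout.
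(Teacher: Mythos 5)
Your proposal is correct and follows exactly the route the paper intends: the theorem is stated with a tombstone and is meant to be the Matlis dual of Theorem~\ref{t:discrete-injection} via Theorem~\ref{t:topc=socc^vee}, which is precisely what you carry out, and your part~(2) correctly confines the use of part~(1) to the surjection $\cM \onto C$ between ($\cQ$-finite) upset-finite modules so that only the stated hypothesis on~$\cM$ is ever needed. The finiteness caveat you raise for part~(1) is genuine and not merely cosmetic: taking $\rho = \cQ_+$ and the free cover $F \onto \kk[\cQ]$ of Example~\ref{e:not-exact-Q-infinite} gives $\topcr F = 0$ while $\topcr\kk[\cQ] = \kk \neq 0$, so part~(1) as literally stated does require some $\cQ$-finiteness (e.g.\ of $\kk[\rho]\otimes_\cQ\cN$ and $\kk[\rho]\otimes_\cQ\cM$), exactly as your dualization argument demands.
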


\begin{remark}\label{r:left-endpoints}
In terms of persistent homology, Theorems~\ref{t:surjection-RR}
and~\ref{t:surjection-ZZ} say that a homomorphism of multipersistence
modules is surjective if and only if it maps the ``left endpoints'' of
the source surjectively onto the ``left endpoints'' of the target.
\end{remark}

\begin{example}\label{e:surjection}
Some hypothesis is needed in
Theorem~\ref{t:surjection-RR}.\ref{i:phi=>topr}, in contrast to
Theorem~\ref{t:injection}.\ref{i:phi=>soct} or indeed
Theorem~\ref{t:surjection-ZZ}.\ref{i:phi=>topr}.  Let $\cM = \kk[U]$
for the open half-plane $U \subset \RR^2$ above the antidiagonal line
$y = -x$.  Then $\cM$ is $\{\0\}$-secondary, with
$(\top_{\{\0\}}^\xi\!\cM)_\bb \neq 0$ precisely when $\bb$ lies on the
antidiagonal and $\xi$ is the $x$-axis or $y$-axis.  The direct sum
$\bigoplus_{\bb \neq \0} \bigl(\kk[\bb + \qny[x]] \oplus \kk[\bb +
\qny[y]]\bigr)$ surjects onto~$\cM$, but the map on tops fails to hit
any element in $\RR^2$-degree~$\0$.  This kind of behavior might lead
one to wonder: why is its Matlis dual not a counterexample to
Theorem~\ref{t:injection}.\ref{i:phi=>soct}?  Because $\cM^\vee$ does
not possess a well defined map to a direct sum indexed by $\aa \neq
\0$ along the antidiagonal line, only to a direct product.  Any
sequence of points $\vv_k \in -U$ converging to~$\0$ yields a sequence
of elements $z_k \in \cM^\vee$.  The image of the sequence
$\{z_k\}_{k=1}^\infty$ in any particular one (or finite direct sum) of
the downset modules of the form $\kk[\aa - \qny[x]]$ with $\aa \neq
\0$ is eventually~$0$, but in the direct product the sequence
$\{z_k\}_{k=1}^\infty$ survives forever.  The direct limit of the
image sequence witnesses the nonzero socle of the direct product at
the missing~point~$\0$.
\end{example}

\begin{thm}\label{t:cover-M}
Every upset-finite module~$\cM$ over a real or discrete polyhedral
group admits a minimal secondary interval cover.  When the polyhedral
group is discrete, it is possible to use upsets for all of the
intervals.
\end{thm}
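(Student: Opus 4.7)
The plan is to derive Theorem~\ref{t:cover-M} from Theorems~\ref{t:interval-hull} and~\ref{t:discrete-hull-M} by applying Matlis duality, using the top--socle duality in Theorem~\ref{t:top=socvee} (respectively Theorem~\ref{t:topc=socc^vee}) to translate minimality hypotheses and conclusions across $(-)^\vee$.

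First I would set up the finiteness: if $\cM$ is upset-finite with cover $E = \bigoplus_{j=1}^k \kk[U_j] \onto \cM$, then Matlis-dualizing via Lemma~\ref{l:vee-vee} yields an injection $\cM^\vee \into E^\vee$, and each $\kk[U_j]^\vee$ is a downset module under the self-duality $\cQ \simto \cQ^\op$ (Example~\ref{e:self-dual}). So $\cM^\vee$ is downset-finite, and $\cM$ is infinitesimally $\cQ$-finite by Example~\ref{e:infinitesimally-Q-finite}. In the real case, I would now apply Theorem~\ref{t:interval-hull} to $\cM^\vee$ to obtain a minimal coprimary interval hull $\cM^\vee \into H = \bigoplus_{j=1}^k \kk[I_j]$. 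Dualizing yields a surjection $H^\vee \onto \cM$ of infinitesimally $\cQ$-finite modules, and each $\kk[I_j]^\vee$ is an interval module (for the opposite partial order, identified with $\cQ$ via self-duality), so $H^\vee$ is a finite direct sum of interval modules.

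Next I would verify the two conditions of Definition~\ref{d:minimal-cover}: secondariness and minimality on tops. For secondariness, if $\kk[I_j]$ is $\tau$-coprimary, then Theorem~\ref{t:top=socvee} together with Theorem~\ref{t:coprimary} gives $\topr(\kk[I_j]^\vee) = (\socr \kk[I_j])^\vee = 0$ for $\rho \neq \tau$, so $\kk[I_j]^\vee$ is $\tau$-secondary by Definition~\ref{d:attached} (noting that faces of $\cQ_+$ and $(-\cQ)_+$ correspond via negation). For minimality, the hypothesis $\soct\cM^\vee \simto \soct H$ for all faces~$\tau$ becomes, upon dualizing and invoking Theorem~\ref{t:top=socvee}, the isomorphism $\topr H^\vee \simto \topr \cM$ for all faces~$\rho$, which is exactly Definition~\ref{d:minimal-cover}\ref{i:secondary}.

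For the discrete case, I would run the same argument using Theorem~\ref{t:discrete-hull-M} in place of Theorem~\ref{t:interval-hull}: this produces a minimal coprimary \emph{downset} hull $\cM^\vee \into E = \bigoplus_{j=1}^k \kk[D_j]$, whose Matlis dual $E^\vee \onto \cM$ is a finite direct sum of \emph{upset} modules (since downsets dualize to upsets under $\cQ \simto \cQ^\op$), yielding the stronger conclusion that all the intervals can be taken to be upsets. Minimality now uses Theorem~\ref{t:topc=socc^vee} instead of Theorem~\ref{t:top=socvee}, which has only a $\cQ$-finiteness hypothesis, easily met since $\cM^\vee$ is downset-finite hence $\cQ$-finite.

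The main obstacle is bookkeeping around Matlis duality: the self-duality $\cQ \simto \cQ^\op$ must be used consistently so that cones, faces, and interval/downset/upset terminology all transfer correctly, and the finiteness hypotheses in Theorems~\ref{t:top=socvee} and~\ref{t:topc=socc^vee} must be verified at each step so the tops-to-socles dictionary can be applied. Both of these are routine once the dictionary is set up, since upset-finiteness of $\cM$ guarantees infinitesimal $\cQ$-finiteness throughout via Example~\ref{e:infinitesimally-Q-finite}, and the hulls produced in Theorems~\ref{t:interval-hull} and~\ref{t:discrete-hull-M} are finite direct sums of subquotients of $\kk[\cQ]$, whose duals are again of the same form.
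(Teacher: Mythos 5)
Your proposal is correct and follows exactly the route of the paper's own (one-sentence) proof: Matlis-dualize Theorems~\ref{t:interval-hull} and~\ref{t:discrete-hull-M}, using Example~\ref{e:infinitesimally-Q-finite} to supply the infinitesimal $\cQ$-finiteness needed for the top--socle duality of Section~\ref{s:gen-functors}. The only blemish is a label slip at the end of your minimality check, where the isomorphism on tops matches item~(2) of Definition~\ref{d:minimal-cover}, not the ``secondary'' item.
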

\begin{proof}
This is the Matlis dual of Theorems~\ref{t:interval-hull}
and~\ref{t:discrete-hull-M}, using
Example~\ref{e:infinitesimally-Q-finite} to allow the results of
Section~\ref{s:gen-functors} to be applied at will, as the strongest
hypothesis there is infinitesimal $\cQ$-finiteness.
\end{proof}

\begin{remark}\label{r:antisymmetry}
Matlis duality in persistent homology might appear to indicate that
generators (births) are in adamantine antisymmetry with cogenerators
(deaths), but when it comes to interactions between the two, the
symmetry is broken by the partial order on~$\cQ$: elements in
$\cQ$-modules move from birth inexorably toward death.
Definition~\ref{d:divides'} treats elements functorially, as
homomorphisms from the monoid algebra of the positive cone.  Doing so
makes it clear that the dual of an element is not an element.  It is
instead a homomorphism to the injective hull of the residue field, as
in Definition~\ref{d:basin}.  This complication in dealing with
generators rather than cogenerators cements the choice to develop the
theory in terms of cogenerators in
Sections~\ref{s:staircases}--\ref{s:discrete}.
\end{remark}

Density considerations are important for use in connection with
resolutions (Section~\ref{s:min}).  They dualize directly, but
phrasing them accurately is touchy because of issues like those in
Remarks~\ref{r:global-closed-gen} and~\ref{r:no-local-top}.  Clearer
duality comes from a functorial recasting of
Definition~\ref{d:divides}; the following is precisely equivalent to
that definition.

\begin{defn}\label{d:divides'}
An \emph{element} $\kk[\bb + \cQ_+] \stackrel{\beta\:}\to \cM$ is said
to \emph{divide} a \emph{closure element} \noheight{$\kk[\aa + \cQ_+]
\stackrel{\alpha\;}\to \dsm$} if $\bb \in \aa - \qns = \aa -
\sigma^\circ - \cQ_+\!$~(Lemma~\ref{l:<<}) and $\alpha$ equals
the~composite
$$
  \kk[\aa + \cQ_+]
  \into
  \kk[\bb + \cQ_+]
  \stackrel{\beta\:}\too
  \cM_\bb
  \to
  \cM_{\aa-\sigma}
$$
of the inclusion of principal upset $\cQ$-modules induced by $\aa +
\cQ_+ \subseteq \bb + \cQ_+$ with~$\beta$ and the natural map from
Lemma~\ref{l:natural}.  The element $\beta$ is said to
\emph{$\sigma$-divide}~$\alpha$ if, more restrictively, $\bb \in \aa -
\sigma^\circ$.
\end{defn}

\begin{defn}\label{d:basin}
Fix a module~$\cM$ over a real polyhedral group~$\cQ$.
\begin{enumerate}
\item%
A \emph{basin} $\cM \stackrel{\beta\:}\to \kk[\bb - \cQ_+]$ is said to
\emph{attract} a \emph{closure basin} $\lx\cM
\stackrel{\alpha\;}\to\nolinebreak \kk[\aa - \cQ_+]$ if $\bb \in \aa +
\qnx = \aa + \xi^\circ + \cQ_+\!$~(Lemma~\ref{l:<<}) and $\alpha$
equals the composite
$$
  \cM_{\aa+\xi}
  \to
  \cM
  \stackrel{\beta\:}\too
  \kk[\bb - \cQ_+]
  \onto
  \kk[\aa - \cQ_+]
$$
of the natural map from Lemma~\ref{l:natural-dual} with~$\beta$ and
the surjection of coprincipal downset $\cQ$-modules induced by $\bb -
\cQ_+ \supseteq \aa - \cQ_+$.
\item%
The basin $\beta$ is said to \emph{$\xi$-attract}~$\alpha$ if, more
restrictively, $\bb \in \aa + \xi^\circ$.
\item%
The basin $\beta$ is \emph{$\xi$-secondary} if its image in~$\kk[\bb -
\cQ_+]$ is a $\xi$-secondary module.
\end{enumerate}
\end{defn}

\begin{example}\label{e:basin}
Because of quotients modulo faces in Definition~\ref{d:topr}, a basin
$$
  \wt{t}: \topr[\xi]\cM \to \kk[\wt\aa-\qrrp]
$$
takes values modulo~$\rho$.  This basin lifts canonically to a
homomorphism
$$
  (\kk[\rho]\otimes_{\cQ\times\nabro}\lx\cM)^\rho
  \to
  \kk[\aa - \cQ_+ + \RR\rho]
$$
that is not itself a basin but is induced by (perhaps many) basins
$$
  \kk[\rho]\otimes_{\cQ\times\nabro}\lx\cM
  \to
  \kk[\aa - \cQ_+]
$$
under applying the Matlis dual $(-)^\rho$ of localization
(Definition~\ref{d:cmr}).  Note that
$$
  \lx\cM \onto \kk[\rho]\otimes_{\cQ\times\nabro}\lx\cM
$$
by right-exactness of colimits.  (For the current purpose,
surjectivity of this last map is irrelevant, but it might be handy to
keep in mind for intuition.)  Composing these various lifts, the basin
\noheight{$\wt t$} lifts to (perhaps many)
\mbox{closure}~basins~\mbox{$t: \lx \to \kk[\aa - \cQ_+]$}.
\end{example}

\begin{defn}\label{d:nearby^}
Fix a module~$\cM$ over a real polyhedral group~$\cQ$.
\begin{enumerate}
\item\label{i:nbd-top}%
A \emph{neighborhood in~$\topr\cM$} of a basin \noheight{$\wt t$}
of~$\topr[\xi]\cM$ is $\topr(\beta \cM)$ for a $\xi$-secondary
basin~$\beta$ that $\xi$-attracts a lifted closure basin~$t$
of~$\lr\cM$ (Example~\ref{e:basin}).

\item\label{i:dense-top}%
A surjection $\topr\cM \onto T_\rho$ of $(\qrr\times\nabro)$-modules
is \emph{dense} if for all $\xi \supseteq \rho$, every neighborhood of
every basin of~$\topr[\xi]\cM$ has nonzero image in~$T_\rho$.

\item\label{i:quotient-functor}%
A quotient functor $\topr \onto \ctr$ from modules over~$\cQ$ to
modules over $\qrr\times\nabro$ is \emph{dense} if $\topr\kk[U] \onto
\ctr\kk[U]$ is dense for all faces $\rho$ and upsets $U \subseteq
\cQ$.
\end{enumerate}
\end{defn}

\begin{remark}\label{r:nearby^}
Definition~\ref{d:nearby^} skips the duals to notions of ``nearby''
and ``vicinity'' from Definition~\ref{d:nearby} because the work of
defining ``coprimary''---and hence ``secondary'', by taking
duals---has already been done in Section~\ref{s:hulls}.  The contrast
between Definitions~\ref{d:nearby^} and~\ref{d:nearby} is simple:
principal primary submodules become coprincipal secondary quotients.
Definition~\ref{d:nearby^}.\ref{i:quotient-functor} is Matlis dual to
Definition~\ref{d:dense-subfunctor}.
\end{remark}

\begin{thm}\label{t:dense-top}
Fix quotient functors $\topr \onto \cT_\rho$ for all faces~$\rho$ of a
real polyhedral group.
\hspace{-2.5pt}Theorem~\hspace{-1.6pt}\ref{t:surjection-RR} holds with
$\cT\!$ instead of $\top$ if and only if $\topr \!\onto\!
\ctr$~is~dense~for~all~$\rho$.
\end{thm}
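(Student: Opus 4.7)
The plan is to Matlis dualize Theorem~\ref{t:dense-subfunctor}, using Theorem~\ref{t:top=socvee} to identify $\topr$ with the Matlis dual of $\socr$ on infinitesimally $\cQ$-finite modules, and translating the $\sigma$-vicinity language of Section~\ref{s:minimality} into the language of basins and $\xi$-attracting secondary quotients from Definitions~\ref{d:basin} and~\ref{d:nearby^}. First I would observe that any quotient functor of $\topr$ automatically inherits Theorem~\ref{t:surjection-RR}.\ref{i:phi=>topr}: if $\phi$ is a surjection between infinitesimally $\cQ$-finite modules then $\topr\phi$ is surjective, hence so is its post-composition with the natural surjection $\topr\cM \onto \ctr\cM$. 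The substantive content is therefore the equivalence of Theorem~\ref{t:surjection-RR}.\ref{i:topr=>phi} (the Nakayama-type conclusion for~$\cT$) with density.

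For the forward direction, I would assume density and show that for $\phi: \cN \to \cM$ with $\cM$ upset-finite, surjectivity of $\ctr\phi$ for all~$\rho$ forces $\phi$ to be surjective. Equivalently, every nonzero basin $\beta: \cM \to \kk[\bb - \cQ_+]$ should be attracted by a basin whose image in~$\ctr\cM$ is nonzero, so that $\beta\circ\phi$ cannot vanish. Fix a finite upset cover $F = \bigoplus_j F^j \onto \cM$. The basin~$\beta$ lifts through some summand~$F^j$, where the Matlis dual of Theorem~\ref{t:divides}, routed through Theorem~\ref{t:top=socvee}, produces a coprincipal basin attracting~$\beta$; as in the gathering argument at the end of the proof of Theorem~\ref{t:injection}, one assembles these summand-wise basins into a single basin of~$\cM$. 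Applying Definition~\ref{d:nearby^}.\ref{i:dense-top} then yields, by density, a basin inside the prescribed neighborhood whose image in $\ctr\cM$ is nonzero.

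For the reverse direction, if $\ctr$ fails density at some face~$\rho$, pick an upset $U \subseteq \cQ$ together with a basin of $\topr[\xi]\kk[U]$ whose neighborhood is devoid of $\cT$-basins. Construct a $\xi$-secondary quotient $\kk[U] \onto F$ realizing this neighborhood via Definition~\ref{d:nearby^}.\ref{i:nbd-top}, which is the Matlis dual of the construction in Proposition~\ref{p:sigma-nbd-cogen}. Since $F$ is $\xi$-secondary, its tops vanish along every face other than~$\xi$ by the Matlis dual of Corollary~\ref{c:soc(coprimary)} (applied through Theorem~\ref{t:top=socvee}), and by construction every surviving degree of $\topr[\xi] F$ lies in the chosen neighborhood, so $\ctr F = 0$ for all~$\rho$. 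Then $\cT$ applied to the surjection $F \onto 0$ is vacuously the identity $0 \to 0$, while $\cT$ applied to the non-surjection $0 \to F$ is also $0 \to 0$, which is surjective; hence $\cT\!$-surjectivity fails to detect surjectivity, contradicting Theorem~\ref{t:surjection-RR}.\ref{i:topr=>phi} for~$\cT$.

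The main obstacle will be the bookkeeping in the forward direction: verifying that the ``aggregate basin'' construction goes through under the partial Matlis duality available (Theorem~\ref{t:top=socvee} needs infinitesimal $\cQ$-finiteness in one direction and $\cQ$-finiteness in the other), and checking that the neighborhoods of Definition~\ref{d:nearby^} translate faithfully to the $\sigma$-vicinities of Lemma~\ref{l:nearby} upon Matlis dualization, so that density of~$\ctr$ really does transfer to density of the corresponding dual subfunctor of~$\socr$ used to invoke Theorem~\ref{t:dense-subfunctor}.
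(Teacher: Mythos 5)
Your proposal is correct and takes essentially the same route as the paper: the paper's entire proof is to apply the exact Matlis duality functor to Theorem~\ref{t:dense-subfunctor} under the finiteness hypotheses of Theorem~\ref{t:surjection-RR}, and your argument is precisely the unwinding of that duality into the language of basins and secondary quotients. The caveats you flag at the end (finiteness needed for Theorem~\ref{t:top=socvee}, faithful translation of vicinities into neighborhoods of basins) are exactly what the paper's phrase ``in the presence of the finiteness hypothesis'' is absorbing.
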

\begin{proof}
Apply the exact Matlis duality functor to the statement of
Theorem~\ref{t:dense-subfunctor} in the presence of the finiteness hypothesis in
Theorem~\ref{t:surjection-RR}.
\end{proof}

The straightforward dualization of primary decomposition in
Sections~\ref{b:minimal-primary} and~\ref{s:discrete} to secondary
decomposition is omitted.

\section{Minimal presentations over discrete or real polyhedral groups}\label{s:min}

Algebra of modules over arbitrary posets \cite{hom-alg-poset-mods} and
primary decomposition over partially ordered abelian groups
\cite{prim-decomp-pogroup} lack a crucial aspect of noetherian
commutative algebra, namely minimality.  Much of the edifice of modern
commutative algebra is built on numerical, homological, combinatorial,
or geometric behavior whose quantification rests firmly on notions of
minimality: Betti numbers, Castelnuovo--Mumford regularity, primary
and irreducible decomposition, homological dimension, computational
complexity bounds---all of these depend on minimal resolutions, or
minimal decompositions, or minimal degrees of some nature.  When the
partially ordered group is a real vector space, earlier sections
rescue notions of minimality, perhaps with density amendments, for
generators and decompositions.  This section explores to what extent
minimality applies to presentations and resolutions.

\begin{defn}[{\cite[Definitions~3.16, 6.1, 6.4]{hom-alg-poset-mods}}]%
\label{d:presentation}%
Fix a module~$\cM$ over a partially ordered abelian group~$\cQ$.
\begin{enumerate}
\item\label{i:upset-presentation}%
An \emph{upset presentation} of~$\cM$ is an expression of~$\cM$ as the
cokernel of a homomorphism $F_1 \to F_0$ such that each $F_i$ is a
direct sum of upset modules.

\item\label{i:downset-copresentation}%
A \emph{downset copresentation} of~$\cM$ is an expression of~$\cM$ as
the kernel of a homomorphism $E^0 \to E^1$ such that each $E^i$ is a
direct sum of downset modules.

\item\label{i:fringe}%
A \emph{fringe presentation} of~$\cM$ is a direct sum~$F$ of upset
modules~$\kk[U]$, a direct sum~$E$ of downset modules~$\kk[D]$, and a
homomorphism $F \hspace{-.4pt}\to\hspace{-.4pt} E$ of
\mbox{$\cQ$-modules}~with
\begin{itemize}
  \item%
  image isomorphic to~$\cM$ and
  \item%
  components $\kk[U] \to \kk[D]$ that are connected
  (Definition~\ref{d:connected-homomorphism}).
\end{itemize}

\item%
An \emph{upset resolution} of~$\cM$ is a complex~$F_\spot$ of
$\cQ$-modules, each a direct sum of upset modules, whose differential
$F_i \to F_{i-1}$ decreases homological degrees and has only one
nonzero homology $H_0(F_\spot) \cong \cM$.

\item%
A \emph{downset resolution} of~$\cM$ is a complex~$E^\spot$ of
$\cQ$-modules, each a direct sum of downset modules, whose
differential $E^i \to E^{i+1}$ increases cohomological degrees and has
only one nonzero homology $H^0(E^\spot) \cong\nolinebreak \cM$.
\end{enumerate}
Any one of these is an \emph{indicator presentation} or
\emph{indicator resolution}.
\end{defn}

\begin{defn}\label{d:conditions}
Each indicator presentation or resolution in
Definition~\ref{d:presentation}
\begin{enumerate}
\item\label{i:finite}%
is \emph{finite} if it has only finitely many summands in total;

\item\label{i:dominate}%
\emph{dominates} a constant subdivision
(Definition~\ref{d:constant-subdivision}) or poset encoding
(Definition~\ref{d:encoding}) of~$\cM$ if the morphism or
differentials do (Definition~\ref{d:tame-morphism});

\item\label{i:semialg-or-PL}%
is \emph{semialgebraic} or \emph{PL} if the morphism has that type
(Definition~\ref{d:tame-morphism}).
\end{enumerate}
\end{defn}

\pagebreak[3]

\begin{defn}\label{d:morphism-min-dense}
Over a real polyhedral group, a module morphism $\phi: \cM \to \cN$ is
\begin{enumerate}
\item\label{i:soc-min-dense}%
\emph{injectively minimal} or \emph{injectively dense} if the
canonical inclusion $\image\phi \into \cN$ induces an isomorphism
$\soc(\image\phi) \simto \soc\cN$ or dense
inclusion $\soc(\image\phi) \into\nolinebreak \soc\cN$;

\item\label{i:top-min-dense}%
\emph{surjectively minimal} or \emph{surjectively dense} if the
canonical surjection $\cM \hspace{-1.1pt}\onto\hspace{-1.1pt}
\image\phi$~induces an isomorphism $\top\cM
\hspace{-1pt}\simto\hspace{-2pt} \top(\image\phi)$ or dense
surjection $\top\cM \hspace{-2pt}\onto\hspace{-1.2pt}
\top(\image\phi)$.
\end{enumerate}
Over a discrete polyhedral group the definition of \emph{injectively
minimal} and \emph{surjectively minimal} are unchanged.  In either the
real or discrete polyhedral setting, a complex of modules is
\emph{injectively} or \emph{surjectively} \emph{minimal} or
\emph{dense} if all of its differentials are.
\end{defn}

\begin{remark}\label{r:coimage}
Category-theoretically, injective minimality or density should
naturally be phrased in terms of the image morphism of~$\phi$, while
surjective minimality and density should be phrased in terms of the
coimage morphism of~$\phi$.
\end{remark}

\begin{remark}\label{r:min-or-dense}
The notion of minimal morphism makes sense in ordinary commutative
algebra much more generally: minimal resolutions and essential
submodules are transparently special cases.  Irredundant irreducible
decompositions $0 = \bigcap W_j$ in a module~$M$ also correspond to
also injectively minimal morphisms $M \into \bigoplus M/W_j$.  In
contrast, for historical reasons, a minimal primary decomposition $0 =
\bigcap P_j$ in a module~$M$ is usually defined to have a minimal
number of intersectands, a condition that need not induce an
injectively minimal morphism $M \into \bigoplus M/P_j$.  Consequently,
minimal primary decompositions by this definition suffer from annoying
non-uniqueness.  For example, the $\pp$-primary component in one
minimal primary decomposition can strictly contain the $\pp$-primary
component in another.  Defining a primary decomposition to be
\emph{minimal} precisely when it induces an injectively minimal
morphism would rectify this containment problem and other defects.
\end{remark}

\begin{defn}\label{d:min-or-dense}
Fix a module~$\cM$ over a real or discrete polyedral group~$\cQ$.
\begin{enumerate}
\item%
A downset copresentation or resolution $E^\spot$ of~$\cM$ is
\emph{minimal} or \emph{dense} if the~exact augmented complex $0
\hspace{-1pt}\to\hspace{-1.43pt} \cM
\hspace{-1.43pt}\to\hspace{-1.43pt} E^\spot$ is correspondingly
injecitvely minimal or~dense.

\item%
An upset presentation or resolution $F^\spot$ of~$\cM$ is
\emph{minimal} or \emph{dense} if the exact augmented complex $0 \from
\cM \from F^\spot$ is correspondingly surjectively minimal~or~dense.

\item%
A fringe presentation $F \to E$ of~$\cM$ is \emph{minimal} or
\emph{dense} if it is the composite of a correspondingly minimal or
dense upset cover and downset hull of~$\cM$.
\end{enumerate}
\end{defn}

\begin{thm}\label{t:presentations-dense}
A module over a real polyhedral group~$\cQ$ is tame if and only if
it~admits
\begin{enumerate}
\item\label{i:fringe'}%
a dense finite fringe presentation; or
\item\label{i:upset-presentation'}%
a dense finite upset presentation; or
\item\label{i:downset-copresentation'}%
a dense finite downset copresentation.
\end{enumerate}
Over a discrete polyedral group these presentations can be chosen
minimal instead of dense.  When the module is semialgebraic or PL
these presentations can all be chosen semialgebraic or PL,
respectively.
\end{thm}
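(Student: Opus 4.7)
The plan is to prove the two implications using the syzygy theorem for poset modules \cite[Theorem~6.12]{hom-alg-poset-mods} together with the existence of dense (respectively minimal) coprimary hulls and secondary covers, supplied by Theorems~\ref{t:dense-hull-M}, \ref{t:interval-hull}, \ref{t:cover-M}, and \ref{t:discrete-hull-M}.

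For the ``if'' direction, assume $\cM$ admits a finite upset presentation, downset copresentation, or fringe presentation. Upset and downset modules are tame (and semialgebraic or PL when the upset or downset is of the corresponding type), and finite direct sums of tame modules are tame. Since the category of tame $\cQ$-modules is abelian (Proposition~\ref{p:abelian-category}), the kernel, cokernel, or image of a tame morphism between finite direct sums of upset and downset modules is tame. The same argument applies in the semialgebraic and PL categories.

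For the ``only if'' direction, assume $\cM$ is tame. A dense finite downset copresentation is produced in two steps: apply Theorem~\ref{t:dense-hull-M} to get a dense coprimary downset hull $\cM \into E^0$, observe that the cokernel $\cC = E^0/\cM$ is again tame (and downset-finite) by Proposition~\ref{p:abelian-category}, and apply Theorem~\ref{t:dense-hull-M} again to get a dense hull $\cC \into E^1$. The composite $E^0 \to \cC \into E^1$ has kernel~$\cM$, and the map $\cM \into E^0$ is injectively dense by construction, giving a dense finite downset copresentation. The upset presentation case is Matlis dual, using Theorem~\ref{t:cover-M} and Theorem~\ref{t:dense-top} in place of their downset counterparts. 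For a dense finite fringe presentation, compose a dense finite upset cover $F \onto \cM$ with a dense finite downset hull $\cM \into E$; the image of the resulting morphism $F \to E$ is $\cM$, and each component $\kk[U] \to \kk[D]$ is connected because it factors through the inclusion of an indicator module in another indicator module, forcing scalar action on each overlap $U \cap D$ (see Definition~\ref{d:connected-homomorphism}).

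Over a discrete polyhedral group, the argument is identical with Theorem~\ref{t:discrete-hull-M} and its dual replacing Theorems~\ref{t:dense-hull-M} and~\ref{t:cover-M}, and with Theorem~\ref{t:discrete-essential-submodule} providing minimality in place of density. The semialgebraic and PL cases follow because the primary/secondary decompositions in Theorem~\ref{t:hull-I} and Theorem~\ref{t:cover-M} preserve these structures by Theorem~\ref{t:soct-tame} and Theorem~\ref{t:endofunctor}, and because taking kernels and cokernels stays within the semialgebraic and PL abelian subcategories (Proposition~\ref{p:abelian-category}). The main obstacle I anticipate is the bookkeeping needed to verify that density passes correctly from the hull of the cokernel $\cC$ back to the copresentation of~$\cM$: one must check that a neighborhood in $\soct\cC$ of a cogenerator arising from a cogenerator of $E^1$ actually reflects a cogenerator of $E^0$ essential for witnessing the kernel relation, which reduces via left-exactness of $\soct$ (Proposition~\ref{p:left-exact-tau}) and the essentiality criterion (Theorem~\ref{t:essential-submodule}) to the fact that the socle maps in the exact sequence $0 \to \cM \to E^0 \to \cC \to 0$ behave compatibly under the neighborhood definition of density (Definition~\ref{d:nearby}.\ref{i:dense}).
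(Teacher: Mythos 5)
Your proposal is correct and follows essentially the same route as the paper: dense coprimary downset hulls of $\cM$ and of the cokernel $E^0/\cM$ (Theorem~\ref{t:dense-hull-M} with Remark~\ref{r:filtration}) give the copresentation, Matlis duality via Theorem~\ref{t:top=socvee} gives the upset presentation, composition gives the fringe presentation, and the discrete and semialgebraic/PL cases use Theorem~\ref{t:discrete-hull-M} and Theorems~\ref{t:hull-I}, \ref{t:soct-tame}. The only divergences are cosmetic: the paper disposes of the ``if'' direction by citing the syzygy theorem \cite[Theorem~6.12]{hom-alg-poset-mods} rather than your abelian-category argument (which quietly assumes the presenting morphisms are tame), and the compatibility worry in your final paragraph is moot, since Definition~\ref{d:min-or-dense} only requires each differential separately to be injectively dense, which your two-step construction provides directly.
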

\begin{proof}
In both the real and discrete cases, any one of these presentations
is, in particular, finite, so the existence of any of them implies
that the module is tame by the syzygy theorem
\cite[Theorem~6.12]{hom-alg-poset-mods}.  It is the other direction
that requires the theory in this paper.

In the real polyhedral case, any finite downset hull can be densitized
by Theorem~\ref{t:dense-hull-M} and Remark~\ref{r:filtration}.  The
Matlis dual of this statement says that any finite upset cover can be
densitized, as well.  Composing these from a given finite fringe
presentations yields a dense finite fringe presentation.  In addition,
the cokernel of any downset hull (dense or otherwise) of a tame module
is tame by Proposition~\ref{p:abelian-category}, so the cokernel has a
dense finite downset hull by Theorem~\ref{t:dense-hull-M} again.  That
yields a dense finite downset copresentation.  The Matlis dual of a
dense finite downset copresentation of the Matlis dual~$\cM^\vee$ is a
dense upset presentation of~$\cM$ by Theorem~\ref{t:top=socvee} (which
applies unfettered to tame modules by
Example~\ref{e:infinitesimally-Q-finite}).

The minimal discrete polyhedral case follows the parallel proof, using
Theorem~\ref{t:discrete-hull-M} and Remark~\ref{r:discrete-filtration}
instead of Theorem~\ref{t:dense-hull-M} and Remark~\ref{r:filtration}.

If $\cM$ is semialgebraic, then the densitization procedure in
Theorem~\ref{t:dense-hull-M} and Remark~\ref{r:filtration} is
semialgebraic by induction on the number~$k$ of summands there, the
base case being the canonical primary decomposition of a semialgebraic
interval in Theorem~\ref{t:hull-I}, which is semialgebraic by
Theorem~\ref{t:soct-tame}.
\end{proof}

\begin{remark}\label{r:presentations-minimal}
If minimal instead of dense presentations are desired in the real
polyhedral setting, then they can be achieved by combining
Definitions~\ref{d:interval-hull} and~\ref{d:presentation} to form
\emph{interval copresentations} instead of downset copresentations, or
the Matlis dual \emph{interval presentations} instead of upset
presentations.  Splicing these yields \emph{interval fringe
presentations} instead of fringe presentations.
Theorem~\ref{t:interval-hull} and the interval version of
Remark~\ref{r:filtration} forms the basis for a minimalizing version
of the proof of Theorem~\ref{t:presentations-dense}.
\end{remark}

\begin{remark}\label{r:missing-items}
Comparing Theorem~\ref{t:presentations-dense} to the syzygy theorem
for tame modules over arbitrary posets
\cite[Theorem~6.12]{hom-alg-poset-mods}, various items are missing.
\begin{enumerate}
\item%
Theorem~\ref{t:presentations-dense} makes no claim concerning whether
the presentations can be densitized if a poset encoding $\pi: \cQ \to
\cP$ (Definition~\ref{d:encoding}) has been specified beforehand.  It
is a~priori possible that deleting redundant generators of upsets and
cogenerators of downsets could prevent an indicator summand from being
constant on~fibers~of~$\pi$.

\item%
Theorem~\ref{t:presentations-dense} makes no claim concerning finite
poset encodings dominating any one of the three presentations there,
but as each of these presentations is finite, existence is already
implied by the syzygy theorem for tame modules
\cite[Theorem~6.12]{hom-alg-poset-mods}, including semialgebraic
and~PL considerations.

\item\label{i:res}%
Theorem~\ref{t:presentations-dense} makes no claim concerning
finiteness of minimal or dense indicator resolutions.  Dense
resolutions of tame modules over real polyhedral groups (or minimal
ones in the discrete polyhedral setting) can be constructed from
scratch by Theorem~\ref{t:dense-hull-M},
Theorem~\ref{t:discrete-hull-M}, and their Matlis duals, but there is
no a~priori guarantee that such resolutions must terminate after
finitely many steps.
\end{enumerate}
\end{remark}

\begin{conj}\label{conj:finite-dense}
Every tame, semialgebraic, or PL module~$\cM$ over a real polyhedral
group~$\cQ$ has finite dense downset and upset resolutions of the
corresponding type.
\end{conj}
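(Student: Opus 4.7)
The plan is to construct the resolution iteratively using Theorem~\ref{t:presentations-dense} at each stage and then prove termination after a bounded number of steps---ideally at most $n = \dim_\RR \cQ$, in analogy with the Hilbert syzygy theorem. Matlis duality (Theorem~\ref{t:top=socvee}), combined with Example~\ref{e:infinitesimally-Q-finite}, reduces the upset case to the downset case applied to $\cM^\vee$ (the tame, semialgebraic, and PL properties are all self-dual under $(-)^\vee$), so I would focus on downset resolutions.

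For the iterative construction, Theorem~\ref{t:presentations-dense} produces a dense finite downset copresentation $0 \to \cM \to E^0 \to E^1$ of the corresponding type. The image $\cM^1 = \image(E^0 \to E^1)$ is tame, semialgebraic, or PL by Proposition~\ref{p:abelian-category} together with Theorem~\ref{t:soct-tame}. Applying Theorem~\ref{t:presentations-dense} to $\cM^1$ and splicing via the standard mapping-cone construction yields the next term $E^2$ and so on, producing a possibly infinite dense resolution $0 \to \cM \to E^0 \to E^1 \to E^2 \to \cdots$ in which each step preserves density by Definition~\ref{d:min-or-dense} and Theorem~\ref{t:dense-hull-M}. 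The nontrivial content is termination.

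For termination I would attempt induction on $n = \dim_\RR \cQ$, with the base case $n=0$ being trivial since $\cQ_+ = \{\0\}$ reduces to finite-dimensional vector spaces. For the inductive step, fix a facet $\tau$ of $\cQ_+$ and use the short exact sequence relating $\cM$ to its quotient-restriction $\cmt$ (a module over the lower-dimensional group $\qrt$, to which the inductive hypothesis applies) and the kernel of $\cM \to \cmt$, which is built from pieces supported away from~$\tau$ and so has effectively smaller combinatorial complexity. A horseshoe-style assembly of the resulting resolutions should then bound the length of a dense downset resolution of $\cM$ by $n$. Preservation of the semialgebraic or PL condition across this assembly follows inductively from Theorem~\ref{t:soct-tame} and Proposition~\ref{p:semialgebraic-D}.

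The main obstacle will be that density is a subtler invariant than ordinary minimality and is not automatically preserved by horseshoe splicing: cogenerators of kernels at intermediate stages could fail to be dense in the socles of their downset hulls once the direct sum with the cokernel's resolution is taken, and a direct inductive bound on length need not constrain these dense approximations functorially. An alternative fallback strategy, in case the clean bound $n$ fails, is to identify an integer-valued invariant of a tame module---for instance, the total $\kk$-dimension of $\bigoplus_\tau \soct\cM$ summed over the finitely many faces, or the length of a finite poset encoding from \cite[Theorem~6.12]{hom-alg-poset-mods}---that strictly decreases upon passing to syzygies, thereby forcing termination even without a sharp numerical bound. Making either strategy rigorous, particularly in the semialgebraic and PL categories where constant subdivisions must be controlled uniformly across all stages of the resolution, is the step where I expect the genuine difficulty to lie.
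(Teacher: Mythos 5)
This statement is Conjecture~\ref{conj:finite-dense}; the paper offers no proof of it.  Remark~\ref{r:missing-items}.\ref{i:res} and Remark~\ref{r:conj} explicitly flag the finiteness of dense resolutions as open, noting that it is not even known how to exhibit a module over~$\RR^2$ whose indicator-dimension is provably~$2$.  Your proposal reconstructs what the paper already supplies --- a possibly infinite dense downset resolution obtained by iterating Theorem~\ref{t:dense-hull-M} and splicing, together with the Matlis-dual reduction of the upset case --- and then defers the entire content of the conjecture, namely termination, to two strategies that do not work as stated.

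Concretely: (i) the induction on $\dim_\RR\cQ$ via quotient-restriction does not set up.  The module $\cmt$ lives over~$\qrt$, not over~$\cQ$, and is a quotient of the localization~$\cM_\tau$; since $\cM \to \cM_\tau$ is in general neither injective nor surjective, there is no short exact sequence of $\cQ$-modules $0 \to K \to \cM \to \cmt \to 0$ to horseshoe over, and even granting a substitute you concede that horseshoe splicing need not preserve density --- which is exactly the property being resolved.  (ii) The fallback invariants are not finite.  For a tame module the socle can be nonzero on an uncountable antichain of degrees: for the closed half-plane $D$ beneath $y = -x$ in~$\RR^2$, $\socc\kk[D] = \kk[\max D]$ is supported on the entire antidiagonal, so the total $\kk$-dimension of $\bigoplus_\tau \soct\cM$ is typically infinite and cannot strictly decrease through integers.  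The order dimension of a finite poset encoding does bound the length of \emph{some} finite indicator resolution by the syzygy theorem \cite[Theorem~6.12]{hom-alg-poset-mods}, but those resolutions need not be dense, and densitizing a hull can destroy constancy on the fibers of a fixed encoding (Remark~\ref{r:missing-items}), so there is no argument that the encoding shrinks under passage to syzygies of dense hulls.  Reconciling finiteness with density is precisely the open problem; your proposal correctly locates the difficulty but does not overcome it, so it should be presented as a strategy discussion rather than a proof.
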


\begin{conj}\label{conj:finite-minimal}
Every tame module~$\cM$ over a discrete polyhedral group~$\cQ$ has
finite minimal downset and upset resolutions of the corresponding
type.
\end{conj}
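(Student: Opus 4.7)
The plan is to prove the conjecture by induction on the length of the longest chain in a finite poset encoding, using the interaction between tame $\cQ$-modules and finite poset modules to reduce the termination question to a Hilbert-syzygy-type finiteness over a finite poset algebra. Given a tame module~$\cM$, Theorem~\ref{t:discrete-hull-M} yields a minimal coprimary downset hull $\cM \into E^0$, and Proposition~\ref{p:abelian-category} ensures the cokernel $\cM^1 = E^0/\cM$ is again tame. Iterating produces a minimal downset resolution~$E^\spot$ automatically; the content of the conjecture is the termination of this construction.

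For termination, I would invoke the syzygy theorem for tame poset modules \cite[Theorem~6.12]{hom-alg-poset-mods} to obtain a finite poset encoding $\pi: \cQ \to \cP$ with $\cM \cong \pi^*\cH$ for a finite-dimensional $\cP$-module~$\cH$. The path algebra of the finite poset~$\cP$ with commutativity relations has finite global dimension bounded by the length of the longest chain in~$\cP$, so $\cH$ admits a finite injective resolution by direct sums of downset modules $\kk[\cP_{\preceq p}]$ (the $\cP$-module analogues of injective hulls in the sense of Example~\ref{e:indicator}.\ref{i:downset}). Pulling back along~$\pi$ produces a finite downset copresentation, and then iterated minimal coprimary downset hulls of consecutive cokernels (using Theorem~\ref{t:discrete-hull-M} at each stage) refine this to a minimal downset resolution of finite length. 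The upset case follows from the downset case by Matlis duality: using Remark~\ref{r:flat} together with Theorem~\ref{t:topc=socc^vee}, the Matlis dual of a finite minimal downset resolution of~$\cM^\vee$ is a finite minimal upset resolution of~$\cM$ in the $\cQ$-finite (hence discrete polyhedral) setting.

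The main obstacle will be proving that the minimal downset hull of the cokernel at each stage remains controlled by the poset encoding. A~priori, the hull furnished by Theorem~\ref{t:discrete-hull-M} may introduce downset modules not of the form $\pi^{-1}$ of downsets in~$\cP$, so that the length bound obtained from the global dimension of the path algebra of~$\cP$ could fail to descend along the resolution. Overcoming this requires showing that minimal coprimary downset hulls and the canonical primary decomposition of Theorem~\ref{t:discrete-hull-I} are compatible with finite encodings, in the sense that if an interval~$I$ is constant on fibers of~$\pi$, then each irreducible component $(\aa_\tau - \cQ_+) \cap I$ is constant on the fibers of a common refinement of~$\pi$ with the encoding recording associated faces of cogenerators. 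A~secondary obstacle is that ``minimal'' in Definition~\ref{d:min-or-dense} refers to \emph{each differential} being surjectively or injectively minimal, but the iterated construction only produces hulls minimal \emph{stage by stage}; establishing that the resulting resolution is minimal as a complex should follow formally from the isomorphisms $\socct E^i \cong \socct(E^i/\image(E^{i-1} \to E^i))$ supplied at each step, but this bookkeeping needs to be verified carefully.
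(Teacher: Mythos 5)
This statement is a conjecture: the paper does not prove it, and in fact explicitly identifies the point you defer as the open problem.  Remark~\ref{r:missing-items}.\ref{i:res} notes that minimal resolutions in the discrete polyhedral setting can be constructed step by step from Theorem~\ref{t:discrete-hull-M} and its Matlis dual, ``but there is no a~priori guarantee that such resolutions must terminate after finitely many steps,'' and Remarks~\ref{r:syzygy} and~\ref{r:conj} explain that the only known length bounds come from the order dimension of an encoding poset and do not transfer to the minimal construction.  Your first paragraph therefore reproduces the known (and non-controversial) part, and your appeal to the syzygy theorem plus finite global dimension of the incidence algebra of a finite poset~$\cP$ only reproduces the already-known existence of \emph{some} finite downset resolution.

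The genuine gap is the passage from ``a finite downset resolution exists'' to ``the minimal downset resolution is finite.''  In classical commutative algebra that implication holds because the building blocks are projective or injective objects, so a minimal resolution is a direct summand of any resolution (Schanuel-type comparison).  Here the building blocks are arbitrary downset modules, and by Remark~\ref{r:flat} only the special downsets $\aa + \tau - \cQ_+$ give injective objects; a general coprimary downset module is not injective, so no comparison theorem forces the minimal resolution built from Theorem~\ref{t:discrete-hull-M} to split off from, or be dominated in length by, the finite resolution pulled back along~$\pi$.  You identify this exact obstacle yourself --- the minimal hull of a cokernel may introduce downsets that are not unions of fibers of~$\pi$, so the global-dimension bound over~$\cP$ does not descend along the minimal resolution --- but you then defer it with ``overcoming this requires showing\dots'' and ``needs to be verified carefully.''  That deferred compatibility statement \emph{is} the conjecture; without it the argument establishes nothing beyond what Theorem~\ref{t:presentations-dense} and \cite[Theorem~6.12]{hom-alg-poset-mods} already give.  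What you have is a plausible plan of attack, not a proof.
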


\begin{remark}\label{r:syzygy}
Remark~\ref{r:missing-items}.\ref{i:res} raises an intriguing point
about indicator resolutions: the bound on the length in the syzygy
theorem over arbitrary posets \cite[Theorem~6.12]{hom-alg-poset-mods}
comes from the order dimension of an encoding poset, which is more or
less unrelated to the dimension of the real or discrete polyhedral
group.  It seems plausible that the geometry of the polyhedral group
asserts control to prevent the lengths from going too high, just as it
does to prevent the cohomological dimension of an affine semigroup
ring from going too high via Ishida complexes to compute local
cohomology \cite[Section~13.3.1]{cca}.  This points to potential value
of developing a derived functor side of the top-socle / birth-death /
generator-cogenerator story for indicator resolutions to solve
Conjecture~\ref{conj:indicator-dimension-n}, which would be an even
tighter indicator analogue of the Hilbert Syzygy Theorem.
\end{remark}

\begin{defn}\label{d:indicator-dimension}
Fix a module~$\cM$ over a poset~$\cQ$.
\begin{enumerate}
\item%
The \emph{downset-dimension} of~$\cM$ is the smallest length of a
downset resolution of~$\cM$.

\item%
The \emph{upset-dimension} of~$\cM$ is the smallest length of an upset
resolution of~$\cM$.

\item%
The \emph{indicator-dimension} of~$\cM$ is maximum of its downset- and
upset-dimensions.

\item%
The \emph{indicator-dimension} of~$\cQ$ is the maximum of the
indicator-dimensions of its tame modules.
\end{enumerate}
\end{defn}

\begin{conj}\label{conj:indicator-dimension-n}
The indicator-dimension of any real or discrete polyhedral group~$\cQ$
is bounded above by the rank of~$\cQ$ (as an $\RR$-vector space or
abelian group, respectively).
\end{conj}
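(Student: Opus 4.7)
The plan is to establish matching upper and lower bounds on the indicator-dimension, handling the discrete and real polyhedral cases in parallel but with different technical inputs. In both cases the rank $n$ bound should arise from the combinatorics of the face lattice $\cfq$, whose longest chain of faces has length $n+1$ (from $\{\0\}$ to $\cQ_+$). So the natural goal is to realize $n$ as both the maximum length of a minimal/dense upset resolution and as a realized length for a specific module.

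For the upper bound, I would first construct a canonical upset resolution of the residue field $\kk = \kk[\cQ_+]/\mm$ (in the discrete case) or its closure analogue (in the real case) using a Čech/Ishida-style complex indexed by faces of $\cQ_+$, stratified by codimension. This is modeled on the Ishida complex used to compute local cohomology of affine semigroup rings \cite[Section~13.3.1]{cca}; its $i$-th term is a direct sum of localizations $\kk[\cQ_+]_\sigma$ over codimension-$i$ faces~$\sigma$, which are upset modules, and its length is exactly $n$. For a general tame module~$\cM$, I would use Theorem~\ref{t:presentations-dense} to start with a finite dense (discretely, minimal) upset presentation $F_1 \to F_0 \onto \cM$, and then tensor-like operations on~$F_\spot$ with the face-indexed Čech complex should produce syzygies that can themselves be resolved by upset modules, inductively capping the length at~$n$. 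The discrete case can be short-circuited by invoking the classical Hilbert syzygy theorem for $\kk[\NN^n]$ and the fact that upset modules in the discrete setting are flat covers (Remark~\ref{r:flat}). Matlis duality (Theorem~\ref{t:top=socvee}) applied to the upper-bound argument then yields the dual statement for downset resolutions.

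For the lower bound, I would exhibit a tame module whose indicator-dimension is at least $n$. In the discrete case, the skyscraper module $\kk_\0$ works: its minimal upset (flat) resolution is the Koszul complex on $x_1,\dots,x_n$, which has length $n$ and no shorter flat resolution, as can be detected by computing $\Tor^{\kk[\cQ_+]}_n(\kk,\kk) \neq 0$. In the real polyhedral case, the analogous candidate is the ``box'' interval module $\kk[[\0,\bb]]$ for some $\bb \in \cQ_+^\circ$, which is simultaneously $\{\0\}$-coprimary and $\cQ_+$-secondary (see Theorems~\ref{t:coprimary} and Definition~\ref{d:attached}) and whose shape at~$\bb$ is the full face lattice~$\cfq$. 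Showing that its minimal (dense) upset resolution has length exactly~$n$ requires building a ``real Koszul'' complex of length~$n$ and verifying it is minimal using the socle/top essentiality criteria in Theorems~\ref{t:injection} and~\ref{t:surjection-RR}, together with the density refinements in Theorems~\ref{t:dense-subfunctor} and~\ref{t:dense-top}.

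The main obstacle is the real polyhedral case, for two reasons. First, Conjecture~\ref{conj:finite-dense} (finiteness of dense resolutions) is a prerequisite: without it, the indicator-dimension could be infinite a~priori, and the theorem is vacuous. So a complete proof of Conjecture~\ref{conj:indicator-dimension-n} essentially presupposes resolving~\ref{conj:finite-dense} first, presumably by leveraging semialgebraic or PL tameness (Theorem~\ref{t:soct-tame}) to rule out pathological staircase limits. Second, as noted in Remark~\ref{r:syzygy}, the paper does not yet develop derived functors of the cogenerator/generator functors in the real-exponent setting; the upper bound argument sketched above, while following the Čech/Ishida template, requires verifying that these derived functors vanish above degree~$n$ and that the relevant spectral sequence collapses, which in turn demands a robust theory of Tor-like and Ext-like functors adapted to upset and downset modules rather than free/injective modules. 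I expect the semialgebraic or PL setting to be genuinely easier than the fully tame one, since in that case the face structure gives finite combinatorial control that should cap cohomological length by the geometric dimension~$n$.
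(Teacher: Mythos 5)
The statement you are trying to prove is stated in the paper as an open conjecture; the paper offers no proof, and Remark~\ref{r:conj} explicitly records that it is open even to exhibit a single module over~$\RR^2$ whose indicator-dimension is provably as large as~$2$, and moreover hedges that the rank of~$\cQ$ might turn out to be only an upper bound rather than an equality. So there is nothing in the paper to compare your argument against; what you have written is a research program, and to your credit you flag its two largest dependencies yourself (Conjecture~\ref{conj:finite-dense} and the missing derived-functor theory alluded to in Remark~\ref{r:syzygy}). Your upper-bound strategy via an Ishida-type complex indexed by faces is exactly the direction Remark~\ref{r:syzygy} gestures toward, so that part of the plan is aligned with the paper's intent, though it is not carried out there either.

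The concrete gap is in your lower bound. You propose to certify that $\kk_\0$ has upset-dimension at least $n$ by computing $\Tor^{\kk[\cQ_+]}_n(\kk,\kk) \neq 0$. But $\Tor$ obstructs short \emph{free} (or flat) resolutions, whereas an upset resolution is built from arbitrary upset modules $\kk[U]$, and by Remark~\ref{r:flat} only the very special upsets $U = \bb + \ZZ\tau + \cQ_+$ give flat modules; a general $\kk[U]$ is neither free nor flat, so nonvanishing of $\Tor_n$ does not preclude an upset resolution of length less than~$n$. This is precisely why the lower bound is the hard half of the conjecture: the class of allowed resolving objects is so much larger than the class of free modules that the classical homological obstructions do not apply, and no substitute invariant is constructed in the paper or in your proposal. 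A secondary issue: your claim that the discrete upper bound is ``short-circuited'' by the Hilbert syzygy theorem only covers finitely generated modules, whereas tame $\ZZ^n$-modules need not be finitely generated, and the syzygy theorem the paper does have \cite[Theorem~6.12]{hom-alg-poset-mods} bounds lengths by the order dimension of an encoding poset, which the paper notes is ``more or less unrelated'' to the rank of~$\cQ$. Until you supply (i) an invariant that bounds upset-dimension from below against arbitrary upset modules and (ii) an upper-bound argument valid for all tame modules, the proposal remains a plan rather than a proof.
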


\begin{remark}\label{r:conj}
No uniform bound on the lengths of finite upset and downset
resolutions over an arbitrary posets~$\cQ$ is known when~$\cQ$ has
quotients with unbounded order~dimension.  It is already open to find
a module over~$\RR^2$ whose indicator-dimension is provably as high
as~$2$.  It would not be shocking if the rank of~$\cQ$ were a strict upper
bound---that is, if the indicator-dimension in
Conjecture~\ref{conj:indicator-dimension-n} were always strictly less
than the rank: the use of upset modules
instead of free modules could prevent the final syzygies that, in
finitely generated situations, come from elements supported at the
origin by local duality.
\end{remark}

\begin{remark}\label{r:relative-hom-alg}
Conjecture~\ref{conj:indicator-dimension-n} asks for bounds on
arbitrary resolutions of modules by indicator modules.  In contrast,
relative homological algebra asks for resolutions
relative to a nonstandard exact structure.  When the indecomposable
projectives are
the indicator modules for connected intervals (or \emph{spreads}),
this leads to a different notion of \emph{spread-global dimension};
see \cite{blanchette-desrochers-hanson-scoccola2025} and the
references therein for details.  In particular, the spread-global
dimension of $\ZZ^2$-modules is infinite
\cite[Ex.\,7.24.3]{blanchette-brüstle-hanson2025}.
\end{remark}

\addtocontents{toc}{\protect\setcounter{tocdepth}{2}}

\end{document}